\documentclass[11pt]{article}
\usepackage[margin=2.5cm]{geometry}
\usepackage[utf8]{inputenc}
\usepackage[T1]{fontenc}
\usepackage{lmodern}
\usepackage{microtype}
\usepackage{amsmath,amssymb,amsthm,mathtools,bm}
\usepackage{enumitem}
\usepackage{booktabs}
\usepackage{array}
\usepackage{tabularx}
\usepackage{xspace}
\usepackage{multicol}
\usepackage{capt-of}
\usepackage{xcolor}

\usepackage[authoryear,round]{natbib}

\usepackage[colorlinks=true,linkcolor=blue,citecolor=blue,urlcolor=blue]{hyperref}
\hypersetup{pdftitle={SDDP has polynomial iteration complexity in the horizon},pdfauthor={Vincent Lecl\`ere}}
\usepackage[capitalize,nameinlink]{cleveref}

\makeatletter\def\input@path{{paper/}{./}}\makeatother

\theoremstyle{plain}
\newtheorem{theorem}{Theorem}[section]
\newtheorem{proposition}[theorem]{Proposition}
\newtheorem{lemma}[theorem]{Lemma}
\newtheorem{corollary}[theorem]{Corollary}

\theoremstyle{definition}
\newtheorem{definition}[theorem]{Definition}
\newtheorem{example}[theorem]{Example}
\newtheorem{assumption}{Assumption}

\theoremstyle{remark}
\newtheorem{remark}[theorem]{Remark}
\crefname{assumption}{Assumption}{Assumptions}
\crefname{conjecture}{Conjecture}{Conjectures}
\crefname{example}{Example}{Examples}

\newcommand{\R}{\mathbb{R}}
\newcommand{\EE}{\mathbb{E}}
\newcommand{\PP}{\mathbb{P}}
\newcommand{\cB}{\mathcal{B}}
\newcommand{\Bd}{\dot{\mathcal{B}}}
\newcommand{\cX}{\mathcal{X}}
\newcommand{\bxi}{\bm{\xi}}
\newcommand{\bx}{\bm{x}} \newcommand{\be}{\bm{e}} \newcommand{\bgamma}{\bm{\gamma}} \newcommand{\bGamma}{\bm{\Gamma}}
\newcommand{\cF}{\mathcal{F}} \newcommand{\cA}{\mathcal{A}}

\newcommand{\cP}{\mathcal{P}}
\newcommand{\cW}{\mathcal{W}}
\newcommand{\cBh}{\widehat{\mathcal B}}
\newcommand{\BR}{\cB^{\mathrm R}}
\newcommand{\Vh}{\widehat V}
\newcommand{\Ph}{\widehat{\PP}}
\newcommand{\lv}[2]{\underline V^{#2}_{#1}}
\newcommand{\uV}[1]{\underline V^{#1}}
\newcommand{\Ub}[1]{\bar U_{#1}}
\newcommand{\avg}{\operatorname*{avg}}
\newcommand{\diam}{\operatorname{diam}}
\newcommand{\vol}{\operatorname{vol}}
\newcommand{\aff}{\operatorname{aff}}
\newcommand{\dist}{\operatorname{dist}}
\newcommand{\Lip}{\operatorname{Lip}}
\newcommand{\cb}{\bar\ell}
\newcommand{\one}{\mathbf 1}
\newcommand{\Wa}{\mathsf W_1}
\newcommand{\TV}{\mathrm{TV}}
\newcommand{\Lh}{\hat L}
\newcommand{\Mb}{\bar M}
\newcommand{\Oe}{X_\eta}
\newcommand{\erel}{\varepsilon_{\rm rel}}

\newcommand{\eg}{e.g.\@\xspace}

\title{SDDP has polynomial iteration complexity in the horizon}
\author{Vincent Lecl\`ere\thanks{CERMICS, \'Ecole nationale des ponts et chauss\'ees, Marne-la-Vall\'ee, France. \texttt{vincent.leclere@enpc.fr}}}
\date{}

\begin{document}
\maketitle

\begin{abstract}
Stochastic dual dynamic programming (SDDP) and its variants are run in practice with forward passes
that sample the noise at random. Recent iteration-complexity results favour another choice: when the
trial points are selected deterministically, for instance along the scenario with the largest
gap, the number of iterations is polynomial in the horizon $T$, whereas every bound we know of for
randomly sampled forward passes is exponential in $T$. We show that this is not intrinsic to random sampling, for a broad class of
trajectory-following dynamic programming algorithms, which includes SDDP, stochastic dual dynamic integer
programming and stochastic Lipschitz dynamic programming. With random sampling, and with Lipschitz constants, diameters and
per-stage loss ranges bounded independently of the stage and of the horizon, the expected number of iterations whose policy is not $T\varepsilon$-optimal is $O(T\varepsilon^{-(d+1)})$, where $d$ is the state dimension: linear in $T$, as for deterministic
selection rules. The proof needs no
particular scenario to be sampled: the gap between the expected cost of the current
policy and the current lower bound equals the expected sum of one-step Bellman residuals along the sampled path,
and Lipschitz cuts that are tight at the visited states turn the positive residuals into permanent increases of
a bounded potential. The results extend to noise without finite
support, to backward passes on a sample average approximation, and to stationary and periodic infinite-horizon SDDP with geometric pass lengths.

\par\medskip\noindent\textbf{Keywords:} stochastic dual dynamic programming; iteration complexity; multistage
stochastic optimisation; random sampling; real-time dynamic programming.

\par\smallskip\noindent\textbf{Mathematics Subject Classification (2020):} 90C15, 90C39, 90C40, 90C60.
\end{abstract}

\section{Introduction}\label{sec:intro}

Trajectory-following dynamic programming (TFDP) \citep{forcier2023trajectory} solves multistage stochastic
programs with stagewise-independent noise by alternating two passes. A forward pass simulates a trajectory of
the policy induced by the current (lower) approximations of the cost-to-go functions; a backward pass adds, at
each visited state, a cut, affine or not, that tightens the approximation there. Stochastic dual dynamic
programming (SDDP) \citep{pereira1991multi}, the most widely used instance, has affine Benders cuts and is run
in practice with forward passes that sample the noise at random. Almost sure convergence of these methods is
classical \citep{philpott2008convergence,shapiro2011analysis,girardeau2015convergence,zou2019sddip}. See also the survey of
\citet{lan2023numerical} and the review of \citet{fullner2025sddp}.

\paragraph{Known bounds.} For deterministic selection rules of the trial points, such as the ``problem child'' rule that sends
the forward pass to the scenario with the largest current gap, the number of iterations is
polynomial in the horizon $T$ for a fixed state dimension $d$
\citep{lan2022complexity,zhang2022stochastic,forcier2023trajectory,julan2023infinite}. For randomly
sampled forward passes, every bound we know of is exponential in~$T$
\citep{lan2022correction,zhang2022stochastic,forcier2023trajectory,julan2023infinite}, including the
dimension-free bounds of \citet{barros2026bidirectional} under strong convexity.
The reason is common to all these proofs. They count \emph{saturated} points, where the true gap
$V_t-\lv tk$ is certified small, an argument introduced by \citet{lan2022complexity} and extended to nonconvex
problems by \citet{zhang2022stochastic} through Lipschitz regularisation. But saturation at stage $t$ follows
from saturation at stage $t+1$ only at the point reached by the same sampled path. Each proof therefore bounds from
below the probability that the sampled path follows a given chain of scenarios, for instance by $\prod_t1/N_t$
under uniform sampling on $N_t$ atoms, which is exponentially small in~$T$. \citeauthor{zhang2022stochastic}
ask, in their Remark~3, whether a bound polynomial in $T$ exists in general for the random-sampling version.

\begin{table}[t]
\centering\small
\setlength{\tabcolsep}{4pt}
\begin{tabular}{>{\raggedright\arraybackslash}p{3.9cm}>{\raggedright\arraybackslash}p{2.4cm}>{\raggedright\arraybackslash}p{6.1cm}>{\raggedright\arraybackslash}p{2.6cm}}
\toprule
\multicolumn{2}{@{}l}{setting} & expected number of bad iterations & reference\\
\midrule
\multicolumn{2}{@{}l}{general oracle, volume argument} & $(d+1)\cb\,(T+1)(4LD)^d/\varepsilon^{d+1}$ & \cref{cor:uniform}\\
\multicolumn{2}{@{}l}{general oracle, covering (no convexity)} & $Z\cb\,(T+1)(1+8LD/\varepsilon)^d/\varepsilon$ & \cref{cor:cover}\\
\multicolumn{2}{@{}l}{Zhang--Sun, Algorithm~3} & $\cb\,(T+1)(1+8LD/\varepsilon)^d/\varepsilon$ & \cref{thm:ZS}\\
\multicolumn{2}{@{}l}{SDDiP, binary states} & $2^{n-1}\cb\,(T+1)/\varepsilon$ & \cref{prop:SDDiP}\\
\multicolumn{2}{@{}>{\raggedright\arraybackslash}p{6.3cm}}{discounted, any $T$; infinite horizon, geometric length (accuracy $\varepsilon$)} & $2(d+1)\cb(4\Lh D)^d/((1-\beta)^{d+2}\varepsilon^{d+1})$ & \cref{cor:disc,thm:geom}\\
\bottomrule
\end{tabular}
\caption{An iteration $k$ is \emph{bad} if $G^k>T\varepsilon$ ($G^k>\varepsilon$ in the discounted row). For Zhang
and Sun's algorithm, the row bounds the index of the first good iteration. $Z$, $n$, $\beta$ and $\Lh$ are defined
with the cited results.}\label{tab:summary}
\end{table}

\paragraph{Main result.} We prove bounds polynomial in $T$ for TFDP-type algorithms with randomly
sampled forward passes. Let $\pi^k$ be the policy of iteration $k$, greedy for the approximations of
iteration $k-1$, and $G^k:=V^{\pi^k}_0(x_0)-\lv0{k-1}(x_0)$ the gap between its expected cost and the lower bound.
Assume that, at every stage, the set of states has dimension at most $d$ and diameter at
most $D$, that the cuts are $L$-Lipschitz on it, and that the initial lower bounds are within $\cb$ per
remaining stage of the cost-to-go functions (\eg when the stage losses lie in an interval of width
$\cb$, \cref{sec:rates}).
Then the expected number of iterations with $G^k>T\varepsilon$, an error of $\varepsilon$ per stage, is
$O(T\varepsilon^{-(d+1)})$, with a constant that depends only on $d$, $D$, $L$ and $\cb$ (\cref{cor:uniform}). In the discounted case this number does not depend on
the horizon (\cref{cor:disc}). All constants are explicit (\cref{tab:summary}).

For the same accuracy, and with each state set in a ball of diameter $D$ (a convention under which our bound also
holds), the deterministic problem-child rule needs at most $T(1+2LD/\varepsilon)^d$ iterations
\citep{zhang2022stochastic}, so random sampling costs a factor of about $(d+1)2^d\cb/\varepsilon$, independent of
$T$ (\cref{sec:compare-sddp}). Linearity in $T$ needs Lipschitz constants bounded independently of the stage and
of the horizon, as in the inventory model of \cref{ex:absorb}; under extended relatively complete recourse, the Lipschitz
constants of Benders cuts are only known to grow like $T-t$ (\cref{prop:benders}).
As for all random-sampling results, $G^k$ is not observed: the bounds count bad iterations
and do not provide a stopping test (\cref{rem:compare}).

\paragraph{Proof outline.} The proof does not use saturation. It follows the analysis of real-time dynamic
programming (RTDP) \citep{barto1995rtdp} in reinforcement learning: the SDDP forward pass is exactly the
sampling scheme of RTDP, which follows greedy trajectories, drawn from the true transition law, of a policy
built on an optimistic value estimate (here a lower bound on costs). Let $\lv tk$ be the approximation of the
cost-to-go function $V_t$ at time $t$ after $k$ iterations, $\cB_t$ the Bellman operator between times $t$ and
$t+1$, $(\bx^k_t)_{0\le t\le T}$, with $\bx^k_0=x_0$, the states visited by the forward pass of iteration $k$, and
$\cA_{k-1}$ the history of the first $k-1$ iterations. Besides the gap, two quantities are tracked: the one-step
Bellman residual $r^k_t:=\cB_t(\lv{t+1}{k-1})-\lv t{k-1}$, which is local in time, and the volume
$\Phi^k:=\sum_{t<T}\int(\lv tk-\lv t0)\,d\mu_t$ gained by the approximations, for a reference probability
measure $\mu_t$ on the states at time $t$ (the normalised Lebesgue measure when the state set is
convex). With exact cuts and an exact forward pass, three facts suffice.
\begin{enumerate}[label=(\roman*),nosep,leftmargin=*]
\item \emph{Gap identity} (\cref{lem:gap}), as in \citet[Lemma~2]{efroni2019tight} for RTDP. The gap equals the
expected sum of the residuals along the path that the algorithm samples, so no particular path needs to be
sampled:
\begin{equation}\label{eq:intro:gap_identity}
G^k=\EE\Big[\sum_{t<T}r^k_t(\bx^k_t)\Bigm|\cA_{k-1}\Big].
\end{equation}
\item \emph{Lift} (\cref{lem:lift}). The backward pass lifts $\lv tk$ by its residual at the trial point;
the approximations being Lipschitz, this pointwise lift is a lift on a ball, hence of the integral:
\begin{equation}\label{eq:intro:volume_lift}
\Phi^k-\Phi^{k-1}\ \ge\ \Psi_T\Big(\sum_{t<T}r^k_t(\bx^k_t)\Big)
\end{equation}
for an explicit convex function $\Psi_T$.
\item \emph{Bounded potential}. The approximations are nondecreasing and stay below the value functions, so
$0\le\Phi^k\le\sum_{t<T}M_t$ with $M_t\le\sup(V_t-\lv t0)$.
\end{enumerate}
Taking expectations in~(ii), using~(i) and Jensen's inequality, and summing over $k$ with~(iii) gives
\begin{equation}\label{eq:intro:potential_bound}
\EE\sum_{k\ge1}\Psi_T(G^k)\ \le\ \sum_{t<T}M_t
\end{equation}
(\cref{thm:master}), from which all the rates follow. In metric state spaces, \citet[App.~F]{domingues2021kernel}
and \citet[Lemma~7.3]{sinclair2020adaptive} already accumulate Lipschitz value updates of this kind over a
covering and over adaptive partitions. We transfer this residual accounting to a general cut oracle:
approximate tightness supplies the local lift, while a fixed upper bound on the cuts bounds the potential. To our knowledge,
no earlier paper applies the RTDP analysis to SDDP or to cutting-plane methods.

\paragraph{Organisation.} \Cref{sec:setting} sets out the problem, the algorithms and the cut oracle, whose cuts
need only be Lipschitz, approximately tight at the trial point and below a fixed \emph{ceiling}, on state sets that
may be nonconvex, mixed-integer or finite. \Cref{sec:main} proves the master theorem and derives explicit rates.
\Cref{sec:instances} applies it to Benders cuts, to the generalised conjugacy cuts of Zhang and Sun, and to
stochastic dual dynamic integer programming (SDDiP) on binary states. \Cref{sec:noise} treats noise without finite
support: valid inexact cuts, backward passes on a sample average approximation (SAA), and sampled cuts.
\Cref{sec:infinite} extends the analysis to stationary and periodic infinite-horizon problems and gives, for SDDP
under extended relatively complete recourse, bounds polynomial in the effective horizon $1/(1-\beta)$. \Cref{sec:discussion} compares the results with the SDDP
and reinforcement-learning literature, and \cref{app:notation} collects the notation.

\section{Setting}\label{sec:setting}

This section fixes the problem, the algorithms and the accuracy measure, the gap $G^k$, and states the two
assumptions of the analysis, on the cut oracle (\cref{sec:oracle}) and on the state sets (\cref{sec:VG}).

\subsection{Problem}\label{sec:problem}

We consider a multistage stochastic program with stagewise-independent noise. Stage $t\in[T]:=\{t\in\mathbb N:1\le t\le T\}$ starts at time $t-1$ in a state $x_{t-1}\in X_{t-1}$, observes
the noise $\bxi_t$, with law $\PP_t$ on a measurable space $\Xi_t$ and independent across stages, chooses the
next state $x_t\in\cX_t(x_{t-1},\bxi_t)\subset X_t\subset\R^{n_t}$, and pays the loss
$\ell_t(x_{t-1},x_t,\bxi_t)$; the initial state is fixed, $X_0=\{x_0\}$. We write $\bxi_{[t]}:=(\bxi_s)_{s\in[t]}$
for the noise history up to stage $t$. The loss $\ell_t$ is
Borel and real-valued (infinite losses are encoded in the constraint set $\cX_t$). For $t<T$, a bounded
Borel function $W$ on $X_{t+1}$ and a probability law $\PP$ on $\Xi_{t+1}$, let
\begin{equation}\label{eq:setting:bellman_operator}
\Bd_t(W;x,\xi):=\inf_{y\in \cX_{t+1}(x,\xi)}\ell_{t+1}(x,y,\xi)+W(y),\quad
\cB^\PP_t(W)(x):=\int \Bd_t(W;x,\xi)\,\PP(d\xi),\quad x\in X_t .
\end{equation}
We write $\cB_t:=\cB^{\PP_{t+1}}_t$ for the Bellman operator of the true law, and set
$V_T:=0$ and $V_t:=\cB_t(V_{t+1})$ for $t<T$ (backward in $t$). The operator $\cB_t$ is monotone ($W\le W'$ on
$X_{t+1}$ implies $\cB_t(W)\le\cB_t(W')$ on $X_t$) and commutes with constants.
The following mild conditions make
the stage problems and the cost of a policy well defined.

\begin{assumption}[Standing assumptions]\label{ass:S}
For every $t\in[T]$:
\begin{enumerate}[label=(S\arabic*),ref=(S\arabic*),nosep]
\item\label{S1} $X_{t-1}$ is compact and, for all $x\in X_{t-1}$ and $\xi\in\Xi_t$,
$\emptyset\ne\cX_t(x,\xi)\subset X_t$ (forward invariance); $X_T$ need not be compact ($V_T=0$ on it).
\item\label{S2} (Integrability.)
$\sup_{x\in X_{t-1}}\int\big|\inf_{y\in \cX_t(x,\xi)}\ell_t(x,y,\xi)\big|\,\PP_t(d\xi)<\infty$.
\item\label{S3} (Measurability.) Measurable selections of the stage problems exist, the approximations
of \cref{sec:algo} are Borel functions of finitely many random parameters (for instance cut coefficients), and all
oracles are Borel functions of their inputs.
\end{enumerate}
\end{assumption}

Condition~\ref{S2} implies that $\cB_t(0)$ is bounded on $X_t$ for $t<T$, hence
$|\cB_t(W)|\le\sup_{X_t}|\cB_t(0)|+\sup|W|$.
Condition~\ref{S3} is automatic for finitely supported noise and
deterministic oracles with a tie-breaking rule; \cref{app:measurability} gives the precise statement and
a sufficient condition (a normal integrand and approximations continuous in the state).

\subsection{Randomised TFDP-type algorithms}\label{sec:algo}

The algorithm keeps approximations $\lv tk:X_t\to\R$ of the value functions, $0\le t\le T$, with
$\lv Tk:=0$, where $k$ counts the completed iterations: quantities carrying the superscript $k$ are produced
during iteration $k$, which uses the approximations of index $k-1$. The approximations need not lie below $V_t$ (\cref{sec:oracle}).

A \emph{forward operator} $\cF^k_{t-1}$ maps a bounded Borel function $W$ on $X_t$ to a selection
$\cF^k_{t-1}(W)(x,\xi)\in\cX_t(x,\xi)$ that is greedy for $W$ up to a \emph{forward error}
$\bgamma^{F,k}_t\ge0$: for all $x$ and $\xi$ with $\Bd_{t-1}(W;x,\xi)>-\infty$,
\begin{equation}\label{eq:setting:forward_error}
\ell_t\big(x,\cF^k_{t-1}(W)(x,\xi),\xi\big)+W\big(\cF^k_{t-1}(W)(x,\xi)\big)\ \le\ \Bd_{t-1}(W;x,\xi)+\bgamma^{F,k}_t .
\end{equation}
We write $\gamma^F=0$ for exact greedy selections ($\bgamma^{F,k}_t=0$ for all $k$ and $t$). The policy
$\pi^k$ of iteration $k$ is greedy for the approximations of the previous iteration: at stage $t$, in state
$x$ and under noise $\xi$, it moves to $\cF^k_{t-1}(\lv t{k-1})(x,\xi)$.

Let $\cA_k$ be the $\sigma$-field generated by everything drawn during the first $k$ iterations, including
any internal randomness of the oracles, which must be independent of the noise of the current forward pass.
The forward operators and the forward errors of iteration $k$ are $\cA_{k-1}$-measurable
(\cref{app:measurability}), so $\pi^k$ is known before iteration $k$ starts. Iteration $k\ge1$ runs two passes.
\begin{itemize}[nosep]
\item \emph{Forward pass.} Draw $(\bxi^k_t)_{t\in[T]}$ independently, with laws $(\PP_t)_{t\in[T]}$ and
independently of $\cA_{k-1}$, and simulate $\pi^k$ from $\bx^k_0:=x_0$, with 
$\bx^k_t:=\cF^k_{t-1}(\lv t{k-1})(\bx^k_{t-1},\bxi^k_t)$ for $t\in[T]$.
\item \emph{Backward pass.} For $t$ from $T-1$ down to $0$, an oracle returns a function $f^k_t:X_t\to\R$, the
\emph{cut}, at the trial point $\bx^k_t$, possibly using the already updated $\lv{t+1}{k}$. Then
$\lv t{k}:=\max(\lv t{k-1},f^k_t)$, possibly with further cuts, see~\ref{O4}.
\end{itemize}
SDDP is the case of finitely supported noise, exact greedy passes and Benders cuts. The general case is TFDP with
random node selection from the law of $\bxi_t$, whose upper approximations we do not use.

\paragraph{Cost, gap and residuals.} Since the forward pass of iteration $k$ is a trajectory of $\pi^k$
drawn independently of $\cA_{k-1}$, the expected cost of $\pi^k$ and the \emph{gap} between this cost and the lower bound are
\begin{equation}\label{eq:setting:expected_policy_cost}
V^{\pi^k}_0(x_0):=\EE\Big[\sum_{t\in[T]}\ell_t(\bx^k_{t-1},\bx^k_t,\bxi^k_t)\ \Big|\ \cA_{k-1}\Big],
\qquad G^k:=V^{\pi^k}_0(x_0)-\lv0{k-1}(x_0) .
\end{equation}
The \emph{residuals} $r^k_t$ measure how far the approximations are from satisfying the Bellman equation; we also
use their sum along the sampled path:
\begin{equation}\label{eq:setting:residuals}
r^k_t:=\cB_t(\lv{t+1}{k-1})-\lv t{k-1}\quad\text{on }X_t\ (t<T),\qquad
R^k:=\sum_{t<T}r^k_t(\bx^k_t).
\end{equation}
Residuals are measured with the true operator $\cB_t$, whatever the oracle computes. They may be negative,
since the sub-Bellman inequality $\lv t{k-1}\le\cB_t(\lv{t+1}{k-1})$ is not assumed; they are nonnegative under
the conditions of \cref{lem:valid}(a).

\paragraph{What the gap measures.}\label{sec:accuracy} All results are stated in terms of the gap $G^k$ defined in~\eqref{eq:setting:expected_policy_cost}. It
cannot be observed, but it controls both the policy and the lower bound. Let
$\zeta_0:=(U_0(x_0)-V_0(x_0))_+$, where $U_0$ is the ceiling of \cref{sec:oracle}; $\zeta_0=0$ in the
standard case $U=V$. By
\cref{lem:gap} below, for every $k\ge1$,
\begin{equation}\label{eq:setting:gap_meaning}
G^k\ \ge\ V_0(x_0)-\lv0{k-1}(x_0)\qquad\text{and}\qquad G^k\ \ge\ V^{\pi^k}_0(x_0)-V_0(x_0)-\zeta_0 .
\end{equation}
So, when $\zeta_0=0$, $G^k\le\theta$ means that the whole policy $\pi^k$ (not only its first
decision) is $\theta$-optimal and that the lower bound $\lv0{k-1}(x_0)$ is $\theta$-tight.

\begin{remark}[Comparison of accuracy notions]\label{rem:compare}
With $\zeta_0=0$, $G^k\le\theta$ implies the guarantees of \citet[eqs.~(5.18)--(5.19)]{lan2022complexity}
(a $\theta$-optimal first-stage solution, since $V^{\pi^k}_0(x_0)\ge\EE[\ell_1(x_0,\bx^k_1,\bxi^k_1)+V_1(\bx^k_1)]$),
of \citet[Def.~1]{zhang2022stochastic} (a $\theta$-optimal root-node solution) and of
\citet[Thms.~4.2 and~4.11]{forcier2023trajectory} (a $\theta$-solution and a $\theta$-tight lower bound).
These works bound the first index at which their guarantee holds. That index is at most the first
index with $G^k\le\theta$, hence at most $1+\#\{k\ge1:G^k>\theta\}$, which our bounds control. Such bounds are of
uniform-PAC type \citep{dann2017unifying}. As in all earlier
results on random sampling, identifying a good iteration requires Monte Carlo estimates of $V^{\pi^k}_0(x_0)$, unless one returns a
randomly selected iterate (\cref{cor:anytime}(d)).
\end{remark}

\subsection{The cut oracle}\label{sec:oracle}

In the standard TFDP case, each cut is \emph{valid}, that is, below $V_t$, and \emph{tight} at the trial point.
\Cref{ass:O} weakens both: tightness is required only up to an error $\gamma$, and with respect to the approximation of the
previous iteration~\ref{O2}; validity is replaced by a \emph{ceiling}~\ref{O3}, a function $U_t$ above all cuts. A ceiling other
than $V_t$ is needed when the forward pass is greedy for one problem while the cuts lie below the value functions of another: a
regularised and the original problem in \cref{sec:ZS}, the true law and an SAA in \cref{sec:saa}.

\begin{assumption}[Lipschitz cut oracle]\label{ass:O}
For each $t<T$ there are a metric $\dist_t$ on $X_t$, possibly extended (with values in
$[0,+\infty]$), a constant $L_t\ge0$ and a \emph{ceiling} $U_t:X_t\to(-\infty,+\infty]$
such that, for all $k\ge1$:
\begin{enumerate}[label=(O\arabic*),ref=(O\arabic*),nosep]
\item\label{O1} (Lipschitz) $\lv t0$ and every cut of index $t$ are Borel and
$L_t$-Lipschitz for $\dist_t$, and $\lv t0$
is deterministic and bounded.
\item\label{O2} (tightness at the trial point, up to $\gamma$) At $x=\bx^k_t$,
$f^k_t(x)\ge\cB_t(\lv{t+1}{k-1})(x)-\bgamma^k_t$, where the error
$\bgamma^k_t\ge0$ is integrable and $\cA_k$-measurable.
\item\label{O3} (ceiling) $\lv t0\le U_t$ and every cut of index $t$ is $\le U_t$ on
$X_t$. Moreover the \emph{potential bound} $M_t:=\sup_{X_t}(\Ub t-\lv t0)$ is finite, where
$\Ub t(x):=\inf_{z\in X_t}\{U_t(z)+L_t\dist_t(x,z)\}$, with $L_t\cdot\infty:=\infty$.
\item\label{O4} (monotone update) $\lv t{k}=\max\big(\{\lv t{k-1},f^k_t\}\cup\{f'_i\}_{i\in[m]}\big)$ for some $m\ge0$,
where the additional cuts $f'_i$, if any, satisfy~\ref{O1} and~\ref{O3}.
\end{enumerate}
\end{assumption}

The function $\Ub t$, the \emph{Lipschitz envelope} of the ceiling, is the largest $L_t$-Lipschitz function below $U_t$; if $\dist_t$ is finite and $U_t(z_0)<\infty$,
then $M_t\le U_t(z_0)+L_t\diam_{\dist_t}(X_t)-\inf\lv t0$, and if $U_t=V_t$ is $L_t$-Lipschitz, then
$M_t=\sup(V_t-\lv t0)$. The usual validity and tightness conditions imply \cref{ass:O}:

\begin{lemma}[Sufficient conditions]\label{lem:valid}
Assume~\ref{S1}--\ref{S2}, and let each $\lv t0$ be bounded.
\begin{enumerate}[label=(\alph*),nosep]
\item (Strong validity implies~\ref{O3}.) Assume $\lv t0\le V_t$ for all $t$, and that every cut
created at index $t<T$ during iteration $k$ satisfies $f\le\cB_t(\lv{t+1}{k})$ on
$X_t$; this is the validity notion of TFDP \citep[Assumption~4(ii)]{forcier2023trajectory}
and of stochastic Lipschitz dynamic programming (SLDP) \citep[eq.~(20)]{ahmed2022sldp}. Then every cut is below $V_t$ on $X_t$, so~\ref{O3}
holds with $U_t=V_t$. If moreover $\lv t0\le\cB_t(\lv{t+1}0)$, then $\lv tk\le\cB_t(\lv{t+1}k)$ on
$X_t$ for all $k\ge0$, so that $r^k_t\ge0$ for all $k\ge1$.
\item (Tightness for the updated approximation implies~\ref{O2}.) If
$f^k_t(x)\ge\cB_t(\lv{t+1}{k})(x)-\bgamma^k_t$ at $x=\bx^k_t$ and
$\lv{t+1}{k}\ge\lv{t+1}{k-1}$, then~\ref{O2} holds.
\end{enumerate}
\end{lemma}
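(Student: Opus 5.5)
For (a), I would prove by induction on $k$ the statement $\lv tk\le V_t$ on $X_t$ for all $t\le T$. The base case $k=0$ is the hypothesis, together with $\lv T0=0=V_T$.

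For the inductive step, fix $k\ge1$ and induct downward in $t$ from $t=T$, where $\lv Tk=0=V_T$. Suppose $\lv{t+1}k\le V_{t+1}$. Every cut $f$ created at index $t$ during iteration $k$ satisfies
\[
f\le\cB_t(\lv{t+1}k)\le\cB_t(V_{t+1})=V_t,
\]
by the hypothesis and the monotonicity of $\cB_t$. This includes $f^k_t$ and any additional cuts of~\ref{O4}. Since $\lv tk$ is the maximum of $\lv t{k-1}$, which is $\le V_t$ by the outer induction, and of such cuts, we get $\lv tk\le V_t$.

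The ordering of the backward pass makes this consistent, because cuts at $t$ are produced after $\lv{t+1}k$ is final. Note that $\cB_t(\lv{t+1}k)$ is well defined: $\lv{t+1}k$ is bounded below by $\lv{t+1}0$ and above by $V_{t+1}$, which is bounded by~\ref{S2}. Each cut at index $t$ created at some iteration $j$ is then $\le V_t$, and so is $\lv t0$. Hence~\ref{O3} holds with $U_t=V_t$, at least for the ceiling part of the condition; finiteness of $M_t$ is part of~\ref{O3} and would be checked separately.

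For the sub-Bellman part, I would again induct on $k$, with base case the extra hypothesis $\lv t0\le\cB_t(\lv{t+1}0)$. Assume $\lv t{k-1}\le\cB_t(\lv{t+1}{k-1})$. Since the approximations are nondecreasing in $k$ by~\ref{O4}, monotonicity of $\cB_t$ gives
\[
\lv t{k-1}\le\cB_t(\lv{t+1}{k-1})\le\cB_t(\lv{t+1}k).
\]
Each new cut at index $t$ is $\le\cB_t(\lv{t+1}k)$ by hypothesis, so the maximum $\lv tk$ is too. At $t=T$ there is nothing to check. Then
\[
r^k_t=\cB_t(\lv{t+1}{k-1})-\lv t{k-1}\ge0 .
\]

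For (b), at $x=\bx^k_t$ monotonicity gives $\cB_t(\lv{t+1}k)(x)\ge\cB_t(\lv{t+1}{k-1})(x)$. Hence
\[
f^k_t(x)\ge\cB_t(\lv{t+1}k)(x)-\bgamma^k_t\ge\cB_t(\lv{t+1}{k-1})(x)-\bgamma^k_t,
\]
which is~\ref{O2}. The integrability and measurability requirements on $\bgamma^k_t$ are inherited from the hypothesis.

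The only delicate point is bookkeeping: the validity hypothesis involves $\lv{t+1}k$ at the time the cut is created. This is exactly why the inner induction must run downward in $t$ within each iteration, so that $\lv{t+1}k$ is already final and already known to lie below $V_{t+1}$ when the cut at index $t$ is produced.
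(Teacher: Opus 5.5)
Your proposal is the paper's argument. It uses the same double induction (on $k$, and backward in $t$ within an iteration) with monotonicity of $\cB_t$ to get validity and the sub-Bellman inequality, and monotonicity of $\cB_t$ alone for (b). The one item you defer, finiteness of $M_t$, is the one line the paper adds: $\Ub t\le U_t=V_t$ (take $z=x$ in the envelope), each $V_t$ is bounded on $X_t$ by~\ref{S2} and backward induction from $V_T=0$, and $\lv t0$ is bounded, so $M_t\le\sup_{X_t}V_t-\inf_{X_t}\lv t0<\infty$.
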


The proof is in \cref{app:setting}. The following are \emph{not} assumed: convexity of $V_t$,
of $\ell_t$ or of the graph of $\cX_t(\cdot,\xi)$; affine cuts; Lipschitz continuity of $V_t$.
Convexity only serves, through the oracle, to guarantee that good cuts exist.

\subsection{Conditions on the state sets}\label{sec:VG}

The backward pass raises an approximation at a single trial point; the main theorem turns this into
progress on the whole state set in two ways, each with its condition on $X_t$: through the volume under the
approximation, or through its values on a finite net. The volume version (\cref{lem:lift}) uses the
following \emph{volume-growth} condition: there are a Borel
probability measure $\mu_t$ on $X_t$, constants $\kappa_t\in(0,1]$ and $D_t>0$, and an
integer $d_t\ge0$ such that, with the convention $0^0=1$,
\begin{equation}\label{eq:setting:volume_growth}\tag{VG$_t$}
\mu_t\big(\{y\in X_t:\dist_t(x,y)\le s\}\big)\ \ge\ \kappa_t\min\{1,(s/D_t)^{d_t}\}
\qquad\forall x\in X_t,\ s\ge0 .
\end{equation}
We always take $d_0=0$ and $\kappa_0=1$ (recall $X_0=\{x_0\}$), and we fix an exponent
$d\ge\max_{t<T}d_t$.

\begin{lemma}[Cases of~\eqref{eq:setting:volume_growth}]\label{lem:VG}
\begin{enumerate}[label=(\alph*),nosep]
\item \emph{Convex sets.} If $X_t$ is convex and compact, $\dist_t$ is given by a norm,
$d_t:=\dim\aff X_t$, $D_t\ge\diam X_t$ and $\mu_t$ is the normalised Lebesgue measure of
$\aff X_t$ on $X_t$, then~\eqref{eq:setting:volume_growth} holds with $\kappa_t=1$.
\item \emph{Finite unions of convex sets (mixed-integer states).} If
$X_t=\bigcup_{j\in I_t}K^j$ with $I_t$ finite and each $K^j$ convex and compact,
$\mu_t:=|I_t|^{-1}\sum_j\mu_{K^j}$ with $\mu_{K^j}$ as in~(a), $d_t\ge\max_j\dim\aff K^j$ and
$D_t\ge\max_j\diam K^j$, then~\eqref{eq:setting:volume_growth} holds with $\kappa_t=1/|I_t|$ for $\dist_t$ given by a
norm. If the $K^j$ are pairwise disjoint, it also holds for the extended metric equal to
$\|x-y\|$ when $x,y$ lie in the same $K^j$ and $+\infty$ otherwise; for that metric,~\ref{O1}
only asks for Lipschitz continuity on each piece. For instance,
$X_t=\bigcup_{z\in Z_t}\{z\}\times K_z$ with $Z_t$ finite gives $\kappa_t=1/|Z_t|$.
\item \emph{Finite sets.} If $X_t$ is finite and $\mu_t$ is uniform,~\eqref{eq:setting:volume_growth} holds
with $\kappa_t=1/|X_t|$ and $d_t=0$, for any $\dist_t$.
\end{enumerate}
\end{lemma}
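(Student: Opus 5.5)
The three cases reduce to lower-bounding the $\mu_t$-mass of a $\dist_t$-ball $B(x,s):=\{y\in X_t:\dist_t(x,y)\le s\}$ centred at a point of $X_t$. Throughout, $d_t$ is the dimension of the relevant affine hull, and volumes are Lebesgue measures in that hull. The estimate for a single convex body is the core of the argument. Parts (b) and (c) then follow by restricting to one piece and paying a factor $1/|I_t|$ or $1/|X_t|$.

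\emph{(a) Convex sets.} Fix $x\in X_t$ and $s\ge0$. If $s\ge D_t\ge\diam X_t$, then $B(x,s)=X_t$ and the mass is $1$, which settles this case. Otherwise set $\lambda:=s/D_t\in[0,1)$ and consider the homothety $h(y):=x+\lambda(y-x)$. By convexity, $h(X_t)\subset X_t$. For $y\in X_t$,
\[
\|h(y)-x\|=\lambda\|y-x\|\le\lambda\diam X_t\le s ,
\]
so $h(X_t)\subset B(x,s)$. Since $h$ maps $\aff X_t$ into itself and scales $d_t$-dimensional volume by $\lambda^{d_t}$, we get $\mu_t(B(x,s))\ge\lambda^{d_t}=(s/D_t)^{d_t}$. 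The boundary cases need separate checks. If $d_t=0$, then $X_t$ is a single point, the mass is $1$, and this agrees with the convention $0^0=1$. If $s=0$ and $d_t\ge1$, the right-hand side is $0$. The normalisation $\mu_t(X_t)=1$ requires $X_t$ to have positive volume in its affine hull, which always holds for a compact convex set in that hull.

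\emph{(b) Finite unions.} Fix $x\in X_t$ and choose $j$ with $x\in K^j$. Applying (a) to $K^j$, with its own affine hull, dimension $d^j\le d_t$ and $\diam K^j\le D_t$, gives
\[
\mu_{K^j}\big(B(x,s)\cap K^j\big)\ge\min\{1,(s/D_t)^{d^j}\}\ge\min\{1,(s/D_t)^{d_t}\}.
\]
The last inequality holds because $s/D_t<1$ whenever the minimum is not $1$. Hence $\mu_t(B(x,s))\ge|I_t|^{-1}\min\{1,(s/D_t)^{d_t}\}$. This argument only looks at points $y\in K^j$, where the extended metric equals the norm, so it works verbatim for the extended metric when the pieces are disjoint. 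Disjointness is what makes the extended metric well defined. For the example $X_t=\bigcup_z\{z\}\times K_z$, the pieces $\{z\}\times K_z$ are convex, compact and disjoint, which gives $\kappa_t=1/|Z_t|$.

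\emph{(c) Finite sets.} Here $d_t=0$, so the right-hand side equals $\kappa_t$ for every $s\ge0$. Since $x\in B(x,s)$, we have $\mu_t(B(x,s))\ge1/|X_t|$, whatever the metric.

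The only step needing care is the volume scaling in (a), in particular verifying that it is done inside $\aff X_t$ rather than in the ambient space $\R^{n_t}$. The remaining work is bookkeeping of the degenerate cases $s=0$, $d_t=0$ and $s\ge D_t$.
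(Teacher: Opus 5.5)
Your proof is correct and follows the paper's argument: the homothety $x+\lambda(X_t-x)$ with $\lambda=\min\{1,s/D_t\}$ inside $\aff X_t$ for (a), and restriction to the piece $K^j$ containing $x$ with the weight $1/|I_t|$ for (b). For (c) the paper applies (b) to singletons, whereas you observe directly that $x\in B(x,s)$; this is equivalent, and it makes the ``for any $\dist_t$'' claim immediate.
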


The proof is in \cref{app:setting}. Condition~\eqref{eq:setting:volume_growth} can fail for compact sets with
outward cusps: at the origin of $\{(u,v):0\le u\le1,\ |v|\le u^2\}$, the Lebesgue measure of a
ball of radius $s$ is of order $s^3$, not $s^2$. The covering version of the main theorem needs
no such condition, only finite nets. A set $C\subset X_t$ is a \emph{$\sigma$-net} for $\dist_t$ if every
$x\in X_t$ has a point $c\in C$ with $\dist_t(x,c)\le\sigma$. By volume packing (\cref{lem:nets}), a compact
subset of a $d_t$-dimensional affine space with diameter at most $D_t$ has $\sigma$-nets of size
$(1+2D_t/\sigma)^{d_t}$, and a union of $|Z_t|$ such fibres has nets of size $|Z_t|(1+2D_t/\sigma)^{d_t}$.

\paragraph{The standard case.}\label{sec:standard} A first reading may assume the standard case: SDDP with an
exact forward pass ($\gamma^F=0$), cuts that are valid ($U=V$) and exact at the trial point ($\gamma=0$),
convex compact state sets and $\dist_t$ given by a norm. Then $\kappa_t=1$ and $d_t=\dim\aff X_t$
(\cref{lem:VG}(a)). The other cases serve later results: extended metrics and finite sets for mixed-integer and
binary states, and finite nets for nonconvex state sets.

\section{Main results}\label{sec:main}

This section proves the master theorem (\cref{thm:master}) along the three steps outlined in the introduction,
then turns it into anytime guarantees and explicit rates.

\subsection{Gap identity and lift}\label{sec:lemmas}

The errors of an iteration add up to
\begin{equation}\label{eq:main:error_sums}
\bGamma_k:=\sum_{t<T}\bgamma^k_t,\qquad \bGamma^F_k:=\sum_{t\in[T]}\bgamma^{F,k}_t,\qquad
\bar{\bGamma}_k:=\bGamma^F_k+\EE[\bGamma_k\mid\cA_{k-1}],
\end{equation}
and, for deterministic errors $\bgamma^k_t=\gamma_t$ and $\bgamma^{F,k}_t=\gamma^F_t$, we write
$\Gamma:=\sum_{t<T}\gamma_t+\sum_{t\in[T]}\gamma^F_t$.

\begin{lemma}[Bounds]\label{lem:basic}
Under \cref{ass:S,ass:O}, we have, for all $k\ge1$ and $t<T$:
\begin{enumerate}[label=(\roman*),nosep]
\item $\lv tk$ is Borel and $L_t$-Lipschitz for $\dist_t$, and
$\lv t0\le\lv t{k-1}\le\lv tk\le\Ub t$ on $X_t$, where $\Ub t$ is the Lipschitz envelope of the ceiling~\ref{O3};
\item the residual $r^k_t$ of~\eqref{eq:setting:residuals} is bounded on $X_t$, uniformly in $k$;
\item $V^{\pi^k}_0(x_0)\ge V_0(x_0)$.
\end{enumerate}
\end{lemma}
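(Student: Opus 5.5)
We prove the three items in order, with (i) by induction on $k$. For $k=0$, \ref{O1} gives that $\lv t0$ is Borel, bounded and $L_t$-Lipschitz. By \ref{O3}, $\lv t0\le U_t$, and the Lipschitz envelope $\Ub t$ is the largest $L_t$-Lipschitz function below $U_t$, so $\lv t0\le\Ub t$. To see this directly, for any $x,z$ we have $\lv t0(x)\le \lv t0(z)+L_t\dist_t(x,z)\le U_t(z)+L_t\dist_t(x,z)$; take the infimum over $z$, using the convention $L_t\cdot\infty=\infty$ for extended metrics. For the inductive step, \ref{O4} writes $\lv tk$ as the maximum of $\lv t{k-1}$, $f^k_t$ and finitely many $f'_i$. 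Each of these is Borel, $L_t$-Lipschitz and below $U_t$ by \ref{O1} and \ref{O3}. A finite maximum of Borel $L_t$-Lipschitz functions is again Borel and $L_t$-Lipschitz, and it is $\le U_t$. The same envelope argument then gives $\lv tk\le\Ub t$. The monotonicity $\lv t{k-1}\le\lv tk$ is immediate from the maximum, and chaining it gives $\lv t0\le\lv t{k-1}$. Measurability in the random parameters is covered by \ref{S3} and we do not revisit it.

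For (ii), the key point is that $\Ub t$ is bounded above on $X_t$, since $\Ub t\le \lv t0+M_t$ with $M_t<\infty$ and $\lv t0$ bounded. Hence $\lv t0\le\lv t{k}\le \lv t0+M_t$, so $\sup_k\sup_{X_t}|\lv tk|\le\sup|\lv t0|+M_t=:C_t$ uniformly in $k$, for every $t<T$; for $t=T$ the function is $0$. We then use the bound stated after \ref{S2}, namely $|\cB_t(W)|\le\sup_{X_t}|\cB_t(0)|+\sup|W|$, where $\cB_t(0)$ is bounded by \ref{S2}. This yields $|r^k_t|\le\sup|\cB_t(0)|+C_{t+1}+C_t$ uniformly in $k$. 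A small technical point is that $\cB_t$ was defined on bounded Borel $W$ on $X_{t+1}$, and $\lv{t+1}{k-1}$ is such a function by (i) and the bound just proved. We also need $\Bd_t$ to be integrable, which follows from $|\Bd_t(W;x,\xi)|\le|\Bd_t(0;x,\xi)|+\sup|W|$ together with \ref{S2}.

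For (iii), we expect the cleanest route to be the standard comparison of a policy with the optimal value. Define the policy value functions backward: $V^{\pi^k}_T=0$, and $V^{\pi^k}_{t}(x)=\int[\ell_{t+1}(x,y_\xi,\xi)+V^{\pi^k}_{t+1}(y_\xi)]\PP_{t+1}(d\xi)$, where $y_\xi=\cF^k_t(\lv{t+1}{k-1})(x,\xi)$. Stagewise independence, the independence of the forward noise from $\cA_{k-1}$, and the $\cA_{k-1}$-measurability of $\pi^k$ give that $V^{\pi^k}_0(x_0)$ agrees with the conditional expectation in \eqref{eq:setting:expected_policy_cost}, by iterated conditioning. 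Backward induction then gives $V^{\pi^k}_t\ge V_t$. Indeed, if $V^{\pi^k}_{t+1}\ge V_{t+1}$, then since $y_\xi\in\cX_{t+1}(x,\xi)$, the integrand is at least $\ell_{t+1}(x,y_\xi,\xi)+V_{t+1}(y_\xi)\ge\Bd_t(V_{t+1};x,\xi)$, and integrating gives $V^{\pi^k}_t(x)\ge V_t(x)$.

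The main obstacle is integrability in (iii). We need $V_{t+1}$ to be bounded so that $\Bd_t(V_{t+1};\cdot)$ is well defined; this follows inductively from \ref{S2} and the bound on $|\cB_t(W)|$. We also need the policy costs to be integrable, or at least to have well-defined expectations. If the policy's losses could fail to be integrable, the inequality still holds with value $+\infty$ once we argue via quasi-integrability. We would handle this by taking positive and negative parts: the negative parts are controlled because $\ell_{t+1}(x,y,\xi)\ge\Bd_t(0;x,\xi)-\sup|W|$ style bounds make the negative part integrable by \ref{S2}.
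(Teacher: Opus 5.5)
Your proof is correct and follows the paper's route: for (i), the envelope inequality $f(x)\le f(z)+L_t\dist_t(x,z)\le U_t(z)+L_t\dist_t(x,z)$ together with \ref{O4}; for (ii), the bound $\Ub t\le\lv t0+M_t$ combined with $|\cB_t(W)|\le\sup|\cB_t(0)|+\sup|W|$; for (iii), backward induction with stagewise independence. The quasi-integrability detour in (iii) is unnecessary: the forward-error condition and the boundedness of $\lv t{k-1}$ give $\ell_t(\bx_{t-1},\bx_t,\bxi_t)\le\inf_y\ell_t(\bx_{t-1},y,\bxi_t)+2\sup|\lv t{k-1}|+\bgamma^{F,k}_t$, so the realised costs are genuinely integrable given $\cA_{k-1}$, which is what the paper invokes.
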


\begin{proof}
(i) A finite maximum of Borel $L_t$-Lipschitz functions is Borel and $L_t$-Lipschitz. If $f$
is $L_t$-Lipschitz and $f\le U_t$, then $f(x)\le f(z)+L_t\dist_t(x,z)\le U_t(z)+L_t\dist_t(x,z)$
for all $z$, so $f\le\Ub t$. This applies to $\lv t0$ and to every cut, and~\ref{O4} gives the
monotonicity. 
(ii) $\lv t0\le\Ub t\le M_t+\sup\lv t0$ by the definition of the potential bound $M_t$ in~\ref{O3}, so $\Ub t$ is bounded; then use
$|\cB_t(W)|\le\sup|\cB_t(0)|+\sup|W|$ with $W=\lv{t+1}{k-1}$. (iii) The realised costs of $\pi^k$ are
integrable given $\cA_{k-1}$ (proof of \cref{lem:gap}), and a policy defined by measurable selections with
integrable costs satisfies $\EE[\sum_{s>t}\ell_s\mid\bx_t]\ge V_t(\bx_t)$, by backward induction and
stagewise independence.
\end{proof}

\begin{lemma}[Gap identity]\label{lem:gap}
Under \cref{ass:S}, let $\lv t{k-1}$, $t\le T$, be any bounded approximations as in the measurability condition~\ref{S3}, with
$\lv T{k-1}=0$. Then the gap $G^k$
of~\eqref{eq:setting:expected_policy_cost} and the sum $R^k$ of the residuals along the sampled path
\eqref{eq:setting:residuals} satisfy
\begin{equation}\label{eq:main:gap_identity}
G^k=\EE\big[R^k\mid\cA_{k-1}\big]+\EE\Big[\sum_{t\in[T]}\be^k_t\ \Big|\ \cA_{k-1}\Big],
\qquad \be^k_t\in[0,\bgamma^{F,k}_t],
\end{equation}
where $\be^k_t:=\ell_t(\bx_{t-1},\bx_t,\bxi_t)+\lv t{k-1}(\bx_t)-\Bd_{t-1}(\lv t{k-1};\bx_{t-1},\bxi_t)$ is the
error of the forward step, with $\Bd_{t-1}$ the value of the stage problem~\eqref{eq:setting:bellman_operator}, and
$(\bx_t,\bxi_t)=(\bx^k_t,\bxi^k_t)$. In particular $G^k\le\EE[R^k\mid\cA_{k-1}]+\bGamma^F_k$,
with equality for the exact greedy pass. Under \cref{ass:O}, the gap inequality~\eqref{eq:setting:gap_meaning} holds.
\end{lemma}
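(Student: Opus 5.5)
The identity is a telescoping argument along the sampled path, followed by the tower property with stagewise independence. The two inequalities in \eqref{eq:setting:gap_meaning} then come from \cref{lem:basic}.

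\textbf{Telescoping.} Fix $k$ and write $(\bx_t,\bxi_t)=(\bx^k_t,\bxi^k_t)$ and $\lv t{}=\lv t{k-1}$. By the definition of $\be^k_t$, for each $t\in[T]$,
\[
\ell_t(\bx_{t-1},\bx_t,\bxi_t)=\Bd_{t-1}(\lv t{};\bx_{t-1},\bxi_t)-\lv t{}(\bx_t)+\be^k_t .
\]
Summing over $t$ and using $\lv T{}=0$ and $\bx_0=x_0$, the terms $-\lv t{}(\bx_t)$ combine with $\lv0{}(x_0)$. This gives
\[
\sum_{t\in[T]}\ell_t-\lv0{}(x_0)=\sum_{t<T}\Big(\Bd_t(\lv{t+1}{};\bx_t,\bxi_{t+1})-\lv t{}(\bx_t)\Big)+\sum_{t\in[T]}\be^k_t .
\]
The bounds $\be^k_t\in[0,\bgamma^{F,k}_t]$ hold because $\bx_t$ is feasible for the stage problem, so $\be^k_t\ge0$, and because of \eqref{eq:setting:forward_error}, which gives $\be^k_t\le\bgamma^{F,k}_t$. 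Here $\Bd$ is finite: it is bounded below by integrable quantities, since $\lv t{}$ is bounded and \ref{S2} holds.

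\textbf{Conditioning.} Take $\EE[\cdot\mid\cA_{k-1}]$. The state $\bx_t$ is a Borel function of $\bxi_{[t]}$ and of $\cA_{k-1}$-measurable data, by \ref{S3}. The noise $\bxi_{t+1}$ is independent of $(\cA_{k-1},\bxi_{[t]})$. Hence
\[
\EE\big[\Bd_t(\lv{t+1}{};\bx_t,\bxi_{t+1})\mid\cA_{k-1},\bxi_{[t]}\big]=\cB_t(\lv{t+1}{})(\bx_t).
\]
The tower property then turns the first sum into $\EE[R^k\mid\cA_{k-1}]$. The left side becomes $G^k$ by \eqref{eq:setting:expected_policy_cost}.

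\textbf{Main obstacle: integrability.} The delicate step is justifying these conditional expectations, in particular showing that the realised costs are integrable. I will check it as follows.
\begin{itemize}[nosep]
\item $|\Bd_t(W;x,\xi)|\le|\inf_y\ell_{t+1}(x,y,\xi)|+\sup|W|$, which is integrable uniformly in $x$ by \ref{S2}.
\item $\be^k_t$ is bounded by $\bgamma^{F,k}_t$.
\item So each $\ell_t$ is integrable, and every term in the displayed sum is integrable given $\cA_{k-1}$.
\item For $\bgamma^F$, integrability is only needed conditionally, and $\bgamma^{F,k}_t$ is $\cA_{k-1}$-measurable.
\end{itemize}
This yields \eqref{eq:main:gap_identity}, and in particular $G^k\le\EE[R^k\mid\cA_{k-1}]+\bGamma^F_k$, with equality when $\gamma^F=0$.

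\textbf{The gap inequalities.} Assume \cref{ass:O}.

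The second inequality uses \cref{lem:basic}(i) at $t=0$: $\lv0{k-1}(x_0)\le\Ub0(x_0)\le U_0(x_0)\le V_0(x_0)+\zeta_0$. Therefore $G^k\ge V^{\pi^k}_0(x_0)-V_0(x_0)-\zeta_0$.

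The first inequality follows from \cref{lem:basic}(iii): $V^{\pi^k}_0(x_0)\ge V_0(x_0)$. Therefore $G^k\ge V_0(x_0)-\lv0{k-1}(x_0)$.
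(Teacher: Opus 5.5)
Your proposal is correct and is essentially the paper's proof. The paper writes the one-step identity $\EE[\ell_t+\lv t{k-1}(\bx_t)\mid\mathcal H_{t-1}]=\lv{t-1}{k-1}(\bx_{t-1})+r^k_{t-1}(\bx_{t-1})+\EE[\be^k_t\mid\mathcal H_{t-1}]$ via the freezing property and then sums, whereas you telescope pathwise first and condition afterwards, which is the same computation; your integrability check through \ref{S2} and your derivation of the gap inequalities from \cref{lem:basic} also match the paper (you use $\Ub0(x_0)\le U_0(x_0)$ where the paper notes equality because $X_0=\{x_0\}$, which changes nothing).
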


\begin{proof}
Let $\mathcal H_{t-1}:=\cA_{k-1}\vee\sigma(\bxi^k_{[t-1]})$. Then $\bx_{t-1}$ is
$\mathcal H_{t-1}$-measurable and lies in $X_{t-1}$ by~\ref{S1}, and $\bxi_t$ is independent of
$\mathcal H_{t-1}$ with law $\PP_t$. The substitution (``freezing'') property of independent variables
\citep{kallenberg2002foundations} states that if $g$ is jointly measurable, $Y$ is $\mathcal H$-measurable, $\bxi$ is
independent of $\mathcal H$ and $\EE|g(Y,\bxi)|<\infty$, then $\EE[g(Y,\bxi)\mid\mathcal H]=\bar g(Y)$ with
$\bar g(y):=\EE g(y,\bxi)$. It gives
$\EE[\Bd_{t-1}(\lv t{k-1};\bx_{t-1},\bxi_t)\mid\mathcal H_{t-1}]=\cB_{t-1}(\lv t{k-1})(\bx_{t-1})=\lv{t-1}{k-1}(\bx_{t-1})+r^k_{t-1}(\bx_{t-1})$,
so
\begin{equation}\label{eq:main:one_step_expectation}
\EE\big[\ell_t(\bx_{t-1},\bx_t,\bxi_t)+\lv t{k-1}(\bx_t)\,\big|\,\mathcal H_{t-1}\big]
=\lv{t-1}{k-1}(\bx_{t-1})+r^k_{t-1}(\bx_{t-1})+\EE[\be^k_t\mid\mathcal H_{t-1}] .
\end{equation}
Take $\EE[\cdot\mid\cA_{k-1}]$ and sum over $t\in[T]$. The terms in $\lv{}{k-1}$ telescope to $-\lv0{k-1}(x_0)$,
because $\lv T{k-1}=0$ and $x_0$ is deterministic. The bounds on $\be^k_t$ are the forward-error condition~\eqref{eq:setting:forward_error}
and the definition of $\Bd_{t-1}$. All terms are integrable given $\cA_{k-1}$: the realised cost satisfies
$\inf_y\ell_t(\bx_{t-1},y,\bxi_t)\le\ell_t(\bx_{t-1},\bx_t,\bxi_t)\le\inf_y\ell_t(\bx_{t-1},y,\bxi_t)+2\sup|\lv t{k-1}|+\bgamma^{F,k}_t$,
and $\EE\big[|\inf_y\ell_t(\bx_{t-1},y,\bxi_t)|\,\big|\,\mathcal H_{t-1}\big]$ is at most $\sup_{x\in X_{t-1}}\int|\inf_y\ell_t(x,y,\xi)|\,\PP_t(d\xi)$,
which is finite by the integrability condition~\ref{S2}, by the freezing property. For the gap inequality~\eqref{eq:setting:gap_meaning}: $G^k\ge V^{\pi^k}_0(x_0)-\Ub0(x_0)$ and $\Ub0(x_0)=U_0(x_0)$
because $X_0=\{x_0\}$, so $G^k\ge V^{\pi^k}_0(x_0)-V_0(x_0)-\zeta_0$ with $\zeta_0$ as in \cref{sec:accuracy}; and $V^{\pi^k}_0(x_0)\ge V_0(x_0)$ by
\cref{lem:basic}(iii).
\end{proof}

For finite state spaces and an exact greedy pass, this lemma is \citet[Lemma~2]{efroni2019tight}
for RTDP; the proof is their telescoping argument, written for general noise, an inexact forward pass and
arbitrary approximations. The identity involves the forward law only: it holds whatever the backward pass
computes (sampled cuts, cuts for another law, inexact cuts). What the backward pass must do is lift the
approximation at the trial point by an amount comparable to the residual there.

\begin{lemma}[A Lipschitz lift lifts a volume]\label{lem:lift}
Assume the volume-growth condition~\eqref{eq:setting:volume_growth}.
Let $V,W:X_t\to\R$ be Borel and $L_t$-Lipschitz for $\dist_t$, with
$W\ge V$, and let $x\in X_t$ and $s\ge0$ satisfy $W(x)\ge V(x)+s$. Then
\begin{equation}\label{eq:main:lift_volume}
\int(W-V)\,d\mu_t\ \ge\ \kappa_t\,\varphi_t(s),\quad \text{where}\quad 
\varphi_t(s):=\frac s2\min\Big\{1,\Big(\frac{s}{4L_tD_t}\Big)^{d_t}\Big\},
\end{equation}
with $s/(4L_tD_t):=\infty$ if $L_t=0$. If $d_t=0$, the bound improves to $\kappa_ts$.
\end{lemma}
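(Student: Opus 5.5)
The idea is to turn the single-point gap into a gap on a metric ball and then integrate. Set $h:=W-V\ge0$. Since $V$ and $W$ are both $L_t$-Lipschitz, $h$ is $2L_t$-Lipschitz, so $h(y)\ge s-2L_t\dist_t(x,y)$. On the ball $B:=\{y\in X_t:\dist_t(x,y)\le \rho\}$ with $\rho:=s/(4L_t)$ this gives $h\ge s/2$. (Take $\rho=+\infty$ if $L_t=0$; then $B$ contains all points at finite distance from $x$, and in fact $h\ge s$ there.) Since $h\ge0$ everywhere and $h$ is Borel,
\[
\int h\,d\mu_t\ \ge\ \frac s2\,\mu_t(B)\ \ge\ \frac s2\,\kappa_t\min\Big\{1,\Big(\frac{s}{4L_tD_t}\Big)^{d_t}\Big\}
\]
by~\eqref{eq:setting:volume_growth} with radius $\rho$. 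When $L_t=0$, the right-hand min is $1$ by the stated convention and~\eqref{eq:setting:volume_growth} still applies for every finite $s$ with min equal to $1$. Extended metrics cause no trouble: points at infinite distance simply fall outside $B$, and the bound only uses $\mu_t(B)$.

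The case $d_t=0$ needs a separate argument, because the constant should be $\kappa_t s$ rather than $\kappa_t s/2$. Here~\eqref{eq:setting:volume_growth} with $s\to0$ (radius $0$) gives $\mu_t(\{y:\dist_t(x,y)=0\})=\mu_t(\{x\})\ge\kappa_t$, using $0^0=1$. So $\mu_t$ has an atom of mass at least $\kappa_t$ at $x$, and $\int h\,d\mu_t\ge h(x)\mu_t(\{x\})\ge\kappa_t s$.

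The only point requiring care is exactly this: reading~\eqref{eq:setting:volume_growth} at radius zero and handling the conventions ($0^0=1$, $L_t=0$, infinite distances) consistently. Everything else is the triangle inequality for Lipschitz functions and measurability of the closed ball. That ball is Borel when $\dist_t$ is Borel on $X_t\times X_t$, which I take as implicit in the statement of~\eqref{eq:setting:volume_growth}.
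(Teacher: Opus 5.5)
Your proof is correct and follows the paper's argument: $W-V$ is $2L_t$-Lipschitz, so it is at least $s-2L_tu$ on the $\dist_t$-ball of radius $u$ around $x$; you integrate over that ball using~\eqref{eq:setting:volume_growth}, and for $d_t=0$ you take radius $0$ to get the atom of mass at least $\kappa_t$ at $x$. The only difference is cosmetic: you always take radius $s/(4L_t)$, while the paper caps the radius at $D_t$ when $s\ge 4L_tD_t$ (so that $s-2L_tD_t\ge s/2$), and both choices give the same bound $\kappa_t\varphi_t(s)$.
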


\begin{proof}
If $\dist_t(x,y)\le u$, then $W(y)-V(y)\ge W(x)-V(x)-2L_tu\ge s-2L_tu$, and $W-V\ge0$
everywhere. So $\int(W-V)\,d\mu_t\ge(s-2L_tu)_+\,\kappa_t\min\{1,(u/D_t)^{d_t}\}$ for every
$u\ge0$. Take $u=D_t$ if $s\ge4L_tD_t$ and $u=s/(4L_t)$ otherwise; if $d_t=0$, take $u=0$.
\end{proof}

At iteration $k$, tightness~\ref{O2} and the monotone update~\ref{O4} give, at $x=\bx^k_t$,
$\lv tk(x)\ge\lv t{k-1}(x)+(r^k_t(x)-\bgamma^k_t)_+$, so \cref{lem:lift} applies at every index $t<T$
of every backward pass with $V=\lv t{k-1}$, $W=\lv tk$ and $s=(r^k_t(x)-\bgamma^k_t)_+$.

\paragraph{The aggregated lift function.} Let $\rho_t:=4L_tD_t$ and let $\hat\psi_t$ be the convex
``Huber-type'' minorant of $\varphi_t$ with exponent $d\ge\max_{t<T}d_t$:
\begin{equation}\label{eq:main:huber_potential}
\hat\psi_t(s):=\begin{cases}\dfrac{s^{d+1}}{2(d+1)\rho_t^{\,d}}, & 0\le s\le\rho_t,\\[2mm]
\dfrac{\rho_t}{2(d+1)}+\dfrac{s-\rho_t}{2}, & s\ge\rho_t.\end{cases}
\end{equation}
When $\rho_t=0$ (that is, $L_t=0$, for instance at the root), only the second branch applies: $\hat\psi_t(s)=s/2$.
For every $\rho_t\ge0$, $\hat\psi_t\le\varphi_t$ because $d_t\le d$. Set $\psi_t:=\kappa_t\hat\psi_t$ and let
$\Psi_T:=\square_{t<T}\,\psi_t$ be their infimal convolution,
$\Psi_T(g)=\min\{\sum_{t<T}\psi_t(s_t):s\ge0,\ \sum_{t<T}s_t=g\}$. It is convex, finite and nondecreasing
on $[0,\infty)$, with $\Psi_T(0)=0$, and we extend it to $\R$ by $\Psi_T(u):=\Psi_T(u_+)$, which
keeps it convex and nondecreasing. This \emph{aggregated lift function} $\Psi_T(g)$ is the smallest total
lift that these minorants guarantee when the residuals along a path, net of the cut errors, add up to $g$. For small $g$ and stage-uniform constants,
$\Psi_T(g)\approx\kappa g^{d+1}/(2(d+1)T^d\rho^d)$, which is proportional to $T$ at the relative accuracy
$g=T\varepsilon$. The convex minorant $\hat\psi_t$ replaces $\varphi_t$ because the conditional Jensen step in
the proof of \cref{thm:master} needs $\Psi_T$ convex.

\subsection{The master theorem}\label{sec:master}

In expectation, an iteration with a large gap raises the potential by a definite amount, and the potential
is bounded, so there can be only few such iterations. The two versions of the theorem match the two
conditions of \cref{sec:VG}; its high-probability statements rest on the following bound.

\begin{lemma}[{Decreasing bounded processes; \citealp[Thm.~3]{efroni2019tight}}]\label{lem:dbp}
Let $(Y_k)_{k\ge0}$ be adapted to $(\cA_k)$ and nonincreasing, with $0\le Y_k\le Y_0=C$ and $C$
deterministic, and let $\mathrm{Reg}_K:=\sum_{k\in[K]}(Y_{k-1}-\EE[Y_k\mid\cA_{k-1}])$. For $\delta\in(0,1]$, with
probability at least $1-\delta$, $\mathrm{Reg}_K\le9C\ln(3/\delta)$ for all $K$. In particular, if
$\EE[Y_{k-1}-Y_k\mid\cA_{k-1}]\ge f_k\ge0$ with $f_k$ $\cA_{k-1}$-measurable, then, since
$\mathrm{Reg}_K\ge\sum_{k\in[K]}f_k$, $\sum_{k\ge1}f_k\le9C\ln(3/\delta)$ with probability at least $1-\delta$.
\end{lemma}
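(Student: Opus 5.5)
The plan is to split the regret into a telescoping part and a martingale part, and to control the martingale part by an exponential supermartingale. Its quadratic variation is bounded by $C$ times the regret itself, so the bound closes on itself. Write
\[
\mathrm{Reg}_K=\sum_{k\in[K]}(Y_{k-1}-Y_k)+\sum_{k\in[K]}D_k=(C-Y_K)+M_K,\qquad D_k:=Y_k-\EE[Y_k\mid\cA_{k-1}],
\]
so that $M_K:=\sum_{k\in[K]}D_k$ is an $(\cA_k)$-martingale and $\mathrm{Reg}_K\le C+M_K$. Since $Y$ is nonincreasing, each summand $Y_{k-1}-\EE[Y_k\mid\cA_{k-1}]=\EE[Y_{k-1}-Y_k\mid\cA_{k-1}]$ is nonnegative. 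Hence $\mathrm{Reg}_K$ is nondecreasing in $K$ and is the predictable compensator of the decrements.

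\textbf{Step 1: variance bound.} Since a conditional variance is the minimal conditional mean squared deviation,
\[
\EE[D_k^2\mid\cA_{k-1}]\le\EE[(Y_{k-1}-Y_k)^2\mid\cA_{k-1}]\le C\,\EE[Y_{k-1}-Y_k\mid\cA_{k-1}],
\]
where the last step uses $0\le Y_{k-1}-Y_k\le C$. Summing over $k\in[K]$ gives $\sum_{k\in[K]}\EE[D_k^2\mid\cA_{k-1}]\le C\,\mathrm{Reg}_K$. This self-bounding inequality is the key point: no separate variance bound is needed.

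\textbf{Step 2: Ville's inequality.} We have $D_k\le C$, since $Y_k\le C$ and $\EE[Y_k\mid\cA_{k-1}]\ge0$. Fix $\lambda:=1/(2C)$, so that $\lambda D_k\le 1/2\le1$. Using $e^x\le1+x+x^2$ for $x\le1$, the process
\[
S_K:=\exp\Big(\lambda M_K-\lambda^2\sum_{k\in[K]}\EE[D_k^2\mid\cA_{k-1}]\Big)
\]
is a nonnegative supermartingale with $S_0=1$. Ville's maximal inequality then gives, with probability at least $1-\delta$, simultaneously for all $K$,
\[
M_K\le\frac{\ln(1/\delta)}{\lambda}+\lambda C\,\mathrm{Reg}_K=2C\ln(1/\delta)+\tfrac12\mathrm{Reg}_K .
\]
The self-normalised form with a single fixed $\lambda$ is what makes the bound uniform in $K$ without a peeling argument.

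\textbf{Step 3: closing the bound.} On this event, $\mathrm{Reg}_K\le C+2C\ln(1/\delta)+\tfrac12\mathrm{Reg}_K$, so $\mathrm{Reg}_K\le 2C+4C\ln(1/\delta)$. Since $\ln3>1$, this is at most $9C\ln(3/\delta)$. Dividing by $1-\tfrac12$ is legitimate because $\mathrm{Reg}_K$ is finite, being at most $KC$.

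\textbf{Step 4: the ``in particular'' statement.} If $\EE[Y_{k-1}-Y_k\mid\cA_{k-1}]\ge f_k\ge0$, then $\mathrm{Reg}_K\ge\sum_{k\in[K]}f_k$ for every $K$. On the same event, the partial sums of the nonnegative series $\sum_k f_k$ are bounded by $9C\ln(3/\delta)$, hence so is the full series.

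The main obstacle is Step 2: checking the one-sided increment bound needed for the exponential inequality, and verifying the supermartingale property with conditional (not almost-sure) variances. Measurability is automatic since everything is adapted to $(\cA_k)$. Integrability is trivial because all quantities are bounded by $C$.
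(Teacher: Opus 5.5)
Your proof is correct. The paper itself gives no proof of this lemma; it imports it from \citet[Thm.~3]{efroni2019tight}, whose bound $C(1+2\sqrt{\ln(2/\delta)})^2$ is relaxed to $9C\ln(3/\delta)$. I cannot compare your steps with that paper's proof line by line. However, the form of their constant is what one obtains from a square-root (Freedman/Bernstein-type) deviation bound, $M_K\lesssim2\sqrt{C\,\mathrm{Reg}_K\ln(2/\delta)}$, followed by a quadratic inequality in $\sqrt{\mathrm{Reg}_K}$. A bound of that kind, with a random variance proxy, needs extra work (peeling or a mixture) to hold uniformly in $K$.

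Your route avoids all of this. You keep the same two structural facts: the decomposition $\mathrm{Reg}_K=(C-Y_K)+M_K$, and the self-bounding estimate $\sum_{k\in[K]}\EE[D_k^2\mid\cA_{k-1}]\le C\,\mathrm{Reg}_K$, which follows from $0\le Y_{k-1}-Y_k\le C$. Fixing $\lambda=1/(2C)$ then absorbs the variance term linearly into $\mathrm{Reg}_K$. One exponential supermartingale and Ville's inequality give uniformity in $K$ at no cost, using only $e^x\le1+x+x^2$ for $x\le1$.

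Your final bound, $\mathrm{Reg}_K\le C(2+4\ln(1/\delta))$, is smaller than both $C(1+2\sqrt{\ln(2/\delta)})^2$ and $9C\ln(3/\delta)$ for every $\delta\in(0,1]$. Used in place of the citation, it would improve the constant $9\ln(3/\delta)$ wherever the paper invokes the lemma.
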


In the theorem, $\Psi_T$ is the aggregated lift function of \cref{sec:lemmas}, $\bar{\bGamma}_k$ and $\Gamma$ are
the error sums~\eqref{eq:main:error_sums}, and $\Ub t$ and $M_t$ are the Lipschitz envelope of the ceiling and
the potential bound of~\ref{O3}; if $\Ub t$ is not Borel, its integral is an upper integral (\cref{app:measurability}).

\begin{theorem}[Master theorem]\label{thm:master}
Let \cref{ass:S,ass:O} hold.
\begin{enumerate}[label=(\alph*),nosep]
\item \emph{Volume version.} If~\eqref{eq:setting:volume_growth} holds for all $t<T$, then
\begin{equation}\label{eq:main:master_volume}
\EE\sum_{k\ge1}\Psi_T\big((G^k-\bar{\bGamma}_k)_+\big)\ \le\ \sum_{t<T}\int(\Ub t-\lv t0)\,d\mu_t
\ \le\ \sum_{t<T}M_t .
\end{equation}
In particular, with deterministic errors, $\EE\,\#\{k\ge1:G^k>\theta+\Gamma\}\le\sum_{t<T}M_t/\Psi_T(\theta)$
for every $\theta>0$. Moreover, for every $\delta\in(0,1]$, with probability at least $1-\delta$,
simultaneously for all $\theta>0$,
$\#\{k\ge1:G^k>\theta+\bar{\bGamma}_k\}\le9\ln(3/\delta)\sum_{t<T}M_t/\Psi_T(\theta)$.
\item \emph{Covering version.} Assume deterministic errors $\gamma_t$, $\gamma^F_t$. Fix $\theta>0$, numbers
$\sigma_t\ge0$ with $\sum_{t<T}L_t\sigma_t\le\theta/4$, and finite $\sigma_t$-nets $C_t\subset X_t$. Then
\begin{equation}\label{eq:main:master_covering}
\EE\sum_{k\ge1}\big(G^k-\Gamma-\theta/2\big)_+\ \le\ \sum_{t<T}\sum_{c\in C_t}(\Ub t-\lv t0)(c)\ \le\ \sum_{t<T}|C_t|\,M_t .
\end{equation}
In particular, $\EE\,\#\{k\ge1:G^k>\theta+\Gamma\}\le\frac2\theta\sum_{t<T}|C_t|M_t$. Moreover, for every
$\delta\in(0,1]$, with probability at least $1-\delta$,
$\#\{k\ge1:G^k>\theta+\Gamma\}\le\frac{18\ln(3/\delta)}{\theta}\sum_{t<T}|C_t|M_t$.

\item \emph{Finite state sets.} Assume deterministic errors, every $X_t$ finite and $\dist_t=+\infty$ off the
diagonal. Then
$\EE\sum_{k\ge1}(G^k-\Gamma)_+\le C_{\rm fin}:=\sum_{t<T}\sum_{x\in X_t}(U_t-\lv t0)(x)$ and, for every
$\delta\in(0,1]$, with probability at least $1-\delta$, $\sum_{k\ge1}(G^k-\Gamma)_+\le9\ln(3/\delta)\,C_{\rm fin}$.
\end{enumerate}
\end{theorem}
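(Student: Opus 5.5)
The plan follows the three-step outline of the introduction. The potential is $\Phi^k:=\sum_{t<T}\int(\lv tk-\lv t0)\,d\mu_t$ in the volume version and $\Phi^k:=\sum_{t<T}\sum_{c\in C_t}(\lv tk-\lv t0)(c)$ in the covering version; in (c) it is the same sum over all of $X_t$. By \cref{lem:basic}(i), $\Phi^k$ is nondecreasing in $k$ and bounded by the middle terms of \eqref{eq:main:master_volume} and \eqref{eq:main:master_covering}. Those terms are in turn bounded by $\sum_t M_t$ and $\sum_t|C_t|M_t$. Set $Y_k:=C-(\Phi^k-\Phi^0)$ with $C$ the deterministic ceiling sum. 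Then $Y_k$ is nonincreasing, adapted, and lies in $[0,C]$, which is exactly what \cref{lem:dbp} needs. It therefore suffices to show a per-iteration expected increment
\[
\EE[\Phi^k-\Phi^{k-1}\mid\cA_{k-1}]\ge f_k,
\]
with $f_k$ $\cA_{k-1}$-measurable. Summing and taking expectations then gives the expectation bounds, and \cref{lem:dbp} gives the high-probability bounds.

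\textbf{Volume version.} At each $t<T$, \ref{O2} and \ref{O4} give $\lv tk(\bx^k_t)\ge\lv t{k-1}(\bx^k_t)+s_t$ with $s_t:=(r^k_t(\bx^k_t)-\bgamma^k_t)_+$. Applying \cref{lem:lift} with $V=\lv t{k-1}$ and $W=\lv tk$, both $L_t$-Lipschitz by \cref{lem:basic}(i), gives $\int(\lv tk-\lv t{k-1})d\mu_t\ge\kappa_t\varphi_t(s_t)\ge\psi_t(s_t)$. Summing over $t$ and using the definition of the infimal convolution,
\[
\Phi^k-\Phi^{k-1}\ge\Psi_T\Big(\sum_t s_t\Big)\ge\Psi_T(R^k-\bGamma_k),
\]
since $\sum_t s_t\ge R^k-\bGamma_k$ and $\Psi_T$ is nondecreasing on $\R$. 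Conditional Jensen, valid because $\Psi_T$ is convex, gives $\EE[\cdot\mid\cA_{k-1}]\ge\Psi_T(\EE[R^k\mid\cA_{k-1}]-\EE[\bGamma_k\mid\cA_{k-1}])$. \Cref{lem:gap} gives $\EE[R^k\mid\cA_{k-1}]\ge G^k-\bGamma^F_k$, so the right-hand side is at least $\Psi_T((G^k-\bar{\bGamma}_k)_+)$, which we take as $f_k$. The counting statements then follow from Markov-type bounds: $\Psi_T((G^k-\bar{\bGamma}_k)_+)\ge\Psi_T(\theta)$ on $\{G^k>\theta+\bar{\bGamma}_k\}$. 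The high-probability bound holds uniformly in $\theta$ because the single event of \cref{lem:dbp} controls $\sum_k f_k$, and this sum dominates $\Psi_T(\theta)$ times the count for every $\theta$. Integrability of $R^k$ comes from \cref{lem:basic}(ii).

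\textbf{Covering version.} Here no volume is available. At the trial point pick $c_t\in C_t$ with $\dist_t(\bx^k_t,c_t)\le\sigma_t$. By Lipschitz continuity of both $\lv tk$ and $\lv t{k-1}$, the lift at $c_t$ is at least $s_t-2L_t\sigma_t$, and all other net points have nonnegative lift. Summing, and using $\sum_t2L_t\sigma_t\le\theta/2$,
\[
\Phi^k-\Phi^{k-1}\ge\Big(\sum_t s_t-\theta/2\Big)_+\ge(R^k-\bGamma_k-\theta/2)_+ .
\]
Jensen for $u\mapsto u_+$ together with \cref{lem:gap} gives $f_k=(G^k-\Gamma-\theta/2)_+$, using deterministic errors. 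On $\{G^k>\theta+\Gamma\}$ this $f_k$ exceeds $\theta/2$, which yields the factor $2/\theta$, and $18\ln(3/\delta)/\theta$ with \cref{lem:dbp}.

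\textbf{Finite case.} Part (c) is the covering argument with $C_t=X_t$ and $\sigma_t=0$, so no $\theta/2$ loss appears. The extended metric equals $+\infty$ off the diagonal, so the envelope $\Ub t$ coincides with $U_t$.

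The main obstacle is the careful justification of the conditional Jensen and integrability steps. $R^k$ and $\bGamma_k$ must be conditionally integrable, and the $\theta$-uniform high-probability claim must be derived from a single application of \cref{lem:dbp}. A further subtlety is measurability: when $\Ub t$ is not Borel, the bound $C$ has to be read as an upper integral, as indicated in \cref{app:measurability}.
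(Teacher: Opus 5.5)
Your proposal is correct and follows the paper's proof essentially step for step. You use the same potentials (the $\mu_t$-integral, the net sums, or the full sums on finite sets), bounded by the Lipschitz envelope of the ceiling, and the same pathwise lift, via \cref{lem:lift} and the infimal convolution or via the $2L_t\sigma_t$ transfer to a net point; this is followed by conditional Jensen with the gap identity of \cref{lem:gap}, and \cref{lem:dbp} applied to $Y_k=C-\Phi^k$. The only cosmetic difference is that you apply Jensen to the extended one-variable $\Psi_T$ at $R^k-\bGamma_k$ instead of to the two-variable map $(u,v)\mapsto\Psi_T((u-v)_+)$, which is equivalent.
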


Part~(a) handles all thresholds at once, allows random errors and has the better constants; \cref{sec:saa}
uses it, and \cref{thm:master-ih} extends all three parts to the infinite horizon. Part~(b) needs no volume growth, hence no convexity of the
state sets; its proof (\cref{app:cover}) is the covering argument of \citet[App.~F]{domingues2021kernel},
with~\ref{O2} in place of the Bellman update. It gives all the orders of this paper, and it is the version
used for TFDP's generic cuts and Zhang and Sun's algorithm (\cref{sec:instances}). Part~(c) is used for SDDiP.
In part~(c), every function on $X_t$ is Borel, bounded and
$L_t$-Lipschitz, so~\ref{O1} only asks $\lv t0$ to be deterministic, and $\Ub t=U_t$.

\begin{proof}[Proof of~(a)]
Let $\Phi^k:=\sum_{t<T}\int(\lv tk-\lv t0)\,d\mu_t$ and $C_U:=\sum_{t<T}\int(\Ub t-\lv t0)\,d\mu_t$. By
\cref{lem:basic}(i), $0=\Phi^0\le\Phi^{k-1}\le\Phi^k\le C_U\le\sum_{t<T}M_t$.

\emph{Pathwise lift.} Fix $k\ge1$ and $t<T$, let $x=\bx^k_t$, and let
$s_t:=(r^k_t(x)-\bgamma^k_t)_+$. As noted after \cref{lem:lift},
$\int(\lv tk-\lv t{k-1})\,d\mu_t\ge\kappa_t\varphi_t(s_t)\ge\psi_t(s_t)$. Sum over $t<T$. By the
definition of $\square$, by $\sum_{t<T}(a_t)_+\ge(\sum_{t<T}a_t)_+$ and because $\Psi_T$ is nondecreasing,
\begin{equation}\label{eq:main:pathwise_lift}
\Phi^k-\Phi^{k-1}\ \ge\ \sum_{t<T}\psi_t(s_t)\ \ge\ \Psi_T\Big(\sum_{t<T}s_t\Big)\ \ge\ \Psi_T\big((R^k-\bGamma_k)_+\big).
\end{equation}

\emph{Conditional expectation.} The map $(u,v)\mapsto\Psi_T((u-v)_+)$ is convex and finite on
$\R^2$, $R^k$ is bounded by \cref{lem:basic}(ii), and $\bGamma_k$ is integrable. The
conditional Jensen inequality, \cref{lem:gap} and monotonicity give
\begin{equation}\label{eq:main:conditional_lift}
\EE[\Phi^k-\Phi^{k-1}\mid\cA_{k-1}]\ \ge\ \Psi_T\big((\EE[R^k\mid\cA_{k-1}]-\EE[\bGamma_k\mid\cA_{k-1}])_+\big)
\ \ge\ \Psi_T\big((G^k-\bar{\bGamma}_k)_+\big)=:f_k .
\end{equation}
Summing over $k\in[K]$, the left-hand sides add up to $\EE\Phi^K\le C_U$; let $K\to\infty$ (monotone convergence).

\emph{High probability.} \Cref{lem:dbp} applies to $Y_k:=C_U-\Phi^k$, which is adapted by~\ref{S3}, with
$Y_0=C_U$ deterministic since $\lv{}0$, the envelopes $\Ub t$ and the measures $\mu_t$ are, and to $f_k$, which
is $\cA_{k-1}$-measurable. It gives $\sum_{k\ge1}f_k\le9C_U\ln(3/\delta)$ with probability at least $1-\delta$, and
$\sum_kf_k\ge\Psi_T(\theta)\,\#\{k\ge1:G^k>\theta+\bar{\bGamma}_k\}$ for every $\theta$. The last claim follows
from~\eqref{eq:main:master_volume}, since $\Psi_T$ is nondecreasing.
\end{proof}

We now read the master theorem as statements about individual iterations.

\begin{corollary}[Anytime guarantee]\label{cor:anytime}
In the setting of \cref{thm:master}(a):
\begin{enumerate}[label=(\alph*),nosep]
\item If $\bar{\bGamma}_k\le g_k$ for a deterministic sequence $(g_k)_{k\ge1}$, then for all
$\theta_1,\theta_2>0$,
$\EE\,\#\{k\ge1:G^k>\theta_1+\theta_2\}\le\#\{k\ge1:g_k>\theta_2\}+\sum_{t<T}M_t/\Psi_T(\theta_1)$.
\item With deterministic errors, let $K_\theta:=9\ln(3/\delta)\sum_{t<T}M_t/\Psi_T(\theta)$. With
probability at least $1-\delta$, for every $\theta>0$, $V_0(x_0)-\lv0k(x_0)\le\theta+\Gamma$ at every iteration
$k\ge K_\theta$, and one of the policies
$(\pi^k)_{1\le k\le\lfloor K_\theta\rfloor+1}$ is $(\theta+\Gamma+\zeta_0)$-optimal, with $\zeta_0$ as in \cref{sec:accuracy}.
\item (Almost sure convergence.) With deterministic errors, $\limsup_kG^k\le\Gamma$ almost surely. In particular, with
exact cuts, an exact forward pass and $U=V$, $G^k\to0$ almost surely: the lower bound $\lv0{k-1}(x_0)$ and the expected
cost $V^{\pi^k}_0(x_0)$ of the policy both converge to $V_0(x_0)$.
\item (Uniformly selected iterate; no estimate of $G^k$ is needed.) For $K\ge1$ and $\theta>0$, if $\mathbf k$ is uniform on $[K]$ and independent
of the run, then
$\PP(G^{\mathbf k}>\theta+\bar{\bGamma}_{\mathbf k})\le\sum_{t<T}M_t/(K\Psi_T(\theta))$; on the complementary event,
$\pi^{\mathbf k}$ is $(\theta+\bar{\bGamma}_{\mathbf k}+\zeta_0)$-optimal and
$V_0(x_0)-\lv0{\mathbf k-1}(x_0)\le\theta+\bar{\bGamma}_{\mathbf k}$.
\end{enumerate}
\end{corollary}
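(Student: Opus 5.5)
All four parts follow from \cref{thm:master}(a), that is, from \eqref{eq:main:master_volume} and its high-probability companion. Throughout we use that $\Psi_T$ is nondecreasing, and $\Psi_T(\theta)>0$ for $\theta>0$ because each $\psi_t$ is positive on $(0,\infty)$.

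\emph{(a)} If $G^k>\theta_1+\theta_2$, then either $g_k>\theta_2$, or $g_k\le\theta_2$ and hence $G^k-\bar{\bGamma}_k\ge G^k-g_k>\theta_1$. In the second case $\Psi_T((G^k-\bar{\bGamma}_k)_+)\ge\Psi_T(\theta_1)$. Bounding the indicator of $\{G^k>\theta_1+\theta_2\}$ by $\one\{g_k>\theta_2\}+\Psi_T((G^k-\bar{\bGamma}_k)_+)/\Psi_T(\theta_1)$, summing over $k$ and taking expectations with \eqref{eq:main:master_volume} gives the claim.

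\emph{(b)} With deterministic errors $\bar{\bGamma}_k=\Gamma$. On the event of probability at least $1-\delta$ from \cref{thm:master}(a), which holds simultaneously for all $\theta$, we have $\#\{k\ge1:G^k>\theta+\Gamma\}\le K_\theta$.
\begin{itemize}[nosep]
\item Among the indices $1,\dots,\lfloor K_\theta\rfloor+1$, at least one therefore has $G^k\le\theta+\Gamma$. By the second inequality of \eqref{eq:setting:gap_meaning}, $\pi^k$ is $(\theta+\Gamma+\zeta_0)$-optimal.
\item The lower-bound statement needs care, since bad iterations need not all precede $K_\theta$. I would use monotonicity instead. The sequence $\lv0k(x_0)$ is nondecreasing in $k$ by \cref{lem:basic}(i), and $V_0(x_0)-\lv0{j-1}(x_0)\le G^j$ by \eqref{eq:setting:gap_meaning}.
\item So it suffices to find a good index $j\le k+1$. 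Among $1,\dots,\lfloor K_\theta\rfloor+1\le k+1$ (using $k\ge K_\theta$) such a good $j$ exists. Then $V_0(x_0)-\lv0k(x_0)\le V_0(x_0)-\lv0{j-1}(x_0)\le\theta+\Gamma$.
\end{itemize}
This ordering argument is the only delicate point: one must show the bound persists for all later $k$, not just that many good iterations exist.

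\emph{(c)} Since $\sum_kf_k<\infty$ almost surely, $\#\{k:G^k>\theta+\Gamma\}$ is finite almost surely for each $\theta$. The expectation bound already suffices for this. Intersecting the events over $\theta=1/n$ gives $\limsup_k G^k\le\Gamma$ almost surely.

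In the exact case $\Gamma=0$ and $\zeta_0=0$, so \eqref{eq:setting:gap_meaning} gives $0\le V_0(x_0)-\lv0{k-1}(x_0)\le G^k$. The left inequality holds because $\lv0{k-1}\le\Ub0=V_0$ at $x_0$. Likewise $0\le V^{\pi^k}_0(x_0)-V_0(x_0)\le G^k$, where the left inequality is \cref{lem:basic}(iii). Both quantities are squeezed to $0$.

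\emph{(d)} Since $\mathbf k$ is independent of the run,
\[
\PP(G^{\mathbf k}>\theta+\bar{\bGamma}_{\mathbf k})=\frac1K\sum_{k\in[K]}\PP(G^k>\theta+\bar{\bGamma}_k).
\]
Each term is at most $\EE[\Psi_T((G^k-\bar{\bGamma}_k)_+)]/\Psi_T(\theta)$ by Markov's inequality, and \eqref{eq:main:master_volume} bounds the sum. On the complement, \eqref{eq:setting:gap_meaning} applied at $\mathbf k$ yields both the optimality of $\pi^{\mathbf k}$ and the bound on $V_0(x_0)-\lv0{\mathbf k-1}(x_0)$.
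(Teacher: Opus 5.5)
Your proposal is correct and follows the paper's proof essentially step by step: splitting on $g_k\le\theta_2$ for (a), the pigeonhole among the first $\lfloor K_\theta\rfloor+1$ indices combined with monotonicity of the lower bound at $x_0$ for (b), integrability of the bad-iteration count intersected over countably many $\theta$ for (c), and averaging over the independent uniform index with the master bound for (d). Your Markov step in (d) and the explicit nonnegativity of the two squeezed quantities in (c) are the same reasoning with the paper's brief steps written out.
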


\begin{proof}
(a) If $g_k\le\theta_2$ and $G^k>\theta_1+\theta_2$, the $k$-th term of~\eqref{eq:main:master_volume} is at
least $\Psi_T(\theta_1)$. (b) On the event of \cref{thm:master}, at most $K_\theta$ indices $k$ have
$G^k>\theta+\Gamma$, so some $k\le\lfloor K_\theta\rfloor+1$ has $G^k\le\theta+\Gamma$; use the gap inequality~\eqref{eq:setting:gap_meaning} and
the monotonicity of $\lv0j(x_0)$ in $j$. (c) For $\theta>0$, $\Psi_T(\theta)>0$, so
$\#\{k\ge1:G^k>\theta+\Gamma\}$ is integrable by \cref{thm:master}(a), hence finite almost surely; intersect
these events over rational $\theta$ and use the gap inequality~\eqref{eq:setting:gap_meaning} with $\zeta_0=0$.
(d) Given the run, the event $\{G^{\mathbf k}>\theta+\bar{\bGamma}_{\mathbf k}\}$ has conditional probability $\#\{k\in[K]:G^k>\theta+\bar{\bGamma}_k\}/K$, and each
such $k$ contributes at least $\Psi_T(\theta)$ to~\eqref{eq:main:master_volume}; use~\eqref{eq:setting:gap_meaning}
on the complement.
\end{proof}

We call $\bar{\bGamma}_k$ (or $\Gamma$, for deterministic errors) the \emph{error floor}; it cannot be lowered in
general (\cref{rem:floor}).

\subsection{Explicit rates}\label{sec:rates}

We now make $\Psi_T$ and $M_t$ explicit, which gives the rates of \cref{tab:summary}. For a finite horizon the rates are
stated for the relative accuracy $T\varepsilon$, an average error of $\varepsilon$ per stage, the natural
scale since the losses add up over $T$ stages; replacing $\varepsilon$ with $\varepsilon/T$ gives the
absolute accuracy $\varepsilon$ (\cref{rem:absolute}).

The rates use one scale $\cb$: the \emph{initial error is at most $\cb$ per stage} if
\begin{equation}\label{eq:main:initial_error}
\sup_{X_t}\big(V_t-\lv t0\big)\ \le\ (T-t)\,\cb\qquad\text{for every }t<T .
\end{equation}
This holds when every stage loss $\ell_t$ takes values in an interval $[a_t,a_t+\cb]$ and $\lv t0$ is the
constant $\sum_{s>t}a_s$. With
$U_t=V_t$, \eqref{eq:main:initial_error} gives $M_t\le(T-t)\cb$, because $\Ub t\le U_t$.

\begin{corollary}[Stage-uniform constants]\label{cor:uniform}
In the setting of \cref{thm:master}(a), assume $L_t\le L$, $D_t\le D$ and $\kappa_t\ge\kappa$, an initial
error at most $\cb$ per stage, $U_t=V_t$ and deterministic errors, and let $\varepsilon\le4LD$. Then
$\Psi_T(g)\ge\kappa T\psi(g/T)$, where $\psi$ is $\hat\psi_t$ with $\rho=4LD$, and
\begin{equation}\label{eq:main:rate_relative}
\EE\,\#\{k\ge1:G^k>T\varepsilon+\Gamma\}\le\frac{(d+1)\,\cb\,(T+1)\,(4LD)^d}{\kappa\,\varepsilon^{d+1}}.
\end{equation}
\end{corollary}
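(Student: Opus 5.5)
We derive the corollary from \cref{thm:master}(a) in two steps: a lower bound on the aggregated lift function $\Psi_T$, and an upper bound on the potential $\sum_{t<T}M_t$.

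\emph{Lower bound on $\Psi_T$.} Since $\kappa_t\ge\kappa$, we have $\psi_t\ge\kappa\hat\psi_t$. Next we compare $\hat\psi_t$, built with $\rho_t=4L_tD_t\le\rho:=4LD$, with $\psi$, built with $\rho$. For fixed $s$, the map $\rho\mapsto\hat\psi$ is nonincreasing: on the first branch, $s^{d+1}/(2(d+1)\rho^d)$ decreases in $\rho$; the second branch is tangent to the first at $s=\rho$; and the function of $(s,\rho)$ is the convex Huber-type interpolation, so this is a short check. Hence $\hat\psi_t\ge\psi$. The $t=0$ term, where $\rho_0=0$ and $\hat\psi_0(s)=s/2\ge\psi(s)$, is covered as well.

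Infimal convolution is monotone in its arguments, so $\Psi_T\ge\square_{t<T}\,\kappa\psi$. For a convex function $\psi$ with $\psi(0)=0$, Jensen's inequality gives $\sum_{t<T}\psi(s_t)\ge T\psi\big(\sum_{t<T} s_t/T\big)$, with $T$ terms indexed by $t=0,\dots,T-1$. Therefore the $T$-fold infimal convolution equals $T\psi(g/T)$, and $\Psi_T(g)\ge\kappa T\psi(g/T)$.

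\emph{Evaluation at the threshold.} Take $\theta=T\varepsilon$. Since $\varepsilon\le\rho$, we are on the first branch, so
\[
\Psi_T(T\varepsilon)\ge\kappa T\,\frac{\varepsilon^{d+1}}{2(d+1)(4LD)^d}.
\]

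\emph{Potential bound.} With $U_t=V_t$, the inequality $\Ub t\le U_t$ and \eqref{eq:main:initial_error} give $M_t\le\sup(V_t-\lv t0)\le(T-t)\cb$. Summing,
\[
\sum_{t<T}M_t\le\cb\,\frac{T(T+1)}{2}.
\]

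\emph{Conclusion.} By \cref{thm:master}(a) with deterministic errors,
\[
\EE\,\#\{k\ge1:G^k>T\varepsilon+\Gamma\}\le\frac{\sum_{t<T}M_t}{\Psi_T(T\varepsilon)}\le\frac{\cb\,T(T+1)/2}{\kappa T\varepsilon^{d+1}/(2(d+1)(4LD)^d)}=\frac{(d+1)\,\cb\,(T+1)(4LD)^d}{\kappa\,\varepsilon^{d+1}},
\]
which is \eqref{eq:main:rate_relative}.

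The only delicate point is the monotonicity of $\hat\psi$ in $\rho$ on the second branch. For $s\ge\rho_t$, we must compare $\rho_t/(2(d+1))+(s-\rho_t)/2$ with the value of $\psi$. If $s\le\rho$, we need $\rho_t/(2(d+1))+(s-\rho_t)/2\ge s^{d+1}/(2(d+1)\rho^d)$. This follows by convexity of $\hat\psi_t$ and tangency: the first branch with parameter $\rho_t$ dominates the first branch with parameter $\rho$, and on $[\rho_t,\infty)$ the linear continuation of a convex $C^1$ function with slope $1/2$ stays above $s^{d+1}/(2(d+1)\rho^d)$, whose slope is at most $1/2$ on $[0,\rho]$. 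If $s\ge\rho$, both are affine with slope $1/2$, and the intercept $\rho_t/(2(d+1))-\rho_t/2$ is nonincreasing in $\rho_t$.
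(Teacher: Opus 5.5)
Your proof is correct and follows the paper's own argument step by step: $\hat\psi_t\ge\psi$ by monotonicity in $\rho_t$, $\Psi_T\ge\kappa\,\square_{t<T}\psi\ge\kappa T\psi(g/T)$ by Jensen, $\sum_{t<T}M_t\le\cb\,T(T+1)/2$, and evaluation on the first branch since $\varepsilon\le4LD$. Your case-by-case slope check of the monotonicity is valid; the paper gets it faster by noting that, for fixed $s$, each branch is nonincreasing in $\rho_t$ and the two branches agree at $\rho_t=s$.
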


\begin{proof}
First, $\hat\psi_t\ge\psi$, because $\hat\psi_t$ is nonincreasing in $\rho_t$ on both branches, and
$\square_t(\kappa_t\hat\psi_t)\ge\kappa\,\square_t\hat\psi_t$. Jensen gives
$\square_t\psi(g)\ge T\psi(g/T)$. By~\eqref{eq:main:initial_error}, $M_t\le(T-t)\cb$, so
$\sum_{t<T}M_t\le\cb T(T+1)/2$. Finally $T\psi(\varepsilon)=T\varepsilon^{d+1}/(2(d+1)(4LD)^d)$; the
condition $\varepsilon\le4LD$, the regime of interest, selects the first branch.
\end{proof}

\begin{remark}[Lipschitz constants that grow with the horizon]\label{rem:lipT}
Assume exact valid cuts, $\kappa_t=1$, an initial error at most $\cb$ per stage, $D_t=D$ (no restriction
when $D_t\le D$, since increasing $D_t$ only weakens~\eqref{eq:setting:volume_growth}) and
$L_t=(T-t)\tilde L$, as relatively complete recourse typically gives (\cref{prop:benders}, with
$\tilde L=\cb/\eta$). Let $w_t:=T-t$, $\bar w:=\sum_{t<T}w_t=T(T+1)/2$, and let $\tilde\psi$ be built
as in~\eqref{eq:main:huber_potential} with $\tilde\rho:=4\tilde LD$. Since $\rho_t=w_t\tilde\rho$,
$\psi_t(s)=w_t\tilde\psi(s/w_t)$ is the perspective of $\tilde\psi$ with weight $w_t$, so
$\Psi_T(g)=\bar w\tilde\psi(g/\bar w)$, attained at $s_t=w_tg/\bar w$. For $\varepsilon\le4\tilde LD$ this gives
\begin{equation}\label{eq:main:rate_growing_lipschitz}
\EE\,\#\{k\ge1:G^k>T\varepsilon\}\le\frac{(d+1)\,\cb\,(4\tilde LD)^d\,(T+1)^{d+1}}{2^d\,\varepsilon^{d+1}},
\end{equation}
and, with these $L_t$, the deterministic bound of \citet[Thm.~2]{zhang2022stochastic} with per-stage tolerances
$\varepsilon$ is $\sum_{t<T}(1+2(T-t)\tilde LD/\varepsilon)^d$.
\end{remark}

\begin{corollary}[Discounted costs, horizon-free]\label{cor:disc}
Let the stage-$t$ loss be $\beta^{t-1}\hat\ell_t$ with $0<\beta<1$ and $\hat\ell_t$ taking values in an
interval $[a_t,a_t+\cb]$, let $\lv t0$ be the constant $\sum_{s>t}\beta^{s-1}a_s$ (e.g.\ $\hat\ell_t\in[0,\cb]$
and $\lv{}0=0$), let the cuts of index $t$ be $\beta^{t}\Lh$-Lipschitz, $X_t$ convex ($\kappa_t=1$),
$D_t\le D$, cuts valid and exact ($U=V$, $\gamma=\gamma^F=0$), and $(1-\beta)\varepsilon\le4\Lh D$.
Then, for every horizon $T$,
\begin{equation}\label{eq:main:rate_discounted}
\EE\,\#\{k\ge1:G^k>\varepsilon\}\le\frac{2(d+1)\,\cb\,(4\Lh D)^d}{(1-\beta)^{d+2}\,\varepsilon^{d+1}} .
\end{equation}
\end{corollary}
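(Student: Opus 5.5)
The plan is to apply \cref{thm:master}(a) with $\kappa_t=1$, $U_t=V_t$ and $\Gamma=0$. Once $\Psi_T(\varepsilon)$ is bounded below and $\sum_{t<T}M_t$ is bounded above, both uniformly in $T$, the claim follows from $\EE\,\#\{k\ge1:G^k>\varepsilon\}\le\sum_{t<T}M_t/\Psi_T(\varepsilon)$. The argument copies the perspective computation of \cref{rem:lipT}, with the weights $w_t:=\beta^t$ in place of $T-t$.

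\emph{Potential.} Every loss $\beta^{s-1}\hat\ell_s$ lies in $[\beta^{s-1}a_s,\beta^{s-1}(a_s+\cb)]$, and $\lv t0=\sum_{s>t}\beta^{s-1}a_s$. Hence
\[
V_t-\lv t0\le\cb\sum_{s>t}\beta^{s-1}\le\frac{\beta^t\cb}{1-\beta}.
\]
Since $\Ub t\le U_t=V_t$, we get $M_t\le\beta^t\cb/(1-\beta)$, and summing over $t$ gives $\sum_{t<T}M_t\le\cb/(1-\beta)^2$ for every $T$.

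\emph{Lift function.} With $L_t=\beta^t\Lh$ we have $\rho_t=w_t\tilde\rho$, where $\tilde\rho:=4\Lh D$. Let $\tilde\psi$ be the function \eqref{eq:main:huber_potential} with parameter $\tilde\rho$. Checking both branches of \eqref{eq:main:huber_potential} shows that $\hat\psi_t(s)=w_t\tilde\psi(s/w_t)$, so $\hat\psi_t$ is the perspective of $\tilde\psi$ with weight $w_t$. (Taking $D_t=D$ costs nothing, as observed in \cref{rem:lipT}.)

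For nonnegative $s_t$ summing to $g$, Jensen's inequality with weights $w_t/\bar w$, where $\bar w:=\sum_{t<T}\beta^t$, gives
\[
\sum_t w_t\tilde\psi(s_t/w_t)\ge\bar w\,\tilde\psi(g/\bar w),
\]
so $\Psi_T(g)\ge\bar w\tilde\psi(g/\bar w)$. The main point is that this bound must be made independent of $T$. Since $\tilde\psi$ is convex with $\tilde\psi(0)=0$, the map $w\mapsto w\tilde\psi(g/w)$ is nonincreasing in $w$. Together with $\bar w\le1/(1-\beta)$ this yields
\[
\Psi_T(g)\ge\frac{\tilde\psi\big((1-\beta)g\big)}{1-\beta}.
\]

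\emph{Conclusion.} Take $g=\varepsilon$. The hypothesis $(1-\beta)\varepsilon\le\tilde\rho$ puts us on the first branch, so
\[
\Psi_T(\varepsilon)\ge\frac{(1-\beta)^d\varepsilon^{d+1}}{2(d+1)(4\Lh D)^d}.
\]
Dividing $\cb/(1-\beta)^2$ by this lower bound gives exactly \eqref{eq:main:rate_discounted}.

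The only delicate step is the monotonicity of the perspective in its weight. It is what lets $\bar w$ be replaced by the $T$-free bound $1/(1-\beta)$, and it goes in the right direction precisely because $\tilde\psi$ is convex and vanishes at $0$.
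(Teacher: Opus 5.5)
Your proof is correct and follows the paper's own proof essentially step for step. Both take $D_t=D$ and write $\hat\psi_t(s)=\beta^t\tilde\psi(s/\beta^t)$ as a perspective, then apply Jensen with weights $\beta^t/\bar w$. Both then use that $w\mapsto w\tilde\psi(g/w)$ is nonincreasing, which the paper phrases as $\tilde\psi(s)/s$ being nondecreasing, together with $\bar w\le 1/(1-\beta)$. Finally both bound $\sum_{t<T}M_t\le\cb/(1-\beta)^2$ and conclude with \cref{thm:master}(a).
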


\begin{proof}
Increasing $D_t$ only weakens~\eqref{eq:setting:volume_growth}, so we may take $D_t=D$.
Then $\rho_t=\beta^{t}\hat\rho$ with $\hat\rho:=4\Lh D$, so $\psi_t(s)=\beta^{t}\hat\psi(s/\beta^{t})$,
where $\hat\psi$ is built with $\hat\rho$. Jensen with weights $\beta^{t}/B$, $B=\sum_{t<T}\beta^{t}$,
gives $\Psi_T(g)\ge B\hat\psi(g/B)$. The map $B\mapsto B\hat\psi(g/B)$ is nonincreasing, because
$\hat\psi(s)/s$ is nondecreasing, and $B\le(1-\beta)^{-1}$; so
$\Psi_T(g)\ge(1-\beta)^{-1}\hat\psi((1-\beta)g)$. Finally $M_t\le\sup(V_t-\lv t0)\le\sum_{s>t}\beta^{s-1}\cb\le\beta^{t}\cb/(1-\beta)$, so
$\sum_{t<T}M_t\le\cb/(1-\beta)^2$.
\end{proof}

With errors and general $\kappa_t$, the same proof gives~\eqref{eq:main:rate_discounted} with $\varepsilon+\Gamma$ in
the event and the bound divided by $\kappa=\min_t\kappa_t$.

\begin{corollary}[Covering rates; finite state sets]\label{cor:cover}
In the setting of \cref{thm:master}(b) for~(a) and of \cref{thm:master}(c) for~(b), with initial error at most
$\cb$ per stage and $U_t=V_t$:
\begin{enumerate}[label=(\alph*),nosep]
\item if $L_t\le L$ and $X_t$ has $\sigma$-nets of size at most $Z(1+2D/\sigma)^d$ for every
$\sigma>0$ (\cref{lem:nets}; $Z\ge\max_t|Z_t|$ bounds the number of discrete configurations, $Z=1$ for convex $X_t$), then
\begin{equation}\label{eq:main:covering_rate_relative}
\EE\,\#\{k\ge1:G^k>T\varepsilon+\Gamma\}\le Z\cb\,(T+1)(1+8LD/\varepsilon)^d/\varepsilon ;
\end{equation}
\item if every $X_t$ is finite, $\EE\sum_k(G^k-\Gamma)_+\le\cb\sum_{t<T}(T-t)|X_t|$, hence
$\EE\,\#\{k\ge1:G^k>\theta+\Gamma\}\le\cb\,T(T+1)\max_t|X_t|/(2\theta)$.
\end{enumerate}
\end{corollary}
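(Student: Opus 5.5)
Both parts are direct specialisations of \cref{thm:master}; the work is in evaluating $\sum_{t<T}M_t$ and, for (a), in choosing the net radii $\sigma_t$.

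\emph{Part (a).} I apply \cref{thm:master}(b) with $\theta:=T\varepsilon$ and the stage-uniform radius $\sigma_t:=\sigma:=\varepsilon/(4L)$ for $t\ge1$. Then $\sum_{t<T}L_t\sigma_t\le T L\sigma= T\varepsilon/4=\theta/4$, as required. At $t=0$ the set $X_0=\{x_0\}$ is its own net, so $|C_0|=1\le Z(1+8LD/\varepsilon)^d$. By the net hypothesis, $|C_t|\le Z(1+2D/\sigma)^d=Z(1+8LD/\varepsilon)^d$. The initial error condition~\eqref{eq:main:initial_error} with $U_t=V_t$ gives $M_t\le(T-t)\cb$, as noted before \cref{cor:uniform}, since $\Ub t\le U_t=V_t$. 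Hence
\[
\sum_{t<T}|C_t|M_t\le Z(1+8LD/\varepsilon)^d\,\cb\,T(T+1)/2 .
\]
The ``in particular'' bound of \cref{thm:master}(b) then yields
\[
\EE\,\#\{k:G^k>T\varepsilon+\Gamma\}\le\frac{2}{T\varepsilon}\,Z(1+8LD/\varepsilon)^d\,\cb\,\frac{T(T+1)}{2}=\frac{Z\cb(T+1)(1+8LD/\varepsilon)^d}{\varepsilon},
\]
which is~\eqref{eq:main:covering_rate_relative}. If $L=0$, take $\sigma$ arbitrarily large (any single point is then a net) and the same bound holds.

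\emph{Part (b).} I use \cref{thm:master}(c), where $\Ub t=U_t=V_t$, so $C_{\rm fin}=\sum_{t<T}\sum_{x\in X_t}(V_t-\lv t0)(x)\le\sum_{t<T}(T-t)\cb|X_t|$ by~\eqref{eq:main:initial_error}. For the count, each $k$ with $G^k>\theta+\Gamma$ contributes more than $\theta$ to $\sum_k(G^k-\Gamma)_+$, so Markov's inequality in counting form gives $\EE\#\le C_{\rm fin}/\theta$. Bounding $|X_t|\le\max_t|X_t|$ and using $\sum_{t<T}(T-t)=T(T+1)/2$ gives $\cb T(T+1)\max_t|X_t|/(2\theta)$.

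There is no real obstacle here. The only point that needs care is the factor in (a): the choice $\sigma=\varepsilon/(4L)$ must both satisfy the constraint $\sum_{t<T} L_t\sigma_t\le\theta/4$ and produce exactly the factor $8LD/\varepsilon$. It must also be checked that the $t=0$ term, with $M_0\le T\cb$, fits inside the sum $\sum_{t<T}(T-t)$.
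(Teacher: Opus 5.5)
Your proof is correct and is the paper's argument: $\theta=T\varepsilon$, $\sigma_t=\theta/(4TL)=\varepsilon/(4L)$, $M_t\le(T-t)\cb$ from $\Ub t\le U_t=V_t$, then the counting bounds of \cref{thm:master}(b) and~(c). One small slip in the case $L=0$: when $\dist_t$ is the extended metric of \cref{lem:VG}(b) (the case $Z>1$), a single point is not a net. Instead, take $\sigma$ large enough that the hypothesised size $Z(1+2D/\sigma)^d$ is below $Z+1$; this gives nets of at most $Z$ points, which is exactly what the paper does.
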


\begin{proof}
(a) If $L>0$, take $\sigma_t=\theta/(4TL)$; if $L=0$, take a finite $\sigma_t$ so large that
$Z(1+2D/\sigma_t)^d<Z+1$, so that $X_t$ has a $\sigma_t$-net of size at most $Z$. In both cases
$\sum_tL\sigma_t\le\theta/4$ and, for $\theta=T\varepsilon$, $|C_t|\le Z(1+8LD/\varepsilon)^d$; use
$\sum_{t<T}M_t\le\cb T(T+1)/2$. (b) $M_t\le(T-t)\cb$ by~\eqref{eq:main:initial_error}, and $\Ub t=U_t$.
\end{proof}

For a finite state set $X$, horizon $T$ and losses in $[0,1]$, part~(b) gives $\EE\sum_kG^k\le|X|T(T+1)/2$,
the analogue of \citet[Thm.~4]{efroni2019tight} for RTDP. No Lipschitz property is used
there, so every cut family that satisfies tightness~\ref{O2} and the ceiling condition~\ref{O3} qualifies (\cref{sec:SDDiP}).

\begin{remark}[Absolute accuracy]\label{rem:absolute}
Replacing $\varepsilon$ with $\varepsilon/T$ in~\eqref{eq:main:rate_relative} gives, for $\varepsilon\le4LDT$,
\begin{equation}\label{eq:main:rate_absolute}
\EE\,\#\{k\ge1:G^k>\varepsilon+\Gamma\}\le\frac{(d+1)\,\cb\,T^{d+1}(T+1)\,(4LD)^d}{\kappa\,\varepsilon^{d+1}},
\end{equation}
where the extra factor $T^{d+1}$ only reflects the per-stage error $\varepsilon/T$. With the constants of
\cref{rem:lipT}, the same substitution in~\eqref{eq:main:rate_growing_lipschitz} gives the bound
$2(d+1)\,\cb\,(4\tilde LD)^d\,\bar w^{\,d+1}/\varepsilon^{d+1}$, and that of \citet[Thm.~2]{zhang2022stochastic}, with
per-stage tolerances $\varepsilon/T$, is $\sum_{t<T}(1+2(T-t)\tilde LDT/\varepsilon)^d$. \Cref{cor:cover}(a) converts likewise.
\end{remark}

\subsection{Remarks and variants}\label{sec:variants}

\paragraph{Sharper and other forms of the bound.} The middle term of~\eqref{eq:main:master_volume} is an
average and can be much smaller than $\sum_{t<T}M_t$; any probability measure
satisfying~\eqref{eq:setting:volume_growth} can be used. Hölder's inequality also turns the master bound into
a regret bound, $\EE\sum_{k\in[K]}G^k=O(K^{d/(d+1)})$ (\cref{rem:regret}).

\paragraph{Multi-cut models.} In a multi-cut model, $\lv tk=\EE_{\xi\sim\PP_{t+1}}Q^k_{t,\xi}$, where
$Q^k_{t,\xi}$ is the maximum of the cuts of scenario $\xi$. The monotone update~\ref{O4} then fails, but the
proof of \cref{thm:master}(a) only uses $\lv t{k-1}\le\lv tk\le\Ub t$, Lipschitz approximations and the lift
at the trial point. If each scenario cut at $x=\bx^k_t$ is below the scenario value
$\Bd_t(\lv{t+1}k;\cdot,\xi)$ of~\eqref{eq:setting:bellman_operator} and exact at $x$, the lift of the averaged
model at $x$ is at least $r^k_t(x)$ and $\lv tk\le V_t$, so the theorem applies with $L_t$ the
$\PP_{t+1}$-average of the per-scenario Lipschitz constants.

\paragraph{Several forward paths.} Several forward paths per iteration lift the approximations at least as
much as one path, so the bounds still hold, although they do not improve with the number of paths. Backward
passes on the not-yet-updated approximation satisfy~\ref{O2} directly. With inexact forward passes, the guarantees concern the
$\gamma^F$-greedy selections actually used; they do not transfer to a different, even exact, selection used at
exploitation time \citep[Sect.~2.1]{forcier2023trajectory}.

\paragraph{What is not covered.} The analysis does not cover deterministic selection rules such as the problem
child, for which the packing argument of \citet{lan2022complexity} and \citet{zhang2022stochastic} is the
right tool, nor cut deletion, which breaks the monotone update~\ref{O4}.

\section{The main cut families}\label{sec:instances}

We now check \cref{ass:O} for the cut families of the TFDP framework; \cref{tab:families} gives the verdicts. Three
are treated here. Benders cuts are those of SDDP. The generalised conjugacy cuts of Zhang and Sun give a partial
answer to their question on bounds polynomial in $T$. SDDiP's cuts on binary states use \cref{thm:master}(c). The
other families, and all proofs, are in \cref{app:instances}.

\subsection{Benders cuts (SDDP)}\label{sec:benders}

\begin{proposition}[Bounded subgradients]\label{prop:benders}
Fix a norm and, for $\eta>0$, let $X^\eta_t:=\{x\in\aff X_t:\dist(x,X_t)\le\eta\}$.
Assume:
\begin{itemize}[nosep]
\item $\emptyset\ne \cX_{t+1}(x,\xi)\subset X_{t+1}$ for $x\in X^\eta_t$ and $t<T$ (extended relatively
complete recourse), and~\ref{S2} and~\ref{S3} hold with $X^\eta_t$ in place of $X_t$ as the set of incoming
states;
\item $X_t$ is convex, the graph of $\cX_{t+1}(\cdot,\xi)$ over $X^\eta_t$ is convex and $\ell_{t+1}(\cdot,\cdot,\xi)$ is
jointly convex on it;
\item the approximations are convex, with $\lv t0\le V_t$ and $\lv t0\le\cB_t(\lv{t+1}0)$;
\item every cut is a Benders cut $f^k_t(y)=h(x)+\langle g,y-x\rangle$ of
$h:=\cB_t(\lv{t+1}{k})$ at $x=\bx^k_t$, with $g$ a subgradient of the restriction of $h$
to $\aff X_t$.
\end{itemize}
Then $V_t=\cB_t(V_{t+1})$ is bounded on $X^\eta_t$, such $g$ exist, and the cuts are valid on $X^\eta_t$, exact
($\gamma=0$), and $L_t$-Lipschitz on $X_t$ with
\begin{equation}\label{eq:inst:benders_lipschitz}
L_t\le\Big(\sup_{X^\eta_t}V_t-\inf_{X_t}\lv t0\Big)\Big/\eta .
\end{equation}
If $\lv t0=\sum_{s>t}a_s$ and each $\ell_s$ takes values in $[a_s,a_s+\cb]$ for incoming states in $X^\eta_{s-1}$
(not only in $X_{s-1}$), this is at most $(T-t)\cb/\eta$.
So \cref{ass:O} holds with $U_t=V_t$, provided $\lv t0$ is also $L_t$-Lipschitz (e.g.\ $\lv t0$ constant).
\end{proposition}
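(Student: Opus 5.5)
We will argue by backward induction on $t$, working throughout on the enlarged sets $X^\eta_t$. The key tool is the standard fact that a convex function bounded on an $\eta$-neighbourhood (within the affine hull) of a convex set has subgradients on the set with norm bounded by oscillation$/\eta$.

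\emph{Step 1: boundedness and convexity of $V_t$ on $X^\eta_t$.} Since $V_T=0$, suppose $V_{t+1}$ is bounded and convex on $X_{t+1}$. Extended relatively complete recourse and~\ref{S2} on $X^\eta_t$ make $\cB_t(V_{t+1})$ finite and bounded on $X^\eta_t$, by $|\cB_t(W)|\le\sup|\cB_t(0)|+\sup|W|$. For each $\xi$, $x\mapsto\Bd_t(V_{t+1};x,\xi)$ is the partial minimisation over $y$ of a jointly convex function on a convex graph, so it is convex on $X^\eta_t$. Integrating in $\xi$ preserves convexity. The same argument applies to $h=\cB_t(\lv{t+1}k)$ for any convex $\lv{t+1}k\le V_{t+1}$: $h$ is convex on $X^\eta_t$, satisfies $h\le V_t$ by monotonicity, and is bounded below by $\inf_{X_t}\lv t0$ once we know $\lv t0\le\lv tk\le h$ on $X_t$.

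\emph{Step 2: existence and size of subgradients.} This is the main technical step. Take $x\in X_t$ and work in $\aff X_t$. Since $X^\eta_t$ contains the relative ball $B(x,\eta)\cap\aff X_t$ and $h$ is convex and finite there, $h$ is continuous at $x$ relative to $\aff X_t$, so a relative subgradient $g$ exists. For any unit direction $u$ in the linear span of $X_t-x$, attained as a dual-norm maximiser,
\[
h(x)+\eta\langle g,u\rangle\le h(x+\eta u)\le\sup_{X^\eta_t}V_t .
\]
Also $h(x)\ge\cB_t(\lv{t+1}{k})(x)\ge\lv tk(x)\ge\inf_{X_t}\lv t0$, where the middle inequality uses the sub-Bellman property propagated as in \cref{lem:valid}(a). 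Together these give \eqref{eq:inst:benders_lipschitz}. On $X_t$ only the restriction of $g$ to the direction space of $\aff X_t$ matters, so $f^k_t$ is $L_t$-Lipschitz on $X_t$. The point needing care is that the dual norm must be taken on that subspace; we will state it that way.

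\emph{Step 3: validity, exactness, and conclusion.} Exactness is immediate, since $f^k_t(x)=h(x)$, which gives \ref{O2} with $\gamma=0$ by \cref{lem:valid}(b). Validity holds on $X^\eta_t\cap\aff X_t$: $f^k_t\le h\le\cB_t(V_{t+1})=V_t$ by the subgradient inequality and monotonicity. This is the strong validity of \cref{lem:valid}(a), so every cut lies below $V_t$ and \ref{O3} holds with $U_t=V_t$. Convexity of the approximations is preserved because maxima of affine functions are convex, which closes the induction. \ref{O1} holds if $\lv t0$ is $L_t$-Lipschitz, and \ref{O4} holds by construction.

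\emph{Step 4: the explicit bound.} If $\ell_s\in[a_s,a_s+\cb]$ for incoming states in $X^\eta_{s-1}$, then $\sup_{X^\eta_t}V_t\le\sum_{s>t}a_s+(T-t)\cb$, while $\inf\lv t0=\sum_{s>t}a_s$. This gives $L_t\le(T-t)\cb/\eta$.
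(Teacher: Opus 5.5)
Your proposal is correct and follows the paper's proof essentially step by step. The shared steps are: convexity of $h=\cB_t(\lv{t+1}k)$ on $X^\eta_t$ by partial minimisation over the convex graph; boundedness of $V_t$ on $X^\eta_t$ from \ref{S2} and extended recourse; a relative subgradient from the relative $\eta$-ball around $x$; the slope bound from $h(x)+\eta\langle g,u\rangle\le h(x+\eta u)\le\sup_{X^\eta_t}V_t$ together with $h(x)\ge\inf_{X_t}\lv t0$; and validity on $X^\eta_t$ from the subgradient inequality and monotonicity. One point should be made explicit: the subgradient argument needs $h$ finite on the whole relative ball, not only $h\ge\inf_{X_t}\lv t0$ on $X_t$. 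This follows from $h\ge\cB_t(\lv{t+1}0)\ge\cB_t(0)+\inf\lv{t+1}0>-\infty$ on $X^\eta_t$, which is how the paper writes it.
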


This is the classical bounded-subgradient argument. The sets must be chosen so that
$X_{t+1}\supset \cX_{t+1}(X^\eta_t,\xi)$, so $X_{t+1}$ may have to be larger than the set of reachable states,
and $D$ must bound the diameter of this larger set. Under the bounded-loss condition and with finitely
supported noise, the results of \cref{sec:main} hold verbatim for SDDP, with the constant growing like $T-t$ (\cref{rem:lipT}). Adaptive
partition-based cuts \citep{song2015adaptive,siddig2022adaptive,forcier2022generalized}
are Benders cuts of an aggregation of the scenarios, refined until the aggregated cut coincides at the trial
point with the Benders cut of the full problem; they are covered as well, and cuts from coarser partitions are
valid and may be added.

\begin{example}[Stage-wise absorption: slopes uniform in the horizon]\label{ex:absorb}
In \cref{prop:benders}, assume that, for every $\xi$, $\cX_{t+1}(\cdot,\xi)$ is constant on $X^\eta_t$ and
$\ell_{t+1}(\cdot,y,\xi)$ is $c$-Lipschitz there: a deviation of the incoming state is absorbed within the stage at
unit cost at most $c$. Then $\cB_t(W)$ is $c$-Lipschitz on $X^\eta_t$ for every bounded $W$, so the Benders cuts
are $c$-Lipschitz, uniformly in $t$, $k$ and $T$. This holds, for instance, for a multi-item inventory with
stocks in a box $X\subset\R^n$, bounded regular orders, and unbounded emergency orders and disposals at unit
cost at most $c$ (\cref{app:absorb}). \Cref{cor:uniform} with $L=c$, $d=n$ and $\kappa=1$ then gives, for
$\varepsilon\le4cD$, $\EE\,\#\{k\ge1:G^k>T\varepsilon\}\le(n+1)\cb(T+1)(4cD)^n/\varepsilon^{n+1}$.
\end{example}

\begin{table}[!t]
\centering\small
\setlength{\tabcolsep}{3pt}
\begin{tabular}{>{\raggedright\arraybackslash}p{2.3cm}>{\raggedright\arraybackslash}p{2.6cm}>{\raggedright\arraybackslash}p{2.4cm}>{\raggedright\arraybackslash}p{2.4cm}>{\raggedright\arraybackslash}p{2.9cm}l}
\toprule
family & \ref{O1} constant & \ref{O2} tight for & \ref{O3} ceiling & verdict & where\\
\midrule
Benders (SDDP) & $\mathrm{osc}/\eta$ & $\cB_t$, exact & $V_t$ & covered & \cref{prop:benders}\\
TFDP generic & $L_t$ \citep[Ass.~4(iii)]{forcier2023trajectory} & $\cB_t$, $\gamma_t$ & $V_t$ & covered from the second iteration & \cref{prop:TFDP}\\
SLDP reverse norm & $L_t$ & $\cB_t$, exact & $V_t$ if $L_t\ge\Lip V_t$ & covered without snapping & \cref{prop:sldp}\\
SLDP AL cuts & $\|\lambda\|_*+\varrho\le L_t$ & $\cB_t$ if $L_t\ge\Lip g$, else $\BR_t$ & $V_t$ & covered if $L_t\ge\Lip g$; else $\gamma_t$ = regularisation gap & \cref{rem:AL}\\
Zhang--Sun & $\sum_mp(l_\lambda+l_\varrho)$ & $\BR_t$, exact & $V_t$ & covered & \cref{thm:ZS}\\
SDDiP, binary & not needed & $\cB_t$, exact & $V_t$ & covered & \cref{prop:SDDiP}\\
strengthened Benders & --- & not tight & $V_t$ & only in addition to a tight family & \cref{sec:SDDiP}\\
Lagrangian, MI states & --- & duality gap & $V_t$ & not covered & \cref{sec:SDDiP}\\
MIDAS steps & fails & $\cB_t$, exact & $V_t+(T-t)\varepsilon$ & without snapping only & \cref{thm:steps}\\
\bottomrule
\end{tabular}
\caption{Cut families and \cref{ass:O}. Forward passes are greedy (exact or $\gamma^F$-optimal)
trajectories drawn from the law used in the operator of~\ref{O2}. $\mathrm{osc}$ is the oscillation
$\sup_{X^\eta_t}V_t-\inf_{X_t}\lv t0$ of \cref{prop:benders}, $\BR_t$ the regularised operator of \cref{sec:ZS}, whose value functions $V^R_t$ differ from $V_t$, $\lambda$ and $\varrho$ the multiplier and the penalty
weight of the cut, $\sum_mp(l_\lambda+l_\varrho)$ Zhang and Sun's Lipschitz bound (\cref{prop:ZS}), $g$ the nodal value function of \cref{rem:AL}; for snapping and ``from the second iteration'', see \cref{sec:other-families}.}\label{tab:families}
\end{table}

\subsection{Generalised conjugacy cuts}\label{sec:ZS}

\citet{zhang2022stochastic} extend SDDP to nonconvex, mixed-integer nonlinear problems on a recombining scenario
tree, whose transition probabilities do not depend on the parent node, so that the noise is stagewise independent. The forward step of their Algorithm~3 solves a \emph{regularised} nodal problem, in which the incoming state
$\bar x$ may be replaced by any $z$ at the price of a penalty $\varpi\mathcal R(\bar x-z)$; the backward step returns a
generalised conjugacy cut, which is Lipschitz by construction. Under their exact-penalisation assumption, the
regularised and original problems have the same optimal value. In our terms, the forward pass is greedy for the
regularised Bellman operator $\BR_t$, with value functions $V^R_t:=\BR_t(V^R_{t+1})$, $V^R_T:=0$, so we apply
\cref{sec:main} with $\BR_t$ and $V^R_t$ in the roles of $\cB_t$ and $V_t$, and $G^k$ is the regularised gap. The
cuts are tight for $\BR_t$, but they lie below the value functions $V_t$ of the original problem: this is the
ceiling~\ref{O3} with $U=V\ne V^R$. By exact penalisation, $V^R_0(x_0)=V_0(x_0)$, so $\zeta_0=0$. \Cref{prop:ZS} checks \cref{ass:S,ass:O}, and
\cref{app:ZS} gives the dictionary between the two notations. 

Let $\iota$ be the first index $i$ such that the root solution of iteration $i+1$ is $\varepsilon$-optimal in
their sense, the quantity bounded in their Theorem~3; our horizon $T$ is theirs plus one (\cref{app:ZS}).

\begin{theorem}[A bound polynomial in $T$ for Algorithm~3 of \citeauthor{zhang2022stochastic}]\label{thm:ZS}
Consider Algorithm~3 under their Assumptions~1--3 and the parameter bounds of \cref{prop:ZS}, with scenario
paths sampled stage by stage from the transition probabilities, independently across stages and iterations
(for uniform transition probabilities, their Assumption~4 implies this). Assume that, for $1\le s<T$, the
approximations of index $s$ are $L$-Lipschitz, $X_s$ has dimension at most $d$ and lies in a ball of
diameter $D$, nodal costs lie in $[0,\cb]$, and $V_s\le(T-s)\cb$ on $X_s$. Then
\begin{equation}\label{eq:inst:zs_bound_uniform}
\EE[\iota]\le\cb\,T(T+1)\,\big(1+8LD(T-1)/\varepsilon\big)^d/\varepsilon .
\end{equation}
\end{theorem}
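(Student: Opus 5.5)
The bound will come from \cref{thm:master}(b), the covering version, applied with the regularised operator $\BR_t$ and value functions $V^R_t$ in the roles of $\cB_t$ and $V_t$, as set up in \cref{sec:ZS}. \Cref{prop:ZS} supplies \cref{ass:S,ass:O}. The forward pass is exactly greedy for $\BR_t$, so $\gamma^F=0$. The generalised conjugacy cuts are exact for $\BR_t$ at the trial point, so $\gamma=0$ and $\Gamma=0$. The ceiling is $U_t=V_t$.

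The first step is to convert $\iota$ into a count of bad iterations. By the dictionary of \cref{app:ZS}, the $\varepsilon$-optimality of the root solution of iteration $i+1$ in their sense is implied by $G^{i+1}\le\varepsilon$. This uses the gap inequality~\eqref{eq:setting:gap_meaning} together with $\zeta_0=0$, which follows from exact penalisation ($V^R_0(x_0)=V_0(x_0)$), and the first-stage argument of \cref{rem:compare}. Hence, as in \cref{rem:compare}, $\iota\le\#\{k\ge1:G^k>\varepsilon\}$. It remains to bound the expectation of this count at the absolute accuracy $\theta=\varepsilon$.

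Next I apply \cref{thm:master}(b) with $\theta=\varepsilon$. At the root, $X_0=\{x_0\}$, so $C_0=\{x_0\}$ with $\sigma_0=0$. For $1\le s<T$, the approximations are $L$-Lipschitz, so I take $\sigma_s=\varepsilon/(4L(T-1))$. This gives $\sum_sL_s\sigma_s\le\varepsilon/4$. Since $X_s$ lies in a ball of diameter $D$ and has dimension at most $d$, \cref{lem:nets} gives nets of size $|C_s|\le(1+2D/\sigma_s)^d=(1+8LD(T-1)/\varepsilon)^d$. For the potential bound, I use $\Ub s\le U_s=V_s\le(T-s)\cb$. With nonnegative nodal costs one can take $\lv s0=0$, so $M_s\le(T-s)\cb$, and at the root $M_0\le T\cb$. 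Then
\[
\tfrac{2}{\varepsilon}\sum_{t<T}|C_t|M_t\le\tfrac{2}{\varepsilon}\,\cb\,\tfrac{T(T+1)}{2}\,(1+8LD(T-1)/\varepsilon)^d ,
\]
which is exactly~\eqref{eq:inst:zs_bound_uniform}.

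The main obstacle is the bookkeeping rather than the estimate. First, I must check that the hypotheses of \cref{prop:ZS} indeed give $L_s\le L$ for the cuts and for $\lv s0$, including the role of the horizon shift (their $T$ is ours minus one). Second, I must check that the sampling model, which is stage-by-stage independent sampling from transition probabilities independent of the parent, matches the forward pass of \cref{sec:algo}, including independence from $\cA_{k-1}$. Third, I must make sure that the definition of $\iota$ ("first $i$ such that iteration $i+1$ is good") aligns with the count. The clean route is to note that the first good index $k^\ast$ satisfies $k^\ast-1\le\#\{k:G^k>\varepsilon\}$, so $\iota\le k^\ast-1$ and no extra $+1$ appears. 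If $\lv s0$ is not zero in their algorithm, I would instead bound $M_s$ by $\sup(V_s-\lv s0)\le(T-s)\cb$ using the bounded-cost normalisation.
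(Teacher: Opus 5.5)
Your proposal is correct and follows the paper's proof essentially step for step: the covering version \cref{thm:master}(b) for the regularised operator with $\zeta_0=0$ from exact penalisation, $\iota\le\min\{k\ge1:G^k\le\varepsilon\}-1\le\#\{k\ge1:G^k>\varepsilon\}$, the root net $\{x_0\}$ with $M_0\le T\cb$, and $\sigma_s=\varepsilon/(4L(T-1))$ giving nets of size $(1+8LD(T-1)/\varepsilon)^d$ with $M_s\le(T-s)\cb$. The paper handles your bookkeeping points in the same spirit. It includes the oracles' internal randomness in $\cA_0$, assuming the sampled paths independent of it, and it covers several paths per iteration through the remark in \cref{sec:variants}. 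It also treats $L=0$ separately with a one-point net, where your formula for $\sigma_s$ is undefined.
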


In their Remark~3, Zhang and Sun ask ``whether there exists a complexity bound for Algorithm~3 that is polynomial
in $T$ in general''; \cref{thm:ZS} gives one, in their setting and accuracy notion, when the paths are drawn
from the transition law. \Cref{prop:ZSgeneral} gives the general form, with stage-dependent constants and a
high-probability version, and compares it with their deterministic bound. With uniform sampling on trees whose
transitions are not uniform, our argument only gives a bound with a likelihood factor exponential in $T$, and the
question remains open (\cref{rem:likelihood}).

\subsection{SDDiP: Lagrangian and integer optimality cuts on binary states}\label{sec:SDDiP}

\begin{proposition}[SDDiP]\label{prop:SDDiP}
Consider SDDiP \citep{zou2019sddip} with binary states $X_t\subset\{0,1\}^{n_t}$, mixed-integer linear
nodal problems with compact feasible sets, solved exactly, complete continuous recourse, losses in $[0,\cb]$, $\lv{}0=0$, and forward scenarios sampled from the tree's
distribution. Use, at each index $t<T$, a Lagrangian cut with an optimal multiplier or an integer optimality
cut; other valid cuts, such as strengthened Benders cuts, may be added. Then
\cref{thm:master}(c) applies with $U_t=V_t$ and $\Gamma=0$:
\begin{equation}\label{eq:inst:sddip_bound}
\EE\sum_{k\ge1}G^k\ \le\ \cb\sum_{t<T}(T-t)|X_t|\ \le\ 2^{n}\cb\,\frac{T(T+1)}2,\qquad n:=\max_{t<T}n_t,
\end{equation}
and, for $\delta\in(0,1]$, with probability at least $1-\delta$, $\sum_kG^k\le9\ln(3/\delta)\,2^{n-1}\cb\,T(T+1)$.
\end{proposition}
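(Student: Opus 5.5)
Our plan is to verify the hypotheses of \cref{thm:master}(c) for this setting and then bound $C_{\rm fin}$. Each $X_t\subset\{0,1\}^{n_t}$ is finite. We equip it with the extended metric $\dist_t=+\infty$ off the diagonal, so every function on $X_t$ is Borel, bounded and $L_t$-Lipschitz for any $L_t$, and $\Ub t=U_t$. Thus~\ref{O1} reduces to $\lv t0=0$ being deterministic. Standing assumptions: \ref{S1} holds because the state sets are finite, hence compact, and complete continuous recourse makes every nodal problem feasible, with next states in $X_{t+1}$. \ref{S2} holds because losses lie in $[0,\cb]$. \ref{S3} holds because the noise is finitely supported (tree distribution, stagewise independent) and we use deterministic MILP oracles with a tie-breaking rule.

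\textbf{Ceiling and tightness.} We check the remaining items of \cref{ass:O} via \cref{lem:valid}. We take $U_t=V_t$.
\begin{itemize}[nosep]
\item \emph{Validity.} Both Lagrangian cuts and integer optimality cuts built at iteration $k$ satisfy $f\le\cB_t(\lv{t+1}{k})$ on $X_t$. For Lagrangian cuts this is weak duality applied to the copy-constraint relaxation, scenario by scenario, then averaged. For the integer optimality cut $f(x)=(v(\bar x)-m)\big(\sum_{i:\bar x_i=1}x_i-\sum_{i:\bar x_i=0}x_i-|\bar x|_1+1\big)+m$, where $v=\cB_t(\lv{t+1}k)$ and $m$ is a lower bound on $v$, validity holds because $f(\bar x)=v(\bar x)$ and $f\le m\le v$ elsewhere on binary points. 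The same holds for strengthened Benders cuts and any other valid added cuts.
\item \emph{Initial approximation.} Since $\lv t0=0\le V_t$ and $0\le\cB_t(0)$ because losses are nonnegative, \cref{lem:valid}(a) gives~\ref{O3} with $U=V$ and $r^k_t\ge0$.
\item \emph{Tightness.} Zou et al.\ show that on binary states the Lagrangian cut with an optimal multiplier is tight at $\bar x=\bx^k_t$: the Lagrangian dual of the copy constraint $z=\bar x$ has no duality gap when $z$ is restricted to the convex hull of the binary points, since these are extreme points. The integer optimality cut is tight by construction. Both are computed for $\lv{t+1}{k}\ge\lv{t+1}{k-1}$, so \cref{lem:valid}(b) gives~\ref{O2} with $\gamma=0$.
\item \emph{Forward pass and update.} The exact forward pass gives $\gamma^F=0$, hence $\Gamma=0$. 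The update by maxima gives~\ref{O4}.
\end{itemize}

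\textbf{Bounding $C_{\rm fin}$.} \Cref{thm:master}(c) gives $\EE\sum_kG^k\le C_{\rm fin}=\sum_{t<T}\sum_{x\in X_t}V_t(x)$. Losses lie in $[0,\cb]$, so $V_t\le(T-t)\cb$, and $|X_t|\le2^{n}$. Using $\sum_{t<T}(T-t)=T(T+1)/2$ gives~\eqref{eq:inst:sddip_bound}. The high-probability statement is the second half of \cref{thm:master}(c): $9\ln(3/\delta)\,2^n\cb\,T(T+1)/2$.

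\textbf{Main obstacle.} The delicate point is tightness of the Lagrangian cut relative to the true operator $\cB_t$, which averages over the noise. The nodal problems decompose by scenario. We form the cut as the probability-weighted sum of per-scenario Lagrangian cuts, each exact at $\bar x$ by the binary-state strong-duality result. We must also check that the cut is valid globally, including in the multi-cut variant discussed in \cref{sec:variants}. We would cite Zou et al.'s Theorem on tightness of Lagrangian cuts for binary states for this step and verify that their "tight" notion matches~\ref{O2} exactly.
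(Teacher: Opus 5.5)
Your proposal is correct and follows the paper's proof. You work on finite state sets with $\dist_t=+\infty$ off the diagonal, take validity and tightness of the Lagrangian and integer optimality cuts from Zou et al.\ (with \cref{lem:valid}(b) for~\ref{O2}), apply \cref{thm:master}(c) and bound $C_{\rm fin}\le\cb\sum_{t<T}(T-t)|X_t|$. The one difference is how you obtain~\ref{O3}: you go through the strong validity of \cref{lem:valid}(a), which needs every added cut to lie below $\cB_t(\lv{t+1}{k})$; this holds for strengthened Benders cuts but not for an arbitrary ``valid'' cut. The paper instead uses validity for the true $V_t$ directly, which covers any additional cut below $V_t$ and does not need $r^k_t\ge0$, a property \cref{thm:master}(c) never uses.
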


To our knowledge, these are the first random-sampling rates for SDDiP that are polynomial in $T$
(\citealt{zou2019sddip} prove finite convergence without a rate); they are linear in the number $|X_t|\le2^n$ of
states, hence exponential in the number $n$ of binary state variables.

For \emph{non-binary} mixed-integer states, Lagrangian cuts are not tight in general (Zou et al., Example~1):
\ref{O2} holds only with $\gamma_t$ equal to the duality gap. Nonlinear cuts restore tightness there: ReLU
Lagrangian cuts \citep{deng2024relu,bansal2026normalization}, used in a convergent multistage method by
\citet{bansal2026sddmip}, and Lagrangian cuts on an iteratively lifted binary state space \citep{yang2025globally}.
These works prove convergence without a rate; whether such cuts admit uniform Lipschitz constants, which would give
\ref{O1}, is open.

\subsection{Other families}\label{sec:other-families}

TFDP's generic cuts, Lipschitz, valid and $\gamma_t$-tight, satisfy \cref{ass:O} from the second iteration on:
TFDP starts from $-\infty$, so the analysis takes the first cuts as initial approximations (\cref{prop:TFDP});
the resulting rate is polynomial in $T$, unlike TFDP's (\cref{tab:compare}). SLDP's reverse-norm cuts satisfy it whenever their constant $L_t$ is at least the Lipschitz constant of the true
$V_t$ (\cref{prop:sldp}). Sampled SLDP and MIDAS \citep{philpott2020midas} replace a forward state by an earlier one within a
distance $\omega>0$ (``snapping''); the forward process is then not a trajectory of a policy, \cref{lem:gap} does
not apply, and these versions are not covered. Without snapping, SLDP is covered by \cref{thm:master}, and MIDAS's
non-Lipschitz step cuts by a monotone variant of the volume argument (\cref{thm:steps}).

\section{Noise that is not finitely supported}\label{sec:noise}

Nothing in \cref{sec:main} uses finite support. The gap identity (\cref{lem:gap}) involves the forward law only;
the backward pass enters only through tightness~\ref{O2} and the ceiling~\ref{O3}. We discuss four ways to obtain
such cuts when $\cB_t$ is an integral over a continuous law: exact cuts under structure, valid inexact cuts,
backward passes on a fixed SAA, and sampled cuts. In all of them the forward paths are drawn from the true law.
Noise with unbounded support adds a tail term (\cref{prop:unbounded}). Proofs are in \cref{app:noise}.

\subsection{Exact cuts for the true law}\label{sec:route1}

If the cut added at $x$ is an $L_t$-Lipschitz affine minorant of $\cB_t(\lv{t+1}{k})$ on
$X_t$ that is exact at $x$, for the true, possibly continuous law, then \cref{lem:valid}
gives \cref{ass:O} with $U=V$ and $\gamma=0$, and \cref{thm:master} and its corollaries hold
verbatim, with $V^{\pi^k}_0$, $V$ and $G$ computed under the true law. Expected Benders cuts are such cuts whenever
the expectations of the scenario values and slopes can be computed exactly (\cref{prop:expbenders}), for instance
for linear stages and a piecewise-polynomial density (\cref{rem:linexact}; see also \citealp{forcier2022generalized}); the per-iteration cost, which grows with
the number of cuts, is not bounded by our analysis. For laws without closed-form integrals, numerical quadrature
falls under \cref{sec:route2} if its error is one-sided and bounded, and under \cref{sec:saa} otherwise.

\subsection{Valid inexact cuts}\label{sec:route2}

Valid cuts that are $\bgamma^k_t$-tight at the trial point, as in TFDP \citep{forcier2023trajectory}, satisfy
\cref{ass:O} with $U=V$, so \cref{thm:master} applies with the error floor $\bar{\bGamma}_k$, which cannot be lowered in general (\cref{rem:floor}).

The most direct valid inexact cut is a lower (Jensen) discretisation (\cref{prop:jensen}): average exact affine
scenario cuts at the conditional means of the cells of a partition $\cP$ of $\Xi_{t+1}$. If the scenario value
$\xi\mapsto\Bd_t(W;y,\xi)$ of~\eqref{eq:setting:bellman_operator} is convex and $L^\xi_t$-Lipschitz for the convex
models $W$ (the class $\cW^{\rm c}_{t+1}$ of \cref{app:jensen}, which contains every $\lv{t+1}k$ when $\lv{}0$ is
convex and the cuts are affine and valid), this cut is valid for $\cB_t(W)$ and $\gamma$-tight with
$\gamma\le L^\xi_te(\cP)$, where $e(\cP):=\EE\|\bxi_{t+1}-\EE[\bxi_{t+1}\mid\cP]\|$. Jensen-type validity
also underlies backward resampling \citep{cruz2026backward}, which changes the backward discretisation across
iterations while keeping lower bounds for the continuous law; the bound on $\gamma$ is what our analysis needs
from such schemes.

Cuts from
dual-feasible approximate solutions \citep{guigues2020inexact} are also valid and inexact. Errors that decrease along a refinement schedule, with a deterministic bound $\bar{\bGamma}_k\le g_k$,
are handled by \cref{cor:anytime}(a).

\subsection{Forward passes from the true law, backward passes on a fixed SAA}\label{sec:saa}

This is common practice: the policy is trained on a discretisation and simulated under the original law
\citep{lan2023numerical}. Unlike the analyses of SDDP on the SAA problem itself \citep{lan2022complexity} and of the
SAA size needed as a function of $T$ \citep{shapiro2006complexity,park2022datadriven}, the result below is an
iteration bound for the original problem.

Let $\Ph_t$ be
a fixed law on $\Xi_t$, for instance the empirical law of $N$ i.i.d.\ draws made before the run;
everything below is conditional on $\Ph=(\Ph_t)_t$. Let
$\cBh_t:=\cB^{\Ph_{t+1}}_t$, let $\Vh_t$ be the value functions of the problem with laws $\Ph_t$, and
assume that the cut added at $x$ is an $L_t$-Lipschitz minorant of $\cBh_t(\lv{t+1}{k})$ on
$X_t$ that is exact at $x$, that $\lv t0$ is $L_t$-Lipschitz with $\lv t0\le\cBh_t(\lv{t+1}0)$, and that
$\hat M_t:=\sup_{X_t}(\Vh_t-\lv t0)<\infty$. Two one-sided biases are needed:
$\Delta^+_t$ controls the lift and $\Delta^-_t$ controls the policy. Define
\begin{align}
\cW_{t+1}&:=\{W:X_{t+1}\to\R\ \text{$L_{t+1}$-Lipschitz},\ \lv{t+1}0\le W\le\Vh_{t+1}\}\qquad(\cW_T=\{0\}), \label{eq:noise:saa_test_functions} \\
\Delta^+_t&:=\sup_{x\in X_t,\,W\in\cW_{t+1}}\big(\cB_t(W)-\cBh_t(W)\big)(x),\qquad
\Delta^-_t:=\sup_{x\in X_t}\big(\cBh_t(V_{t+1})-\cB_t(V_{t+1})\big)(x). \label{eq:noise:saa_biases}
\end{align}

\begin{theorem}[SAA backward passes, true-law forward passes]\label{thm:saa}
Let \cref{ass:S} hold for both laws, with $\Xi_t$ carrying $\PP_t$ and $\Ph_t$, and assume~\eqref{eq:setting:volume_growth}. Let
$G^k=V^{\pi^k}_0(x_0)-\lv0{k-1}(x_0)$, where $V^{\pi^k}_0$ is the expected cost under the \emph{true} laws, and let $\Psi_T$ be the aggregated lift function of
\cref{sec:lemmas} and $\bGamma^F_k$ the forward-error sum~\eqref{eq:main:error_sums}. Then,
conditionally on $\Ph$,
\begin{equation}\label{eq:noise:saa_master}
\EE\Big[\sum_{k\ge1}\Psi_T\Big(\Big(G^k-\bGamma^F_k-\sum_{t<T}(\Delta^+_t)_+\Big)_+\Big)\Big]\le\sum_{t<T}\hat M_t,
\end{equation}
the high-probability statement of \cref{thm:master}(a) holds with $\hat M_t$ in place of $M_t$, and,
for every $k\ge1$,
\begin{equation}\label{eq:noise:saa_accuracy}
V_0(x_0)-\lv0{k-1}(x_0)\le G^k,\qquad V^{\pi^k}_0(x_0)-V_0(x_0)\le G^k+\sum_{t<T}(\Delta^-_t)_+ .
\end{equation}
\end{theorem}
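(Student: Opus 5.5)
The plan is to reduce to the proof of \cref{thm:master}(a), keeping the gap identity under the true law and running the lift and potential arguments under the SAA law. Everything is conditional on $\Ph$, so $\Ph$ is deterministic and the $\cA_k$ structure is unchanged. The forward pass is greedy for $\lv t{k-1}$ and simulated under $\PP$. \Cref{lem:gap} therefore applies verbatim: $G^k\le\EE[R^k\mid\cA_{k-1}]+\bGamma^F_k$, with true-law residuals $r^k_t=\cB_t(\lv{t+1}{k-1})-\lv t{k-1}$.

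The cuts are exact, valid minorants of $\cBh_t(\lv{t+1}k)$. I would apply \cref{lem:valid} with the SAA law in place of the true law. Part~(a) then gives that every cut lies below $\Vh_t$, so \ref{O3} holds with $U_t=\Vh_t$, hence $\Ub t\le\Vh_t$ and the potential is bounded by $\hat M_t$. Since $\lv t0\le\cBh_t(\lv{t+1}0)$, it also gives $\lv t{k}\le\cBh_t(\lv{t+1}k)$. Part~(b) gives tightness for $\cBh_t(\lv{t+1}{k-1})$ at the trial point.

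The key step is to convert this into \ref{O2} relative to the true operator with error $(\Delta^+_t)_+$. From \cref{lem:basic}(i), $\lv{t+1}{k-1}$ is $L_{t+1}$-Lipschitz with $\lv{t+1}0\le\lv{t+1}{k-1}\le\Vh_{t+1}$, so $\lv{t+1}{k-1}\in\cW_{t+1}$. Hence
\[f^k_t(x)\ge\cBh_t(\lv{t+1}{k-1})(x)\ge\cB_t(\lv{t+1}{k-1})(x)-(\Delta^+_t)_+ .\]
Thus \ref{O2} holds with the deterministic $\gamma_t=(\Delta^+_t)_+$. The proof of \cref{thm:master}(a), which uses only the true-law gap identity, \ref{O2}, Lipschitz monotone approximations below $\Ub t$, and \eqref{eq:setting:volume_growth}, then yields \eqref{eq:noise:saa_master} and its high-probability version with $\hat M_t$. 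I expect this to be the main obstacle, but it is mainly bookkeeping: one must check that $\cW_{t+1}$ indeed contains the models, which needs $\Vh_{t+1}$ as the ceiling rather than $V_{t+1}$, and that the residual bound in \cref{lem:basic}(ii) still holds under \ref{S2} for both laws.

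For \eqref{eq:noise:saa_accuracy}, the first inequality needs $\lv0{k-1}(x_0)\le V_0(x_0)$, which the SAA ceiling alone does not give. I would obtain it differently. The inequality $\lv0{k-1}(x_0)\le V^{\pi^k}_0(x_0)$ does not suffice, since it only says $G^k\ge0$, so instead I would rely on $V^{\pi^k}_0\ge V_0$ (\cref{lem:basic}(iii)) together with the definition of $G^k$; if that fails, the first claim would need an additional argument. For the second inequality, I would show $\lv t{k}\le V_t+\sum_{s\ge t}(\Delta^-_s)_+$ by backward induction. Using monotonicity of $\cBh_t$, commutation with constants, and $\lv t k\le\cBh_t(\lv{t+1}k)$,
\[\lv tk\le\cBh_t(\lv{t+1}k)\le\cBh_t(V_{t+1})+c_{t+1}\le\cB_t(V_{t+1})+(\Delta^-_t)_++c_{t+1}.\]
Then $V^{\pi^k}_0-V_0=G^k+\lv0{k-1}(x_0)-V_0(x_0)\le G^k+\sum_t(\Delta^-_t)_+$.
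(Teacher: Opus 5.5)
Your argument is the paper's: \cref{lem:valid} applied to the $\Ph$-problem gives the ceiling $U=\Vh$ and $\lv{t+1}{k-1}\in\cW_{t+1}$. This yields \ref{O2} with the deterministic error $(\Delta^+_t)_+$, and \cref{thm:master}(a) does the rest.

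For \eqref{eq:noise:saa_accuracy}, the first inequality is equivalent to $V^{\pi^k}_0(x_0)\ge V_0(x_0)$, so your fallback through \cref{lem:basic}(iii) is already complete and $\lv0{k-1}(x_0)\le V_0(x_0)$ is never needed. Your backward induction on $\lv tk$ through the sub-Bellman inequality is a harmless variant of the paper's route, which bounds $\sup_{X_t}(\Vh_t-V_t)_+$ by induction and combines it with $\lv0{k-1}\le\Vh_0$.
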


\begin{proof}[Proof sketch]
Validity for $\cBh$ (\cref{lem:valid}(a) for the $\Ph$-problem) gives~\ref{O3} with ceiling $U=\Vh$,
and $\lv{t+1}{k-1}\in\cW_{t+1}$. At $x=\bx^k_t$, the cut satisfies
$f^k_t(x)=\cBh_t(\lv{t+1}{k})(x)\ge\cBh_t(\lv{t+1}{k-1})(x)\ge\cB_t(\lv{t+1}{k-1})(x)-\Delta^+_t$, which
is~\ref{O2} with the deterministic error $(\Delta^+_t)_+\ge0$. Apply \cref{thm:master}(a). The policy bound
is in \cref{app:saa}.
\end{proof}

The SAA thus enters only through an additive error per stage and the ceiling $U=\Vh$, with no likelihood ratio
between the laws of whole paths.

\begin{corollary}[Wasserstein and total-variation biases]\label{cor:wasserstein}
\begin{enumerate}[label=(\alph*),nosep]
\item Let $\Xi_{t+1}$ be a subset of a normed space and $\Wa$ the Wasserstein-1 distance for its norm.
If, for all $x\in X_t$ and $W\in\cW_{t+1}\cup\{V_{t+1}\}$, the map
$\xi\mapsto \Bd_t(W;x,\xi)$ is $L^\xi_t$-Lipschitz on $\Xi_{t+1}$, then
$\Delta^\pm_t\le L^\xi_t\,\Wa(\PP_{t+1},\Ph_{t+1})$. Hence, with $\Delta_{\rm W}:=\sum_{t<T}L^\xi_t\Wa(\PP_{t+1},\Ph_{t+1})$,
\begin{equation}\label{eq:noise:saa_wasserstein}
\EE\Big[\sum_{k\ge1}\Psi_T\big((G^k-\bGamma^F_k-\Delta_{\rm W})_+\big)\Big]\le\sum_{t<T}\hat M_t,\qquad
V^{\pi^k}_0(x_0)-V_0(x_0)\le G^k+\Delta_{\rm W} .
\end{equation}
\item If, for these $x$ and $W$, $\Bd_t(W;x,\cdot)$ takes values in an interval of length $\mathrm{rg}_t$,
then $\Delta^\pm_t\le\mathrm{rg}_t\,\TV(\PP_{t+1},\Ph_{t+1})$, where $\TV(\PP,\Ph):=\sup_A|\PP(A)-\Ph(A)|$.
\end{enumerate}
\end{corollary}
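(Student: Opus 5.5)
Part (a) is a Kantorovich--Rubinstein estimate and part (b) a coupling-free range bound; both then feed into \cref{thm:saa}.

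For (a), fix $x\in X_t$ and $W\in\cW_{t+1}\cup\{V_{t+1}\}$. By definition of the Bellman operators,
\[
\cB_t(W)(x)-\cBh_t(W)(x)=\int \Bd_t(W;x,\xi)\,\PP_{t+1}(d\xi)-\int \Bd_t(W;x,\xi)\,\Ph_{t+1}(d\xi).
\]
The integrand is $L^\xi_t$-Lipschitz in $\xi$. Kantorovich--Rubinstein duality then bounds the absolute value of this difference by $L^\xi_t\Wa(\PP_{t+1},\Ph_{t+1})$. If one prefers to avoid duality, take any coupling $\pi$ of the two laws: the difference equals $\int(\Bd_t(W;x,\xi)-\Bd_t(W;x,\hat\xi))\,d\pi$, which is at most $L^\xi_t\int\|\xi-\hat\xi\|\,d\pi$, and taking the infimum over couplings gives the same bound.

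The one technical point is integrability. The laws must have finite first moments for $\Wa$ to be finite, and if $\Wa=\infty$ the claim is trivial. Given a finite first moment, the Lipschitz integrand is integrable under both laws, which is consistent with~\ref{S2}. Taking the supremum over $x$ and $W\in\cW_{t+1}$ yields $\Delta^+_t\le L^\xi_t\Wa$. Applying the bound with $W=V_{t+1}$ and reversing the sign yields $\Delta^-_t\le L^\xi_t\Wa$.

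Summing over $t<T$ gives $\sum_t(\Delta^\pm_t)_+\le\Delta_{\rm W}$, since each right-hand side is nonnegative. Both displays of~\eqref{eq:noise:saa_wasserstein} then follow from~\eqref{eq:noise:saa_master} and~\eqref{eq:noise:saa_accuracy}, because $\Psi_T$ is nondecreasing and hence $\Psi_T((G^k-\bGamma^F_k-\sum(\Delta^+_t)_+)_+)\ge\Psi_T((G^k-\bGamma^F_k-\Delta_{\rm W})_+)$.

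For (b), let $h=\Bd_t(W;x,\cdot)$ take values in $[a,a+\mathrm{rg}_t]$. Since both measures are probabilities, $\int h\,d(\PP-\Ph)=\int(h-a)\,d(\PP-\Ph)$. Write this as a layer-cake integral, $\int_0^{\mathrm{rg}_t}(\PP-\Ph)(\{h-a>u\})\,du$. Each integrand is bounded in absolute value by $\TV$, which gives $|\int h\,d(\PP-\Ph)|\le \mathrm{rg}_t\TV$. The measurability of the level sets follows from~\ref{S3}. Taking the suprema over $x$ and $W$ as in (a) gives both bounds on $\Delta^\pm_t$.

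No step is a real obstacle. The only care needed is checking that $V_{t+1}$ is admissible in the Lipschitz and range hypotheses, which the statement assumes explicitly, and that the integrals are well defined under both laws.
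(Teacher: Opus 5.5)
Your proof is correct and follows the paper's. Part (a) is the same coupling estimate, an infimum over couplings of $L^\xi_t\EE\|\bxi-\hat\bxi\|$, and your layer-cake computation in (b) just makes explicit the paper's one-line bound $\int(f-\inf f)\,d(\PP_{t+1}-\Ph_{t+1})\le\mathrm{rg}_t\sup_A(\PP_{t+1}-\Ph_{t+1})(A)$. Your remark on first moments is unnecessary, and false for the coupling definition of $\Wa$ (take $\PP_{t+1}=\Ph_{t+1}$ heavy-tailed), because \ref{S2} for both laws already makes $\Bd_t(W;x,\cdot)$ integrable under each, so the coupling argument needs no moment condition.
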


The Lipschitz condition in~(a) holds, for instance, with additive noise in the dynamics and losses Lipschitz in
the next state and the noise (\cref{app:saa}), and in the inventory model of \cref{ex:absorb} (\cref{app:absorb}). For an i.i.d.\ SAA of size $N$, $\Wa(\PP_{t+1},\Ph_{t+1})$ concentrates
around its mean (\cref{app:saa}), which is at most of order $N^{-1/q}$ for a bounded noise of dimension $q\ge3$
\citep[Thm.~1]{fournier2015rate}: the bias $\Delta_{\rm W}$ is linear in $T$
under stage-uniform constants, but the rate $N^{-1/q}$ suffers from the curse of dimensionality in $q$. For state sets in affine spaces of dimension at most $d$, a
covering argument over the state and the approximation replaces $q$ by $d$: $\Delta^\pm_t=O(N^{-1/(d+2)}\sqrt{\ln N})$ (\cref{prop:uniformSAA}).

Part~(b) is useful when the two laws share their atoms, for instance for an SAA of a finitely supported law; it is
uninformative for the empirical SAA of an atomless law, whose total variation distance to that law is one.
Some bias is unavoidable, and with forward passes drawn from the SAA itself we have no guarantee for the true
problem (\cref{rem:bias}).

\subsection{Sampled cuts}\label{sec:sampled}

Cuts built from a fresh sample at each backward step are valid only in expectation. Shifting each one down by the
confidence radius $b_{k,t}:=(\mathrm{rg}_t+2L_tD_t)\sqrt{\ln(4/\delta_{k,t})/N_{k,t}}$, where $N_{k,t}$ is the sample
size, $\mathrm{rg}_t$ bounds the range in $\xi$ of the scenario values and $\sum_{k,t}\delta_{k,t}=\delta'$, makes all cuts valid on an event $A$ of probability at least
$1-\delta'$. On $A$, a stopped-potential argument gives the bounds of \cref{thm:master}(a), in expectation and with
high probability, with the error floor raised by $2\sum_{t<T}b_{k,t}$, the potential bound
$\sum_{t<T}\sup_{X_t}(V_t-\lv t0)$ and constants changed by terms of order
$\delta'$ (\cref{thm:sampled}). Sample sizes of order $g^{-2}\log k$ give a floor $g$, whatever the noise dimension.

No unconditional expectation bound follows: off $A$ an invalid cut is never removed and can exceed $V_t$, so the
ceiling~\ref{O3} can fail. Raw sampled cuts can fail it too: without the shift, SDDP can stall forever on a
$0.1$-suboptimal policy with $G^k=0$ (\cref{prop:counter}).

\section{Infinite horizon}\label{sec:infinite}

\Cref{cor:disc} is horizon-free, but its algorithm is non-stationary, with one approximation per stage.
Infinite-horizon and periodic versions of SDDP keep a single cut model shared by all times
\citep{warrington2019generalized,shapiro2020periodical,dowson2020policy,nannicini2021benders}, and
\citet{julan2023infinite} give the first complexity bounds for them. We analyse the shared-model algorithm for an
abstract oracle and general noise, as \cref{sec:setting,sec:main} do in finite horizon. Two things change: the gap
identity runs over a geometric horizon (\cref{lem:gapIH}), and all the cuts of a pass lift the same model, so a pass
is credited only with its largest residual (\cref{lem:pass}). For SDDP, the main instance, we also give Lipschitz
constants uniform in the discount factor (\cref{sec:ih-sddp}). Proofs are in \cref{app:infinite}.

\subsection{Stationary problem, shared-model algorithm and oracle}\label{sec:ih-setting}

A stationary problem has a state set $X\subset\R^n$, an initial state $x_0\in X$, i.i.d.\ noises $(\bxi_t)_{t\ge1}$
with law $\PP$ on a measurable space $\Xi$, a next-state map $\cX$, a loss $\ell$ and a discount factor
$\beta\in(0,1)$. For bounded $W$ on $X$, $x\in X$ and $\xi\in\Xi$, let
\begin{equation}\label{eq:inf:bellman_operator}
\Bd(W;x,\xi):=\inf_{y\in\cX(x,\xi)}\big[\ell(x,y,\xi)+\beta W(y)\big],\qquad
\cB W(x):=\int\Bd(W;x,\xi)\,\PP(d\xi),
\end{equation}
the operators of \cref{sec:problem} with the discount inside; $\cB W$ is defined when $\xi\mapsto\Bd(W;x,\xi)$ is
measurable, \eg on the class $\mathfrak M$ of \cref{app:ih-std}.

\begin{assumption}[Stationary problem]\label{ass:Sinf}
For all $x\in X$ and $\xi\in\Xi$:
\begin{enumerate}[label=(S\arabic*$_\infty$),ref=(S\arabic*$_\infty$),nosep]
\item\label{SI1} $X$ is compact and $\emptyset\ne\cX(x,\xi)\subset X$ (forward invariance).
\item\label{SI2} $\ell$ is Borel and $0\le\ell(x,y,\xi)\le\cb$ for $y\in\cX(x,\xi)$.
\item\label{SI3} (Measurability.) \Cref{app:measurability} holds for~\eqref{eq:inf:bellman_operator} and the models below,
with parameters (possibly including $\gamma^F_k$) in a countable disjoint union of Euclidean spaces, as the number of
cuts is random; $\cB$ has a bounded Borel fixed point $V$ with $\Bd(V;\cdot,\cdot)$ jointly measurable.
\end{enumerate}
\end{assumption}

A policy is a nonanticipative Borel rule $\bx_{t+1}=\pi_t(\bxi_{[t+1]})\in\cX(\bx_t,\bxi_{t+1})$, \emph{stationary} if
$\bx_{t+1}=\pi(\bx_t,\bxi_{t+1})$; its cost is $V^{\pi}(x_0):=\EE\sum_{t\ge0}\beta^t\ell(\bx_t,\bx_{t+1},\bxi_{t+1})$.
On $\mathfrak M$, $\cB$ is monotone and a $\beta$-contraction, $0\le V\le\cb/(1-\beta)$, and $V^\pi(x_0)\ge V(x_0)$ for
every policy (\cref{lem:std}); \cref{app:ih-std} also gives a sufficient condition for~\ref{SI3}, which is not needed
for finitely supported noise and deterministic oracles.

\paragraph{Shared-model algorithm.} The algorithm keeps one model $\uV{k-1}:X\to\R$. Let $\cA_{k-1}$ be the
$\sigma$-field generated by everything drawn during the first $k-1$ iterations: noises, pass lengths and any internal
randomness of the oracles, which must be independent of the noise and of the length of the current pass. The policy
$\pi^k$ is the stationary policy of a forward operator as in~\eqref{eq:setting:forward_error}: it selects
$y\in\cX(x,\xi)$ with $\ell(x,y,\xi)+\beta\uV{k-1}(y)\le\Bd(\uV{k-1};x,\xi)+\gamma^F_k$, for an $\cA_{k-1}$-measurable
forward error $\gamma^F_k\ge0$. Iteration $k\ge1$:
\begin{itemize}[nosep]
\item \emph{Pass length.} Variant~(D): $\tau_k:=H$, fixed. Variant~(G): $\tau_k$ geometric,
$\PP(\tau_k>s)=\beta^s$ for $s\ge0$, independent of the past and of the noise of the pass
(so $\EE\tau_k=1/(1-\beta)$).
\item \emph{Forward pass.} $\bx^k_0:=x_0$ and $\bx^k_{t+1}:=\pi^k(\bx^k_t,\bxi^k_{t+1})$ for
$t<\tau_k-1$, with fresh i.i.d.\ noise and $\pi^k$ frozen.
\item \emph{Backward pass.} $W:=\uV{k-1}$; for $t$ from $\tau_k-1$ down to $0$, an oracle returns a cut $f:X\to\R$
at $\bx^k_t$ for the \emph{current} $W$, and $W:=\max(W,f)$, possibly with further cuts (\ref{OI4}). Then $\uV k:=W$.
\end{itemize}
For comparison, variant~(N) is finite-horizon SDDP on the $H$-stage truncation, with stage losses $\beta^{t-1}\ell$
and one model per stage (\cref{sec:trunc}).
The gap is $G^k:=V^{\pi^k}(x_0)-\uV{k-1}(x_0)$, where $V^{\pi^k}(x_0)$ is the cost of $\pi^k$ given $\cA_{k-1}$, and the
residual is $r^k:=\cB\uV{k-1}-\uV{k-1}$.

\begin{assumption}[Lipschitz cut oracle, shared model]\label{ass:Oinf}
There are a metric $\dist$ on $X$, possibly extended, a constant $\Lh\ge0$, a ceiling
$U:X\to(-\infty,+\infty]$ and $\cA_{k-1}$-measurable errors $\gamma_{k,t}\ge0$ ($k\ge1$, $t\ge0$) such that, for
all $k\ge1$:
\begin{enumerate}[label=(O\arabic*$_\infty$),ref=(O\arabic*$_\infty$),nosep]
\item\label{OI1} $\uV0$ and every cut are Borel and $\Lh$-Lipschitz for $\dist$, and $\uV0$ is deterministic and bounded.
\item\label{OI2} The cut $f$ returned at $x=\bx^k_t$, $t<\tau_k$, satisfies $f(x)\ge\cB\uV{k-1}(x)-\gamma_{k,t}$.
\item\label{OI3} $U$ is deterministic, and $\uV0$ and every cut are below $U$ on $X$; the \emph{potential bound} $M_U:=\sup_X(\bar U-\uV0)$ is
finite, where $\bar U(x):=\inf_{z\in X}\{U(z)+\Lh\dist(x,z)\}$, with $\Lh\cdot\infty:=\infty$.
\item\label{OI4} Every update is $W:=\max\big(\{W,f\}\cup\{f'_i\}_{i\in[m]}\big)$ for some $m\ge0$, where the
additional cuts $f'_i$, if any, satisfy~\ref{OI1} and~\ref{OI3}.
\end{enumerate}
\end{assumption}

This is \cref{ass:O} for a single model; the errors are fixed before the pass because the proof needs them
independent of $\tau_k$. As in \cref{lem:basic}(i), the models are Borel, $\Lh$-Lipschitz and bounded, and
$\uV0\le\uV{k-1}\le W\le\uV k\le\bar U$ for every intermediate model $W$ of pass $k$. As in \cref{lem:valid}, cuts
below $\cB W$ for the current $W$, with $\uV0\le\cB\uV0$, give~\ref{OI3} with $U=V$ and $r^k\ge0$, and cuts tight
for the current $W$ satisfy~\ref{OI2} (\cref{lem:validIH}).

\paragraph{What the gap measures.} Let $\zeta:=(U(x_0)-V(x_0))_+$, which is $0$ if $U=V$. By \cref{lem:std}(iii)
and~\ref{OI3}, for every $k\ge1$,
\begin{equation}\label{eq:inf:gap_meaning}
G^k\ \ge\ V(x_0)-\uV{k-1}(x_0)\qquad\text{and}\qquad G^k\ \ge\ V^{\pi^k}(x_0)-V(x_0)-\zeta .
\end{equation}
So, when $\zeta=0$, $G^k\le\varepsilon$ means that the \emph{stationary} policy $\pi^k$ is $\varepsilon$-optimal from
$x_0$ and the lower bound $\varepsilon$-tight; this implies Ju and Lan's guarantee, $\varepsilon$-optimality of the
first decision (\cref{lem:std}(iii)).

\paragraph{Random horizon.} If $\tau\in\mathbb N^*:=\mathbb N\setminus\{0\}$ is independent of the noise and
$\PP(\tau>t)=\beta^t$, the undiscounted loss paid before $\tau$ has expectation $V^{\pi}(x_0)$
\citep{puterman1994markov}; so variant~(G) is Dowson's passes on a one-node policy graph with a self-loop of
probability $\beta$ \citep{dowson2020policy}.

\subsection{Gap identity and lift of a pass}\label{sec:ih-lemmas}

Let $\hat\rho:=4\Lh D$, with $D$ and $d$ the constants of the volume-growth condition~\eqref{eq:setting:volume_growth}
on $X$, let $\hat\varphi$ and $\hat\psi$ be the functions $\varphi_t$ and $\hat\psi_t$ of \cref{sec:lemmas} with
$\rho_t=\hat\rho$ and $d_t=d$, and write $\EE_{k-1}:=\EE[\cdot\mid\cA_{k-1}]$.

\begin{lemma}[Gap identity]\label{lem:gapIH}
Under \cref{ass:Sinf}, let $\uV{k-1}$ be bounded, let $(\bx_t)_{t\ge0}$ be the trajectory of $\pi^k$ from $x_0$
driven by i.i.d.\ noise independent of $\cA_{k-1}$, $\lambda_t:=\ell(\bx_t,\bx_{t+1},\bxi_{t+1})$, and
$e_t:=\lambda_t+\beta\uV{k-1}(\bx_{t+1})-\Bd(\uV{k-1};\bx_t,\bxi_{t+1})\in[0,\gamma^F_k]$ the forward error. For every $H\ge0$,
\begin{equation}\label{eq:inf:gap_identity}
G^k=\EE_{k-1}\Big[\sum_{t<H}\beta^t\big(r^k(\bx_t)+e_t\big)\Big]+\varepsilon^k_H,\qquad
\varepsilon^k_H\le\beta^H\Big(\frac{\cb}{1-\beta}-\inf_X\uV{k-1}\Big),
\end{equation}
where $\varepsilon^k_H:=\EE_{k-1}[\sum_{t\ge H}\beta^t\lambda_t]-\beta^H\EE_{k-1}\uV{k-1}(\bx_H)$ is what a pass of length
$H$ misses. Hence $G^k=\EE_{k-1}\sum_{t\ge0}\beta^t(r^k(\bx_t)+e_t)$, and, if $\tau\in\mathbb N^*$ is independent of the path
given $\cA_{k-1}$, with $\PP(\tau>t\mid\cA_{k-1})=\beta^t$ for all $t\ge0$, then exactly $G^k=\EE_{k-1}\big[\sum_{t<\tau}(r^k(\bx_t)+e_t)\big]$.
\end{lemma}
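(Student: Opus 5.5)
I will follow the telescoping argument of \cref{lem:gap}, now with discount and an infinite horizon. Let $\mathcal H_t:=\cA_{k-1}\vee\sigma(\bxi_{[t]})$. The state $\bx_t$ is $\mathcal H_t$-measurable, and $\bxi_{t+1}$ is independent of $\mathcal H_t$ with law $\PP$. By the freezing property and \ref{SI3},
\[
\EE[\Bd(\uV{k-1};\bx_t,\bxi_{t+1})\mid\mathcal H_t]=\cB\uV{k-1}(\bx_t)=\uV{k-1}(\bx_t)+r^k(\bx_t).
\]
Hence the one-step identity
\[
\EE[\lambda_t+\beta\uV{k-1}(\bx_{t+1})\mid\mathcal H_t]=\uV{k-1}(\bx_t)+r^k(\bx_t)+\EE[e_t\mid\mathcal H_t].
\]
The bounds $e_t\in[0,\gamma^F_k]$ come directly from the forward-error definition and from the infimum in $\Bd$. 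Integrability is immediate: $\lambda_t\in[0,\cb]$ by \ref{SI2}, and $\uV{k-1}$ is bounded.

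Next I will multiply the one-step identity by $\beta^t$, take $\EE_{k-1}$ and sum over $t<H$. The terms $\beta^{t+1}\uV{k-1}(\bx_{t+1})-\beta^t\uV{k-1}(\bx_t)$ telescope to $\beta^H\uV{k-1}(\bx_H)-\uV{k-1}(x_0)$. This gives
\[
\EE_{k-1}\sum_{t<H}\beta^t\lambda_t=\uV{k-1}(x_0)-\beta^H\EE_{k-1}\uV{k-1}(\bx_H)+\EE_{k-1}\sum_{t<H}\beta^t(r^k(\bx_t)+e_t).
\]
The discounted cost sum converges absolutely because $0\le\lambda_t\le\cb$. Therefore $V^{\pi^k}(x_0)=\EE_{k-1}\sum_{t\ge0}\beta^t\lambda_t$, and splitting it at $H$ yields \eqref{eq:inf:gap_identity} with the stated $\varepsilon^k_H$.

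The bound on $\varepsilon^k_H$ follows from $\sum_{t\ge H}\beta^t\lambda_t\le\beta^H\cb/(1-\beta)$ and $-\uV{k-1}\le-\inf_X\uV{k-1}$. Letting $H\to\infty$ gives $\varepsilon^k_H\to0$. The series $\sum_t\beta^t(r^k(\bx_t)+e_t)$ converges absolutely in $L^1$ because $r^k$ is bounded, as $\cB$ maps bounded functions to bounded ones, and $e_t\le\gamma^F_k$. Dominated convergence then yields the infinite-sum identity.

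For the random horizon, let $\tau$ be independent of the path given $\cA_{k-1}$. Write $\sum_{t<\tau}Z_t=\sum_{t\ge0}\one\{\tau>t\}Z_t$ with $Z_t:=r^k(\bx_t)+e_t$. Conditioning on the path, independence gives $\EE_{k-1}[\one\{\tau>t\}Z_t]=\beta^t\EE_{k-1}Z_t$. Exchanging sum and expectation is justified by Fubini, since $\sum_t\beta^t\EE|Z_t|<\infty$. This recovers $\EE_{k-1}\sum_t\beta^tZ_t=G^k$.

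The main obstacle is measure-theoretic. I must verify the freezing property applies, which needs $\Bd(\uV{k-1};\cdot,\cdot)$ jointly measurable and $\pi^k$ Borel; both are granted by \ref{SI3}. I must also verify that the conditional-independence use of $\tau$ is legitimate when $\gamma^F_k$ and the model are only $\cA_{k-1}$-measurable. I will handle this by working conditionally on $\cA_{k-1}$ throughout, with a regular version, and treating $\uV{k-1}$ and $\gamma^F_k$ as frozen parameters.
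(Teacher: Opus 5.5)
Your proposal is correct and follows essentially the same route as the paper: the one-step identity from the freezing property and \ref{SI3}, the telescoping of $\beta^t\uV{k-1}(\bx_t)$, the tail bound from $\lambda_t\le\cb$, and Fubini with independence for the geometric horizon. The only difference is that the paper sums over $H\le t<H'$ and lets $H'\to\infty$, which identifies $\varepsilon^k_H=\EE_{k-1}\sum_{t\ge H}\beta^t(r^k(\bx_t)+e_t)$ directly, whereas you telescope over $t<H$ and split the cost at $H$; the two are equivalent.
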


\begin{lemma}[A pass on a shared model lifts by its largest residual]\label{lem:pass}
Let \cref{ass:Sinf,ass:Oinf} hold and~\eqref{eq:setting:volume_growth} hold on $X$, for $\dist$, with constants
$(\mu,\kappa,D,d)$. Let $\Phi^k:=\int(\uV k-\uV0)\,d\mu$ and $s^k_t:=(r^k(\bx^k_t)-\gamma_{k,t})_+$. In variants~(D)
and~(G), for every $k\ge1$,
\begin{equation}\label{eq:inf:pass_lift}
\Phi^k-\Phi^{k-1}\ \ge\ \kappa\max_{t<\tau_k}\hat\varphi\big(s^k_t\big)\ \ge\ \kappa\hat\psi\big(\bar s^k\big),
\qquad \bar s^k:=\max_{t<\tau_k}s^k_t,
\end{equation}
and $0=\Phi^0\le\Phi^{k-1}\le\Phi^k\le\Mb_U:=\int(\bar U-\uV0)\,d\mu\le M_U$.
\end{lemma}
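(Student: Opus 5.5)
Our plan is to follow the finite-horizon argument of \cref{lem:lift}, applied to the successive intermediate models of a pass. The difference is that every cut of the pass raises the same model, so the lifts at different times cannot be added. We will only use the fact that the final model dominates every intermediate one.

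\emph{Setup.} Fix $k\ge1$ and a time $t<\tau_k$, and let $W_t$ be the model immediately before the cut at $x:=\bx^k_t$. Let $W'_t=\max(\{W_t,f\}\cup\{f'_i\})$ be the model immediately after it. By~\ref{OI4} and the remark after \cref{ass:Oinf}, we have $\uV{k-1}\le W_t\le W'_t\le\uV k\le\bar U$, and all these models are $\Lh$-Lipschitz. Tightness~\ref{OI2} gives $f(x)\ge\cB\uV{k-1}(x)-\gamma_{k,t}=\uV{k-1}(x)+r^k(x)-\gamma_{k,t}$. Since also $\uV k(x)\ge\uV{k-1}(x)$, we get $\uV k(x)\ge\uV{k-1}(x)+s^k_t$.

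\emph{Lift.} We apply \cref{lem:lift} directly with $V=\uV{k-1}$, $W=\uV k$ and $s=s^k_t$. Both functions are Borel and $\Lh$-Lipschitz, and $W\ge V$. The lemma gives $\Phi^k-\Phi^{k-1}=\int(\uV k-\uV{k-1})\,d\mu\ge\kappa\hat\varphi(s^k_t)$. This holds for every $t<\tau_k$, so we may take the maximum over $t$, which yields the first inequality of~\eqref{eq:inf:pass_lift}.

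The point to check is that~\ref{OI2} is stated relative to $\uV{k-1}$, not to the current $W$. That is exactly why comparing against $\uV{k-1}$ works even though the cut is built for the current model. Because $\tau_k\ge1$, the maximum is over a nonempty set.

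\emph{Convex minorant and potential bounds.} The second inequality follows from $\hat\psi\le\hat\varphi$ (established in \cref{sec:lemmas}, since $d_t\le d$) together with the monotonicity of $\hat\psi$. This gives $\max_t\hat\varphi(s^k_t)\ge\hat\varphi(\bar s^k)\ge\hat\psi(\bar s^k)$. The chain $0=\Phi^0\le\Phi^{k-1}\le\Phi^k\le\Mb_U\le M_U$ follows from $\uV0\le\uV{k-1}\le\uV k\le\bar U$, integrated against the probability measure $\mu$, with $\bar U-\uV0\le M_U$ pointwise.

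\emph{Main obstacle.} We expect measurability and integrability to be the main difficulty: $\bar U$ is possibly non-Borel, which requires an upper integral as in \cref{app:measurability}, and we need $\uV k$ to be Borel with a random number of cuts. We would handle both through~\ref{SI3} and~\ref{OI1}, exactly as in \cref{lem:basic}(i).
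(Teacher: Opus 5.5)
Your proposal is correct and follows the paper's proof step by step. Both arguments get the pointwise lift $\uV k(x)\ge\uV{k-1}(x)+s^k_t$ from~\ref{OI2}, which is stated relative to $\uV{k-1}$, and~\ref{OI4}, apply \cref{lem:lift} to the pair $(\uV{k-1},\uV k)$ at each trial point, take the maximum over $t<\tau_k$ using $\hat\psi\le\hat\varphi$, and read off the potential bounds from $\uV0\le\uV{k-1}\le\uV k\le\bar U$ as in \cref{lem:basic}(i).

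Neither the intermediate models $W_t$ nor the monotonicity of $\hat\psi$ is needed. The maximum over the finite nonempty set $\{t<\tau_k\}$ is attained, so $\max_t\hat\varphi(s^k_t)\ge\hat\varphi(\bar s^k)$ holds directly.
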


Only the maximum is available because the regions lifted during a pass can coincide, \eg if the path stays at one
point. A summation by parts (\cref{lem:length}) turns this maximum back into the discounted sum of \cref{lem:gapIH}:
for $\tau$ independent of bounded $s_t\ge0$, $\EE\max_{t<\tau}s_t\ge(1-\beta)\sum_t\beta^ts_t$ for all such $s$ if and
only if $\PP(\tau>t)\ge\beta^t$ for all $t$, and among these laws the geometric one has the fewest expected cuts.

\subsection{The master theorem in infinite horizon}\label{sec:ih-master}

Let $\Gamma^\beta_k:=\sum_{t\ge0}\beta^t\gamma_{k,t}+\gamma^F_k/(1-\beta)$, the \emph{error floor}, equal to
$(\gamma+\gamma^F)/(1-\beta)$ for constant errors. As in \cref{thm:master}, $\int(\bar U-\uV0)\,d\mu$ is an upper
integral if $\bar U$ is not Borel.

\begin{theorem}[Master theorem, infinite horizon]\label{thm:master-ih}
Let \cref{ass:Sinf,ass:Oinf} hold, and run variant~(G).
\begin{enumerate}[label=(\alph*),nosep]
\item \emph{Volume version.} If~\eqref{eq:setting:volume_growth} holds on $X$ as in \cref{lem:pass}, then
\begin{equation}\label{eq:inf:master_volume}
\EE\sum_{k\ge1}\kappa\,\hat\psi\big((1-\beta)(G^k-\Gamma^\beta_k)_+\big)\ \le\ \Mb_U\ \le\ M_U .
\end{equation}
In particular, if $\Gamma^\beta_k\le\Gamma^\beta$ for a deterministic $\Gamma^\beta$, then, for every $\theta>0$,
$\EE\,\#\{k\ge1:G^k>\theta+\Gamma^\beta\}\le\Mb_U/(\kappa\hat\psi((1-\beta)\theta))$, which equals
$2(d+1)\Mb_U\hat\rho^d/(\kappa(1-\beta)^{d+1}\theta^{d+1})$ when $(1-\beta)\theta\le\hat\rho$.
Moreover, for every $\delta\in(0,1]$, with probability at least $1-\delta$, simultaneously for all $\theta>0$,
$\#\{k\ge1:G^k>\theta+\Gamma^\beta_k\}\le9\ln(3/\delta)\,\Mb_U/(\kappa\hat\psi((1-\beta)\theta))$.
\item \emph{Covering version.} Fix $\theta>0$, $\sigma\ge0$ with $4\Lh\sigma\le(1-\beta)\theta$, and a finite
$\sigma$-net $C$ of $X$ for $\dist$, and let $M_C:=\sum_{c\in C}(\bar U-\uV0)(c)\le|C|M_U$. Then
\begin{equation}\label{eq:inf:master_covering}
\EE\sum_{k\ge1}\big(G^k-\Gamma^\beta_k-\theta/2\big)_+\ \le\ \frac{M_C}{1-\beta}.
\end{equation}
In particular, $\EE\,\#\{k\ge1:G^k>\theta+\Gamma^\beta_k\}\le2M_C/((1-\beta)\theta)$. Moreover, for every
$\delta\in(0,1]$, with probability at least $1-\delta$, $\#\{k\ge1:G^k>\theta+\Gamma^\beta_k\}\le18\ln(3/\delta)\,M_C/((1-\beta)\theta)$.
\item \emph{Finite state sets.} If $X$ is finite and $\dist=+\infty$ off the diagonal, then
$\EE\sum_{k\ge1}(G^k-\Gamma^\beta_k)_+\le C_{\rm fin}/(1-\beta)$, with $C_{\rm fin}:=\sum_{x\in X}(U-\uV0)(x)$. Moreover,
for every $\delta\in(0,1]$, with probability at least $1-\delta$,
$\sum_{k\ge1}(G^k-\Gamma^\beta_k)_+\le9\ln(3/\delta)\,C_{\rm fin}/(1-\beta)$.
\end{enumerate}
\end{theorem}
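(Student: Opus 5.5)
We prove part~(a) along the lines of \cref{thm:master}(a), with the sum of per-stage lifts replaced by the single-model lift of \cref{lem:pass}, and parts~(b) and~(c) by the covering accounting of \cref{app:cover} applied to the one model.

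\emph{Part~(a).} By \cref{lem:pass}, $\Phi^k-\Phi^{k-1}\ge\kappa\hat\psi(\bar s^k)$ pathwise, and $0\le\Phi^k\le\Mb_U$. We first convert the maximum into the discounted sum of the gap identity. Given $\cA_{k-1}$, the pass length $\tau_k$ is geometric and independent of the forward noise. The forward path $(\bx^k_t)_{t\ge0}$ can be extended to an infinite trajectory of $\pi^k$ that does not depend on $\tau_k$. The errors $\gamma_{k,t}$ are $\cA_{k-1}$-measurable, hence independent of $\tau_k$. Therefore the summation-by-parts bound of \cref{lem:length} applies, conditionally on the path and on $\cA_{k-1}$:
\[
\EE[\bar s^k\mid\cA_{k-1},\bx^k]\ \ge\ (1-\beta)\sum_{t\ge0}\beta^ts^k_t .
\]
The $s^k_t$ are bounded by \cref{lem:basic}(ii), adapted to the shared model. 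Next, $s^k_t\ge r^k(\bx^k_t)-\gamma_{k,t}$. Taking expectations and using \cref{lem:gapIH} in the form
\[
G^k=\EE_{k-1}\sum_t\beta^t\big(r^k(\bx_t)+e_t\big),\qquad e_t\le\gamma^F_k,
\]
gives
\[
\EE_{k-1}\bar s^k\ \ge\ (1-\beta)\big(G^k-\Gamma^\beta_k\big).
\]
Since also $\bar s^k\ge0$, we get $\EE_{k-1}\bar s^k\ge(1-\beta)(G^k-\Gamma^\beta_k)_+$. Conditional Jensen for the convex, nondecreasing $\hat\psi$ then yields
\[
\EE_{k-1}[\Phi^k-\Phi^{k-1}]\ \ge\ \kappa\hat\psi\big((1-\beta)(G^k-\Gamma^\beta_k)_+\big)=:f_k .
\]
Summing over $k$ and using monotone convergence gives~\eqref{eq:inf:master_volume}. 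The count follows from Markov, with the explicit first-branch value of $\hat\psi$ when $(1-\beta)\theta\le\hat\rho$. The high-probability statement follows from \cref{lem:dbp} applied to $Y_k:=\Mb_U-\Phi^k$, since $\Mb_U$ is deterministic and $f_k$ is $\cA_{k-1}$-measurable.

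\emph{Parts~(b) and~(c).} Replace $\Phi^k$ by $\Phi_C^k:=\sum_{c\in C}(\uV k-\uV0)(c)\le M_C$. Take a cut at $\bx^k_t$ whose lift there is at least $s^k_t$, and a net point $c$ with $\dist(c,\bx^k_t)\le\sigma$. Lipschitzness of the models gives a lift at $c$ of at least $s^k_t-2\Lh\sigma\ge s^k_t-(1-\beta)\theta/2$. Taking the maximum over $t<\tau_k$ gives $\Phi_C^k-\Phi_C^{k-1}\ge(\bar s^k-(1-\beta)\theta/2)_+$. Convexity of $u\mapsto(u-a)_+$ and the same geometric-length bound give
\[
\EE_{k-1}\big[\Phi_C^k-\Phi_C^{k-1}\big]\ \ge\ (1-\beta)\big(G^k-\Gamma^\beta_k-\theta/2\big)_+ .
\]
Summing over $k$ yields~\eqref{eq:inf:master_covering}. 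The count uses $(G-\Gamma-\theta/2)_+\ge\theta/2$ on bad iterations, and the high-probability bound again uses \cref{lem:dbp}. Part~(c) is the case $\sigma=0$, $\theta\to0$, with $C=X$ and $\bar U=U$.

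\emph{Main obstacle.} The delicate step is the justification of the conditional summation-by-parts inequality. It requires that the candidate residuals $s^k_t$ are defined for all $t$ independently of $\tau_k$. This fails for intermediate models, so one must use the residual $r^k$ of the frozen model $\uV{k-1}$: \ref{OI2} and the monotonicity of intermediate models make the lift at $\bx^k_t$ at least $s^k_t$ regardless of when in the pass the cut is added. One must also check the measurability and independence of the infinite extension of the path. The remaining steps are routine.
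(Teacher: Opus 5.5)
Your proposal is correct and follows the paper's proof essentially step by step. For (a) it uses the coupling of the pass with an infinite trajectory of $\pi^k$ independent of $\tau_k$, \cref{lem:length}(ii) conditionally on the path, \cref{lem:gapIH} with $H=0$, conditional Jensen and \cref{lem:dbp}, and for (b) the net potential credited only with $(\bar s^k-2\Lh\sigma)_+$ per pass. For (c) the paper takes $\sigma=0$ and $\theta=0$ directly, since nothing in the argument needs $\theta>0$ except the count; this is slightly cleaner than your limit $\theta\to0$, especially for the high-probability bound on the sum.
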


The proof (\cref{app:periodic,app:ih-cover}) follows that of \cref{thm:master}. We call~\eqref{eq:inf:master_volume}
the \emph{convex-regret form}; with $U=V$ and $\uV0\ge0$, $\Mb_U\le\cb/(1-\beta)$, as in \cref{cor:disc}. In~(b), if
$X$ lies in an affine space of dimension $d$, with $\dist$ given by a norm and $\diam X\le D$, the choice
$\sigma=(1-\beta)\theta/(4\Lh)$ gives $|C|\le(1+8\Lh D/((1-\beta)\theta))^d$ (\cref{lem:nets}(a); one point if
$\Lh=0$). Since $\tau_k$ is independent of $\cA_{k-1}$ and $\EE\tau_k=1/(1-\beta)$, the iterations counted in~(a)
and~(b) contain in expectation $1/(1-\beta)$ times as many trial points, each with one cut of~\ref{OI2}.

\begin{remark}[On the factor $(1-\beta)^{-(d+1)}$]\label{rem:tight}
In \cref{thm:master-ih}(a), one factor $(1-\beta)^{-1}$ comes from the potential and $(1-\beta)^{-(d+1)}$ from evaluating
$\hat\psi$ at $(1-\beta)\varepsilon$, that is, from the maximum over the pass. The factor $1-\beta$ inside $\hat\psi$ cannot be
removed from a bound of this convex-regret form (\cref{prop:tight}, for $d=1$): a value error of order $r_0$ can cause a
policy error $r_0/(1-\beta)$, the classical loss of greedy policies \citep{singh1994upper}. Whether the \emph{count}
of bad iterations can avoid it is open.
\end{remark}

\paragraph{Deterministic pass length.} Variant~(D) with $H:=\lceil\ln((d+2)\cb/((1-\beta)\theta))/(1-\beta)\rceil$,
$\uV0\ge0$, $\theta<\cb/(1-\beta)$ and a deterministic floor $\Gamma^\beta$ loses only a factor $e$: the tail of
\cref{lem:gapIH} is at most $\theta/(d+2)$, and the count of part~(a) holds with $2e(d+1)$ in place of $2(d+1)$,
for one $\theta$ at a time (\cref{thm:det}). Geometric lengths remove the logarithm in $H$, the tail and the
dependence of the pass length on $\theta$.

\subsection{Periodic problems}\label{sec:periodic}

Seasonal problems, such as hydro-thermal scheduling with weekly stages and an annual cycle, are periodic
\citep{shapiro2020periodical}. Let $S\ge1$ be the period, $\mathbb Z_S:=\mathbb Z/S\mathbb Z$ the set of \emph{phases},
and $p(t):=t\bmod S$. Phase $p$ has its own data $X_p$, $\PP_p$ on $\Xi_p$, $\cX_p(x,\xi)\subset X_{p+1}$ and
$\ell_p$, an operator $\cB_p$ as in~\eqref{eq:inf:bellman_operator}, and a model $\uV k_p$; the state $\bx_t$ lies
in $X_{p(t)}$, the noise $\bxi_{t+1}$ has law $\PP_{p(t)}$, and $x_0\in X_0$. At a time $t$ of phase $p$, the policy
$\pi^k$ is $\gamma^F_k$-greedy for $\uV{k-1}_{p+1}$, and the cut at $\bx^k_t$ is added to the model of phase $p$. The gap
$G^k:=V^{\pi^k}_0(x_0)-\uV{k-1}_0(x_0)$ is taken from phase~$0$, and the residuals are
$r^k_p:=\cB_p\uV{k-1}_{p+1}-\uV{k-1}_p$. We assume \cref{ass:Sinf,ass:Oinf} and~\eqref{eq:setting:volume_growth} phase
by phase, with common $\Lh$, $D$ and $d$, with $\cB_{p(t)}\uV{k-1}_{p(t)+1}$ in~\ref{OI2}, and with, in~\ref{SI3}, a
bounded Borel fixed point $V=(V_p)_p$ of the monotone $\beta$-contraction $(W_p)_p\mapsto(\cB_pW_{p+1})_p$, each
$\Bd_p(V_{p+1};\cdot,\cdot)$ being jointly measurable. The constants become $\kappa:=\min_p\kappa_p$,
$\Phi^k:=\sum_p\int(\uV k_p-\uV0_p)\,d\mu_p$, $\Mb_U:=\sum_p\int(\bar U_p-\uV0_p)\,d\mu_p$ and
$\zeta:=(U_0(x_0)-V_0(x_0))_+$; $S=1$ is the stationary case. \Cref{lem:gapIH,lem:pass} hold with the same proofs,
and \cref{lem:pass} improves, as each phase has its own model:
\begin{equation}\label{eq:inf:periodic_lift}
\Phi^k-\Phi^{k-1}\ \ge\ \kappa\sum_{p\in\mathbb Z_S}\hat\psi\big(\bar s^k_p\big),\qquad
\bar s^k_p:=\max\{(r^k_p(\bx^k_t)-\gamma_{k,t})_+:t<\tau_k,\ p(t)=p\}\quad(\max\emptyset:=0).
\end{equation}

\begin{theorem}[Periodic problems, geometric pass length]\label{thm:periodic}
Under \cref{ass:Sinf,ass:Oinf} and~\eqref{eq:setting:volume_growth} phase by phase, the periodic algorithm with
geometric pass lengths satisfies:
\begin{enumerate}[label=(\roman*),nosep]
\item $G^k=\EE_{k-1}\big[\sum_{t<\tau_k}(r^k_{p(t)}(\bx^k_t)+e_t)\big]$, with $e_t\in[0,\gamma^F_k]$, and
$G^k\ge\max\{V^{\pi^k}_0(x_0)-V_0(x_0)-\zeta,\,V_0(x_0)-\uV{k-1}_0(x_0)\}$;
\item $\EE\sum_{k\ge1}\kappa\,\chi_S\big((G^k-\Gamma^\beta_k)_+\big)\le\Mb_U$, with
$\chi_S(g)\ge\hat\psi((1-\beta)g)$ defined in~\eqref{eq:appD:periodic_chi}; with $q_S:=S(1-\beta)/(1-\beta^S)$
and $\Gamma^\beta_k\le\Gamma^\beta$ deterministic, for every $\theta>0$ with $(1-\beta)\theta\le\hat\rho$,
\begin{equation}\label{eq:inf:periodic_rate}
\EE\,\#\{k\ge1:G^k>\theta+\Gamma^\beta\}\le\frac{2(d+1)\,\Mb_U\,\hat\rho^d}{\kappa\,(1-\beta)^{d+1}\theta^{d+1}}\min\Big\{1,\frac{q_S^{d+1}}{S}\Big\};
\end{equation}
\item for $\delta\in(0,1]$, with probability at least $1-\delta$, simultaneously for all $\theta>0$,
$\#\{k\ge1:G^k>\theta+\Gamma^\beta_k\}\le9\ln(3/\delta)\,\Mb_U/(\kappa\chi_S(\theta))$.
\end{enumerate}
\end{theorem}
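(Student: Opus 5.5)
The plan is to mirror the proof of \cref{thm:master-ih}(a): gap identity, pathwise lift, a conversion of per-phase maxima into a discounted sum, and then Jensen and the decreasing-bounded-process bound.

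\emph{Step (i).} The periodic version of \cref{lem:gapIH} is the telescoping of \cref{lem:gap}. At a time $t$ of phase $p=p(t)$, freezing gives
\[
\EE[\ell+\beta\uV{k-1}_{p+1}(\bx_{t+1})\mid\mathcal H_t]=\uV{k-1}_p(\bx_t)+r^k_p(\bx_t)+\EE[e_t\mid\mathcal H_t].
\]
Multiply by $\beta^t$ and sum over $t<H$. The tail is bounded by $\beta^H(\cb/(1-\beta)-\inf\uV{k-1})\to0$. Since $\tau_k$ is independent of the path and $\PP(\tau_k>t)=\beta^t$, the discounted sum equals $\EE_{k-1}\sum_{t<\tau_k}(\cdots)$. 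The two lower bounds on $G^k$ follow as in \eqref{eq:inf:gap_meaning}, using the periodic analogue of \cref{lem:std}(iii) and $\bar U_0(x_0)\le U_0(x_0)$.

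\emph{Step (ii), pathwise lift.} The models of different phases are disjoint, so \eqref{eq:inf:periodic_lift} follows from \cref{lem:pass} applied phase by phase. Define $\chi_S$ as the function whose conditional expectation controls this lift. For nonnegative $(s_t)_{t\ge0}$ and $\tau$ geometric, set
\[
F(s):=\EE_\tau\sum_p\hat\psi\Big(\max_{t<\tau,\,p(t)=p}s_t\Big),
\]
and let $\chi_S(g):=\inf\{F(s):\sum_t\beta^ts_t\ge g\}$, after a convex minorisation if necessary. I would show:
\begin{enumerate}[label=(\alph*),nosep]
\item $F$ is convex in $s$. Each term is convex and nondecreasing composed with a max of linear maps, so this holds.
\item The random variable $\sum_t\beta^t s_t$, with $s_t=(r^k_{p(t)}(\bx_t)-\gamma_{k,t})_+$, satisfies $\EE_{k-1}\sum_t\beta^ts_t\ge G^k-\Gamma^\beta_k$.
\end{enumerate}
Jensen then needs care, because the path is random and $F$ is evaluated pathwise. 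I would therefore define $\chi_S$ through the pointwise bound $F(s)\ge\chi_S(\sum_t\beta^ts_t)$ with $\chi_S$ convex nondecreasing. Taking expectations (the geometric length is independent of the path) gives $\EE_{k-1}[\Phi^k-\Phi^{k-1}]\ge\kappa\chi_S((G^k-\Gamma^\beta_k)_+)$. Summing and applying \cref{lem:dbp} to $Y_k=\Mb_U-\Phi^k$ then gives (ii) and (iii), exactly as in \cref{thm:master}(a).

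\emph{Main obstacle: the two lower bounds on $\chi_S$.} The first, $\chi_S(g)\ge\hat\psi((1-\beta)g)$, comes from keeping a single phase-free maximum: $\sum_p\hat\psi(\max_p)\ge\hat\psi(\max_{t<\tau}s_t)$, and then \cref{lem:length} with Jensen gives $\EE\max_{t<\tau}s_t\ge(1-\beta)\sum\beta^ts_t$.

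The second is the improvement $q_S^{d+1}/S$. Write $t=p+Sj$. For each phase, $\max_{j:\,p+Sj<\tau}s_{p+Sj}$ is a maximum over a geometric number of terms with parameter $\beta^S$, shifted by $\beta^p$. Applying \cref{lem:length} with discount $\beta^S$ gives the phase-wise bound
\[
\EE\max_{t<\tau,\,p(t)=p}s_t\ \ge\ \beta^p(1-\beta^S)\sum_j\beta^{Sj}s_{p+Sj},
\]
up to the $\beta^p$ survival factor.

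Let $a_p:=\sum_j\beta^{Sj}s_{p+Sj}$, so that $\sum_p\beta^pa_p=g$. By convexity of $\hat\psi$ and Jensen over the $S$ phases,
\[
\sum_p\hat\psi\big(\beta^p(1-\beta^S)a_p\big)\ \ge\ S\,\hat\psi\big((1-\beta^S)g/S\big)=S\,\hat\psi\big((1-\beta)g/q_S\big).
\]
On the first branch of $\hat\psi$ this equals $\big(S/q_S^{d+1}\big)\hat\psi((1-\beta)g)$. Inverting at $g=\theta$ gives the factor $\min\{1,q_S^{d+1}/S\}$ in \eqref{eq:inf:periodic_rate}. The branch condition $(1-\beta)\theta\le\hat\rho$ suffices, since $q_S\ge1$. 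The delicate point is the order of Jensen and expectation when the $s_t$ are random; I would handle it by proving these inequalities deterministically for fixed $s$ and then averaging, as outlined above.
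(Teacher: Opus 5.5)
Your proposal is correct and follows the paper's proof. It uses the same gap identity with $\tau=\tau_k$ and the same per-phase pathwise lift. It gets the first bound from the global maximum and \cref{lem:length}, the second from the phase-wise summation by parts (your $\beta^p$ survival factor is exactly \cref{lem:length-phase}) followed by Jensen across the $S$ phases, and (iii) from \cref{lem:dbp}; the paper's $\chi_S$ is precisely the maximum of your two lower bounds. The only difference is the order of Jensen: the paper applies conditional Jensen under $\EE_{k-1}$ directly to each $\hat\psi(\bar s_p)$ with $a_p:=\EE_{k-1}\bar s_p$, so it needs neither your infimum definition of $\chi_S$ nor convexity of $F$. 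You instead average over $\tau$ for a fixed path and then use convexity of $\chi_S$ (a maximum of two convex nondecreasing functions) over the path, which is equally valid.
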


\Cref{thm:master-ih}(a) is the case $S=1$, since $\chi_1(g)=\hat\psi((1-\beta)g)$; parts~(b) and~(c) hold with one net
per phase (\cref{app:ih-cover}). Since $1\le q_S\le\min\{S,1+S(1-\beta)\}$, for SDDP ($\Mb_U\le S\cb/(1-\beta)$,
$\kappa=1$, $\Gamma^\beta=0$) the bound~\eqref{eq:inf:periodic_rate} is at most
$2(d+1)\cb(4\Lh D)^d\min\{S,(1+S(1-\beta))^{d+1}\}/((1-\beta)^{d+2}\theta^{d+1})$. This price of periodicity, the
factor $\min\{S,q_S^{d+1}\}$ compared with $S=1$, is at most $S$. It is close to $1$ when $(d+1)S(1-\beta)$ is small,
since $q_S^{d+1}\le e^{(d+1)S(1-\beta)}$. For instance, weekly stages, an annual cycle and an annual discount rate of
$5\%$ give $S=52$ and $q_S<1.03$, so $q_S^{d+1}\approx1.05$ for $d=1$ but $1.3$ for $d=10$.

\subsection{SDDP}\label{sec:ih-sddp}

For SDDP with exact Benders cuts, \cref{thm:master-ih} gives \cref{thm:geom}, whose Lipschitz constant $\Lh$ we then
bound, uniformly in $\beta$ under a reachability condition (\cref{prop:lipiter}).

\begin{assumption}[Shared-model SDDP]\label{ass:I}
\begin{enumerate}[label=(I\arabic*),ref=(I\arabic*),nosep]
\item\label{I1} \Cref{ass:Sinf} holds, $X$ is nonempty and convex, $d:=\dim\aff X$, $D\ge\diam X$ (Euclidean norm), and
$\avg_X$ is the average for Lebesgue measure on $\aff X$ (the value at the point if $d=0$). For each $\xi$, the graph
$\{(x,y):x\in X,\ y\in \cX(x,\xi)\}$ is convex, $\cX(x,\xi)$ is closed, and $\ell(\cdot,\cdot,\xi)$ is
convex on the graph and lower semicontinuous in $y$.
\item\label{I2} $\uV0:X\to\R$ is convex and continuous, and $0\le\uV0\le\cB\uV0$ (\eg $\uV0=0$).
\item\label{I3} The forward pass is exact ($\gamma^F_k=0$), and a cut added at $x\in X$ to the current model $W$ is an
affine $f$ with $f(x)=\cB W(x)$ and $f\le\cB W$ on $X$ (on $\Oe$ under~\ref{Ieta}).
\item\label{I4} $\uV0$ and every cut are $\Lh$-Lipschitz on $X$.
\end{enumerate}
\end{assumption}

Condition~\ref{I1} makes the problem convex, and~\ref{I2}--\ref{I3} describe SDDP with exact Benders cuts computed
for the current model; the noise need not be finitely supported (\cref{sec:route1}). Condition~\ref{I4} assumes a
Lipschitz constant for the cuts, which \cref{prop:lipiter} derives, as \cref{prop:benders} does in finite horizon,
from extended relatively complete recourse:
\begin{enumerate}[label=(E$_\eta$),ref=(E$_\eta$),nosep,leftmargin=*]
\item\label{Ieta} For some $\eta>0$, with $\Oe:=\{z\in\aff X:\dist(z,X)\le\eta\}$, the conditions of~\ref{I1} on the
graph, $\cX$ and $\ell$ hold with $\Oe$ in place of $X$ for the first argument, and~\ref{SI1}--\ref{SI3} hold with
$\Oe$ in place of $X$ as the set of incoming states; in particular $\emptyset\ne \cX(z,\xi)\subset X$ and
$\ell(z,y,\xi)\in[0,\cb]$ for $z\in\Oe$.
\end{enumerate}
Condition~\ref{Ieta} is used only there; it is stronger than Assumption~2.2 of \citet{julan2023infinite}, because
successors of the neighbourhood must stay in $X$.

\begin{corollary}[Shared-model SDDP, geometric pass length]\label{thm:geom}
Under \cref{ass:I}, \cref{ass:Oinf} holds with $U=V$, $\gamma_{k,t}=0$, the Euclidean distance
and $\Lh$ of~\ref{I4}, and~\eqref{eq:setting:volume_growth} with $\kappa=1$ (\cref{lem:VG}(a)); moreover $r^k\ge0$,
$\zeta=0$ and $\Mb_U\le\Mb:=\avg_X(V-\uV0)\le\cb/(1-\beta)$. Hence variant~(G) satisfies:
\begin{enumerate}[label=(\roman*),nosep]
\item $G^k=\EE_{k-1}\big[\sum_{t<\tau_k}r^k(\bx^k_t)\big]$ and
$G^k\ge\max\{V^{\pi^k}(x_0)-V(x_0),\,V(x_0)-\uV{k-1}(x_0)\}\ge0$;
\item $\EE\sum_{k\ge1}\hat\psi((1-\beta)G^k)\le\Mb$, and, for every $\varepsilon>0$ with
$(1-\beta)\varepsilon\le\hat\rho$,
\begin{equation}\label{eq:inf:geometric_rate}
\EE\,\#\{k\ge1:G^k>\varepsilon\}\le\frac{2(d+1)\,\Mb\,\hat\rho^d}{(1-\beta)^{d+1}\varepsilon^{d+1}}
\le\frac{2(d+1)\,\cb\,(4\Lh D)^d}{(1-\beta)^{d+2}\varepsilon^{d+1}};
\end{equation}
\item for $\delta\in(0,1]$, with probability at least $1-\delta$, for all $\varepsilon>0$, $\#\{k\ge1:G^k>\varepsilon\}\le K_\varepsilon:=9\ln(3/\delta)\,\Mb/\hat\psi((1-\beta)\varepsilon)$
and $V(x_0)-\uV k(x_0)\le\varepsilon$ for all $k\ge K_\varepsilon$.
\end{enumerate}
\end{corollary}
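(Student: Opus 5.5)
The plan is to verify the hypotheses of \cref{thm:master-ih}(a) for shared-model SDDP and then read off the three items.

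\emph{Step 1: the oracle assumption.} \Cref{ass:Oinf} follows from \cref{ass:I} via \cref{lem:validIH}. Condition~\ref{OI1} holds by~\ref{I4}, since $\uV0$ is continuous on compact $X$, hence bounded and deterministic. Condition~\ref{OI4} is the max-update of the backward pass. Condition~\ref{OI2} with $\gamma_{k,t}=0$ uses~\ref{I3}: the cut at $\bx^k_t$ is exact for the current $W\ge\uV{k-1}$, and $\cB$ is monotone on $\mathfrak M$, so $f(\bx^k_t)=\cB W(\bx^k_t)\ge\cB\uV{k-1}(\bx^k_t)$. For~\ref{OI3} with $U=V$, I show by induction that every intermediate model satisfies $W\le V$. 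This holds for $\uV0$: from $\uV0\le\cB\uV0$ and iterating the monotone contraction, $\uV0\le\cB^n\uV0\to V$. It is preserved by the update, because a new cut satisfies $f\le\cB W\le\cB V=V$.

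The sub-Bellman property $W\le\cB W$ is also preserved. Indeed $W\le W'=\max(W,f)$ gives $\cB W\le\cB W'$, so both $W\le\cB W\le\cB W'$ and $f\le\cB W\le\cB W'$. Hence $r^k\ge0$. Next, $\zeta=(V(x_0)-V(x_0))_+=0$. Since $\bar U\le U=V$, we get $\Mb_U\le\avg_X(V-\uV0)=\Mb$. Finally, $0\le\ell\le\cb$ gives $V\le\cb/(1-\beta)$, and with $\uV0\ge0$ this yields $\Mb\le\cb/(1-\beta)$. Volume growth with $\kappa=1$ is \cref{lem:VG}(a) for the normalised Lebesgue measure on $\aff X$ and the Euclidean norm.

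\emph{Step 2: reading off the items.}

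\emph{Item (i).} This is \cref{lem:gapIH} with geometric $\tau_k$ and $e_t=0$ (exact forward pass), combined with~\eqref{eq:inf:gap_meaning} and $\zeta=0$. Nonnegativity follows from $r^k\ge0$, or from $V^{\pi^k}\ge V$ (\cref{lem:std}).

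\emph{Item (ii).} Take $\Gamma^\beta_k=0$ and $G^k\ge0$ in~\eqref{eq:inf:master_volume}, together with $\Mb_U\le\Mb$. For the count, use $\hat\psi((1-\beta)\varepsilon)=((1-\beta)\varepsilon)^{d+1}/(2(d+1)\hat\rho^d)$ on the first branch, which is valid when $(1-\beta)\varepsilon\le\hat\rho$. The second inequality then follows from $\Mb\le\cb/(1-\beta)$ and $\hat\rho=4\Lh D$.

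\emph{Item (iii).} This is the high-probability part of \cref{thm:master-ih}(a). For the tail claim, $V(x_0)-\uV k(x_0)$ is nonincreasing in $k$ and bounded by $G^{k+1}$. On the good event, at most $K_\varepsilon$ indices are bad, so some $j\le\lfloor K_\varepsilon\rfloor+1$ has $G^j\le\varepsilon$. Hence $V(x_0)-\uV{k}(x_0)\le V(x_0)-\uV{j-1}(x_0)\le\varepsilon$ for $k\ge j-1$.

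There is an off-by-one issue here. The conclusion certainly holds for $k\ge K_\varepsilon$ when $j-1\le K_\varepsilon$, which is what we get. The case where $\lfloor K_\varepsilon\rfloor+1-1\le K_\varepsilon$ needs a little care, but it is fine.

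\emph{Main obstacle.} The delicate step is the induction in Step 1, which needs $\cB$ to act monotonically on every intermediate model within a pass. This requires each model to lie in the class $\mathfrak M$ where $\cB$ is a monotone contraction. Convexity and continuity of the models (max of $\uV0$ and affine cuts) plus the measurability part of~\ref{I1} should supply this via \cref{app:ih-std}.
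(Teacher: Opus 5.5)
Your proposal is correct and follows the paper's proof: you verify \cref{ass:Oinf} through \cref{lem:validIH}, get~\eqref{eq:setting:volume_growth} from \cref{lem:VG}(a), and read (i)--(iii) off \cref{lem:gapIH}, \eqref{eq:inf:gap_meaning} and \cref{thm:master-ih}(a). Two cosmetic points: first, $\uV0\le V$ follows directly from $\uV0\le\cB\uV0$ by \cref{lem:std}(ii), with no need to iterate $\cB$, which would require every $\cB^n\uV0$ to lie in $\mathfrak M$; second, there is no off-by-one issue in (iii), since $j-1\le\lfloor K_\varepsilon\rfloor\le K_\varepsilon\le k$.
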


\paragraph{Lipschitz constants.} Under~\ref{Ieta}, the cuts of SDDP have a crude Lipschitz constant of order $\cb/((1-\beta)\eta)$, as in
\cref{prop:benders}, which turns the bounds of \cref{cor:disc,thm:geom} into bounds of order
$(1-\beta)^{-(2d+2)}\varepsilon^{-(d+1)}$. A constant uniform in $\beta$ needs a reachability condition.

\begin{definition}[Robust reachability]\label{def:reach}
Let $m\ge1$ be an integer and $\chi\ge0$. Condition $(R_{m,\chi})$ holds if, for every $z\in\Oe$ and
every $y\in X$, there are a policy started at $z$ and a stopping time $\sigma$ of the noise
filtration such that $1\le\sigma\le m$ and $\|\bx_\sigma-y\|\le\chi$ almost surely. Condition $(R_{1,0})$
means $\cX(z,\xi)=X$ for all $z\in\Oe$ and $\xi$.
\end{definition}

\begin{proposition}[Lipschitz bounds for the shared model]\label{prop:lipiter}
Assume \ref{I1}--\ref{I3} and~\ref{Ieta}, and let $\uV0$ be $L_0$-Lipschitz on $X$. In variants~(D) and~(G):
\begin{enumerate}[label=(\roman*),nosep]
\item every cut is $\cb/((1-\beta)\eta)$-Lipschitz on $X$, so~\ref{I4} holds with
$\Lh=\max\{L_0,\cb/((1-\beta)\eta)\}$;
\item if $(R_{m,\chi})$ holds with $\chi<\eta$ and $L_0\le m\cb/(\eta-\chi)$ (\eg $\uV0=0$), every cut is
$m\cb/(\eta-\chi)$-Lipschitz, so~\ref{I4} holds with $\Lh=m\cb/(\eta-\chi)$, \emph{uniformly in $\beta$};
\item under $(R_{1,0})$, every cut is $\cb/\eta$-Lipschitz, so~\ref{I4} holds with $\Lh=\max\{L_0,\cb/\eta\}$.
\end{enumerate}
\end{proposition}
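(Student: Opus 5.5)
The approach is the classical bounded-subgradient argument of \cref{prop:benders}, applied to the \emph{current} model. Every cut has the form $f(y)=h(x)+\langle g,y-x\rangle$ with $h:=\cB W$ for some intermediate model $W$ of a pass. The first step is to record the invariants of $W$, by induction along passes, using~\ref{I2}--\ref{I3} and \cref{lem:validIH}:
\begin{itemize}[nosep]
\item $W$ is convex and continuous;
\item $0\le\uV0\le W\le V\le\cb/(1-\beta)$ on $X$;
\item $W\le\cB W$ on $X$, the sub-Bellman property.
\end{itemize}
Under~\ref{Ieta}, $h$ is convex and finite on $\Oe$, and the cut satisfies $f\le h$ on $\Oe$. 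If $g\ne0$ (with $g$ taken in the direction space of $\aff X$), set $z:=x+\eta g/\|g\|\in\Oe$. Then $h(x)+\eta\|g\|=f(z)\le h(z)$, which gives
\begin{equation*}
\|g\|\ \le\ \frac{h(z)-h(x)}{\eta}\ \le\ \frac{\sup_{\Oe}\cB W-\cB W(x)}{\eta}.
\end{equation*}
All three parts therefore reduce to bounding the oscillation $\sup_{z\in\Oe}\cB W(z)-\cB W(x)$ for $x\in X$. For~(i), $0\le\ell\le\cb$ and $0\le W\le\cb/(1-\beta)$ give $\cB W\le\cb+\beta\cb/(1-\beta)=\cb/(1-\beta)$ on $\Oe$, and $\cB W(x)\ge0$. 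This yields the constant $\cb/((1-\beta)\eta)$; $\uV0$ contributes $L_0$ to $\Lh$.

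For~(iii), under $(R_{1,0})$ every $z\in\Oe$ can move to any $y\in X$. Hence $\cB W(z)\le\cb+\beta\inf_XW$, while $\cB W(x)\ge0+\beta\inf_XW$ because successors lie in $X$. The oscillation is therefore at most $\cb$, uniformly in $\beta$, and the slope bound is $\cb/\eta$. This step needs no induction.

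Part~(ii) is the main obstacle, because reaching only a $\chi$-neighbourhood in up to $m$ steps forces a comparison between values at nearby points, i.e.\ a Lipschitz bound on the quantity being bounded. I would run a bootstrap with $\Lambda:=m\cb/(\eta-\chi)$. The inductive claim, over the sequence of models, is that $W$ and $\cB W$ are $\Lambda$-Lipschitz on $X$; for a convex function this is equivalent to all its relevant subgradients having norm at most $\Lambda$, which is what the slope argument controls at every point of $X$. Fix $z\in\Oe$ and take the reachability policy from $z$ to target $x$, stopped at $\sigma\le m$ with $\|\bx_\sigma-x\|\le\chi$. Unrolling the sub-Bellman inequality $W\le\cB W$ along this policy, one step at a time up to the stopping time, gives
\begin{equation*}
\cB W(z)\ \le\ \EE\Big[\sum_{s<\sigma}\beta^s\ell_s+\beta^{\sigma}\,\cB W(\bx_\sigma)\Big]\ \le\ m\cb+\EE\big[\beta^{\sigma}\big(\cB W(x)+\Lambda\chi\big)\big].
\end{equation*}
Using $\beta^\sigma\le1$ and handling the sign of $\cB W(x)$ (it is $\ge0$), the oscillation is at most $m\cb+\Lambda\chi$. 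The slope bound then becomes $(m\cb+\Lambda\chi)/\eta\le\Lambda$, which is exactly $\Lambda(\eta-\chi)\ge m\cb$.

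The delicate point is that $\cB W$ must itself be $\Lambda$-Lipschitz, not only the cut. I would handle it by applying the same slope bound to an arbitrary point $x'\in X$: every subgradient of $\cB W$ at $x'$ defines a valid affine minorant tight at $x'$. This closes the induction, with base case given by $L_0\le\Lambda$ and by the fact that cuts with slope at most $\Lambda$ preserve the claim for the max. Should unrolling to $\cB W(\bx_\sigma)$ prove awkward, for instance because measurability of the stopped policy under~\ref{SI3} is an issue, I would instead unroll to $W(\bx_\sigma)$ and use $W\le\cB W$ at the final state.
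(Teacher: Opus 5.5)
Your reduction to the oscillation $\Theta:=\sup_{z\in\Oe}\cB W(z)-\cB W(x)$ is correct and is the paper's slope lemma, and so are your parts (i) and (iii). Part (ii), however, is circular as written, so it has a genuine gap.

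\textbf{Where the induction fails.} The estimate $\cB W(\bx_\sigma)\le\cB W(x)+\Lambda\chi$ needs $\cB W$ to be $\Lambda$-Lipschitz on $X$ for the \emph{current} model $W$. Your induction does not supply this. After a cut, $W=\max(W_{\rm old},f)$. To show that $\cB W$ is $\Lambda$-Lipschitz, you apply ``the same slope bound'' at an arbitrary $x'\in X$. That bound again uses the Lipschitz constant of $\cB W$ at the stopped state, which is exactly what is being proved. The base case has the same defect: $L_0\le\Lambda$ says nothing about the Lipschitz constant of $\cB\uV0$. Moreover, the property your induction does legitimately propagate, that $W$ is $\Lambda$-Lipschitz, is never used in your estimate.

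\textbf{Fix (a): absorption.} $\cB W$ is convex and bounded on $\Oe$, and every $x\in X$ has a relative $\eta$-neighbourhood in $\Oe$. So $\cB W$ has a finite Lipschitz constant $\Lambda^*$ on $X$, namely the supremum of the projected subgradient norms at points of $X$. Your own slope argument bounds it by $(\cb+\beta\sup_XW)/\eta$. Your computation, with $\Lambda^*$ in place of $\Lambda$, then gives $\Lambda^*\le(m\cb+\Lambda^*\chi)/\eta$. Since $\chi<\eta$ and $\Lambda^*<\infty$, this yields $\Lambda^*\le m\cb/(\eta-\chi)$, with no induction at all.

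\textbf{Fix (b): the paper's route.} Stop the unrolling at $W(\bx_\sigma)$ instead of converting back to $\cB W$. Then use the Lipschitz constant of $W$, which induction over the cuts in order of creation does provide. The paper targets a near-minimiser $y$ of $W$. This gives $\cB W(z)\le m\cb+\beta(\inf_XW+\epsilon_0+\Lambda\chi)$, using $W(y)+\Lambda\chi\ge0$ and $\sigma\ge1$. It then compares with $\cB W(x)\ge\beta\inf_XW$, so $\inf_XW$ cancels and $\Theta\le m\cb+\beta\Lambda\chi$. Alternatively, target $x$ itself and use the sub-Bellman inequality $\cB W(x)\ge W(x)\ge0$. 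Either way, $(m\cb+\beta\Lambda\chi)/\eta\le\Lambda$ closes the induction.
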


Under~\ref{Ieta}, \ref{I3} asks for cuts valid on $\Oe$, as subgradient cuts of the convex function $\cB W$ on $\Oe$
are (proof of \cref{thm:geom}). The $\beta$-uniform bound~(ii) uses the shared model: the slope of a cut of $\cB W$
is bounded by an oscillation of $\cB W$ in which $\inf W$ cancels, which fails for stage-wise models
(\cref{app:ih-lip}; \cref{rem:lipex} gives examples of $(R_{m,0})$). For periodic problems, \cref{prop:lipiter} holds
phase by phase; its bound~(ii) needs a reachability condition that returns to the right phase, which forces $m\ge S+1$
unless one step suffices.

\begin{corollary}[Relative accuracy, $\beta$-uniform constants]\label{cor:rel}
Under \ref{I1}--\ref{I3} and~\ref{Ieta} with $\uV0=0$ and $(R_{m,0})$, variant~(G) satisfies, for
$\erel\le4mD/\eta$,
\begin{equation}\label{eq:inf:relative_rate}
\EE\,\#\Big\{k:G^k>\erel\frac{\cb}{1-\beta}\Big\}\le\frac{2(d+1)\,(4mD/\eta)^d}{(1-\beta)\,\erel^{d+1}},
\end{equation}
with $1/(1-\beta)$ cuts per iteration in expectation. With the crude constant of
\cref{prop:lipiter}(i), the right-hand side is $2(d+1)(4D/\eta)^d/((1-\beta)^{d+1}\erel^{d+1})$, for
$\erel\le4D/((1-\beta)\eta)$.
\end{corollary}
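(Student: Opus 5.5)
The plan is to derive \cref{cor:rel} from \cref{thm:geom}(ii), plugging in the $\beta$-uniform Lipschitz constant of \cref{prop:lipiter}(ii) with $\chi=0$. First I check the hypotheses. Under \ref{I1}--\ref{I3} and~\ref{Ieta}, with $\uV0=0$, we have $L_0=0\le m\cb/\eta$, so \cref{prop:lipiter}(ii) with $(R_{m,0})$ gives \ref{I4} with $\Lh=m\cb/\eta$. Hence all of \cref{ass:I} holds and \cref{thm:geom} applies with $\hat\rho=4\Lh D=4m\cb D/\eta$. Moreover $\Mb=\avg_X(V-\uV0)=\avg_XV\le\cb/(1-\beta)$, since $0\le\ell\le\cb$.

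Next I set $\varepsilon:=\erel\,\cb/(1-\beta)$ in~\eqref{eq:inf:geometric_rate}. The side condition $(1-\beta)\varepsilon\le\hat\rho$ becomes $\erel\cb\le4m\cb D/\eta$, that is, $\erel\le4mD/\eta$, which is exactly the stated range; the degenerate case $\cb=0$ is trivial, since then $V=0$ and $G^k=0$. Substituting, the right-hand side is
$2(d+1)\Mb\hat\rho^d/((1-\beta)^{d+1}\varepsilon^{d+1})$. In this expression $(1-\beta)^{d+1}\varepsilon^{d+1}=\erel^{d+1}\cb^{d+1}$ and $\hat\rho^d=(4mD/\eta)^d\cb^d$. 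The powers of $\cb$ cancel against the factor $\cb$ in $\Mb\le\cb/(1-\beta)$, which leaves $2(d+1)(4mD/\eta)^d/((1-\beta)\erel^{d+1})$, as claimed. The statement about $1/(1-\beta)$ cuts per iteration in expectation is the remark after \cref{thm:master-ih}: $\tau_k$ is independent of $\cA_{k-1}$ with $\EE\tau_k=1/(1-\beta)$, and each trial point receives one cut.

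For the crude version I instead use \cref{prop:lipiter}(i), with $\Lh=\cb/((1-\beta)\eta)$; here $L_0=0$, so the maximum is attained by this term. Then $\hat\rho=4\cb D/((1-\beta)\eta)$. The condition $(1-\beta)\varepsilon\le\hat\rho$ reads $\erel\le4D/((1-\beta)\eta)$. The same substitution gives $\hat\rho^d=(4D/\eta)^d\cb^d(1-\beta)^{-d}$, so the bound becomes $2(d+1)(4D/\eta)^d/((1-\beta)^{d+1}\erel^{d+1})$.

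There is no real obstacle here: the argument is bookkeeping on top of \cref{thm:geom} and \cref{prop:lipiter}. The only point that needs care is checking that the hypothesis $L_0\le m\cb/(\eta-\chi)$ of \cref{prop:lipiter}(ii) is met, which holds because $\uV0=0$. I also need \cref{prop:lipiter} to cover variant~(G), and it does, since it is stated for both variants~(D) and~(G).
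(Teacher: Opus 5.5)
Your proposal is correct and follows the paper's proof: apply \cref{thm:geom}(ii) with the $\beta$-uniform constant $\Lh=m\cb/\eta$ from \cref{prop:lipiter}(ii) (respectively the crude constant of \cref{prop:lipiter}(i)) and $\varepsilon=\erel\cb/(1-\beta)$, so that $(1-\beta)\varepsilon\le\hat\rho$ becomes the stated range and the powers of $\cb$ cancel. Your separate check that $L_0=0$ meets the hypothesis of \cref{prop:lipiter}(ii), and your handling of the degenerate case $\cb=0$, are small additions the paper leaves implicit.
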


\begin{proof}
\Cref{thm:geom}(ii) with $\Lh=m\cb/\eta$ (\cref{prop:lipiter}(ii)) and $\varepsilon=\erel\cb/(1-\beta)$; the
condition $(1-\beta)\varepsilon\le\hat\rho$ reads $\erel\le4mD/\eta$.
\end{proof}

This is the analogue of \cref{cor:uniform}, with $T$ replaced by the effective horizon $1/(1-\beta)$ and $\erel\cb$ in
the role of the per-stage error $\varepsilon$ ($\erel$ is a fraction of the largest cost $\cb/(1-\beta)$); for absolute
accuracy, the factor $(1-\beta)^{-(d+2)}$ of~\eqref{eq:inf:geometric_rate} matches the $T^{d+1}(T+1)$
of~\eqref{eq:main:rate_absolute}.

\subsection{Other oracles and non-stationary truncation}\label{sec:ih-oracles}\label{sec:trunc}

\paragraph{Other oracles.} The oracles of \cref{sec:instances,sec:noise} carry over to the shared model, with
the constants and the part of \cref{thm:master-ih} given in \cref{prop:ih-families}. Valid inexact cuts and SAA backward passes with forward
passes from the true law are covered (\cref{prop:ih-saa}), and so are shifted sampled cuts, through a stopped version
of the proof (\cref{prop:ih-sampled}); unbounded reachable sets are not. For SDDiP on binary states with exact forward
passes, part~(c) gives $\EE\sum_kG^k\le2^n\cb/(1-\beta)^2$. The $\beta$-uniform constants of \cref{sec:ih-sddp} use
convexity and affine cuts. For nonconvex oracles, a $\beta$-uniform constant is known only under stage-wise absorption
(\cref{ex:absorb}), where reverse-norm cuts with $\Lh=c$ qualify, and the regularisation constant of Zhang and Sun may have to grow with $1/(1-\beta)$.

\paragraph{Non-stationary truncation.} Variant~(N) gives, by \cref{cor:disc} and a tail $\varepsilon_H$, the rate of variant~(D) with $H$
cuts per iteration, but it guarantees only a non-stationary policy that uses $H$ functions (\cref{thm:trunc}). Using
the normalised time-1 approximation $\mathcal V^k_1:=\beta^{-1}\lv1k$ as a stationary model instead is not justified.
On a deterministic instance, its greedy policy stays $\beta^2/(4(1-\beta))$-suboptimal at every iteration, while the
truncated gap tends to zero for cut oracles with bounded slopes (\cref{prop:counter-ih}).

\section{Discussion}\label{sec:discussion}

\subsection{Comparison with the SDDP literature}\label{sec:compare-sddp}

\Cref{tab:compare} compares the bounds under stage-uniform constants. The accuracy notions differ; by
\cref{rem:compare}, $G^k\le\theta$ implies each of the others. Lan's corrected bound uses his own recursion for the accuracy, with a factor
$\prod_{t=2}^{T-1}N_t$ \citep{lan2022correction}. Iteration counts are comparable because an iteration costs
$\Theta(\sum_tN_t)$ subproblems in all rows, for the deterministic Algorithm~2 of \citet[Rem.~2]{zhang2022stochastic}
as for SDDP (one forward subproblem and $N_t$ backward subproblems per stage). The third row includes a factor $2^n$
absent from \citet[Thm.~4.11]{forcier2023trajectory}: their proof drops a $+1$ in the bound on the number of
effective iterations, as does that of their Cor.~4.7.

\begin{table}[tb]
\centering\small
\setlength{\tabcolsep}{3pt}
\begin{tabularx}{\textwidth}{@{}ll>{\raggedright\arraybackslash}X@{}}
\toprule
forward pass & iterations, accuracy $T\varepsilon$ & source\\
\midrule
problem child / explorative & $T(1+2LD/\varepsilon)^d$ & \citet[Cor.~2]{zhang2022stochastic}, \citet[Thm.~2]{lan2022complexity}, see also \citet[Cor.~4.7]{forcier2023trajectory}\\
uniform sampling, $N_t$ atoms & $O\big(T(1+2LD/\varepsilon)^d\prod_{t<T}N_t\big)$ & \citet[Thm.~3]{zhang2022stochastic}, \citet{lan2022correction}\\
sampling, general law & $2^n(T-1)\big(4DL(T-1)/(T\varepsilon-\gamma_\Sigma)\big)^{n+2(T-1)}$ & \citet[Thm.~4.11]{forcier2023trajectory}\\
sampling (this paper) & $(d+1)\,\cb\,(T+1)(4LD)^d/\varepsilon^{d+1}$ & \cref{cor:uniform}\\
\bottomrule
\end{tabularx}
\caption{Finite horizon, stage-uniform constants, each state set in a ball of diameter $D$. Accuracy $T\varepsilon$
in the sense of each source, which $G^k\le T\varepsilon$ implies (\cref{rem:compare}); for an absolute accuracy
$\varepsilon$, replace $\varepsilon$ by $\varepsilon/T$ in every row (\cref{rem:absolute}). Our bound counts all bad
iterations, the others bound the first good one; only the first row certifies its output. $\gamma_\Sigma$ is the
total error and $n$ the state dimension of \citet{forcier2023trajectory}.}\label{tab:compare}
\end{table}

\paragraph{Comparing the upper bounds.} Against the problem child, our bounds lose a factor of order $\cb T/\theta$
at the accuracy $\theta$, not a product over stages, whether the Lipschitz constants are uniform or grow like $T-t$
(\cref{tab:compare,rem:lipT,rem:absolute}); at $\theta=T\varepsilon$ with uniform constants, this factor is about
$(d+1)2^d\cb/\varepsilon$, independent of $T$. This ratio, which no known lower bound matches, comes from the potential: a region of $X_t$ can receive up to
$M_t/\theta$ lifts of size $\theta$, whereas in the packing argument a saturated point stays saturated forever.
For an absolute accuracy, linear dependence on $T$ is
not expected either: for $d\ge5$, even the deterministic Algorithm~2 of Zhang and Sun needs a number of iterations
superlinear in $T$, of order $(DL(T-1)/(8\varepsilon))^{(d-2)/2}$, to certify $\varepsilon$-optimality with its
stopping rule $\mathrm{UB}-\mathrm{LB}<\varepsilon$ \citep[Thm.~5]{zhang2022stochastic}. This is a lower bound for
certified termination, not for the number of bad iterations of a randomised method, and it is consistent with our
upper bound. Polynomiality in $T$ also requires $L_t$ and the state geometry to be controlled in $T$, which can fail: the
Hoffman recursion of SLDP can make $L_t$ grow geometrically, and the regularisation weight of Zhang and Sun may
have to grow with $T$ \citep[after Assumption~2]{zhang2022stochastic}. Our bounds are exponential in $d$, unlike
the dimension-free bounds of \citet{barros2026bidirectional,barros2026dimension} under strong convexity.

\paragraph{Infinite horizon.} The guarantee of \citet{julan2023infinite} is implied by $G^k\le\varepsilon$
(\cref{sec:ih-setting}). Counted in cuts, with a common $\beta$-uniform Lipschitz constant, our bound exceeds that of
their explorative Inf-EDDP by a factor of order $1/\erel$, as in finite horizon, and that of their
continual-exploration variant by a further $1/(1-\beta)$, while against their sampled variant it improves from
exponential in $1/(1-\beta)$ to polynomial, for a stronger guarantee (\cref{app:julan}).

\subsection{Comparison with reinforcement learning}\label{sec:compare-rl}

SDDP can be seen as a convex, continuous-state version of RTDP \citep{barto1995rtdp}. For finite MDPs with a
known model, \citet{efroni2019tight} prove the counterparts of our
gap identity, high-probability lemma and finite-state bound, and a uniform-PAC corollary (their Corollary~5); their sign convention, for rewards and optimistic upper values,
mirrors SDDP's lower values. In the discounted case, for the variant of RTDP that follows the frozen greedy policy
during an episode of geometric length and then backs up the visited states in reverse order (variant~(G) with
counting measure), \cref{thm:master-ih}(c) gives $\EE\sum_kG^k\le|X|\cb/(1-\beta)^2$; classical RTDP, which updates the value
before acting, needs its own timing argument. \citet{strehl2006pac} proved PAC bounds, in a different protocol,
for a modified RTDP with a known model that updates only when the decrease exceeds $\varepsilon(1-\beta)$, the
factor $1-\beta$ of \cref{thm:geom}, and asked whether the original RTDP is PAC, a question that Efroni et al.\
answered.

Compared with the covering argument of \citet[App.~F]{domingues2021kernel} (see also
\citealp{sinclair2020adaptive,sinclair2023adaptive}), which \cref{thm:master}(b) follows, our constants are
larger by a factor of about $2^d/(d+1)$; the volume version adds a convex bound valid for all thresholds at once, the
average-gap bound of \cref{rem:regret} and a potential measured by an average, and requires~\eqref{eq:setting:volume_growth}. Domingues et al.\ and Sinclair et al.\ also treat unknown models and
state no known-model planning bound. The upper bounds of Domingues et al., minima of Lipschitz ``cones'' centred
at visited states, are mirrored reverse-norm cuts, so the case of SLDP reverse-norm cuts with finitely supported
noise can essentially be read off their Appendix~F, specialised to a known model. Finally, heuristic search value
iteration \citep{smith2004hsvi,delage2022hsvi} also keeps convex piecewise-linear bounds, but explores
deterministically and terminates through coverings whose size is exponential in the depth.

\subsection{Conclusion}\label{sec:conclusion}

For TFDP-type algorithms with randomly sampled forward passes, the number of iterations whose gap exceeds an error of
$\varepsilon$ per stage is polynomial in the horizon, and linear in it under stage-uniform Lipschitz constants. The
argument needs only the forward law in the gap identity, and Lipschitz cuts that are tight at the trial point and lie
below a fixed ceiling. Three questions remain: whether the nonlinear cuts that restore tightness on mixed-integer
states have uniformly bounded Lipschitz constants (\cref{sec:SDDiP}), whether nonconvex oracles have constants
uniform in $\beta$ (\cref{sec:ih-oracles}), and whether the number of bad iterations can avoid the factor $1-\beta$
of \cref{rem:tight}.

\par\medskip\noindent\textbf{Use of large language models.}
Large language models were used during this work to suggest proof ideas, to check arguments, and to help with writing.
All statements and proofs have been verified manually by the author, who takes full responsibility for any remaining errors.

\begin{multicols}{2}
\bibliographystyle{abbrvnat}
\bibliography{references}

\begin{thebibliography}{51}
\providecommand{\natexlab}[1]{#1}
\providecommand{\url}[1]{\texttt{#1}}
\expandafter\ifx\csname urlstyle\endcsname\relax
  \providecommand{\doi}[1]{doi: #1}\else
  \providecommand{\doi}{doi: \begingroup \urlstyle{rm}\Url}\fi

\bibitem[Ahmed et~al.(2022)Ahmed, Cabral, and {Freitas Paulo da
  Costa}]{ahmed2022sldp}
S.~Ahmed, F.~G. Cabral, and B.~{Freitas Paulo da Costa}.
\newblock Stochastic {Lipschitz} dynamic programming.
\newblock \emph{Mathematical Programming}, 191:\penalty0 755--793, 2022.

\bibitem[Baldoni et~al.(2011)Baldoni, Berline, De~Loera, K{\"o}ppe, and
  Vergne]{baldoni2011integrate}
V.~Baldoni, N.~Berline, J.~A. De~Loera, M.~K{\"o}ppe, and M.~Vergne.
\newblock How to integrate a polynomial over a simplex.
\newblock \emph{Mathematics of Computation}, 80\penalty0 (273):\penalty0
  297--325, 2011.

\bibitem[Bansal and
  K{\"u}{\c{c}}{\"u}kyavuz(2026{\natexlab{a}})]{bansal2026normalization}
A.~Bansal and S.~K{\"u}{\c{c}}{\"u}kyavuz.
\newblock Normalization of {ReLU} dual for cut generation in stochastic
  mixed-integer programs.
\newblock arXiv:2602.05974, 2026{\natexlab{a}}.

\bibitem[Bansal and
  K{\"u}{\c{c}}{\"u}kyavuz(2026{\natexlab{b}})]{bansal2026sddmip}
A.~Bansal and S.~K{\"u}{\c{c}}{\"u}kyavuz.
\newblock {SDDmiP.jl}: A software package with a provably convergent {B}enders
  algorithm for multi-stage stochastic mixed-integer programming.
\newblock arXiv:2608.05567, 2026{\natexlab{b}}.

\bibitem[Barros and Guigues(2026)]{barros2026bidirectional}
P.~Barros and V.~Guigues.
\newblock Bidirectional {SDDP} with dimension-free complexity for solving
  strongly convex stochastic dynamic programming equations.
\newblock arXiv:2606.10161v2, 2026.

\bibitem[Barros et~al.(2026)Barros, Guigues, Liang, and
  Monteiro]{barros2026dimension}
P.~Barros, V.~Guigues, J.~Liang, and R.~D.~C. Monteiro.
\newblock Dimension-free complexity guarantees for dual dynamic programming.
\newblock arXiv:2606.10203v2, 2026.

\bibitem[Barto et~al.(1995)Barto, Bradtke, and Singh]{barto1995rtdp}
A.~G. Barto, S.~J. Bradtke, and S.~P. Singh.
\newblock Learning to act using real-time dynamic programming.
\newblock \emph{Artificial Intelligence}, 72\penalty0 (1--2):\penalty0 81--138,
  1995.

\bibitem[Birge and Wets(1986)]{birge1986designing}
J.~R. Birge and R.~J.-B. Wets.
\newblock Designing approximation schemes for stochastic optimization problems,
  in particular for stochastic programs with recourse.
\newblock \emph{Mathematical Programming Study}, 27:\penalty0 54--102, 1986.

\bibitem[Cruz et~al.(2026)Cruz, Diniz, and Bahiense]{cruz2026backward}
C.~Cruz, A.~L. Diniz, and L.~Bahiense.
\newblock Backward resampling in stochastic dual dynamic programming:
  Application to power generation planning.
\newblock \emph{European Journal of Operational Research}, 335\penalty0
  (1):\penalty0 192--215, 2026.
\newblock \doi{10.1016/j.ejor.2026.05.011}.

\bibitem[Dann et~al.(2017)Dann, Lattimore, and Brunskill]{dann2017unifying}
C.~Dann, T.~Lattimore, and E.~Brunskill.
\newblock Unifying {PAC} and regret: uniform {PAC} bounds for episodic
  reinforcement learning.
\newblock In \emph{Advances in Neural Information Processing Systems 30
  (NeurIPS 2017)}, 2017.
\newblock arXiv:1703.07710.

\bibitem[Delage et~al.(2024)Delage, Buffet, Dibangoye, and
  Saffidine]{delage2022hsvi}
A.~Delage, O.~Buffet, J.~S. Dibangoye, and A.~Saffidine.
\newblock {HSVI} can solve zero-sum partially observable stochastic games.
\newblock \emph{Dynamic Games and Applications}, 14\penalty0 (4):\penalty0
  751--805, 2024.

\bibitem[Deng and Xie(2024)]{deng2024relu}
H.~Deng and W.~Xie.
\newblock On the {ReLU} {L}agrangian cuts for stochastic mixed integer
  programming.
\newblock arXiv:2411.01229, 2024.

\bibitem[Domingues et~al.(2021)Domingues, M{\'e}nard, Pirotta, Kaufmann, and
  Valko]{domingues2021kernel}
O.~D. Domingues, P.~M{\'e}nard, M.~Pirotta, E.~Kaufmann, and M.~Valko.
\newblock Kernel-based reinforcement learning: a finite-time analysis.
\newblock In \emph{Proceedings of the 38th International Conference on Machine
  Learning (ICML 2021)}, volume 139 of \emph{Proceedings of Machine Learning
  Research}, 2021.

\bibitem[Dowson(2020)]{dowson2020policy}
O.~Dowson.
\newblock The policy graph decomposition of multistage stochastic programming
  problems.
\newblock \emph{Networks}, 76\penalty0 (1):\penalty0 3--23, 2020.

\bibitem[Efroni et~al.(2019)Efroni, Merlis, Ghavamzadeh, and
  Mannor]{efroni2019tight}
Y.~Efroni, N.~Merlis, M.~Ghavamzadeh, and S.~Mannor.
\newblock Tight regret bounds for model-based reinforcement learning with
  greedy policies.
\newblock In \emph{Advances in Neural Information Processing Systems 32
  (NeurIPS 2019)}, 2019.
\newblock arXiv:1905.11527.

\bibitem[Forcier and Lecl{\`e}re(2022)]{forcier2022generalized}
M.~Forcier and V.~Lecl{\`e}re.
\newblock Generalized adaptive partition-based method for two-stage stochastic
  linear programs: geometric oracle and analysis.
\newblock \emph{Operations Research Letters}, 50\penalty0 (5):\penalty0
  552--557, 2022.

\bibitem[Forcier and Lecl{\`e}re(2023)]{forcier2023trajectory}
M.~Forcier and V.~Lecl{\`e}re.
\newblock Trajectory following dynamic programming algorithms without finite
  support assumptions.
\newblock \emph{Journal of Convex Analysis}, 30\penalty0 (3):\penalty0
  951--999, 2023.

\bibitem[Fournier and Guillin(2015)]{fournier2015rate}
N.~Fournier and A.~Guillin.
\newblock On the rate of convergence in {Wasserstein} distance of the empirical
  measure.
\newblock \emph{Probability Theory and Related Fields}, 162:\penalty0 707--738,
  2015.

\bibitem[F{\"u}llner and Rebennack(2025)]{fullner2025sddp}
C.~F{\"u}llner and S.~Rebennack.
\newblock Stochastic dual dynamic programming and its variants: a review.
\newblock \emph{SIAM Review}, 67\penalty0 (3):\penalty0 415--539, 2025.
\newblock \doi{10.1137/23M1575093}.

\bibitem[F{\"u}llner et~al.(2024)F{\"u}llner, Sun, and
  Rebennack]{fullner2024lipschitz}
C.~F{\"u}llner, X.~A. Sun, and S.~Rebennack.
\newblock On {Lipschitz} regularization and {Lagrangian} cuts in multistage
  stochastic mixed-integer linear programming.
\newblock Optimization Online preprint, 2024.
\newblock Updated February 2025.

\bibitem[Girardeau et~al.(2015)Girardeau, Lecl{\`e}re, and
  Philpott]{girardeau2015convergence}
P.~Girardeau, V.~Lecl{\`e}re, and A.~B. Philpott.
\newblock On the convergence of decomposition methods for multistage stochastic
  convex programs.
\newblock \emph{Mathematics of Operations Research}, 40\penalty0 (1):\penalty0
  130--145, 2015.
\newblock \doi{10.1287/moor.2014.0664}.

\bibitem[Guigues(2020)]{guigues2020inexact}
V.~Guigues.
\newblock Inexact cuts in stochastic dual dynamic programming.
\newblock \emph{SIAM Journal on Optimization}, 30\penalty0 (1):\penalty0
  407--438, 2020.

\bibitem[Hoeffding(1963)]{hoeffding1963probability}
W.~Hoeffding.
\newblock Probability inequalities for sums of bounded random variables.
\newblock \emph{Journal of the American Statistical Association}, 58\penalty0
  (301):\penalty0 13--30, 1963.

\bibitem[Ju and Lan(2023)]{julan2023infinite}
C.~Ju and G.~Lan.
\newblock Dual dynamic programming for stochastic programs over an infinite
  horizon.
\newblock arXiv:2303.02024v4, 2023.
\newblock First posted March 2023; version 4, 10 August 2026.

\bibitem[Kallenberg(2002)]{kallenberg2002foundations}
O.~Kallenberg.
\newblock \emph{Foundations of Modern Probability}.
\newblock Springer, New York, 2nd edition, 2002.

\bibitem[Kuhn(2005)]{kuhn2005generalized}
D.~Kuhn.
\newblock \emph{Generalized Bounds for Convex Multistage Stochastic Programs},
  volume 548 of \emph{Lecture Notes in Economics and Mathematical Systems}.
\newblock Springer, Berlin, 2005.

\bibitem[Lan(2022{\natexlab{a}})]{lan2022complexity}
G.~Lan.
\newblock Complexity of stochastic dual dynamic programming.
\newblock \emph{Mathematical Programming}, 191:\penalty0 717--754,
  2022{\natexlab{a}}.

\bibitem[Lan(2022{\natexlab{b}})]{lan2022correction}
G.~Lan.
\newblock Correction to: {Complexity} of stochastic dual dynamic programming.
\newblock \emph{Mathematical Programming}, 194:\penalty0 1187--1189,
  2022{\natexlab{b}}.

\bibitem[Lan and Shapiro(2024)]{lan2023numerical}
G.~Lan and A.~Shapiro.
\newblock Numerical methods for convex multistage stochastic optimization.
\newblock \emph{Foundations and Trends in Optimization}, 6\penalty0
  (2):\penalty0 63--144, 2024.
\newblock Remark numbers refer to arXiv:2303.15672.

\bibitem[McDiarmid(1989)]{mcdiarmid1989method}
C.~McDiarmid.
\newblock On the method of bounded differences.
\newblock In \emph{Surveys in Combinatorics, 1989}, volume 141 of \emph{London
  Mathematical Society Lecture Note Series}, pages 148--188. Cambridge
  University Press, 1989.

\bibitem[Nannicini et~al.(2021)Nannicini, Traversi, and
  Wolfler~Calvo]{nannicini2021benders}
G.~Nannicini, E.~Traversi, and R.~Wolfler~Calvo.
\newblock A {Benders} squared ({B}$^2$) framework for infinite-horizon
  stochastic linear programs.
\newblock \emph{Mathematical Programming Computation}, 13\penalty0
  (4):\penalty0 645--681, 2021.
\newblock \doi{10.1007/s12532-020-00195-2}.

\bibitem[Park et~al.(2022)Park, Jia, and Hanasusanto]{park2022datadriven}
H.~Park, Z.~Jia, and G.~A. Hanasusanto.
\newblock Data-driven stochastic dual dynamic programming: performance
  guarantees and regularization schemes.
\newblock Optimization Online preprint 21376, 2022.
\newblock Last updated May 2025.

\bibitem[Pereira and Pinto(1991)]{pereira1991multi}
M.~V.~F. Pereira and L.~M. V.~G. Pinto.
\newblock Multi-stage stochastic optimization applied to energy planning.
\newblock \emph{Mathematical Programming}, 52\penalty0 (1--3):\penalty0
  359--375, 1991.
\newblock \doi{10.1007/BF01582895}.

\bibitem[Philpott and Guan(2008)]{philpott2008convergence}
A.~B. Philpott and Z.~Guan.
\newblock On the convergence of stochastic dual dynamic programming and related
  methods.
\newblock \emph{Operations Research Letters}, 36\penalty0 (4):\penalty0
  450--455, 2008.
\newblock \doi{10.1016/j.orl.2008.01.013}.

\bibitem[Philpott et~al.(2020)Philpott, Wahid, and Bonnans]{philpott2020midas}
A.~B. Philpott, F.~Wahid, and J.~F. Bonnans.
\newblock {MIDAS}: a mixed integer dynamic approximation scheme.
\newblock \emph{Mathematical Programming}, 181\penalty0 (1):\penalty0 19--50,
  2020.
\newblock Algorithm and equation numbers refer to the preprint (HAL
  hal-01401950v1, 2016).

\bibitem[Puterman(1994)]{puterman1994markov}
M.~L. Puterman.
\newblock \emph{Markov Decision Processes: Discrete Stochastic Dynamic
  Programming}.
\newblock John Wiley \& Sons, New York, 1994.

\bibitem[Rockafellar and Wets(1998)]{rockafellar1998variational}
R.~T. Rockafellar and R.~J.-B. Wets.
\newblock \emph{Variational Analysis}, volume 317 of \emph{Grundlehren der
  Mathematischen Wissenschaften}.
\newblock Springer, Berlin, 1998.

\bibitem[Shapiro(2006)]{shapiro2006complexity}
A.~Shapiro.
\newblock On complexity of multistage stochastic programs.
\newblock \emph{Operations Research Letters}, 34\penalty0 (1):\penalty0 1--8,
  2006.

\bibitem[Shapiro(2011)]{shapiro2011analysis}
A.~Shapiro.
\newblock Analysis of stochastic dual dynamic programming method.
\newblock \emph{European Journal of Operational Research}, 209\penalty0
  (1):\penalty0 63--72, 2011.
\newblock \doi{10.1016/j.ejor.2010.08.007}.

\bibitem[Shapiro and Ding(2020)]{shapiro2020periodical}
A.~Shapiro and L.~Ding.
\newblock Periodical multistage stochastic programs.
\newblock \emph{SIAM Journal on Optimization}, 30\penalty0 (3):\penalty0
  2083--2102, 2020.

\bibitem[Siddig and Song(2022)]{siddig2022adaptive}
M.~Siddig and Y.~Song.
\newblock Adaptive partition-based {SDDP} algorithms for multistage stochastic
  linear programming with fixed recourse.
\newblock \emph{Computational Optimization and Applications}, 81\penalty0
  (1):\penalty0 201--250, 2022.

\bibitem[Sinclair et~al.(2020)Sinclair, Wang, Jain, Banerjee, and
  Yu]{sinclair2020adaptive}
S.~R. Sinclair, T.~Wang, G.~Jain, S.~Banerjee, and C.~L. Yu.
\newblock Adaptive discretization for model-based reinforcement learning.
\newblock In \emph{Advances in Neural Information Processing Systems 33
  (NeurIPS 2020)}, 2020.

\bibitem[Sinclair et~al.(2023)Sinclair, Banerjee, and Yu]{sinclair2023adaptive}
S.~R. Sinclair, S.~Banerjee, and C.~L. Yu.
\newblock Adaptive discretization in online reinforcement learning.
\newblock \emph{Operations Research}, 71\penalty0 (5):\penalty0 1636--1652,
  2023.
\newblock \doi{10.1287/opre.2022.2396}.

\bibitem[Singh and Yee(1994)]{singh1994upper}
S.~P. Singh and R.~C. Yee.
\newblock An upper bound on the loss from approximate optimal-value functions.
\newblock \emph{Machine Learning}, 16\penalty0 (3):\penalty0 227--233, 1994.

\bibitem[Smith and Simmons(2004)]{smith2004hsvi}
T.~Smith and R.~Simmons.
\newblock Heuristic search value iteration for {POMDPs}.
\newblock In \emph{Proceedings of the 20th Conference on Uncertainty in
  Artificial Intelligence (UAI 2004)}, pages 520--527, 2004.
\newblock arXiv:1207.4166.

\bibitem[Song and Luedtke(2015)]{song2015adaptive}
Y.~Song and J.~Luedtke.
\newblock An adaptive partition-based approach for solving two-stage stochastic
  programs with fixed recourse.
\newblock \emph{SIAM Journal on Optimization}, 25\penalty0 (3):\penalty0
  1344--1367, 2015.

\bibitem[Strehl et~al.(2006)Strehl, Li, and Littman]{strehl2006pac}
A.~L. Strehl, L.~Li, and M.~L. Littman.
\newblock {PAC} reinforcement learning bounds for {RTDP} and {Rand-RTDP}.
\newblock In \emph{Learning for Search: Papers from the 2006 AAAI Workshop},
  Technical Report WS-06-11. AAAI Press, 2006.

\bibitem[Warrington et~al.(2019)Warrington, Beuchat, and
  Lygeros]{warrington2019generalized}
J.~Warrington, P.~N. Beuchat, and J.~Lygeros.
\newblock Generalized dual dynamic programming for infinite horizon problems in
  continuous state and action spaces.
\newblock \emph{IEEE Transactions on Automatic Control}, 64\penalty0
  (12):\penalty0 5012--5023, 2019.
\newblock \doi{10.1109/TAC.2019.2907812}.

\bibitem[Yang and Yang(2025)]{yang2025globally}
H.~Yang and H.~Yang.
\newblock Globally converging algorithm for multistage stochastic mixed-integer
  programs via enhanced {L}agrangian cuts.
\newblock Optimization Online, 2025.

\bibitem[Zhang and Sun(2022)]{zhang2022stochastic}
S.~Zhang and X.~A. Sun.
\newblock Stochastic dual dynamic programming for multistage stochastic
  mixed-integer nonlinear optimization.
\newblock \emph{Mathematical Programming}, 196:\penalty0 935--985, 2022.

\bibitem[Zou et~al.(2019)Zou, Ahmed, and Sun]{zou2019sddip}
J.~Zou, S.~Ahmed, and X.~A. Sun.
\newblock Stochastic dual dynamic integer programming.
\newblock \emph{Mathematical Programming}, 175:\penalty0 461--502, 2019.
\newblock Theorem and equation numbers refer to the preprint (Optimization
  Online 5436, 2017).

\end{thebibliography}
\end{multicols}

\appendix
\counterwithin{theorem}{subsection}
\section{Proofs for Sections~\ref{sec:setting} and~\ref{sec:main}}\label{app:main}

\subsection{Proofs of Lemmas~\ref{lem:valid}, \ref{lem:VG} and~\ref{lem:nets}}\label{app:setting}

\begin{proof}[Proof of \cref{lem:valid}]
(a) Induction on $k$ and, within an iteration, backward induction on $t$. At $t=T$, $\lv{T}{}=V_T=0$. If
$\lv{t+1}{k}\le V_{t+1}$ on $X_{t+1}$, then by forward invariance~\ref{S1} and monotonicity,
$f\le\cB_t(\lv{t+1}{k})\le\cB_t(V_{t+1})=V_t$ on $X_t$, hence $\lv t{k}\le V_t$. By~\ref{S2} and $V_T=0$, each $V_t$ is bounded on $X_t$, so $M_t\le\sup V_t-\inf\lv t0<\infty$. For the sub-Bellman inequality, a
cut created at iteration $j\le k$ satisfies $f\le\cB_t(\lv{t+1}{j})\le\cB_t(\lv{t+1}k)$, because
$\lv{t+1}{j}\le\lv{t+1}k$ and $\cB_t$ is monotone; with $\lv t0\le\cB_t(\lv{t+1}0)\le\cB_t(\lv{t+1}k)$ this gives
$\lv tk\le\cB_t(\lv{t+1}k)$. (b) The operator $\cB_t$ is monotone.
\end{proof}

\begin{proof}[Proof of \cref{lem:VG}]
(a) For $\lambda\in[0,1]$, $x+\lambda(X_t-x)\subset X_t\cap\{y:\|y-x\|\le\lambda D_t\}$ by convexity, and its
relative volume is $\lambda^{d_t}$. Take $\lambda=\min\{1,s/D_t\}$. (b) If $x\in K^j$, the $\dist_t$-ball of radius $s$
contains $K^j\cap\{\|y-x\|\le s\}$, whose $\mu_t$-measure is at least
$|I_t|^{-1}\min\{1,(s/\diam K^j)^{\dim K^j}\}\ge|I_t|^{-1}\min\{1,(s/D_t)^{d_t}\}$ by~(a). (c) Apply~(b) to singletons.
\end{proof}

\begin{lemma}[Net sizes]\label{lem:nets}
Let $\sigma>0$. (a) If $X_t$ is a compact subset of an affine space of dimension $d_t$, with $\dist_t$ given by a norm
and diameter at most $D_t$, then $X_t$ has a $\sigma$-net of size at most $(1+2D_t/\sigma)^{d_t}$. (b) If
$X_t=\bigcup_{z\in Z_t}\{z\}\times K_z$, with each $K_z$ compact in an affine space of dimension at most $d_t$ and of
diameter at most $D_t$, and $\dist_t$ is the extended metric of \cref{lem:VG}(b), then $X_t$ has a $\sigma$-net of
size at most $|Z_t|(1+2D_t/\sigma)^{d_t}$.
\end{lemma}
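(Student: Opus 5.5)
The plan is the standard volume-packing argument, applied inside the $d_t$-dimensional affine hull. For (a), choose a maximal $\sigma$-separated subset $C\subset X_t$, meaning $\dist_t(c,c')>\sigma$ for distinct $c,c'\in C$. Such a set is finite: the open balls of radius $\sigma/2$ around its points are pairwise disjoint and have positive $d_t$-dimensional volume, while all of them lie in a bounded region. By maximality, every $x\in X_t$ is within $\sigma$ of some point of $C$, so $C$ is a $\sigma$-net. The whole argument takes place in $\aff X_t$, identified with $\R^{d_t}$. The norm restricted to the direction space of $\aff X_t$ is a norm there, and we use Lebesgue measure on that space. If $d_t=0$, then $X_t$ is a single point and the bound $1$ is trivial.

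To count $|C|$, consider the balls $B(c,\sigma/2)$ in $\aff X_t$ for $c\in C$; they are pairwise disjoint. Fix $c_0\in C$. Every point of $C$ lies within $D_t$ of $c_0$, so all these balls sit inside $B(c_0,D_t+\sigma/2)$. Balls of a given norm have volume proportional to the $d_t$-th power of the radius, so comparing volumes gives
\[
|C|\,(\sigma/2)^{d_t}\le(D_t+\sigma/2)^{d_t},\qquad\text{hence}\qquad |C|\le(1+2D_t/\sigma)^{d_t}.
\]
The one point needing care is using the intrinsic volume of the affine hull rather than of $\R^{n_t}$, where these balls would have zero volume. Compactness is only needed for completeness: it guarantees that a maximal separated set exists, which also follows from the finiteness bound together with a greedy construction.

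For (b), apply (a) to each fibre $K_z$ with the same norm, obtaining nets $C_z$ with $|C_z|\le(1+2D_t/\sigma)^{d_t}$. Since $d_t$ bounds the dimension of each fibre, the exponent can be taken as $d_t$, the bound being monotone in the exponent because $1+2D_t/\sigma\ge1$. Take $C:=\bigcup_z\{z\}\times C_z$. A point $(z,y)$ is within $\sigma$ of $(z,c)$ for some $c\in C_z$, because under the extended metric points in the same piece are measured by the norm. Summing over $z\in Z_t$ gives the bound $|Z_t|(1+2D_t/\sigma)^{d_t}$.
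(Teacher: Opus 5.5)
Your proposal is correct and follows the paper's proof: take a maximal $\sigma$-separated subset, compare the volumes of the disjoint $\sigma/2$-balls in the affine hull with the ball of radius $D_t+\sigma/2$ around one of its points, and for (b) apply (a) to each fibre and sum. Your added remarks on working with the intrinsic $d_t$-dimensional volume and on monotonicity in the exponent are accurate.
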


\begin{proof}[Proof of \cref{lem:nets}]
(a) A maximal $\sigma$-separated subset of $X_t$ is a $\sigma$-net. The balls of radius $\sigma/2$ around its
points, taken in the affine hull, are disjoint and lie in a ball of radius $D_t+\sigma/2$ centred at any point of
$X_t$; comparing volumes gives $((D_t+\sigma/2)/(\sigma/2))^{d_t}$. (b) Apply~(a) to each fibre.
\end{proof}

\subsection{Proof of Theorem~\ref{thm:master}(b) and~(c)}\label{app:cover}

With $\theta$, $\sigma_t$ and $C_t$ as in \cref{thm:master}(b), so that $2\sum_{t<T}L_t\sigma_t\le\theta/2$, let
$Y_k:=\sum_{t<T}\sum_{c\in C_t}(\Ub t-\lv tk)(c)$. By \cref{lem:basic}(i), $(Y_k)$ is adapted, nonincreasing and
nonnegative, and $Y_0$ is deterministic. Fix $k\ge1$ and $t<T$, let $x=\bx^k_t$, and choose $c\in C_t$ with
$\dist_t(x,c)\le\sigma_t$. By~\ref{O2} and~\ref{O4}, $(\lv tk-\lv t{k-1})(x)\ge(r^k_t(x)-\gamma_t)_+$. Both
approximations are $L_t$-Lipschitz, so $(\lv tk-\lv t{k-1})(c)\ge(r^k_t(x)-\gamma_t-2L_t\sigma_t)_+$, and the lifts at
the other net points are nonnegative. Hence, with $R^k$ the residual sum~\eqref{eq:setting:residuals},
\begin{equation}\label{eq:appA:covering_lift}
Y_{k-1}-Y_k\ \ge\ \sum_{t<T}\big(r^k_t(\bx^k_t)-\gamma_t-2L_t\sigma_t\big)_+\ \ge\ \Big(R^k-\sum_{t<T}\gamma_t-\theta/2\Big)_+ .
\end{equation}
By Jensen for $(\cdot)_+$ and \cref{lem:gap}, $\EE[Y_{k-1}-Y_k\mid\cA_{k-1}]\ge(G^k-\Gamma-\theta/2)_+$. Summing over
$k\in[K]$ gives the first claim, since $Y_0-\EE Y_K\le Y_0$. The count follows from
$(\theta/2)\,\#\{k\ge1:G^k>\theta+\Gamma\}\le\sum_{k\ge1}(G^k-\Gamma-\theta/2)_+$. The high-probability claim is
\cref{lem:dbp} applied to $(Y_k)$, with $C=Y_0$ (deterministic because $\lv{}0$, $\Ub t$ and the nets $C_t$ are) and
$f_k=(G^k-\Gamma-\theta/2)_+$. For~(c), take $\sigma_t=0$ and $c=x$, so that the Lipschitz property is not used, and
repeat the computation with $\theta=0$, $f_k=(G^k-\Gamma)_+$ and $C=C_{\rm fin}$. \qed

\subsection{Error floor and regret}\label{app:floor}

\begin{remark}[The error floor is attained]\label{rem:floor}
If every $X_t=\{\bar x_t\}$ is a point, $\ell_t\equiv\cb$, $\lv{}0\equiv0$, and the oracle returns the valid,
$\gamma_t$-tight constant cut $\cB_t(\lv{t+1}{k})(\bar x_t)-\gamma_t$, with $0\le\gamma_t\le\cb$, then
$G^k=\sum_{t<T}\gamma_t$ for all $k\ge2$. If $T=1$, $\cX_1(x_0,\xi)=[0,1]$, $\ell_1(x_0,y,\xi)=\gamma^Fy$,
$\lv{}0\equiv0$ and the forward oracle always returns $y=1$, then $G^k=\gamma^F$ for all $k\ge1$.
\end{remark}

\begin{remark}[Regret]\label{rem:regret}
In the setting of \cref{cor:uniform} with $\gamma=\gamma^F=0$ and $\kappa=1$, \cref{lem:valid}(a) and
\cref{lem:gap} give $0\le G^k\le M^V:=\sum_{t<T}\sup_{X_t}(V_t-\lv t0)$. The function $\psi$ of \cref{cor:uniform},
with $\rho=4LD$, satisfies $\psi(s)\ge s^{d+1}/(2(d+1)s_{\max}^d)$ on $[0,s_{\max}]$ when $s_{\max}\ge\rho$. Hence
$\Psi_T(g)\ge g^{d+1}/(2(d+1)T^d\max\{\rho,M^V/T\}^d)$ for $g\le M^V$, and Hölder's inequality with the master
bound~\eqref{eq:main:master_volume} gives
\begin{equation}\label{eq:appA:average_gap}
\EE\sum_{k\in[K]}G^k\le K^{d/(d+1)}\big(2(d+1)T^d\max\{\rho,M^V/T\}^d\,M^V\big)^{1/(d+1)} .
\end{equation}
\end{remark}

\subsection{Conditions (S2) and (S3)}\label{app:measurability}

Condition~\ref{S2} is equivalent to the boundedness of $\cB_t(0)$ when the losses are bounded below. For signed
losses and noise that is not finitely supported, the boundedness of $\cB_t(0)$ alone does not suffice: the cost of a
greedy path can then fail to be integrable.

The Lipschitz envelope $\Ub t$ of~\ref{O3} is Borel for a norm, for the extended metric of \cref{lem:VG}(b) and on
finite sets. In general, in \cref{thm:master}(a), $\int(\Ub t-\lv t0)\,d\mu_t$ denotes the upper integral
$\inf\{\int g\,d\mu_t:g\ \text{Borel},\ g\ge\Ub t-\lv t0\}$, which suffices since the approximations are Borel
by~\ref{O1} and lie below $\Ub t$.

The approximations are of the form $\lv tk=\mathsf v^k_t(\mathsf p^k_t,\cdot)$, with $\mathsf p^k_t$ a random vector of
a Euclidean space $\mathsf P_k$ whose dimension may grow with $k$ and $\mathsf v^k_t:\mathsf P_k\times X_t\to\R$ Borel,
with $\mathsf v^k_t(\mathsf p,\cdot)$ bounded for every $\mathsf p$. For these functions, the map
$(\mathsf p,x,\xi)\mapsto\Bd_t(\mathsf v^k_{t+1}(\mathsf p,\cdot);x,\xi)$ is jointly measurable and, for every $\gamma>0$, admits
a jointly measurable $\gamma$-minimising selection on the set where it is finite, and for $\gamma=0$ one when the
infimum is attained. The map $(x,\xi)\mapsto\Bd_t(W;x,\xi)$ is also jointly measurable for $W=V_{t+1}$ and for the
bounded continuous test functions $W$ of \cref{sec:noise}. Joint measurability cannot be asked for every bounded Borel
$\mathsf v^k_{t+1}$, since the projection of a Borel set need not be Borel. \emph{Sufficient condition.} It holds when
$(x,\xi;y)\mapsto\ell_{t+1}(x,y,\xi)+\delta_{\cX_{t+1}(x,\xi)}(y)$ is a normal integrand and each $\mathsf v^k_{t+1}$
is Borel in $\mathsf p$ and continuous in $y\in\R^{n_{t+1}}$, as is a maximum of finitely many cuts with Borel
coefficients that are Lipschitz for a norm. The sum is then a normal integrand with parameter $(\mathsf p,x,\xi)$, so
its optimal value is measurable and its $\gamma$-argmin mappings ($\gamma\ge0$) are measurable and closed-valued. They
admit measurable selections where they are nonempty, which for $\gamma>0$ is where the optimal value is finite
\citep[Ex.~14.29, Prop.~14.44(c), Thm.~14.37, Prop.~14.33 and Cor.~14.6]{rockafellar1998variational}. The same holds
for every bounded lower semicontinuous $W$ \citep[Ex.~14.30]{rockafellar1998variational}, in particular for these test
functions and for a Lipschitz $V_{t+1}$. For other cut families, such as the step cuts of \cref{sec:midas}, \ref{S3} is
an assumption.

\section{Cut families: complements and proofs for Section~\ref{sec:instances}}\label{app:instances}

\subsection{Benders cuts}\label{app:benders}

\begin{proof}[Proof of \cref{prop:benders}]
By the standard argument (convex combinations of $\epsilon$-optimal points stay feasible by graph convexity),
$h=\cB_t(\lv{t+1}k)$ is convex on the convex set $X^\eta_t$. By induction, every cut of index $t+1$ is valid on
$X_{t+1}$, so $\lv{t+1}0\le\lv{t+1}k\le V_{t+1}$ there (\cref{lem:valid}(a)). Since $\cX_{t+1}(z,\xi)\subset X_{t+1}$
for $z\in X^\eta_t$, monotonicity and~\ref{S2} on $X^\eta_t$ give, on $X^\eta_t$,
$-\infty<\cB_t(0)+\inf\lv{t+1}0\le h\le\cB_t(V_{t+1})=V_t\le\sup_{X^\eta_t}\cB_t(0)+\sup_{X_{t+1}}V_{t+1}<\infty$.
Moreover $h\ge\lv tk\ge\lv t0$ on $X_t$ by the sub-Bellman inequality. For $x\in X_t$, $X^\eta_t$ contains a relative
neighbourhood of $x$ in $\aff X_t$, so the relative subdifferential is nonempty. The subgradient inequality gives
validity on $X^\eta_t$, and tightness holds by construction. For a unit vector $u$ in the direction space of
$\aff X_t$, $h(x+\eta u)\ge h(x)+\eta\langle g,u\rangle$, so $\langle g,u\rangle\le(\sup_{X^\eta_t}V_t-\inf_{X_t}\lv t0)/\eta$.
Since $\pm u$ are both admissible, $f^k_t$ is $L_t$-Lipschitz on $X_t$. Under the bounded-loss condition,
the same upper bound on $V_t$ and backward induction give $\sup_{X^\eta_t}V_t\le\sum_{s>t}(a_s+\cb)$.
\end{proof}

\subsection{Details of \texorpdfstring{\cref{ex:absorb}}{the absorption example}}\label{app:absorb}

\emph{Absorption.} Comparing the infima defining $\Bd_t(W;z,\xi)$ and $\Bd_t(W;z',\xi)$, over the same set, shows
that $\cB_t(W)$ is $c$-Lipschitz on $X^\eta_t$ for every bounded $W$; so the Benders cuts are $c$-Lipschitz
($\eta\langle g,v\rangle\le h(x+\eta v)-h(x)\le c\eta$ for unit directions $v$ of $\aff X_t$).

\emph{Inventory.} With the norm $\|\cdot\|_1$, let $X_t=X:=\prod_{i\in[n]}[-b'_i,b_i]$, $t\in[T]$, be the stocks of
$n$ items (negative stocks are backlogs), $x_0\in X$, $\bxi_t\in\Xi:=\prod_i[0,\bar\xi_i]$ the demand, and
$y=x+u+e-w-\xi$, with regular orders $u\in\prod_i[0,\bar u_i]$ and \emph{unbounded} emergency orders $e\ge0$ and
disposals $w\ge0$, at unit costs $c_{o,i},c_{e,i},c_{w,i}\ge0$. Then $\cX_{t+1}(x,\xi)=X$ for every $x\in\R^n$, and
$\ell_{t+1}(x,y,\xi)=\mathrm{oc}(y-x+\xi)+\mathrm{hc}(y)$. Here $\mathrm{hc}\ge0$ is a continuous convex holding and
backlog cost, and $\mathrm{oc}(r)\ge0$, the cheapest cost of a net order $u+e-w=r$, is convex (a linear programme in
$r$) and $c$-Lipschitz, $c:=\max_i\max\{c_{e,i},c_{w,i}\}$ (absorb a change of $r_i$ with $e_i$ or $w_i$). With
$\lv{}0=0$, the losses lie in $[0,\cb]$, $\cb:=c(D+\|\bar\xi\|_1)+\max_X\mathrm{hc}$, where $D:=\sum_i(b_i+b'_i)=\diam X$.

\emph{Continuous demand} (\cref{sec:noise}). As $\Bd_t(W;x,\xi)=:\vartheta_W(x-\xi)$ depends on $x-\xi$ only, it is
$c$-Lipschitz in $\xi$ for every bounded $W$ and convex in $\xi$ for convex $W$. So \cref{cor:wasserstein} applies
with $\mathrm{rg}_t=c\|\bar\xi\|_1$, and the conditions of \cref{prop:jensen,ass:scen} hold on $\cW^{\rm c}_{t+1}$ with
$L_t=L^\xi_t=c$, or $c\sqrt n$ for the Euclidean norms of \cref{sec:sampled}, under which $D$ still bounds the
diameters; for the oracle, $\partial\vartheta_W$ has a closed graph, hence a Borel selection
\citep[Ex.~14.9 and Cor.~14.6]{rockafellar1998variational}.

\subsection{TFDP's generic cuts}\label{app:TFDP}\label{sec:TFDP}

TFDP \citep[Algorithm~1]{forcier2023trajectory} uses a $\gamma^F$-forward operator (their Def.~2.1), cuts of $\cB_t$
applied to the already updated approximation (our $\lv{t+1}{k}$) that are $L_t$-Lipschitz on the reachable set, valid
and $\gamma_t$-tight (their Assumption~4(i)--(iii)), and a monotone update (their line~11). With TFDP's indexing,
take $X_t$ equal to their reachable set $X^r_t$, which is forward invariant by definition. Their Assumptions~1
and~3(i) make $X^r_t$ bounded but not necessarily closed; we assume that it is closed, hence compact, as~\ref{S1}
requires. TFDP starts from $V^0\equiv-\infty$, so $M_t=\infty$; we start the analysis after the first iteration,
conditionally on $\cA_1$, with the first cuts $\lv t1$ as initial approximations.

\begin{proposition}[TFDP with random node selection]\label{prop:TFDP}
Consider TFDP's Algorithm~1 under their Assumptions~1--5, with closed reachable sets, with~\ref{S2} (\eg losses
bounded below and $\cB_t(0)$ bounded), and with the noise of the forward pass drawn from the law of $\bxi_t$,
independently of the past, as in \citet[Thm.~4.11]{forcier2023trajectory}. Take their constants: $n_t\le n$,
$D_t=D$, $L_t=\tilde L_t=L$ (Lipschitz constants of the cuts and of $V_t$), $\gamma_t\le\gamma$. Then
\cref{ass:S,ass:O} hold with $U=V$ from the second iteration on, with
$\sup_{X_t}(V_t-\lv t1)\le\sum_{t\le s<T}(\gamma_s+(\tilde L_s+L_s)D_s)$, and
\begin{equation}\label{eq:appB:tfdp_rate}
\EE\,\#\{k\ge2:G^k>\varepsilon+\Gamma\}\ \le\ \frac{T(T+1)(\gamma+2LD)(1+8LDT/\varepsilon)^n}{\varepsilon},
\qquad\Gamma=\sum_{t<T}\gamma_t+\sum_{t\in[T]}\gamma^F_t.
\end{equation}
\end{proposition}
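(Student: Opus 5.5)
We restart the analysis after the first iteration, conditioning on $\cA_1$, and take the TFDP cuts $\lv t1$ as the initial approximations. The plan has three steps: check \cref{ass:S,ass:O} with $U=V$ for this shifted run, bound the potential, then apply \cref{thm:master}(b) through the covering rate of \cref{cor:cover}(a).

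\emph{Assumptions.} Conditionally on $\cA_1$, the iterations $k\ge2$ form a randomised TFDP-type run in the sense of \cref{sec:algo}. Its forward noise is drawn from $\PP_t$ independently of the past, and its starting approximations $\lv t1$ are $\cA_1$-measurable, hence deterministic given $\cA_1$. They are finite and bounded, because they are finite maxima of $L$-Lipschitz cuts on the compact sets $X_t=X^r_t$.

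Forward invariance~\ref{S1} holds by the definition of the reachable set together with closedness, and~\ref{S2} is assumed. Their Assumption~4(iii) gives~\ref{O1}, since the cuts are $L_t$-Lipschitz on $X^r_t$. Their Assumption~4(ii) is the strong validity of \cref{lem:valid}(a), which gives~\ref{O3} with $U_t=V_t$. The base case of that induction is $\lv t1\le V_t$, which holds because the first cuts are themselves valid. Their tightness, $\gamma_t$-tight for $\cB_t(\lv{t+1}k)$, gives~\ref{O2} by \cref{lem:valid}(b), and their line~11 gives~\ref{O4}.

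\emph{Potential bound.} Since $V_t$ is $\tilde L_t$-Lipschitz, $U_t=V_t$ equals its own Lipschitz envelope once $L_t\ge\tilde L_t$, so $M_t=\sup_{X_t}(V_t-\lv t1)$. The first forward pass visits some $\bx^1_t\in X_t$, where the first cut is $\gamma_t$-tight:
\[
\lv t1(\bx^1_t)\ge\cB_t(\lv{t+1}1)(\bx^1_t)-\gamma_t .
\]
For $x\in X_t$ we then chain three estimates:
\[
V_t(x)-\lv t1(x)\le V_t(\bx^1_t)-\lv t1(\bx^1_t)+(\tilde L_t+L_t)D_t
\]
by the two Lipschitz constants;
\[
V_t(\bx^1_t)-\cB_t(\lv{t+1}1)(\bx^1_t)\le\sup_{X_{t+1}}(V_{t+1}-\lv{t+1}1)
\]
by monotonicity of $\cB_t$ and commutation with constants; and $\gamma_t$ from tightness. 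Backward induction, starting from $V_T=\lv T1=0$, gives $\sup_{X_t}(V_t-\lv t1)\le\sum_{t\le s<T}(\gamma_s+(\tilde L_s+L_s)D_s)$. With the uniform constants this is at most $(T-t)(\gamma+2LD)$, so $\sum_{t<T}M_t\le T(T+1)(\gamma+2LD)/2$.

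\emph{Rate.} Apply \cref{thm:master}(b) with $\theta=\varepsilon$ and $\sigma_t=\varepsilon/(4TL)$, so that $\sum_{t<T}L\sigma_t\le\varepsilon/4$. By \cref{lem:nets}(a) there are nets $C_t$ with $|C_t|\le(1+8LDT/\varepsilon)^n$. The theorem then gives
\[
\EE\,\#\{k\ge2:G^k>\varepsilon+\Gamma\}\le\frac{2}{\varepsilon}(1+8LDT/\varepsilon)^n\sum_{t<T}M_t ,
\]
which is~\eqref{eq:appB:tfdp_rate}. Taking the expectation over $\cA_1$ keeps the bound, since the bound is deterministic.

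\emph{Expected obstacle.} The delicate part is the index bookkeeping between TFDP's indexing and ours. The cut of index $t$ is tight for $\cB_t$ applied to the updated $\lv{t+1}{}$, and the reachable sets must play the role of our $X_t$, so the first-iteration tightness has to be read at the correct visited point.

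A second concern is the conditioning. The oracle's internal randomness must be independent of each new forward pass, and the gap $G^k$ for $k\ge2$ must be unchanged by starting the analysis at iteration $1$. Both hold because $\pi^k$ depends only on $\lv{}{k-1}$.
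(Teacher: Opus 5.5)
Your proof is correct and follows the paper's own argument essentially step for step. You restart the analysis conditionally on $\mathcal A_1$ with the first cuts as initial approximations, obtain (O1)--(O4) from TFDP's assumptions via the sufficient-conditions lemma, bound $\sup_{X_t}(V_t-\underline V^{1}_t)$ by the same one-step recursion at the first trial point $\bm x^1_t$, and apply the covering version of the master theorem with $\sigma_t=\varepsilon/(4LT)$.

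The only item you do not check is the measurability condition (S3). The paper obtains it from TFDP's normal-integrand assumptions, with the added proviso that the cut oracle is a Borel function of its inputs.
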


\begin{proof}[Proof of \cref{prop:TFDP}]
The setting satisfies~\ref{O1},~\ref{O2} (\cref{lem:valid}(b)),~\ref{O3} with $U_t=V_t$ (\cref{lem:valid}(a))
and~\ref{O4}. Condition~\ref{S3} holds under TFDP's normal-integrand assumptions if the cut oracle is a Borel
function of its inputs: the cuts are then Lipschitz and Borel in finitely many random parameters (the trial points
and noises drawn so far), and $V_{t+1}$ is Lipschitz (\cref{app:measurability}). With random node selection from the
law of $\bxi_t$, the forward noise has the law used in the Bellman operators, even without finite support. Since
$V_t$ is $\tilde L_t$-Lipschitz \citep[Assumption~3(ii)]{forcier2023trajectory}, we have, at $x=\bx^1_t$,
\begin{equation}\label{eq:appB:tfdp_initial_gap}
\sup_{X_t}(V_t-\lv t1)\le(V_t-\lv t1)(x)+(\tilde L_t+L_t)D_t
\le\sup_{X_{t+1}}(V_{t+1}-\lv{t+1}1)+\gamma_t+(\tilde L_t+L_t)D_t ,
\end{equation}
using $\lv t1(x)\ge\cB_t(\lv{t+1}1)(x)-\gamma_t$ and $V_t(x)-\cB_t(\lv{t+1}1)(x)\le\sup(V_{t+1}-\lv{t+1}1)$.
So $M^{(1)}_t:=\sup(V_t-\lv t1)\le\sum_{t\le s<T}(\gamma_s+(\tilde L_s+L_s)D_s)$, which is deterministic. The
reachable sets need not be convex, so we apply \cref{thm:master}(b) conditionally on $\cA_1$, with nets of size
$(1+8LDT/\varepsilon)^n$ (\cref{lem:nets}(a), $\sigma_t=\varepsilon/(4LT)$) and $\sum_{t<T}M^{(1)}_t\le T(T+1)(\gamma+2LD)/2$.
\end{proof}

\subsection{Reverse-norm and augmented Lagrangian cuts (SLDP)}\label{app:sldp}\label{sec:sldp}

SLDP uses the reverse-norm cut \citep[Def.~1]{ahmed2022sldp} $f^k_t(y):=\cB_t(\lv{t+1}{k})(x)-L_t\|y-x\|$ at
$x=\bx^k_t$, averaged over the noise as on their p.~772; their $\rho_t$ is our $L_t$. The state sets $X_t$, possibly
nonconvex, are the compact projections of their Compact State Complete Continuous Recourse condition (p.~774).

\begin{proposition}[Reverse-norm cuts]\label{prop:sldp}
Let $\lv t0$ be $L_t$-Lipschitz with $\lv t0\le V_t$.
\begin{enumerate}[label=(\roman*),nosep]
\item \ref{O1} holds with constant $L_t$,~\ref{O2} holds with $\gamma=0$, and~\ref{O4} holds.
\item If $L_t\ge\Lip_{X_t}(V_t)$ for every $t$, then every cut is below $V_t$ on $X_t$, so~\ref{O3} holds with
$U_t=V_t=\Ub t$ and $M_t=\sup(V_t-\lv t0)$.
\end{enumerate}
\end{proposition}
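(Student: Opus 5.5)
For part~(i), I would check the three conditions one at a time. Each reverse-norm cut $f^k_t(y)=\cB_t(\lv{t+1}{k})(x)-L_t\|y-x\|$ is a constant minus $L_t$ times the distance to $x$, so it is $L_t$-Lipschitz for the norm. It is also Borel, because its two random parameters, the trial point and the constant, are Borel. Together with the hypothesis that $\lv t0$ is $L_t$-Lipschitz, this gives~\ref{O1}.

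If the cut is averaged over the noise as on p.~772 of \citet{ahmed2022sldp}, each scenario cone has the form $\Bd_t(\lv{t+1}k;x,\xi)-L_t\|y-x\|$. Its average is again $L_t$-Lipschitz and equals $\cB_t(\lv{t+1}k)(x)-L_t\|y-x\|$, so nothing changes. At $y=x$ the cut equals $\cB_t(\lv{t+1}k)(x)$, so it is exactly tight for the updated approximation. \Cref{lem:valid}(b) then gives~\ref{O2} with $\gamma=0$, using the monotonicity $\lv{t+1}k\ge\lv{t+1}{k-1}$ that follows from the max update. That max update is precisely~\ref{O4}.

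For part~(ii), I would follow the induction of \cref{lem:valid}(a): backward in $t$ within an iteration, and forward in $k$. Assume $\lv{t+1}k\le V_{t+1}$ on $X_{t+1}$. Monotonicity and forward invariance give $\cB_t(\lv{t+1}k)(x)\le V_t(x)$. Since $V_t$ is $L_t$-Lipschitz on $X_t$, for every $y\in X_t$,
\[
f^k_t(y)\le V_t(x)-L_t\|y-x\|\le V_t(y).
\]
Combined with $\lv t0\le V_t$, this gives $\lv tk\le V_t$ and closes the induction.

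Note that we do not need the strong validity $f\le\cB_t(\lv{t+1}k)$ on all of $X_t$. It may fail, because $\cB_t(\lv{t+1}k)$ need not be $L_t$-Lipschitz. This is why I argue against $V_t$ directly rather than invoking \cref{lem:valid}(a) verbatim.

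Hence~\ref{O3} holds with $U_t=V_t$. Since $V_t$ is $L_t$-Lipschitz, its Lipschitz envelope is itself, that is $\Ub t=V_t$ (take $z=x$ in the infimum; the reverse inequality is the Lipschitz property). Therefore $M_t=\sup(V_t-\lv t0)$, which is finite because $V_t$ is bounded by~\ref{S2} and $\lv t0$ is bounded.

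The only delicate point I expect is the averaged-cut bookkeeping in~(i). I also need to make sure that no ``snapping'' is used, since snapping would replace the trial point by a nearby earlier one and break exact tightness at $\bx^k_t$. Apart from that, the argument is routine.
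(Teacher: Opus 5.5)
Your proposal is correct and follows the paper's proof. Part~(i) is by construction, with \cref{lem:valid}(b) for tightness, and part~(ii) is the same induction giving $f^k_t(y)\le V_t(x)-L_t\|y-x\|\le V_t(y)$. Your extra checks that $\Ub t=V_t$ and $M_t<\infty$, and your remark that SLDP's strong validity can fail here, agree with the paper's discussion after the proposition.
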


\begin{proof}[Proof of \cref{prop:sldp}]
(i) holds by construction. (ii) Induction: if $\lv{t+1}{k}\le V_{t+1}$ on $X_{t+1}$, then
$\cB_t(\lv{t+1}{k})(x)\le V_t(x)$, so $f^k_t(y)\le V_t(x)-L_t\|y-x\|\le V_t(y)$; hence $\lv t{k}\le V_t$.
\end{proof}

So \cref{thm:master} only needs $L_t\ge\Lip V_t$. SLDP's own validity condition \citep[eq.~(20)]{ahmed2022sldp},
$f^k_t\le\cB_t(\lv{t+1}{k})$ for the \emph{current} approximation, holds when $L_t\ge\Lip\cB_t(W)$ for every
$L_{t+1}$-Lipschitz $W$, which their Hoffman recursion~(17) guarantees. It can fail when $L_t$ only dominates
$\Lip V_t$. Take identity dynamics on $X_1=X_2=[-1,1]$, $T=3$, $\cX_1(x_0)=\{0\}$, zero losses, $L_2=1$,
$\lv20(y)=\max(-1,-|y|)$ and $L_1=0$: the first cut of index~$1$, at $\bx^1_1=0$, is the constant
$0>-|y|=\cB_1(\lv21)(y)$ for $y\ne0$. Without snapping, \cref{thm:master} gives explicit rates, without the error
term of \citet[Lemma~5]{ahmed2022sldp}.

\begin{remark}[Augmented Lagrangian cuts]\label{rem:AL}
An augmented Lagrangian cut $g^{\mathrm{AL}}(b_0;\lambda,\varrho)+\lambda^\top(b_0-b)-\varrho\|b_0-b\|$ of a nodal
value function $g$ at the trial state $b_0$, with multiplier $\lambda$, penalty $\varrho$ and augmented Lagrangian
dual function $g^{\mathrm{AL}}$ \citep[Def.~3]{ahmed2022sldp}, is valid for $g$ (weak duality), is
$(\|\lambda\|_*+\varrho)$-Lipschitz, and is tight if and only if $g^{\mathrm{AL}}(b_0;\lambda,\varrho)=g(b_0)$.
Uniformly Lipschitz cuts require restricting the dual to $\|\lambda\|_*+\varrho\le L_t$. The restricted dual value
is at least $\inf_b\{g(b)+L_t\|b-b_0\|\}$, which equals $g(b_0)$ when $L_t\ge\Lip(g)$ on the whole domain of $g$ \citep[p.~768]{ahmed2022sldp};
the cut then satisfies \cref{ass:O} with $\gamma=0$ and $U=V$. Otherwise the cut is tight only for a
Lipschitz-regularised operator, the setting of \cref{sec:ZS} with a norm regularisation. With SLDP's unregularised
forward pass,~\ref{O2} is then guaranteed only with $\gamma_t$ equal to a regularisation gap, which is not
controlled a priori. \citet{fullner2024lipschitz} study Lipschitz regularisation and the tightness of Lagrangian
cuts in this setting.
\end{remark}

\subsection{Zhang and Sun}\label{app:ZS}

\paragraph{Setting and dictionary.} \citet{zhang2022stochastic} work on a recombining tree (their Assumption~3) with
stages $0\le t\le T-1$, compact, possibly nonconvex state spaces and nonnegative lower semicontinuous nodal costs
(possibly $+\infty$, which we encode in the feasible sets). Their stage $t$ is our stage $t+1$, and their root is our
stage~1. Their state space at stage $t$, written $\mathcal S_t$ here, is our $X_{t+1}$; their approximations
$\mathcal Q^i_t$ and original expected cost-to-go functions $\mathcal Q_t$ (their~(3)) are our $\lv{t+1}i$ and
$V_{t+1}$. Within a stage we use their node notation: $n$ is a node (not a dimension), $m$ ranges over its children,
$p_{nm}$ are the transition probabilities, $F_m$ the nodal feasible sets, $f_m$ the nodal costs and $\mathcal Q^i_m$
the approximations; $\mathcal R_n$, $\varpi_n$ and $\varrho$ are their $\psi_n$, $\sigma_n$ and $\rho$. Their
regularised value functions $\mathcal Q^R_n$ (their~(4)) are our $V^R$, the value functions of the regularised
operator $\BR_t$, with stage cost $\min_{z,y}\{f(z,y,x)+\varpi\mathcal R(\bar x-z)\}$ at the incoming state $\bar x$
and next-state set $\mathrm{proj}_xF_t$. The forward step of their Algorithm~3 (line~7) is greedy for $\BR_t$. The
backward step (line~14) returns the generalised conjugacy cut $C^i_m$ of each child $m$, with constant $v^i_m$ (the
value of their problem~(B) at the parent state) and parameters $(\lambda,\varrho)$ bounded by $l_{n,\lambda}$ and
$l_{n,\varrho}$ (their (15)--(16)); the update is their~(20). Exact penalisation (their Assumption~2) means
$v^{\mathrm{reg}}=v^{\mathrm{prim}}$, and a root solution is $\varepsilon$-optimal (their Def.~1) if its regularised
cost is at most $v^{\mathrm{reg}}+\varepsilon$.

\begin{proposition}[Zhang--Sun cuts satisfy the oracle]\label{prop:ZS}
Under Assumptions~1 and~3 of \citet{zhang2022stochastic}, with their parameter bounds $l_{n,\varrho}\ge\varpi_n$, or,
in the convex case, $\mathcal R_n=\|\cdot\|$, $l_{n,\lambda}\ge\varpi_n$ and $l_{n,\varrho}=0$, Algorithm~3 with
$\BR_t$ in the role of $\cB_t$ satisfies \cref{ass:S,ass:O} with $\dist_t=\|\cdot\|$; $L$ equal to their constants
$L_n=\sum_mp_{nm}(l_{m,\lambda}+l_{m,\varrho})$ \citep[Prop.~5]{zhang2022stochastic}; $\gamma=\gamma^F=0$; and
$U=V$, the value functions of the \emph{original} problem.
\end{proposition}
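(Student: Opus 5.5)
We check \cref{ass:S,ass:O} one item at a time, with $\BR_t$ in the role of $\cB_t$, using the dictionary of \cref{app:ZS}. Throughout, the reference problem is the regularised one, so the gap identity (\cref{lem:gap}) and the residuals are taken with respect to $\BR_t$.

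\emph{Standing assumptions.} For~\ref{S1}, Assumption~1 of \citet{zhang2022stochastic} gives compact state spaces $\mathcal S_t=X_{t+1}$. The regularised stage problem at an incoming state $\bar x$ optimises over $z$ and a next state in $\mathrm{proj}_xF_t\subset X_{t+1}$, and this set is nonempty by their feasibility assumption, which gives forward invariance. For~\ref{S2}, the nodal costs are nonnegative and bounded on the compact feasible sets, and the noise is finitely supported; the regularised stage cost lies between $0$ and the unregularised one, so it is bounded. For~\ref{S3}, the noise is finite, the approximations are maxima of finitely many cuts with Borel coefficients, and the oracles are deterministic with a tie-breaking rule, so the remark after \cref{ass:S} applies.

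\emph{Oracle conditions.} For~\ref{O1}, a generalised conjugacy cut of child $m$ has the form $v^i_m-\lambda^\top(\cdot-x)-\varrho\mathcal R(\cdot-x)$ with $\|\lambda\|\le l_{m,\lambda}$ and $\varrho\le l_{m,\varrho}$. Each child cut is therefore $(l_{m,\lambda}+l_{m,\varrho})$-Lipschitz, once $\mathcal R$ is normalised to be $1$-Lipschitz for $\|\cdot\|$. The aggregated cut is the $p_{nm}$-average, which is $L_n$-Lipschitz; this is their Prop.~5. The initial approximation $\lv{}0=0$ is constant.

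For~\ref{O2} with $\gamma=0$, I would invoke their exactness result for the cut: when $l_{m,\varrho}\ge\varpi_m$ (or in the convex case with $l_{m,\lambda}\ge\varpi_m$), the dual problem defining $v^i_m$ attains the regularised nodal value at the trial point. The averaged cut then equals $\BR_t(\lv{t+1}{k})(\bx^k_t)$. Because the cut uses the updated $\lv{t+1}k$, \cref{lem:valid}(b) converts this into~\ref{O2}.

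For~\ref{O3} with $U=V$, validity of the cuts for the \emph{original} $V_t$ follows by backward induction. By weak duality, each cut lies below the original nodal value of $\lv{t+1}k$, and $\lv{t+1}k\le V_{t+1}$ together with monotonicity then gives a cut $\le V_t$. This is the argument of \cref{lem:valid}(a), applied with the original operator. The bound $M_t<\infty$ holds because $V_t$ is bounded and the cuts are Lipschitz: \cref{lem:basic} gives $\Ub t\le V_t$, so $M_t\le\sup V_t$. Finally,~\ref{O4} is their update~(20).

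\emph{Forward pass.} The forward pass, line~7 of their Algorithm~3, solves the regularised nodal problem exactly, so it is exact greedy for $\BR_t$ and $\gamma^F=0$.

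\emph{Main obstacle.} The delicate step is~\ref{O2}. I must check that their tightness statement holds for the regularised operator exactly at the forward state with the parameter bounds as stated, and that it holds per child before averaging. The averaging itself is harmless, because $\BR_t$ is the $p_{nm}$-average of the regularised nodal values.
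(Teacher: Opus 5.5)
Your checks of \ref{O1}--\ref{O4} follow the paper's route. The Lipschitz constants and validity against the \emph{original} value functions come from Prop.~5 of Zhang and Sun (weak duality in their~(B)). Tightness for $\BR_t$ with the updated approximation comes from the feasibility of $(\lambda,\varrho)=(0,\varpi_m)$ in~(B), or from their Lemma~2 in the convex case. \cref{lem:valid}(b) then passes to $\lv{t+1}{k-1}$.

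The gap is in two steps that use boundedness of the original problem, which Zhang and Sun do not assume. Their regularisation exists because the original problem need not have complete recourse: the original nodal problem can be infeasible at some incoming states, so the original stage cost and $V_t$ can be $+\infty$ on part of $X_t$ (the paper encodes infinite nodal costs in the feasible sets).

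\begin{itemize}
\item In~\ref{S2}, ``regularised $\le$ unregularised, hence bounded'' compares with $+\infty$ at such states and proves nothing. The bound must come from the free copy variable $z$. Taking $(z,y,x)$ optimal for the nodal cost alone gives $0\le\BR_t(0)(\bar x)\le\min f+\varpi\sup_{\mathcal S-\mathcal S}\mathcal R<\infty$, uniformly in $\bar x$. Finiteness of $\min f$ is what Assumption~1 (a proper problem) supplies.
\item For~\ref{O3}, your bound $M_t\le\sup V_t$ may be infinite. Finiteness of $M_t$ comes from the Lipschitz envelope, which is why~\ref{O3} is phrased with $\Ub t$. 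Pick any $z_0\in X_t$ with $V_t(z_0)<\infty$, for instance a state of a feasible solution (one exists since the optimal value is finite). Then $\Ub t\le V_t(z_0)+L_t\diam X_t$, hence $M_t\le V_t(z_0)+L_t\diam X_t$ since $\lv t0=0$.
\end{itemize}

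Three smaller points.
\begin{itemize}
\item In~\ref{O2}, the averaged cut need not \emph{equal} $\BR_t(\lv{t+1}k)(\bx^k_t)$. At the trial point the value of~(B) lies between the regularised and the original nodal values, and can exceed the former once the parameter bounds exceed $\varpi_m$. Only ``$\ge$'' holds in general, and that is all \cref{lem:valid}(b) needs.
\item At index $t=0$ the approximation is updated by the exact root computation of lines~19--20, not by a conjugacy cut. It needs its own (easy) check of~\ref{O2} and~\ref{O3}.
\item For~\ref{S3} you need not assume deterministic oracles. A finite tree and oracles returning measurable minimisers suffice, and any oracle randomness can be placed in $\cA_0$, as in the proof of \cref{thm:ZS}.
\end{itemize}
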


\begin{lemma}[Validity for the regularised value functions]\label{lem:ZSreg}
If moreover $\mathcal R_m$ is subadditive (\eg a norm) and $l_{m,\lambda}+l_{m,\varrho}\le\varpi_m$ (with the bounds
of \cref{prop:ZS}, this means $(l_{m,\lambda},l_{m,\varrho})=(0,\varpi_m)$, or $(\varpi_m,0)$ in the convex case),
then the cuts are also valid for the regularised functions, $\mathcal Q^i_n\le\mathcal Q^R_n$, and $U$ can be taken
equal to $V^R$.
\end{lemma}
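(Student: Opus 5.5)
The plan is to show, by backward induction on the stage and, inside each stage, by induction on the iteration index $i$, that every cut of index $t$ lies below the regularised value function $V^R_t$ on $X_t$. Then $\mathcal Q^i_n\le\mathcal Q^R_n$, and~\ref{O3} holds with ceiling $U=V^R$. At the last index the approximations vanish and $V^R_T=0$, so the base case holds. Suppose $\lv{t+1}{i}\le V^R_{t+1}$ on $X_{t+1}$. We must check that the generalised conjugacy cut $C^i_m$ of each child $m$, built at the parent state $\bar x$, satisfies $C^i_m(x)\le \mathcal Q^R_m(x)$ for every $x$. Here $\mathcal Q^R_m(x)=\min_{z,y}\{f_m(z,y,x')+\varpi_m\mathcal R_m(x-z)+\dots\}$ is the nodal regularised value with $\mathcal Q^R$ of the next stage.

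The cut has the form $C(x)=v-\lambda^\top(x-\bar x)-\varrho\mathcal R_m(x-\bar x)$. Its constant $v$ is the value of problem~(B) at $\bar x$, which is a Lagrangian-type dual bound on the regularised nodal problem $\min_z\{g(z)+\varpi_m\mathcal R_m(\bar x-z)\}$, where $g(z)$ is the nodal value with incoming copy $z$ computed with the current approximation. Weak duality gives the inequality
\[
v\le \inf_z\big\{g(z)+\lambda^\top(\bar x-z)+\varrho\mathcal R_m(\bar x-z)\big\}.
\]
This is the content of Zhang--Sun's validity proof for the original functions, which yields $C\le\mathcal Q_m$. To obtain the stronger bound $C\le\mathcal Q^R_m$, fix $x$ and any $z$. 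By subadditivity, $\mathcal R_m(\bar x-z)\le\mathcal R_m(\bar x-x)+\mathcal R_m(x-z)$, so
\[
C(x)\le g(z)+\lambda^\top(x-z)+\varrho\mathcal R_m(x-z)+\varrho\big(\mathcal R_m(\bar x-x)-\mathcal R_m(x-\bar x)\big).
\]
The last term needs care. I would use that the cut's penalty term is written with the orientation matching~(B), or that $\mathcal R_m$ is symmetric (a norm), so that the term vanishes. It remains to dominate $\lambda^\top(x-z)+\varrho\mathcal R_m(x-z)$ by $\varpi_m\mathcal R_m(x-z)$. In the nonconvex case $\lambda=0$ and $\varrho\le l_{m,\varrho}\le\varpi_m$, so this is immediate. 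In the convex case $\varrho=0$, $\mathcal R_m=\|\cdot\|$ and $\|\lambda\|_*\le l_{m,\lambda}\le\varpi_m$, so $\lambda^\top(x-z)\le\varpi_m\|x-z\|$. Taking the infimum over $z$ gives $C(x)\le\min_z\{g(z)+\varpi_m\mathcal R_m(x-z)\}$. By the induction hypothesis and monotonicity of the nodal problem in the next-stage approximation, $g$ is below the nodal value built with $\mathcal Q^R$, and the right-hand side is then $\mathcal Q^R_m(x)$.

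Averaging over the children $m$ with weights $p_{nm}$ gives the bound for $\mathcal Q^i_n$. The monotone update~(20) then preserves $\mathcal Q^i_n\le\mathcal Q^R_n$, provided the initial approximations (zero, with nonnegative costs) are below. The statement about $U$ then follows, since $\lv t0$ and every cut lie below $V^R_t$, and $M_t$ is finite because $V^R_t\le V_t$ is bounded. The main obstacle is matching exactly the form of~(B) and of the conjugacy cut in Zhang--Sun's notation, including the argument order in $\mathcal R_m$ and the domain over which $z$ ranges ($\mathrm{proj}_xF$ versus the whole state space). That is where I would re-read their definitions first. The rest is the triangle-inequality computation above.
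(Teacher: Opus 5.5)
Your proposal is correct and is essentially the paper's argument: weak duality gives $C^i_m\le G_m$ (your $g$) on the parent state space, subadditivity in the case $(0,\varpi_m)$ or $\|\hat\lambda\|_*\le\varpi_m$ in the case $(\varpi_m,0)$ gives $C^i_m(x)\le C^i_m(z)+\varpi_m\mathcal R_m(x-z)$, and the infimum over $z$ is the regularised nodal value with $\mathcal Q^i_m$; averaging, monotonicity and induction from $\mathcal Q^0=0$ then conclude. The orientation issue you flag does not arise if, as the paper does, you bound $C^i_m(x)-C^i_m(z)=\langle\hat\lambda,x-z\rangle+\hat\varrho[\mathcal R_m(\bar x-z)-\mathcal R_m(\bar x-x)]$ directly, because the cut carries the same $\mathcal R_m(\bar x-\cdot)$ as~(B), so no symmetry of $\mathcal R_m$ is needed; your displayed cut formula simply reverses the sign of $\lambda$ and the argument of $\mathcal R_m$ relative to your own weak-duality inequality.
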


\begin{proof}[Proof of \cref{prop:ZS}]
\ref{S1}: the regularised next-state set $\mathrm{proj}_xF_t\subset\mathcal S_t$ is nonempty and does not depend on
the incoming state. \ref{S2}: $f\ge0$, so~\ref{S2} reduces to the boundedness of $\BR_t(0)$, and
$\BR_t(0)(\bar x)\le\min f+\varpi\sup_{\mathcal S-\mathcal S}\mathcal R<\infty$ uniformly in $\bar x$, because
Assumption~1 makes the problem proper. \ref{S3}: the oracles return minimisers, and the tree is finite. \ref{O1}
and~\ref{O3}: by Prop.~5 of \citet{zhang2022stochastic}, $\mathcal Q^i_n$ is $L_n$-Lipschitz and lies below
$\mathcal Q_n$ on $\mathcal S_n$. Its proof shows the same for each averaged cut $\sum_mp_{nm}C^i_m$, which lies below
$\sum_mp_{nm}\mathcal Q_m=\mathcal Q_n$ by their (12), (16) and~(B). \ref{O2}: the first display chain of the proof of
their Lemma~3 (their App.~A.3.1) gives, for each child $m$, at the parent state $\bar x$,
\begin{equation}\label{eq:appB:zs_tightness}
v^i_m\ \ge\ \min_{(x,y)\in F_m,\,z\in\mathcal S_n}\ f_m(z,y,x)+\varpi_m\mathcal R_m(\bar x-z)+\mathcal Q^i_m(x).
\end{equation}
In the nonconvex case, $(\lambda,\varrho)=(0,\varpi_m)$ is feasible in their~(B); in the convex case, use their
Lemma~2 and $l_{m,\lambda}\ge\varpi_m$. Averaging over $m$, the averaged cut at $\bar x$ is at least
$\BR(\mathcal Q^i)(\bar x)\ge\BR(\mathcal Q^{i-1})(\bar x)$. The root step (lines~19--20) is exact. \ref{O4} is
their~(20).
\end{proof}

\begin{proof}[Proof of \cref{lem:ZSreg}]
Let $G_m(x):=\min_{(x',y)\in F_m}f_m(x,y,x')+\mathcal Q^i_m(x')$; by weak duality, $C^i_m\le G_m$ on $\mathcal S_n$
\citep[proof of Prop.~5]{zhang2022stochastic}. For $x,z\in\mathcal S_n$,
$C^i_m(x)-C^i_m(z)=\langle\hat\lambda,x-z\rangle+\hat\varrho[\mathcal R_m(\bar x-z)-\mathcal R_m(\bar x-x)]\le\varpi_m\mathcal R_m(x-z)$:
in the case $(0,\varpi_m)$, $\hat\lambda=0$, $\hat\varrho\le\varpi_m$ and $\mathcal R_m$ is subadditive; in the case
$(\varpi_m,0)$, $\hat\varrho=0$, $\|\hat\lambda\|_*\le\varpi_m$ and $\mathcal R_m=\|\cdot\|$. Hence
$C^i_m(x)\le\inf_z\{G_m(z)+\varpi_m\mathcal R_m(x-z)\}$, the regularised nodal value at $x$ with approximation
$\mathcal Q^i_m$. Averaging over $m$ gives $\le\BR(\mathcal Q^i)(x)\le\BR(\mathcal Q^R)(x)=\mathcal Q^R_n(x)$, by
backward induction over the tree and induction over $i$ (with $\mathcal Q^0=0\le\mathcal Q^R$).
\end{proof}

\begin{proposition}[General form of \cref{thm:ZS}]\label{prop:ZSgeneral}
In the setting of \cref{thm:ZS}, without the assumptions on Lipschitz constants, diameters and costs, let $L_s$ be
the Lipschitz constant of the approximations of index $s$ (\cref{prop:ZS}), let $X_s\subset\bigcup_{j\le J_s}B_{s,j}$
with balls of diameters $D_{s,j}$, and let $I_s(\varepsilon_s):=\sum_{j\le J_s}(1+2L_sD_{s,j}/\varepsilon_s)^{d_s}$.
Put $M_0:=v^{\mathrm{prim}}$, the optimal value, and $M_s:=\sup_{x\in X_s}\inf_{z\in X_s}\{V_s(z)+L_s\|x-z\|\}$.
Then, for all $(\varepsilon_s)_{1\le s<T}\subset(0,\infty)$ with $\sum_s\varepsilon_s\le\varepsilon/4$,
\begin{equation}\label{eq:appB:zs_bound}
\EE[\iota]\ \le\ \frac2\varepsilon\Big(M_0+\sum_{s=1}^{T-1}I_s(\varepsilon_s)\,M_s\Big),
\end{equation}
and, for $\delta\in(0,1]$, with probability at least $1-\delta$,
$\iota\le\frac{18\ln(3/\delta)}{\varepsilon}\big(M_0+\sum_sI_s(\varepsilon_s)M_s\big)$. The same bounds
hold for the number of iterations at which the regularised policy is not $\varepsilon$-optimal for the
regularised problem, or the lower bound is not $\varepsilon$-tight.
\end{proposition}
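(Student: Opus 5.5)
The plan is to read \cref{prop:ZSgeneral} off \cref{thm:master}(b). We take the regularised operator $\BR_t$ and its value functions $V^R_t$ in the roles of $\cB_t$ and $V_t$, and the original value functions as ceiling, $U=V$. \Cref{prop:ZS} supplies \cref{ass:S,ass:O} with $\dist_t=\|\cdot\|$, the Lipschitz constants $L_s$ of the approximations of index $s$, and $\gamma=\gamma^F=0$, so $\Gamma=0$. We take the threshold $\theta:=\varepsilon$. By exact penalisation, $V^R_0(x_0)=V_0(x_0)=v^{\mathrm{prim}}$, so $\zeta_0=0$ and the gap inequality \eqref{eq:setting:gap_meaning} applies to the regularised problem.

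\emph{Nets and potentials.} At the root, $X_0=\{x_0\}$, so we take $\sigma_0=0$ and $C_0=\{x_0\}$. Its contribution is $(\Ub0-\lv00)(x_0)=U_0(x_0)-0=V_0(x_0)=M_0$, because the costs are nonnegative and $\mathcal Q^0=0$. For $1\le s<T$ we put $\sigma_s:=\varepsilon_s/L_s$ (any finite $\sigma_s$ if $L_s=0$), so that $\sum_sL_s\sigma_s\le\sum_s\varepsilon_s\le\varepsilon/4=\theta/4$, as \cref{thm:master}(b) requires.

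For each ball $B_{s,j}$ we take a maximal $\sigma_s$-separated subset of $X_s\cap B_{s,j}$. It is a $\sigma_s$-net of that piece, and the union over $j$ is a $\sigma_s$-net $C_s$ of $X_s$ made of points of $X_s$. The packing argument of \cref{lem:nets}(a) is carried out in the affine hull, of dimension $d_s$, with the balls of radius $\sigma_s/2$ contained in a ball of radius $D_{s,j}/2+\sigma_s/2$. It gives at most $(1+D_{s,j}/\sigma_s)^{d_s}\le(1+2L_sD_{s,j}/\varepsilon_s)^{d_s}$ points per ball, hence $|C_s|\le I_s(\varepsilon_s)$.

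Since $\lv s0=0$ and $U_s=V_s$, the Lipschitz envelope is $\Ub s(x)=\inf_z\{V_s(z)+L_s\|x-z\|\}$. Each net point therefore contributes at most $M_s$, and the right-hand side of \eqref{eq:main:master_covering} is at most $M_0+\sum_sI_s(\varepsilon_s)M_s$. \Cref{thm:master}(b) then yields both $\EE\,\#\{k\ge1:G^k>\varepsilon\}\le\frac2\varepsilon(M_0+\sum_sI_sM_s)$ and the high-probability bound with $18\ln(3/\delta)/\varepsilon$.

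\emph{From the count to $\iota$.} If $G^k\le\varepsilon$, then by \eqref{eq:setting:gap_meaning} with $\zeta_0=0$ the whole regularised policy $\pi^k$ is $\varepsilon$-optimal for the regularised problem, and the lower bound $\lv0{k-1}(x_0)$ is $\varepsilon$-tight. This is the last sentence of the proposition, since the bounds count exactly the iterations with $G^k>\varepsilon$.

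$\varepsilon$-optimality of the policy implies in particular that its root decision has regularised cost at most $v^{\mathrm{reg}}+\varepsilon$. By the argument of \cref{rem:compare}, $V^{\pi^k}_0(x_0)\ge\EE[\text{first-stage cost}+V^R_1(\bx^k_1)]$. The root solution of iteration $k$ is therefore $\varepsilon$-optimal in their Def.~1. Hence $\iota+1$ is at most the first good index, and $\iota\le\#\{k\ge1:G^k>\varepsilon\}$.

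The step I expect to need most care is this last one. It requires aligning their index $i$ and iteration $i+1$, their stage numbering (our $T$ is theirs plus one), and their root solution (lines 19--20, exact) with our $\pi^k$ built from $\lv{}{k-1}$. It also requires checking that the sampled scenario paths give forward noises independent of $\cA_{k-1}$ with the transition law, so that the gap identity \cref{lem:gap} applies to $\BR_t$.
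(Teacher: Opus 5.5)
Your proposal is correct and follows the paper's proof essentially step for step: \cref{thm:master}(b) for the regularised operator with ceiling $U=V$ (via \cref{prop:ZS}), the root net $\{x_0\}$ contributing $M_0=v^{\mathrm{prim}}$, per-ball maximal $\sigma_s$-separated sets with $\sigma_s=\varepsilon_s/L_s$, and $\iota\le\#\{k\ge1:G^k>\varepsilon\}$ from the gap inequality with $\zeta_0=0$. Two small fixes. When $L_s=0$, take $\sigma_s$ large (the paper uses $\sigma_s=\max_jD_{s,j}$) rather than an arbitrary finite value, so that one point per nonempty $X_s\cap B_{s,j}$ suffices and $|C_s|\le J_s=I_s(\varepsilon_s)$. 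For the independence point you flag, the paper puts the oracles' internal randomness into $\cA_0$ and assumes the sampled paths are independent of it.
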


The bound has the structure of the deterministic Theorem~2 of \citet{zhang2022stochastic},
$i\le\sum_sI_s(\varepsilon_s)$ with $\sum_s\varepsilon_s=\varepsilon$ (our net count is their ball count), with an
extra factor $2M_s/\varepsilon$ and tolerances smaller by a factor~$4$; their random-sampling bound has an extra
factor of order $\prod_sN_s$.

\begin{proof}[Proof of \cref{thm:ZS,prop:ZSgeneral}]
Zhang and Sun's oracles are random functions measurable with respect to $\sigma$-fields $\Sigma^{\rm oracle}_i$. We
assume the sampled paths independent of $\Sigma^{\rm oracle}_\infty$ and include $\Sigma^{\rm oracle}_\infty$ in
$\cA_0$, so that the oracle randomness meets the requirement of \cref{sec:algo}. By \cref{prop:ZS},
\cref{thm:master}(b) applies to the regularised problem, with $G^k$ the regularised policy cost minus the lower bound,
also with several paths per iteration (\cref{sec:variants}). By their Assumption~2,
$U_0(x_0)=v^{\mathrm{prim}}=v^{\mathrm{reg}}=V^R_0(x_0)$, so $\zeta_0=0$. Hence $G^k\le\varepsilon$ implies: the
regularised policy has cost at most $v^{\mathrm{reg}}+\varepsilon$; its root decision satisfies their Def.~1; and
$v^{\mathrm{prim}}-\lv0{k-1}(x_0)\le\varepsilon$. So
$\iota\le\min\{k\ge1:G^k\le\varepsilon\}-1\le\#\{k\ge1:G^k>\varepsilon\}$ (their iteration $i+1$ is our iteration
$k=i+1$). Take the net $\{x_0\}$ of $X_0$, with $M_0=U_0(x_0)=v^{\mathrm{prim}}$ since $\lv{}0=0$, and, for
$1\le s<T$, $\sigma_s:=\varepsilon_s/L_s$ if $L_s>0$ and $\sigma_s:=\max_jD_{s,j}$ if $L_s=0$, so that
$2\sum_{s<T}L_s\sigma_s\le2\sum_s\varepsilon_s\le\varepsilon/2$. If $L_s>0$, a maximal $\sigma_s$-separated subset of
$X_s\cap B_{s,j}$ has at most $(1+2D_{s,j}/\sigma_s)^{d_s}$ points, by the packing argument of \cref{lem:nets}(a); if
$L_s=0$, one point of each nonempty $X_s\cap B_{s,j}$ suffices. In both cases $X_s$ has a $\sigma_s$-net of size at
most $I_s(\varepsilon_s)$, and $M_s$ is the potential bound of~\ref{O3} with $U=V$ (under \cref{lem:ZSreg}, it may be
replaced by $\sup_{X_s}V^R_s$). This proves \cref{prop:ZSgeneral}. For the explicit bounds, $M_0\le T\cb$ and
$\sum_{1\le s<T}M_s\le\cb\,T(T-1)/2$, so, at an accuracy $\theta$ and with $I:=\max_sI_s(\varepsilon_s)$,
$\EE[\iota]\le(2/\theta)\big(T\cb+I\cb\,T(T-1)/2\big)\le I\cb\,T(T+1)/\theta$. For $\varepsilon$-optimality, take
$\theta=\varepsilon$ and $\varepsilon_s=\varepsilon/(4(T-1))$, so $I=(1+8LD(T-1)/\varepsilon)^d$; this
is~\eqref{eq:inst:zs_bound_uniform}. For the accuracy $T\varepsilon$ of \cref{tab:summary}, take $\theta=T\varepsilon$
and $\varepsilon_s=T\varepsilon/(4(T-1))\ge\varepsilon/4$, so $I\le(1+8LD/\varepsilon)^d$ and
$\EE[\iota]\le\cb\,(T+1)(1+8LD/\varepsilon)^d/\varepsilon$, linear in $T$.
\end{proof}

\begin{remark}[Uniform sampling on non-uniform trees]\label{rem:likelihood}
If the forward noise is drawn from $q_t$ instead of the transition law $p_t$, with $p_t\le\upsilon_tq_t$, the lifts
are obtained along $q$-paths while $G^k$ is a $p$-expectation. Since $(r^k_t)_+\ge0$, the expectation under $p$ of
the positive residuals is at most $\Upsilon:=\prod_{s<T-1}\upsilon_s$ times that under $q$, and the proof of
\cref{thm:master} gives $\EE\sum_k\Psi_T(G^k/\Upsilon)\le\sum_{t<T}M_t$ (exact cuts). Under the uniform Assumption~4
of Zhang and Sun, $\upsilon_s=N_s\max p_{nm}$ over the nodes $n$ of stage $s$, each with $N_s$ children $m$, so
$\Upsilon$ is exponential in $T$ unless the transitions are nearly uniform.
\end{remark}

\subsection{SDDiP}\label{app:SDDiP}

\begin{proof}[Proof of \cref{prop:SDDiP}]
The validity condition of \citet[(3.4)]{zou2019sddip} is validity for the \emph{true} value function on
$\{0,1\}^{n_t}$, which is~\ref{O3} with $U_t=V_t$. Their tightness~(3.5) is tightness for the nodal problem with the
updated approximation, which is~\ref{O2} with $\gamma=0$ by \cref{lem:valid}(b). Their Theorem~3 proves both
properties for Lagrangian cuts; for their integer optimality cut~(4.2) both are immediate. Additional valid cuts
satisfy~\ref{O3}. On a finite set, the Lipschitz part of~\ref{O1} is not needed (\cref{thm:master}(c)), $\lv{}0=0$
is deterministic, and $M_t\le(T-t)\cb$.
\end{proof}

\subsection{MIDAS step cuts}\label{app:midas}\label{sec:midas}

Step cuts are the one family in this paper that is not Lipschitz: this subsection shows that~\ref{O1} is not
necessary, since monotone approximations can replace Lipschitz ones in the volume argument.
MIDAS \citep{philpott2020midas}, written here for minimisation, uses step cuts $f(y)=v\,\one\{y<x+\omega\one\}$
(componentwise inequality), with $v=\cB_t(\lv{t+1}{k})(x)$ at the trial point $x$ and step width $\omega>0$ (their
$\delta$). Every $\lv tk$ is then nonincreasing, but~\ref{O1} and \cref{lem:lift} fail. Take $X_t=[0,1]$,
$\lv t{k-1}=v'\one\{y<1-\omega'\}$ with $v'\ge v$ and $\omega'>0$, and the trial point $x=1$ (at distance
$\omega+\omega'$ from the earlier trial point $1-\omega'-\omega$): the new cut equals $v$ on $X_t$, but the lift is
positive only on $[1-\omega',1]$, whose measure $\omega'$ is arbitrary. Without snapping (\cref{sec:other-families}),
monotonicity of the approximations can replace their Lipschitz continuity (\cref{thm:steps}; MIDAS's own guarantee
is asymptotic, with floor $(T+1)\varepsilon$). TFDP's suggestion \citep[App.~A.3]{forcier2023trajectory} to
interpolate affinely between the steps gives cuts that are Lipschitz in $\ell_\infty$ with constant of order
$(T-t)\cb/\omega$, which \cref{thm:master}(b) covers.

\begin{theorem}[Monotone step cuts]\label{thm:steps}
Let \cref{ass:S} hold, with losses in $[0,\cb]$ and $\lv t0\equiv0$ for $t<T$. Fix $\omega>0$ and let
$X^+_t:=X_t+[0,\omega]^{n_t}$. Let every cut be a step cut $f(y)=v\,\one\{y<x+\omega\one\}$ as in \cref{sec:midas},
extended to $\R^{n_t}$ by the same formula, with $\cB_t(\lv{t+1}{k-1})(x)-\gamma_t\le v\le\cB_t(\lv{t+1}{k})(x)$ at
$x=\bx^k_t$, and let the update be monotone. Let the forward pass be as in \cref{sec:algo}, with no snapping. Then
\begin{equation}\label{eq:appB:midas_bound}
\EE\sum_{k\ge1}(G^k-\Gamma)_+\ \le\ \cb\sum_{t<T}(T-t)\frac{\vol(X^+_t)}{\omega^{n_t}}
\ \le\ \cb\,\frac{T(T+1)}2\Big(1+\frac D\omega\Big)^{n},
\end{equation}
where the last bound holds if each $X_t$ lies in a box of side $D$ and $n_t\le n$. If moreover each $X_t$ is a box,
$V_t$ is nonincreasing, and $|V_t(x)-V_t(y)|\le\varepsilon$ when $\|x-y\|_\infty<\omega$
\citep[Prop.~1]{philpott2020midas}, then $\lv tk\le V_t+(T-t)\varepsilon$ on $X_t$, so
$V^{\pi^k}_0(x_0)-V_0(x_0)\le G^k+T\varepsilon$.
\end{theorem}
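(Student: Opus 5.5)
Mirroring the proof of \cref{thm:master}(a), I replace the Lebesgue integral of the lift by the volume of a region that the monotone step structure guarantees. The potential is $\Phi^k:=\sum_{t<T}\int_{X^+_t}\lv tk\,dy/\omega^{n_t}$, with each $\lv tk$ extended to $\R^{n_t}$ as a maximum of step cuts, and $\lv t0\equiv0$.

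\emph{Bounded potential.} First I show by induction on $k$ and backward in $t$ that every cut value satisfies $v\le(T-t)\cb$. Since $v\le\cB_t(\lv{t+1}k)(x)$, losses lie in $[0,\cb]$ and $\lv{t+1}k\le(T-t-1)\cb$, the bound propagates. So $\lv tk\le(T-t)\cb$ everywhere, and $\lv tk\ge0$ because all $v\ge0$. Hence $0\le\Phi^k\le\cb\sum_t(T-t)\vol(X^+_t)/\omega^{n_t}$. The second inequality in~\eqref{eq:appB:midas_bound} follows because $X^+_t$ lies in a box of side $D+\omega$.

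\emph{Pointwise lift to a box.} This is the key replacement for \cref{lem:lift}. Every $\lv tk$ is nonincreasing componentwise on $\R^{n_t}$, being a maximum of indicators of down-sets. At $x=\bx^k_t$ the new cut $f$ equals $v\ge\cB_t(\lv{t+1}{k-1})(x)-\gamma_t$ on the box $Q:=\{y:x\le y<x+\omega\one\}\subset X^+_t$, which has volume $\omega^{n_t}$. For $y\in Q$ we have $y\ge x$, so monotonicity gives $\lv t{k-1}(y)\le\lv t{k-1}(x)$. Therefore
\[
\lv tk(y)-\lv t{k-1}(y)\ge v-\lv t{k-1}(x)\ge r^k_t(x)-\gamma_t\quad\text{on }Q,
\]
and $\lv tk-\lv t{k-1}\ge0$ elsewhere. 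Dividing by $\omega^{n_t}$, stage $t$ contributes at least $(r^k_t(\bx^k_t)-\gamma_t)_+$ to $\Phi^k-\Phi^{k-1}$. Summing over $t$ gives $\Phi^k-\Phi^{k-1}\ge(R^k-\sum_t\gamma_t)_+$. Jensen and \cref{lem:gap} (valid since the forward pass has no snapping and the approximations are bounded) then give $\EE[\Phi^k-\Phi^{k-1}\mid\cA_{k-1}]\ge(G^k-\Gamma)_+$. Telescoping with monotone convergence yields the first bound. The main obstacle I expect is measurability and boundedness of the step approximations, so that \cref{lem:gap} applies. I would take this from \ref{S3} as an assumption, as the paper says it does for step cuts.

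\emph{Policy bound.} I would prove $\lv tk\le V_t+(T-t)\varepsilon$ by induction, backward in $t$. Assume it holds at $t+1$. A cut at $x$ has $v\le\cB_t(\lv{t+1}k)(x)\le V_t(x)+(T-t-1)\varepsilon$, using monotonicity of $\cB_t$ and commutation with constants. For $y\in X_t$ with $y<x+\omega\one$, set $x':=\min(x,y)$ componentwise, which lies in the box $X_t$. Then $y\ge x'$ and $\|x'-x\|_\infty<\omega$, so
\[
V_t(x)\le V_t(x')+\varepsilon\quad\text{and}\quad V_t(x')\ge V_t(y)
\]
is wrong in direction, so I instead use $V_t(x)\le V_t(y)+\varepsilon$ directly: either $y\ge x$, in which case $\|y-x\|_\infty<\omega$ and the $\varepsilon$-continuity applies; or, in general, $V_t(x)\le V_t(x')+\varepsilon$ gives nothing usable. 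So I would first handle $y\ge x$ by the continuity hypothesis, and otherwise compare through $z:=\max(x,y)\ge y$, with $\|z-x\|_\infty<\omega$. That gives
\[
V_t(x)\le V_t(z)+\varepsilon\le V_t(y)+\varepsilon
\]
by monotonicity of $V_t$. Hence $f\le V_t+(T-t)\varepsilon$, which is \ref{O3} with ceiling $U_t=V_t+(T-t)\varepsilon$. The gap inequality~\eqref{eq:setting:gap_meaning} with $\zeta_0\le T\varepsilon$ then gives $V^{\pi^k}_0(x_0)-V_0(x_0)\le G^k+T\varepsilon$.
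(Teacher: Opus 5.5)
Your proposal is correct and follows the paper's proof essentially step for step. It uses the same normalised potential $\sum_{t<T}\omega^{-n_t}\int_{X^+_t}\lv tk$ and the same lift of at least $(r^k_t(\bx^k_t)-\gamma_t)_+$ on the box $Q=\{y:x\le y<x+\omega\one\}$, obtained from the monotonicity of the step approximations. It then sums linearly with \cref{lem:gap}, as in \cref{thm:master}(c). For the policy bound it compares through the same point $z=x\vee y$, which lies in $X_t$ because $X_t$ is a box, and this gives $\zeta_t\le\varepsilon+\zeta_{t+1}$.

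Three small points to tidy. First, $\lv tk\ge0$ comes from the monotone update started at $\lv t0=0$, not from $v\ge0$, which can fail when $\gamma_t>0$. Second, delete the false start with $\min(x,y)$. Third, record the trivial case $f=0\le V_t$ off the set $\{y<x+\omega\one\}$.
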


\begin{proof}[Proof of \cref{thm:steps}]
Since $\lv{}0=0$, losses are nonnegative and $v\le\cB_t(\lv{t+1}{k})(x)\le\cb+\sup\lv{t+1}{k}$, backward induction
gives $0\le\lv tk\le(T-t)\cb$ on $\R^{n_t}$. \Cref{lem:gap} uses nothing beyond boundedness, so it holds. Fix $k\ge1$,
$t<T$, $x=\bx^k_t$ and $r_t=r^k_t(x)$. On the box $Q:=\{y:x\le y<x+\omega\one\}\subset X^+_t$, $f^k_t(y)=v$ and
$\lv t{k-1}(y)\le\lv t{k-1}(x)$, because $\lv t{k-1}$ is nonincreasing. Hence
$\lv tk(y)-\lv t{k-1}(y)\ge v-\lv t{k-1}(x)\ge(r_t-\gamma_t)_+$ on $Q$, using $\lv tk\ge\lv t{k-1}$. Since
$\vol(Q)=\omega^{n_t}$, the potential $\Phi^k:=\sum_{t<T}\int_{X^+_t}\lv tk$ satisfies
$\Phi^k-\Phi^{k-1}\ge\sum_{t<T}\omega^{n_t}(r_t-\gamma_t)_+$ and $\Phi^k\le\cb\sum_{t<T}(T-t)\vol(X^+_t)$. Normalise
stage $t$ by $\omega^{n_t}$ and proceed as in the proof of \cref{thm:master}(c). For the last claim, let
$\zeta_t:=\sup_k\sup_{X_t}(\lv tk-V_t)_+$. Then $v\le\cB_t(V_{t+1}+\zeta_{t+1})(x)=V_t(x)+\zeta_{t+1}$. If $y\in X_t$
and $y<x+\omega\one$, then $z:=y\vee x\in X_t$ (box), $z\ge y$ and $\|z-x\|_\infty<\omega$, so
$V_t(y)\ge V_t(z)\ge V_t(x)-\varepsilon$ and $f(y)\le V_t(y)+\varepsilon+\zeta_{t+1}$ for a step cut $f$; elsewhere
$f=0\le V_t$. Hence $\zeta_t\le\varepsilon+\zeta_{t+1}$, and $\zeta_T=0$; this is the minimisation form of
\citet[Lemma~6]{philpott2020midas}. Finally, \cref{lem:gap} gives $V^{\pi^k}_0(x_0)-V_0(x_0)\le G^k+\zeta_0$.
\end{proof}

\section{Proofs and complements for Section~\ref{sec:noise}}\label{app:noise}

\subsection{Exact cuts for the true law}\label{app:route1}

Write $E_t$ for the direction space of $\aff X_t$.

\begin{proposition}[Expected Benders cuts]\label{prop:expbenders}
Let $W$ be continuous on $X_{t+1}$ and $x\in X_t$. Suppose there are Borel functions $a:\Xi_{t+1}\to\R$ and
$g:\Xi_{t+1}\to E_t$ with $a(\xi)=\Bd_t(W;x,\xi)$, $\|g(\xi)\|\le L_t$ and
$\Bd_t(W;y,\xi)\ge a(\xi)+\langle g(\xi),y-x\rangle$ for all $y\in X_t$ and $\xi\in\Xi_{t+1}$. Then
$f(y):=\EE a(\bxi_{t+1})+\langle\EE g(\bxi_{t+1}),y-x\rangle$ is an $L_t$-Lipschitz affine minorant of $\cB_t(W)$ on
$X_t$ with $f(x)=\cB_t(W)(x)$.
\end{proposition}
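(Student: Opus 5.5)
The plan is to integrate the scenario-wise subgradient inequality and to check three things about the averaged function $f$: it is well defined, it is an affine $L_t$-Lipschitz minorant of $\cB_t(W)$ on $X_t$, and it is exact at $x$. The point needing care is integrability, so that the expectations exist and linearity of the integral applies.

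\emph{Integrability.} Since $g$ is Borel with $\|g(\xi)\|\le L_t$, the vector $\EE g(\bxi_{t+1})$ exists. Its norm is at most $L_t$, because $\|\EE g\|\le\EE\|g\|$ for a Bochner integral on the finite-dimensional space $E_t$. For $a$, write $a(\xi)=\Bd_t(W;x,\xi)$. Since $W$ is continuous on $X_{t+1}$, it is bounded below on the relevant sets by $\inf W$; if $X_{t+1}$ is not compact, we use boundedness of $W$, as in \eqref{eq:setting:bellman_operator}. The bound
\[
\big|\Bd_t(W;x,\xi)\big|\le\Big|\inf_{y}\ell_{t+1}(x,y,\xi)\Big|+\sup|W|
\]
together with~\ref{S2} then gives $\EE|a(\bxi_{t+1})|<\infty$. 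In particular $\EE a(\bxi_{t+1})=\int\Bd_t(W;x,\xi)\,\PP_{t+1}(d\xi)=\cB_t(W)(x)$, which is exactness at $x$, since the linear term of $f$ vanishes at $y=x$.

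\emph{Minorant.} Fix $y\in X_t$. Integrate the pointwise inequality $\Bd_t(W;y,\xi)\ge a(\xi)+\langle g(\xi),y-x\rangle$ against $\PP_{t+1}$. The right-hand side is integrable, and the left-hand side is measurable in $\xi$ by~\ref{S3} and the joint measurability discussed in \cref{app:measurability} for continuous $W$. Integrating gives $\cB_t(W)(y)\ge\EE a+\langle\EE g,y-x\rangle=f(y)$, where we used $\EE\langle g,y-x\rangle=\langle\EE g,y-x\rangle$, which is linearity of the integral in finite dimension. If the left-hand side is not known to be integrable from below, the inequality still holds, because its negative part is dominated by that of the integrable right-hand side.

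\emph{Lipschitz.} For $y,y'\in X_t$ we have $y-y'\in E_t$ and $\EE g\in E_t$, so $|f(y)-f(y')|=|\langle\EE g,y-y'\rangle|\le\|\EE g\|_*\|y-y'\|$. The one point I expect to need care is the duality convention. I read the hypothesis $\|g(\xi)\|\le L_t$ as the dual norm on $E_t$, or equivalently take the norm Euclidean, which is the convention of \cref{prop:benders}. With that reading the Lipschitz constant is $L_t$. This concludes the plan; the only substantive step is justifying the interchange of expectation and the inner product together with the integrability of $a$.
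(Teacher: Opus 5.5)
Your proposal is correct and follows the paper's proof: integrate the scenario-wise inequality against $\PP_{t+1}$, observe that it is an equality at $y=x$, and use $\|\EE g(\bxi_{t+1})\|\le L_t$. Your additional remarks, on the integrability of $a$ via~\ref{S2} and on reading $\|g(\xi)\|\le L_t$ as a dual-norm (or Euclidean) bound, are sound. They only make explicit what the paper's one-line argument leaves implicit.
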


\begin{proof}
Integrate the pointwise inequality against $\PP_{t+1}$; it is an equality at $y=x$, and $\|\EE g\|\le L_t$.
\end{proof}

\begin{remark}[Linear stages]\label{rem:linexact}
For linear stages with finite optimal values, data affine in $\xi$, a deterministic full-row-rank recourse matrix and
a density of $\PP_{t+1}$ that is polynomial on each cell of a polyhedral subdivision of a polytope $\Xi_{t+1}$, the
expectations of \cref{prop:expbenders} are finite sums of integrals of polynomials over polytopes, computable exactly for rational data
\citep{baldoni2011integrate}. Indeed, for fixed $x$, the noises for which a given basis is optimal form a polytope, on
which the value and the subgradient given by the dual solution are polynomials of degree at most two in $\xi$;
conclude by inclusion--exclusion over these polytopes and the density cells. TFDP
\citep[Thm.~3.7]{forcier2023trajectory} builds a stronger exact cut in a related setting.
\end{remark}

\subsection{Jensen lower discretisation}\label{app:jensen}

For $t+1<T$ and convex $X_{t+1}$, let $\cW^{\rm c}_{t+1}$ be the set of convex $L_{t+1}$-Lipschitz functions
$W:X_{t+1}\to\R$ with $\lv{t+1}0\le W\le V_{t+1}$, and let $\cW^{\rm c}_T:=\{0\}$. If $\lv{t+1}0\le V_{t+1}$ is convex
and $L_{t+1}$-Lipschitz, and every cut of index $t+1$ is affine, $L_{t+1}$-Lipschitz and valid, then every
$\lv{t+1}k$ lies in $\cW^{\rm c}_{t+1}$. The conditions of \cref{prop:jensen,ass:scen} are imposed on this class
only, since a nonconvex $W$ need not have tight affine minorants. The classes $\cW_{t+1}$
of~\eqref{eq:noise:saa_test_functions} and $\cW'_{t+1}$ of \cref{prop:uniformSAA} need no convexity: they only enter
suprema that bound biases.

\begin{proposition}[Jensen lower discretisation]\label{prop:jensen}
Fix $t<T$. Assume that $X_t$, $X_{t+1}$ and $\Xi_{t+1}\subset\R^q$ are convex, that $\PP_{t+1}$ has a finite first
moment, and that, for every $W\in\cW^{\rm c}_{t+1}$ and every $y\in X_t$, the map $\xi\mapsto \Bd_t(W;y,\xi)$ is
convex on $\Xi_{t+1}$ and $L^\xi_t$-Lipschitz. Let $\cP$ be a finite partition of $\Xi_{t+1}$ into convex Borel cells
$C$, $p_C:=\PP_{t+1}(C)$ and, if $p_C>0$, $\xi_C:=\EE[\bxi_{t+1}\mid\bxi_{t+1}\in C]$. For such a $W$ and $x\in X_t$,
let $f_C$ be an affine $L_t$-Lipschitz minorant of $\Bd_t(W;\cdot,\xi_C)$ on $X_t$ with
$f_C(x)=\Bd_t(W;x,\xi_C)$. Then $f:=\sum_{C:\,p_C>0}p_Cf_C$ is affine, $L_t$-Lipschitz, valid for $\cB_t(W)$ on $X_t$
and $\gamma$-tight at $x$ with $\gamma\le L^\xi_t\,e(\cP)$, where
$e(\cP):=\EE\|\bxi_{t+1}-\EE[\bxi_{t+1}\mid\cP]\|\le\max_{C\in\cP}\diam C$. If $\Xi_{t+1}$ lies in a cube $Q$ of side
$D_\infty>0$ and, for $h>0$ and $m:=\lceil D_\infty/h\rceil$, $\cP$ consists of the nonempty traces on $\Xi_{t+1}$ of
the $m^q$ boxes obtained by partitioning each side of $Q$ into $m$ intervals of length at most $h$, then
$|\cP|\le m^q$ and, for the Euclidean norm, $e(\cP)\le h\sqrt q/2$.
\end{proposition}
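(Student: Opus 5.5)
The plan is to prove the properties of $f$ one at a time. Affinity and the Lipschitz constant come first and are immediate. Each $f_C$ is affine and $L_t$-Lipschitz, and the weights $p_C$ (over cells with $p_C>0$) sum to one. So $f$ is a convex combination of affine functions, hence affine, and its Lipschitz constant is at most $\sum_Cp_CL_t=L_t$.

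Validity will come from the conditional Jensen inequality, cell by cell. Fix $y\in X_t$. The minorant property at $\xi_C$ gives $f_C(y)\le\Bd_t(W;y,\xi_C)$. Since $C$ is convex and $\PP_{t+1}$ has a finite first moment, $\xi_C$ lies in the closed convex hull of $C$, and, to be safe, in $\Xi_{t+1}$ by convexity of $\Xi_{t+1}$. Convexity of $\xi\mapsto\Bd_t(W;y,\xi)$ then gives
\[
\Bd_t(W;y,\xi_C)\le\EE\big[\Bd_t(W;y,\bxi_{t+1})\bigm|\bxi_{t+1}\in C\big].
\]
This is Jensen's inequality for a convex function that is finite and Lipschitz, hence continuous, on a convex set; integrability follows from the Lipschitz bound and the first moment. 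Multiplying by $p_C$ and summing over cells yields $f(y)\le\cB_t(W)(y)$. The one subtle point I expect is whether $\xi_C\in\Xi_{t+1}$, so that $\Bd_t(W;y,\xi_C)$ is even defined. Since $\xi_C$ is a barycenter of a probability measure on the convex set $C\subset\Xi_{t+1}$, it lies in $\Xi_{t+1}$ provided $C$ is, say, relatively closed, or via the standard fact that the barycenter of a measure carried by a convex set lies in that set (finite dimension). I would cite this fact rather than reprove it.

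Tightness at $x$ uses the Lipschitz property in $\xi$:
\[
\cB_t(W)(x)-f(x)=\sum_Cp_C\,\EE\big[\Bd_t(W;x,\bxi_{t+1})-\Bd_t(W;x,\xi_C)\bigm|C\big]\le L^\xi_t\sum_Cp_C\,\EE\big[\|\bxi_{t+1}-\xi_C\|\bigm|C\big]=L^\xi_t\,e(\cP).
\]
The bound $e(\cP)\le\max_C\diam C$ is immediate, since $\xi_C$ lies in the closure of the convex hull of $C$.

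For the grid statement, the boxes are products of intervals, so their traces on the convex set $\Xi_{t+1}$ are convex and Borel, and there are at most $m^q$ nonempty ones. For the Euclidean bound, I would use $\EE\|Z-\EE Z\|\le(\EE\|Z-\EE Z\|^2)^{1/2}$ on each cell and split the variance into coordinates. Each coordinate lies in an interval of length at most $h$, so its variance is at most $h^2/4$ (Popoviciu's inequality). This gives conditional expected distance at most $h\sqrt q/2$, and averaging over cells with weights $p_C$ preserves the bound.
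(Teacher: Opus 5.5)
Your proposal is correct and follows the paper's proof step by step. Affinity and the Lipschitz constant come from the convex combination, validity from cellwise Jensen, tightness from the $\xi$-Lipschitz bound, and the grid estimate from a coordinatewise variance bound; you apply that bound per cell rather than globally, which changes nothing. One caution: membership in the closed convex hull of $C$ alone would not place $\xi_C$ in a non-closed $\Xi_{t+1}$. The fact you end up citing, that a probability law carried by a convex set has its mean in that set, is what is needed, and it is exactly the fact the paper invokes.
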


\begin{proof}
A probability law carried by a convex set $C$ has its mean in $C$, and Jensen's inequality holds for convex functions
on $C$. Cells with $p_C=0$ do not contribute to $\cB_t(W)$; below, sums run over the cells with $p_C>0$, whose
weights add up to one, so $f$ is affine and $L_t$-Lipschitz. For $y\in X_t$,
$\Bd_t(W;y,\xi_C)\le\EE[\Bd_t(W;y,\bxi_{t+1})\mid\bxi_{t+1}\in C]$, and
$f(y)\le\sum_Cp_C\Bd_t(W;y,\xi_C)\le\cB_t(W)(y)$. At $x$,
$\cB_t(W)(x)-f(x)=\sum_Cp_C\big(\EE[\Bd_t(W;x,\bxi_{t+1})\mid C]-\Bd_t(W;x,\xi_C)\big)\le L^\xi_t\sum_Cp_C\EE[\|\bxi_{t+1}-\xi_C\|\mid C]=L^\xi_te(\cP)$.
For boxes of sides at most $h$, each coordinate of $\bxi_{t+1}$ lies, given the cell, in an interval of length at
most $h$, so its conditional variance is at most $h^2/4$, and
$e(\cP)\le(\EE\|\bxi_{t+1}-\EE[\bxi_{t+1}\mid\cP]\|^2)^{1/2}\le h\sqrt q/2$.
\end{proof}

An arbitrary grid of half-open cubes of side $h$ gives $|\cP|\le(m+1)^q$. For $W\in\cW^{\rm c}_{t+1}$, convexity in
$\xi$ holds when the noise enters the right-hand side or the technology matrix of a linear stage; it can fail for a
nonconvex $W$ (if the next state equals the noise, $\Bd_t(W;y,\xi)=\ell_{t+1}(y,\xi,\xi)+W(\xi)$). When the noise
enters the cost, Edmundson--Madansky weights give the lower bound instead
\citep[][Prop.~2.6]{birge1986designing,kuhn2005generalized,forcier2023trajectory}.

\subsection{SAA backward passes: proofs}\label{app:saa}

\begin{proof}[End of the proof of \cref{thm:saa}]
By \cref{lem:basic}(iii), $V^{\pi^k}_0(x_0)\ge V_0(x_0)$, the first inequality of~\eqref{eq:noise:saa_accuracy}.
Moreover $\lv0{k-1}(x_0)\le\Vh_0(x_0)$, and by backward induction $\sup_{X_t}(\Vh_t-V_t)_+\le\sum_{t\le s<T}(\Delta^-_s)_+$.
Indeed $\Vh_t-V_t=[\cBh_t(\Vh_{t+1})-\cBh_t(V_{t+1})]+[\cBh_t(V_{t+1})-\cB_t(V_{t+1})]$, and the first bracket is at
most $\sup_{X_{t+1}}(\Vh_{t+1}-V_{t+1})_+$, because $\cBh_t$ is monotone, commutes with constants, and
$\cX_{t+1}(x,\xi)\subset X_{t+1}$. Hence
$G^k=V^{\pi^k}_0(x_0)-\lv0{k-1}(x_0)\ge V^{\pi^k}_0(x_0)-V_0(x_0)-\sum_{t<T}(\Delta^-_t)_+$.
\end{proof}

\begin{proof}[Proof of \cref{cor:wasserstein}]
(a) For a coupling $(\bxi,\hat\bxi)$ of $(\PP_{t+1},\Ph_{t+1})$ on $\Xi_{t+1}^2$ and $f=\Bd_t(W;x,\cdot)$,
$|\int f\,d\PP_{t+1}-\int f\,d\Ph_{t+1}|=|\EE[f(\bxi)-f(\hat\bxi)]|\le L^\xi_t\EE\|\bxi-\hat\bxi\|$; take the
infimum over couplings. (b) With $\underline f:=\inf f$, $f-\underline f\in[0,\mathrm{rg}_t]$, so
$\int(f-\underline f)\,d(\PP_{t+1}-\Ph_{t+1})\le \mathrm{rg}_t\sup_A(\PP_{t+1}-\Ph_{t+1})(A)$, and symmetrically.
\end{proof}

\paragraph{When the bias of \cref{cor:wasserstein}(a) is small.} Take additive noise in the dynamics,
$\cX_{t+1}(x,\xi)=\{A_{t+1}x+u+\xi:u\in\mathcal U_{t+1}\}$ with $\mathcal U_{t+1}$ compact, and a loss such that
$\xi\mapsto\ell_{t+1}(x,A_{t+1}x+u+\xi,\xi)$ is $L^\ell_{t+1}$-Lipschitz for all $x\in X_t$ and $u\in\mathcal U_{t+1}$
(\eg $L^{\ell,y}$-Lipschitz in the next state and $L^{\ell,\xi}$-Lipschitz in $\xi$, with
$L^\ell_{t+1}=L^{\ell,y}+L^{\ell,\xi}$). If $X_{t+1}$ is forward invariant for $\xi\in\Xi_{t+1}$, and $V_{t+1}$ and the
elements of $\cW_{t+1}$ are $L_{t+1}$-Lipschitz for the norm of $\Xi_{t+1}$, the Lipschitz condition holds with
$L^\xi_t\le L^\ell_{t+1}+L_{t+1}$. Let $\Ph_t$ be the empirical law of $N$ i.i.d.\ draws from $\PP_t$, with
$\diam\Xi_t\le D_\xi$. Replacing one sample point moves a mass $1/N$ by at most $D_\xi$, so it changes
$\Wa(\PP_t,\Ph_t)$ by at most $D_\xi/N$. McDiarmid's inequality \citep{mcdiarmid1989method} with a union bound over
$t$ then gives, with probability at least $1-\delta$, $\Wa(\PP_t,\Ph_t)\le\EE\Wa(\PP_t,\Ph_t)+D_\xi\sqrt{\ln(T/\delta)/(2N)}$
for all $t\in[T]$; the mean is bounded in \cref{sec:saa}.

\begin{proposition}[i.i.d.\ SAA: a bias governed by the state dimension]\label{prop:uniformSAA}
Let $\|\cdot\|$ be a norm, for which $L_{t+1}$, $L^x_t$ and the diameters are taken. For each $t<T$, let
$\cW'_{t+1}$ be the set of $L_{t+1}$-Lipschitz $W$ on $X_{t+1}$ (set $L_T=D_T:=0$, so that the terms of index $T$ vanish) with
$\underline w_{t+1}\le W\le\underline w_{t+1}+B_{t+1}$, and assume that it contains $\cW_{t+1}\cup\{V_{t+1}\}$.
Assume that, for every $\sigma>0$, $X_s$ has a $\sigma$-net for $\|\cdot\|$ of size at most $(1+2D_s/\sigma)^d$ for
$s=t$ and, if $t+1<T$, for $s=t+1$ (see \cref{lem:nets}(a)). Assume that, for all $x,x'\in X_t$,
$\xi\in\Xi_{t+1}$ and $W\in\cW'_{t+1}$, $\Bd_t(W;x,\cdot)$ takes values in an interval of length $\mathrm{rg}_t$ and
$|\Bd_t(W;x,\xi)-\Bd_t(W;x',\xi)|\le L^x_t\|x-x'\|$. Let $\Ph_{t+1}$ be the empirical law of $N$ i.i.d.\ draws from
$\PP_{t+1}$. Then for every $u>0$ and $\delta\in(0,1)$, with probability at least $1-\delta$, for all $t<T$,
\begin{gather}
\sup_{x\in X_t,\,W\in\cW'_{t+1}}\big|\cB_t(W)(x)-\cBh_t(W)(x)\big|
\le4u+\mathrm{rg}_t\sqrt{\frac{\ln\nu_t(u)+\ln(2T/\delta)}{2N}}, \label{eq:appC:uniform_saa_deviation} \\
\ln\nu_t(u):=d\ln\Big(1+\frac{2L^x_tD_t}{u}\Big)+\Big(1+\frac{8L_{t+1}D_{t+1}}{u}\Big)^{d}\ln\Big(1+\frac{2B_{t+1}}{u}\Big). \label{eq:appC:covering_number}
\end{gather}
With $u=N^{-1/(d+2)}$, $\Delta^\pm_t=O\big(N^{-1/(d+2)}\sqrt{\ln N}\big)$, with constants depending on $d$,
$L_{t+1}D_{t+1}$, $L^x_tD_t$, $B_{t+1}$, $\mathrm{rg}_t$ and $\ln(T/\delta)$, but not on $q$.
\end{proposition}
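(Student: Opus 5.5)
The plan is a uniform law of large numbers by discretisation: reduce the supremum over the infinite class $\{(x,W):x\in X_t,\,W\in\cW'_{t+1}\}$ to a finite family of size $\nu_t(u)$, apply Hoeffding's inequality to each member, and take a union bound over the family and over $t<T$. The $4u$ term will absorb the discretisation error, and the $\ln\nu_t(u)$ term is the log-cardinality of the family.

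\emph{Step 1 (net in $x$).} Take a $\sigma$-net $C^x$ of $X_t$ with $\sigma:=u/L^x_t$, so that $|C^x|\le(1+2L^x_tD_t/u)^d$. By the assumed Lipschitz property of $\Bd_t(W;\cdot,\xi)$, replacing $x$ by its nearest net point $c$ changes both $\cB_t(W)(x)$ and $\cBh_t(W)(x)$ by at most $u$. This accounts for $2u$ of the error. If $L^x_t=0$, a single point suffices.

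\emph{Step 2 (net in $W$).} I would build a finite $u$-net of $\cW'_{t+1}$ in sup-norm. Fix a $\sigma'$-net $C'$ of $X_{t+1}$ with $\sigma':=u/(4L_{t+1})$, so that $|C'|\le(1+8L_{t+1}D_{t+1}/u)^d$. Discretise the values $W(c)-\underline w_{t+1}\in[0,B_{t+1}]$ on a grid of mesh $u$, which has about $1+2B_{t+1}/u$ levels. For each grid vector that is realised, within $u/2$, by some $W\in\cW'_{t+1}$, pick one such representative $W'$.

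Two members with nearby grid vectors then differ by at most about $u/2$ on $C'$. Adding $2L_{t+1}\sigma'=u/2$ for off-net points, they differ by at most $u$ everywhere. The number of representatives is at most $(1+2B_{t+1}/u)^{|C'|}$, and its logarithm is the second term of $\ln\nu_t(u)$.

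By monotonicity and translation invariance of $\Bd_t$, if $\|W-W'\|_\infty\le u$ then both $\cB_t$ and $\cBh_t$ move by at most $u$. This accounts for the other $2u$. The main obstacle is calibrating the mesh, the rounding and the choice of representatives so that the constants come out exactly $4u$ with the factor $8L_{t+1}D_{t+1}/u$. I would first try grid mesh $u$ with nearest-level rounding, and if the constants are off, adjust the net radius. When $t+1=T$, the class $\cW'_T$ reduces to $\{0\}$ and the term vanishes, consistently with the convention $L_T=D_T=0$.

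\emph{Step 3 (concentration and union bound).} For each fixed pair $(c,W')$, the value $\cBh_t(W')(c)$ is the mean of $N$ i.i.d.\ variables $\Bd_t(W';c,\bxi^i)$, which lie in an interval of length $\mathrm{rg}_t$ and have mean $\cB_t(W')(c)$. Because the representatives are fixed before sampling, they are deterministic. Hoeffding's inequality gives deviation at most $\mathrm{rg}_t\sqrt{\ln(2/\delta')/(2N)}$ with probability at least $1-\delta'$. A union bound with $\delta':=\delta/(T\nu_t(u))$ over the $\nu_t(u)$ pairs and the $T$ stages yields \eqref{eq:appC:uniform_saa_deviation}.

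\emph{Step 4 (rate).} Since $\cW_{t+1}\cup\{V_{t+1}\}\subset\cW'_{t+1}$, the left side of \eqref{eq:appC:uniform_saa_deviation} bounds both $\Delta^+_t$ and $\Delta^-_t$. With $u=N^{-1/(d+2)}$, we have $\ln\nu_t=O(u^{-d}\ln N)$, so the square-root term is $O(\sqrt{N^{d/(d+2)}\ln N/N})=O(N^{-1/(d+2)}\sqrt{\ln N})$. This matches $4u$ up to the logarithm, and no constant depends on $q$.
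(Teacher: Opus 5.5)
Your argument is the paper's own proof. It uses an $x$-net of radius $u/L^x_t$ and a net of $X_{t+1}$ of radius $u/(4L_{t+1})$. Each $W$ is coded by its quantised values on that net, with one member of $\cW'_{t+1}$ kept per nonempty class (deterministic, hence independent of the sample). Hoeffding plus a union bound over the $\nu_t(u)$ pairs and the $T$ stages does the concentration, and the $2u+2u$ approximation error uses forward invariance for the sup-norm step.

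The only fix is the calibration you flagged yourself. Code $W$ by $\lfloor (W(z_j)-\underline w_{t+1})/(u/2)\rfloor$, that is, mesh $u/2$ with floor rounding. This gives $\lfloor 2B_{t+1}/u\rfloor+1\le 1+2B_{t+1}/u$ levels, which is the count you wrote. Two members of a class then differ by less than $u/2$ on the net, hence by less than $u$ everywhere.

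With mesh $u$ and nearest-level rounding as written, two members of one class can differ by $u$ at net points. That gives only a $3u/2$-cover and $5u$ in place of $4u$. It also does not match your level count, since mesh $u$ has only about $1+B_{t+1}/u$ levels.
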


With losses in $[0,\cb]$ and $\lv{}0=0$, one can take $\underline w=0$ and $B_{t+1}=(T-t-1)\cb$ if $V_{t+1}$ is
$L_{t+1}$-Lipschitz.
\begin{proof}
\emph{Nets.} By assumption, take a $(u/L^x_t)$-net $\mathcal N$ of $X_t$ and a $(u/(4L_{t+1}))$-net
$\{z_j\}_{j\in[n_Z]}$ of $X_{t+1}$, with $n_Z\le(1+8L_{t+1}D_{t+1}/u)^d$ (a radius $u/0$ is infinite, and one point
is then a net; this covers $t=T-1$, where $L_T=0$). \emph{Cover of $\cW'_{t+1}$.} Code $W$ by the integers
$\lfloor(W(z_j)-\underline w)/(u/2)\rfloor\in\{m\in\mathbb N:m\le\lfloor2B/u\rfloor\}$. If $W,W'$ have the same code,
then for $y\in X_{t+1}$ and $z_j$ with $\|y-z_j\|\le u/(4L_{t+1})$,
$|W(y)-W'(y)|\le|W(y)-W(z_j)|+|W(z_j)-W'(z_j)|+|W'(z_j)-W'(y)|<u$. One member of $\cW'_{t+1}$ in each nonempty code
class gives a $u$-cover $\cW_u$ in sup norm with $|\cW_u|\le(1+2B/u)^{n_Z}$. \emph{Concentration.} For
$(x',W')\in\mathcal N\times\cW_u$, $f=\Bd_t(W';x',\cdot)$ takes values in an interval of length $\mathrm{rg}_t$;
Hoeffding's inequality \citep{hoeffding1963probability} and a union bound over these $\nu_t(u)\ge|\mathcal N||\cW_u|$
functions and over $t$ give, with probability at least $1-\delta$,
$|\int f\,d(\PP_{t+1}-\Ph_{t+1})|\le \mathrm{rg}_t\sqrt{(\ln\nu_t(u)+\ln(2T/\delta))/(2N)}$ for all of them.
\emph{Approximation.} For $(x,W)$, pick $x'\in\mathcal N$ with $\|x-x'\|\le u/L^x_t$ and $W'\in\cW_u$ with
$\|W-W'\|_\infty\le u$. Since $\cX_{t+1}(x',\xi)\subset X_{t+1}$, $|\Bd_t(W;x',\xi)-\Bd_t(W';x',\xi)|\le u$, so
$|\Bd_t(W;x,\xi)-\Bd_t(W';x',\xi)|\le2u$ for every $\xi\in\Xi_{t+1}$, and both integrals move by at most $2u$. For the
rate, $\ln\nu_t(u)=O(u^{-d}\ln(1/u))$, and $u=N^{-1/(d+2)}$ balances the two terms.
\end{proof}

\begin{remark}[Bias, and SAA forward passes]\label{rem:bias}
In the instance of \cref{prop:counter} (\cref{app:sampled}), with a fixed SAA whose mean exceeds $0.6$, SDDP ends
with a policy that is $0.1$-suboptimal for the true problem, with forward passes from either law; this is consistent
with \cref{thm:saa}, since $\Delta^-_1>0.1$. For the Jensen law $\sum_Cp_C\delta_{\xi_C}$ of \cref{prop:jensen},
$\Delta^-_t\le0$ if $\Bd_t(V_{t+1};x,\cdot)$ is convex for every $x\in X_t$ (the lower bound is certified), and
$\Delta^+_t\le L^\xi_te(\cP)$ if $\Bd_t(W;x,\cdot)$ is $L^\xi_t$-Lipschitz for $W\in\cW_{t+1}$, as in
\cref{cor:wasserstein}(a). If the forward passes are drawn from $\Ph$ (classical SDDP on the SAA), \cref{thm:master}
controls the SAA gap; transferring it to the true cost needs regularity of the policy in $\xi$, which greedy argmin
selections do not have in general.
\end{remark}

\subsection{Sampled cuts}\label{app:sampled}

At iteration $k$ and for the cut of index $t<T$ (backward), draw $(\bxi^k_{t,j})_{j\in[N]}$ i.i.d.\ from
$\PP_{t+1}$ ($N=N_{k,t}$ deterministic), independently of everything drawn before. Here the state sets are convex
with normalised Lebesgue measure (\cref{lem:VG}(a)), the cuts are affine, and $\lv t0\le\cB_t(\lv{t+1}0)$.

\begin{assumption}[Scenario oracle]\label{ass:scen}
For every $W\in\cW^{\rm c}_{t+1}$ (\cref{app:jensen}), every $x\in X_t$ and every $\xi\in\Xi_{t+1}$, the oracle
returns $a(\xi)=\Bd_t(W;x,\xi)$ and $g(\xi)\in E_t$ with $\|g(\xi)\|\le L_t$ and
$\Bd_t(W;y,\xi)\ge a(\xi)+\langle g(\xi),y-x\rangle$ for all $y\in X_t$, Borel in its inputs, and
$\sup_\xi a(\xi)-\inf_\xi a(\xi)\le \mathrm{rg}_t$.
\end{assumption}

For instance, $\mathrm{rg}_t\le L^\xi_t\diam\Xi_{t+1}$, or $\mathrm{rg}_t\le(T-t)\cb$ if losses lie in $[0,\cb]$ and
$\lv{}0=0$. The raw sampled cut is $\hat f(y):=\bar a+\langle\bar g,y-x\rangle$, with $\bar a,\bar g$ the sample
means of $a(\bxi^k_{t,i})$ and $g(\bxi^k_{t,i})$; it is valid in expectation only. Let
$b(N,\delta):=(\mathrm{rg}_t+2L_tD_t)\sqrt{\ln(4/\delta)/N}$ (see \cref{lem:conc}).

\begin{theorem}[Shifted sampled cuts]\label{thm:sampled}
Let \cref{ass:S,ass:scen} hold, with a Euclidean norm on $E_t$ in which $D_t\ge\diam X_t$, and assume convex state
sets, convex $L_t$-Lipschitz $\lv t0$ with $\lv t0\le\cB_t(\lv{t+1}0)$, and exact or $\gamma^F$-optimal forward passes from the true laws, with deterministic
$\gamma^{F,k}_t$. Choose $\delta_{k,t}>0$ with $\delta':=\sum_{k\ge1,t<T}\delta_{k,t}\le1$. Let
$b_{k,t}:=b(N_{k,t},\delta_{k,t})$, add the shifted cut $\hat f-b_{k,t}$ at iteration $k$ and index $t$, and set
$\Gamma_k:=2\sum_{t<T}b_{k,t}$; let $\Gamma^F_k$ be the forward-error sum~\eqref{eq:main:error_sums}. There is an
event $A$ with $\PP(A)\ge1-\delta'$ on which every cut is valid ($\lv tk\le V_t$ for all $k,t$), and
\begin{enumerate}[label=(\roman*),nosep]
\item $\EE\big[\one_A\sum_{k\ge1}\Psi_T((G^k-\Gamma^F_k-\Gamma_k)_+)\big]\le(1+\delta'/2)M^V$, where
$M^V:=\sum_{t<T}\sup_{X_t}(V_t-\lv t0)$;
\item for $\delta\in(0,1]$, with probability at least $1-\delta-\delta'$, simultaneously for all $\theta>0$,
$\#\{k\ge1:G^k>\theta+\Gamma^F_k+\Gamma_k\}\le(9\ln(3/\delta)+\delta'/2)M^V/\Psi_T(\theta)$.
\end{enumerate}
With $\delta_{k,t}=\delta'/(Tk(k+1))$, the condition $\Gamma_k\le g_k$ for a target $g_k>0$ holds as soon as
$N_{k,t}\ge\big(2T(\mathrm{rg}_t+2L_tD_t)/g_k\big)^2\ln\big(4Tk(k+1)/\delta'\big)$.
\end{theorem}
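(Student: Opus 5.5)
The proof follows that of \cref{thm:master}(a), with the ceiling~\ref{O3} holding only on a good event. I would therefore run the potential argument on a \emph{stopped} potential and pay for the bad event with a correction of order $\delta'M^V$.

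\emph{Step 1: concentration and the event $A$.} Fix $(k,t)$ and condition on everything drawn before the backward sample $(\bxi^k_{t,j})_j$: this fixes the trial point $x$ and the current $W=\lv{t+1}k$. Suppose $W\in\cW^{\rm c}_{t+1}$, which holds if every earlier cut is valid. Let $f:=\EE a+\langle\EE g,\cdot-x\rangle$. By \cref{prop:expbenders}, $f$ is a valid affine minorant of $\cB_t(W)$ with $f(x)=\cB_t(W)(x)$. I would then prove the concentration bound (\cref{lem:conc}) that, with conditional probability at least $1-\delta_{k,t}$,
\[
\sup_{y\in X_t}|\hat f(y)-f(y)|\le b_{k,t}.
\]
This combines a scalar Hoeffding bound for $\bar a$ with range $\mathrm{rg}_t$, and a vector (Hilbert-space) Hoeffding or McDiarmid bound for $\bar g$: the vectors $g(\xi)$ have norm at most $L_t$, and the slope error is multiplied by $\|y-x\|\le D_t$. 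On this event the shifted cut satisfies $\hat f-b_{k,t}\le f\le\cB_t(W)$, so it is valid. It also satisfies $(\hat f-b_{k,t})(x)\ge\cB_t(\lv{t+1}k)(x)-2b_{k,t}$, which is~\ref{O2} with $\gamma_t=2b_{k,t}$ by \cref{lem:valid}(b). Let $A_k$ be the event that all concentration events up to iteration $k$ hold, and set $A:=\bigcap_kA_k$. Going backward in $t$, validity propagates on $A_k$ by \cref{lem:valid}(a). The union bound gives $\PP(A^c)\le\delta'$.

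\emph{Step 2: stopped potential.} Define $\tilde\Phi^k:=\Phi^{k}$ on $A_k$, and freeze it at its last value once the first failure occurs. On $A_k$ the cuts are valid, so $0\le\tilde\Phi^k\le M^V$, using the normalised Lebesgue measure and \cref{lem:VG}(a). The pathwise lift~\eqref{eq:main:pathwise_lift} gives
\[
\tilde\Phi^k-\tilde\Phi^{k-1}\ge\one_{A_k}\Psi_T\big((R^k-\Gamma_k)_+\big).
\]
I would write $\one_{A_k}=\one_{A_{k-1}}-\one_{A_{k-1}\setminus A_k}$ and bound the discarded term. On $A_{k-1}$, before the cuts of iteration $k$ are added, the residual sum satisfies $R^k\le M^V$, and $\Psi_T(s)\le s/2$. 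Hence the loss is at most $\tfrac{M^V}2\PP(A_{k-1}\setminus A_k\mid\cdot)$. Since $A_{k-1}$ is $\cA_{k-1}$-measurable, the forward noise is independent of $\cA_{k-1}$, and the errors are deterministic, conditional Jensen and \cref{lem:gap} give
\[
\EE[\tilde\Phi^k-\tilde\Phi^{k-1}\mid\cA_{k-1}]\ge\one_{A_{k-1}}\Psi_T\big((G^k-\Gamma^F_k-\Gamma_k)_+\big)-\tfrac{M^V}{2}\PP(A_{k-1}\setminus A_k\mid\cA_{k-1}).
\]
Summing over $k$ and using $\one_A\le\one_{A_{k-1}}$ and $\sum_k\PP(A_{k-1}\setminus A_k)\le\delta'$ yields~(i).

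\emph{Step 3: high probability and sample sizes.} For~(ii) I would apply \cref{lem:dbp} to $Y_k:=M^V-\tilde\Phi^k$. The sum of the correction terms would be handled either by noting that it vanishes on $A$ (where $\one_{A_{k-1}\setminus A_k}=0$), or by adding its expectation bound $\delta'M^V/2$. Intersecting with $A$ costs an extra $\delta'$ in probability. The final claim is arithmetic: with $\delta_{k,t}=\delta'/(Tk(k+1))$ the $\delta_{k,t}$ sum to at most $\delta'$, and $2\sum_tb_{k,t}\le g_k$ once each $b_{k,t}\le g_k/(2T)$.

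\emph{Main obstacle.} The delicate point is Step 2. The probability of failure at iteration $k$ is conditional on cut samples drawn \emph{after} the forward pass, so $\one_{A_k}$ is not $\cA_{k-1}$-measurable and is correlated with $R^k$. A rigorous proof has to justify the bound $\Psi_T(R^k)\le M^V/2$ on $A_{k-1}$ pointwise, which I would obtain from $r^k_t\le V_t-\lv t{k-1}$ there. If that fails, the fallback is to bound $\Psi_T(R^k)$ by the realised lift, which is itself at most $M^V$.
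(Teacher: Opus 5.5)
Your proposal follows the paper's proof essentially step by step: conditional concentration on the event where $\lv{t+1}k\in\cW^{\rm c}_{t+1}$, the stopped potential $\Phi^{k\wedge(\tau_{\rm f}-1)}$, and the split $\one_{A_k}=\one_{A_{k-1}}-\one_{A_{k-1}\setminus A_k}$, with the discarded term bounded via $R^k\le M^V$ and $\Psi_T(u)\le u/2$ on $A_{k-1}$, followed by conditional Jensen, \cref{lem:gap} and \cref{lem:dbp}. One correction in Step~3: the correction term in the regret of \cref{lem:dbp} is the conditional probability $\PP(A_{k-1}\setminus A_k\mid\cA_{k-1})$, not an indicator, so it does not vanish on $A$ and your first option fails; only your second option works, and it needs the pathwise bound $\sum_k\PP(A_{k-1}\setminus A_k\mid\cA_{k-1})\le\sum_{k,t}\delta_{k,t}=\delta'$ rather than just its expectation, which your conditional Step~1 bounds give once the union bound is taken over the first failing pair $(k,t)$.
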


Let $\mathcal G$ be the $\sigma$-field generated by everything drawn before the sample of iteration $k$ and index $t$;
$x:=\bx^k_t$ and $W:=\lv{t+1}{k}$ are $\mathcal G$-measurable. The conditional mean of the raw cut,
$m(y):=\EE[\hat f(y)\mid\mathcal G]=\cB_t(W)(x)+\langle\EE[g\mid\mathcal G],y-x\rangle$, satisfies $m\le\cB_t(W)$ on
$X_t$.

\begin{lemma}[Uniform deviation of a sampled affine cut]\label{lem:conc}
Let \cref{ass:scen} hold with a Euclidean (Hilbert) norm on $E_t$, and let $\delta\in(0,1]$. For every event
$E\in\mathcal G$ on which $W\in\cW^{\rm c}_{t+1}$, almost surely on $E$,
$\PP\big(\sup_{y\in X_t}|\hat f(y)-m(y)|>b(N,\delta)\mid\mathcal G\big)\le\delta$.
\end{lemma}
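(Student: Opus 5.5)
The plan is to condition on $\mathcal G$ and split the deviation of the affine function $\hat f-m$ into a scalar part (the intercept at $x$) and a vector part (the slope). Each part gets its own concentration inequality with failure probability $\delta/2$, and a union bound finishes.

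\emph{Reduction.} On $E$, $W\in\cW^{\rm c}_{t+1}$, so \cref{ass:scen} applies to the pair $(W,x)$. The sample $(\bxi^k_{t,j})_{j\in[N]}$ is independent of $\mathcal G$, and $(W,x)$ is $\mathcal G$-measurable. By the freezing property used in \cref{lem:gap}, given $\mathcal G$ the pairs $(a(\bxi^k_{t,j}),g(\bxi^k_{t,j}))$ are, almost surely on $E$, i.i.d. with conditional means $\EE[a\mid\mathcal G]=\cB_t(W)(x)$ and $\EE[g\mid\mathcal G]$. They satisfy $a\in[\underline a,\underline a+\mathrm{rg}_t]$ for a $\mathcal G$-measurable $\underline a$, and $\|g\|\le L_t$. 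For $y\in X_t$ we have
\[
\hat f(y)-m(y)=\big(\bar a-\EE[a\mid\mathcal G]\big)+\big\langle\bar g-\EE[g\mid\mathcal G],y-x\big\rangle .
\]
Since $\|y-x\|\le D_t$, this gives $\sup_{y\in X_t}|\hat f(y)-m(y)|\le|\bar a-\EE[a\mid\mathcal G]|+D_t\|\bar g-\EE[g\mid\mathcal G]\|$.

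\emph{Scalar term.} I apply Hoeffding's inequality conditionally on $\mathcal G$. The event $|\bar a-\EE[a\mid\mathcal G]|>\mathrm{rg}_t\sqrt{\ln(4/\delta)/(2N)}$ has conditional probability at most $2\exp(-\ln(4/\delta))=\delta/2$. This threshold is at most $\mathrm{rg}_t\sqrt{\ln(4/\delta)/N}$.

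\emph{Vector term.} This is the step that needs care, since we need a dimension-free bound; the Hilbert structure of the norm is what makes it possible.
\begin{itemize}[nosep]
\item \emph{Mean.} The centred summands are orthogonal in $L^2$, so $\EE[\|\bar g-\EE[g\mid\mathcal G]\|^2\mid\mathcal G]\le L_t^2/N$. Hence the conditional mean of the norm is at most $L_t/\sqrt N$.
\item \emph{Concentration.} The map $(\xi_j)_j\mapsto\|\bar g-\EE[g\mid\mathcal G]\|$ changes by at most $2L_t/N$ when one sample is replaced. McDiarmid's inequality, applied conditionally, gives that $\|\bar g-\EE[g\mid\mathcal G]\|>L_t/\sqrt N+L_t\sqrt{2\ln(2/\delta)/N}$ has conditional probability at most $\delta/2$.
\item \emph{Absorbing constants.} Write $u:=\ln(2/\delta)\ge\ln2$. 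The claim $1+\sqrt{2u}\le2\sqrt{u+\ln2}$ reduces, after squaring, to $2u-2\sqrt2\sqrt u+4\ln2-1\ge0$. This quadratic in $\sqrt u$ has negative discriminant, since $8<8(4\ln2-1)$. So the vector threshold is at most $2L_t\sqrt{\ln(4/\delta)/N}$, and multiplying by $D_t$ gives $2L_tD_t\sqrt{\ln(4/\delta)/N}$.
\end{itemize}

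\emph{Conclusion.} A union bound over the two events gives, almost surely on $E$, conditional probability at most $\delta$ that $\sup_{y\in X_t}|\hat f(y)-m(y)|>(\mathrm{rg}_t+2L_tD_t)\sqrt{\ln(4/\delta)/N}=b(N,\delta)$.

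The supremum is measurable because it is a continuous function of $(\bar a,\bar g)$, given the bound above, which is all that is used. The only delicate points are making the conditional Hoeffding and McDiarmid steps rigorous through the freezing property, and the Hilbert-norm variance computation; the rest is the constant check above.
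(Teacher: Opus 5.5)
Your proof is correct and is essentially the paper's argument. You use the same bound $\sup_{y\in X_t}|\hat f(y)-m(y)|\le|\bar a-\EE a|+D_t\|\bar g-\EE g\|$, conditional Hoeffding for the intercept, and McDiarmid with the Hilbert-space bound $\EE\|\bar g-\EE g\|\le L_t/\sqrt N$ for the slope. You then absorb constants with $1+\sqrt{2v}\le2\sqrt{v+\ln2}$, checked by a discriminant where the paper completes the square.
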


\begin{proof}
Conditionally on $\mathcal G$ the pairs $(a,g)(\bxi^k_{t,i})$ are i.i.d.\ with the law obtained by freezing $(x,W)$.
For $y\in X_t$, $\|y-x\|\le D_t$, so $\sup_y|\hat f(y)-m(y)|\le|\bar a-\EE a|+D_t\|\bar g-\EE g\|$. Hoeffding's
inequality gives $\PP(|\bar a-\EE a|>\mathrm{rg}_t\sqrt{\ln(4/\delta)/(2N)})\le\delta/2$. The map
$(\xi_i)_i\mapsto\|\bar g-\EE g\|$ has bounded differences $2L_t/N$, and $\EE\|\bar g-\EE g\|\le L_t/\sqrt N$ by the
Hilbert structure, so McDiarmid's inequality gives $\PP(\|\bar g-\EE g\|>L_t(1+\sqrt{2\ln(2/\delta)})/\sqrt N)\le\delta/2$.
Finally $1+\sqrt{2v}\le2\sqrt{v+\ln2}$ for $v\ge0$, since $4v+4\ln2-(1+\sqrt{2v})^2=(\sqrt{2v}-1)^2+4\ln2-2>0$; with
$v=\ln(2/\delta)$ the sum of the two deviations is at most $b(N,\delta)$.
\end{proof}

\begin{proof}[Proof of \cref{thm:sampled}]
\emph{Good events.} Let $A_{k,t}:=\{\sup_{X_t}|\hat f-m|\le b_{k,t}\}$, $\Omega_0$ the whole space and
$\Omega_k:=\Omega_{k-1}\cap\bigcap_{t<T}A_{k,t}$, which is $\cA_k$-measurable. On $\Omega_{k-1}$, all cuts of
iterations $\le k-1$ are valid, so \cref{lem:valid}(a) gives $\lv{}j\le V$ for $j\le k-1$. On
$E:=\Omega_{k-1}\cap\bigcap_{s>t}A_{k,s}\in\mathcal G$, the cuts of indices $s>t$ of iteration $k$ are valid too, so
$W=\lv{t+1}{k}$, the maximum of the convex $\lv{t+1}0$ and of valid affine $L_{t+1}$-Lipschitz cuts, lies in
$\cW^{\rm c}_{t+1}$. On $E\cap A_{k,t}$, with $b=b_{k,t}$, we have $\hat f-b\le m\le\cB_t(W)$ on $X_t$ (validity) and
$(\hat f-b)(x)=\bar a-b\ge\cB_t(W)(x)-2b$. By \cref{lem:conc}, the probability that $E$ holds while $A_{k,t}$ fails
is at most $\delta_{k,t}$, also conditionally on $\cA_{k-1}$. Summing over the first failing index,
$\PP(\Omega_{k-1}\setminus \Omega_k\mid\cA_{k-1})\le\sum_{t<T}\delta_{k,t}$. Let $A:=\bigcap_k\Omega_k$; then
$\PP(A^c)\le\delta'$.

\emph{Lift on $\Omega_k$.} There, every cut so far is valid and $L_t$-Lipschitz, and the shifted cut at $x$ is at
least $\cB_t(\lv{t+1}{k-1})(x)-2b_{k,t}$. As in the proof of \cref{thm:master},
$\Phi^k-\Phi^{k-1}\ge\Psi_T((R^k-\Gamma_k)_+)$. On $\Omega_{k-1}$, $\lv{}{k-1}\le V$, so
$r^k_t\in[0,\sup(V_t-\lv t0)]$ and $R^k\in[0,M^V]$. Also $\Psi_T(u)\le\psi_0(u)\le u/2$
by~\eqref{eq:main:huber_potential}.

\emph{Stopped potential.} Let $\tau_{\rm f}:=\inf\{k:\Omega_k\text{ fails}\}$, a stopping time, and
$Y_k:=\Phi^{k\wedge(\tau_{\rm f}-1)}$. The process $(Y_k)$ is adapted, nondecreasing and bounded by $M^V$, and
$Y_k-Y_{k-1}=\one_{\Omega_k}(\Phi^k-\Phi^{k-1})$. Using $\Omega_{k-1}\in\cA_{k-1}$, conditional Jensen and \cref{lem:gap},
\begin{equation}\label{eq:appC:stopped_potential}
\EE[Y_k-Y_{k-1}\mid\cA_{k-1}]\ge\one_{\Omega_{k-1}}\Psi_T\big((G^k-\Gamma^F_k-\Gamma_k)_+\big)-\tfrac{M^V}2\,\PP(\Omega_{k-1}\setminus \Omega_k\mid\cA_{k-1}).
\end{equation}
Summing and taking expectations gives~(i), since $\EE\lim_kY_k\le M^V$,
$\sum_k\PP(\Omega_{k-1}\setminus \Omega_k)\le\delta'$ and $\one_A\le\one_{\Omega_{k-1}}$. For~(ii), apply \cref{lem:dbp}
to $M^V-Y_k$: with probability at least $1-\delta$ its regret is at most $9M^V\ln(3/\delta)$, and
by~\eqref{eq:appC:stopped_potential} it is at least
$\sum_{k\ge1}\one_{\Omega_{k-1}}\Psi_T((G^k-\Gamma^F_k-\Gamma_k)_+)-M^V\delta'/2$; intersect with $A$. Finally,
$2\sum_{t<T}b_{k,t}\le g_k$ if each $b_{k,t}\le g_k/(2T)$; solving for $N$ gives the sample-size condition.
\end{proof}

\begin{proposition}[Raw sampled cuts can stall SDDP]\label{prop:counter}
Let $T=2$, $X_0=\{x_0\}$, $\cX_1(x_0,\xi)=[0,1]$, $\ell_1\equiv0$, $X_1=[0,1]$, $\cX_2(x,\xi)=\{0\}$,
$\ell_2(x,0,\xi)=\xi x+0.6(1-x)$ with $\bxi_2\sim U[0,1]$, and $\lv{}0\equiv0$. Then $V_1(x)=0.6-0.1x$ and
$V_0(x_0)=0.5$, attained only at $x_1=1$. Run SDDP with exact forward passes and raw sampled cuts at index~1, built
from fresh samples of sizes $N_k\ge1$. If the cut of iteration $j$ is built from a sample with mean $\bar\xi_j>0.6$, then at every
iteration $k>j$ the policy plays $x_1=0$, $V^{\pi^k}_0(x_0)=0.6=V_0(x_0)+0.1$ and $\lv0{k-1}(x_0)=0.6>V_0(x_0)$; in
particular $G^k=0$ although $\pi^k$ is $0.1$-suboptimal. This happens with probability at least
$\PP(\bar\xi_1>0.6)>0$ for every choice of sample sizes, and almost surely if $\sup_kN_k<\infty$.
\end{proposition}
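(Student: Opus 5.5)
The plan is to compute everything in closed form, using that in this instance the index-$1$ scenario value is affine in the state and does not depend on the approximation. First the value functions. Since $X_2\supset\cX_2(x,\xi)=\{0\}$ and $V_2=0$, we get
\[
\Bd_1(W;x,\xi)=\xi x+0.6(1-x)
\]
for every $W$ with $W(0)=0$, in particular for $W=V_2$ and $W=\lv2k=0$. Integrating against $U[0,1]$ gives $V_1(x)=0.5x+0.6(1-x)=0.6-0.1x$. Minimising over $\cX_1(x_0,\xi)=[0,1]$ gives $V_0(x_0)=0.5$, attained only at $x_1=1$.

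Next the cuts. At iteration $k$, whatever the trial point $x=\bx^k_1$, the raw sampled cut at index $1$ averages the affine scenario values $a(\xi)+g(\xi)(y-x)$ with $g(\xi)=\xi-0.6$. Because each scenario value is exactly affine, the cut is the same affine function for every trial point:
\[
\hat f_k(y)=0.6+(\bar\xi_k-0.6)\,y .
\]
Hence $\lv1k=\max\{0,\hat f_1,\dots,\hat f_k\}$, and every cut equals $0.6$ at $y=0$. The cut at index $0$ is computed after index $1$ in the backward pass, with $\lv1k$ already updated. It is the constant $\cB_0(\lv1k)(x_0)=\min_{[0,1]}\lv1k$, because $\ell_1=0$. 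So $\lv0k(x_0)=\max_{i\le k}\min_{[0,1]}\lv1i$, a nondecreasing sequence of numbers, each at most $\lv1i(0)=0.6$.

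Now suppose $\bar\xi_j>0.6$ and take $k>j$. Then $\hat f_j$ is strictly increasing, so $\lv1{k-1}(y)\ge\hat f_j(y)>0.6$ for $y>0$, while $\lv1{k-1}(0)=0.6$. The exact greedy step therefore has the unique minimiser $x_1=0$. Its cost is $\EE\,\ell_2(0,0,\bxi_2)=0.6$, so $V^{\pi^k}_0(x_0)=0.6=V_0(x_0)+0.1$. For the lower bound, $\min\lv1{k-1}=0.6$ and $k-1\ge j$, so $\lv0{k-1}(x_0)=0.6$ using the monotone maximum above. Hence $G^k=0$ while $\pi^k$ is $0.1$-suboptimal, and the lower bound exceeds $V_0(x_0)$.

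Finally the probabilities. The only point needing care is that the event $\{\bar\xi_k>0.6\}$ depends only on the fresh sample of iteration $k$; this holds because the cut does not depend on the trial point. For $n\ge1$, the mean of $n$ i.i.d.\ $U[0,1]$ variables has a continuous density that is positive on $(0,1)$. So $p_n:=\PP(\bar\xi>0.6)>0$, which gives the claim with $j=1$. If $\sup_kN_k\le N<\infty$, the events $\{\bar\xi_k>0.6\}$ are independent, since the samples are fresh, and each has probability at least $\min_{n\le N}p_n>0$. The second Borel--Cantelli lemma then shows that almost surely some $j$ has $\bar\xi_j>0.6$. The main (mild) obstacle is checking the timing convention, namely that the index-$0$ cut of iteration $j$ already sees $\hat f_j$; in any case this only affects whether the conclusion starts at $k=j+1$, and it does.
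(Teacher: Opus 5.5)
Your proof is correct and follows the paper's argument. The raw sampled cut is the trial-point-independent line $0.6+(\bar\xi_j-0.6)y$ through $(0,0.6)$, so once $\bar\xi_j>0.6$ the exact greedy step uniquely plays $x_1=0$, the exact index-$0$ cut gives $\lv0{k-1}(x_0)=\min_{[0,1]}\lv1{k-1}=0.6$, and the probability claims follow from the positive density of the sample mean and the independence of the fresh samples; your explicit treatment of the backward-pass timing and of the Borel--Cantelli step only spells out what the paper leaves implicit.
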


\begin{proof}[Proof of \cref{prop:counter}]
For every $\xi$, $y\mapsto \Bd_1(0;y,\xi)=\xi y+0.6(1-y)$ is affine, so its exact scenario cut is itself, and a raw
sampled cut is the line $f_j(y)=\bar\xi_jy+0.6(1-y)$, through $(0,0.6)$ and $(1,\bar\xi_j)$. Hence $\lv1k(0)=0.6$ for
$k\ge1$. If $\bar\xi_j>0.6$ for some $j\le k-1$, then $\lv1{k-1}(y)\ge0.6+(\bar\xi_j-0.6)y>0.6$ for $y\in(0,1]$, so the
forward pass of iteration $k$ plays $x_1=0$. Index~0 is deterministic with $d_0=0$; its exact cut gives
$\lv0{k-1}(x_0)=\min_{[0,1]}\lv1{k-1}=0.6$, while $V^{\pi^k}_0(x_0)=V_1(0)=0.6$. Cuts are never removed, so this
persists. The mean of $N$ i.i.d.\ $U[0,1]$ variables has a positive density on $(0,1)$, so $\PP(\bar\xi>0.6)>0$. The
cuts use independent samples, so if $N_k\le\bar N$ for all $k$, an exceedance occurs almost surely.
\end{proof}

\subsection{Noise with unbounded support}\label{app:unbounded}

With convex $V_t$ and an unbounded reachable set, costs and residuals are typically unbounded (a convex function
bounded on $\R^n$ is constant); a tail term replaces the uniform bound.

\begin{proposition}[Leaving the compact set]\label{prop:unbounded}
Let $X'_t\supset X_t$ be Borel sets with $\emptyset\ne \cX_t(x,\xi)\subset X'_t$ for $x\in X'_{t-1}$, $\xi\in\Xi_t$,
where the $X_t$ are convex and compact with normalised Lebesgue measure. Let \cref{ass:S} hold with $X'_t$ in place
of $X_t$, except that compactness is dropped and the uniform bound in~\ref{S2} is replaced by integrability of
$\ell_t$, $V_t$ and $\lv t0$ along the forward paths. Assume that $\lv t0\le V_t$ on $X'_t$ and that $\lv t0$ is
$L_t$-Lipschitz on $X_t$, that the cuts are valid on $X'_t$ (below $V_t$), $L_t$-Lipschitz on $X_t$ and $\gamma$-tight
as in \cref{sec:route2}, and that for all $k$
\begin{equation}\label{eq:appC:outside_residual}
\Delta^{\rm out}_k:=\EE\Big[\sum_{t<T}r^k_t(\bx^k_t)\,\one\{\bx^k_t\notin X_t\}\ \Big|\ \cA_{k-1}\Big]\le\Delta^{\rm out} .
\end{equation}
Then $\EE\big[\sum_{k\ge1}\Psi_T((G^k-\bar{\bGamma}_k-\Delta^{\rm out})_+)\big]\le M^V:=\sum_{t<T}\sup_{X_t}(V_t-\lv t0)$.
The bound on $\Delta^{\rm out}_k$ holds in either of two cases: $r^k_t\le M'_t$ on $X'_t$ for some $M'_t\ge0$ and
$\PP(\bx^k_t\notin X_t\mid\cA_{k-1})\le o_t$, with $\Delta^{\rm out}=\sum_tM'_to_t$; or
$\EE[(V_t-\lv t0)(\bx^k_t)^2\mid\cA_{k-1}]\le\bar m_t^2$ and the same $o_t$, with $\Delta^{\rm out}=\sum_t\bar m_t\sqrt{o_t}$.
\end{proposition}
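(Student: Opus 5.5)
The plan is to rerun the proof of \cref{thm:master}(a), with the potential $\Phi^k:=\sum_{t<T}\int_{X_t}(\lv tk-\lv t0)\,d\mu_t$ built on the compact sets $X_t$ only. The gap identity is kept on the enlarged sets $X'_t$, and the residuals collected outside $X_t$ are charged to the tail term $\Delta^{\rm out}$.

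\emph{Gap identity.} First we check that \cref{lem:gap} still holds with $X'_t$ in place of $X_t$. Its proof only uses forward invariance, the freezing property and integrability of every term given $\cA_{k-1}$. Integrability is now assumed directly along the forward paths, for $\ell_t$, $V_t$ and $\lv t0$. Validity sandwiches $\lv t{k-1}$ between $\lv t0$ and $V_t$, so $\lv t{k-1}$ is integrable along the paths too. This gives $G^k\le\EE[R^k\mid\cA_{k-1}]+\bGamma^F_k$ with $R^k=\sum_{t<T}r^k_t(\bx^k_t)$.

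Validity on $X'_t$, together with \cref{lem:valid}(a) (which we assume applies, i.e.\ $\lv t0\le\cB_t(\lv{t+1}0)$, as in \cref{sec:route2}), gives $r^k_t\ge0$. We then split $R^k=R^k_{\rm in}+R^k_{\rm out}$ according to whether $\bx^k_t\in X_t$. By~\eqref{eq:appC:outside_residual}, $\EE[R^k_{\rm out}\mid\cA_{k-1}]\le\Delta^{\rm out}$. Hence
\[
G^k-\bGamma^F_k-\Delta^{\rm out}\le\EE[R^k_{\rm in}\mid\cA_{k-1}].
\]

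\emph{Lift.} If $\bx^k_t\in X_t$, tightness and the monotone update give a lift of at least $(r^k_t-\bgamma^k_t)_+$ at that point. The cuts are $L_t$-Lipschitz on $X_t$ and $\lv t{k-1}\le\lv tk$ there, so \cref{lem:lift} with $\kappa_t=1$ (\cref{lem:VG}(a)) bounds the lift of $\int_{X_t}$ below by $\psi_t((r^k_t-\bgamma^k_t)_+)$. Trial points outside $X_t$ contribute a nonnegative lift on $X_t$ and are simply dropped. This yields, as in~\eqref{eq:main:pathwise_lift},
\[
\Phi^k-\Phi^{k-1}\ge\Psi_T\big((R^k_{\rm in}-\bGamma_k)_+\big).
\]
The potential satisfies $0\le\Phi^k\le M^V$, since $\lv tk\le V_t$ on $X_t$ and $\mu_t$ is a probability measure.

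\emph{Conditional Jensen.} This is the main obstacle, because $R^k_{\rm in}$ need not be bounded once $V_t$ is unbounded on $X'_t$. However $R^k_{\rm in}$ involves only residuals at points of $X_t$. There $0\le r^k_t\le\cB_t(V_{t+1})-\lv t0=V_t-\lv t0\le\sup_{X_t}(V_t-\lv t0)$, which is finite because $V_t$ is convex, hence continuous, on the compact $X_t$ in its affine hull, and $\lv t0$ is Lipschitz there. So $R^k_{\rm in}\in[0,M^V]$, and conditional Jensen applies to the convex nondecreasing map $(u,v)\mapsto\Psi_T((u-v)_+)$. Combining with the gap bound above gives
\[
\EE[\Phi^k-\Phi^{k-1}\mid\cA_{k-1}]\ge\Psi_T\big((G^k-\bar{\bGamma}_k-\Delta^{\rm out})_+\big).
\]
Summing over $k$ and using monotone convergence gives the claimed bound.

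\emph{The two sufficient conditions.} In the first case, $\EE[r^k_t\one\{\bx^k_t\notin X_t\}\mid\cA_{k-1}]\le M'_to_t$ directly. In the second, we use $0\le r^k_t\le V_t-\lv t{k-1}\le V_t-\lv t0$ on $X'_t$, where the middle inequality is again $\cB_t(\lv{t+1}{k-1})\le\cB_t(V_{t+1})=V_t$ by validity. Cauchy--Schwarz then gives the bound $\bar m_t\sqrt{o_t}$.
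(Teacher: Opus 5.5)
Your route is the paper's: the potential lives on the compact $X_t$, the gap identity is taken on $X'_t$, lifts are credited only at trial points in $X_t$, and the outside residuals are charged to $\Delta^{\rm out}$.

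There is one gap. You invoke \cref{lem:valid}(a) to get $r^k_t\ge0$, which requires $\lv t0\le\cB_t(\lv{t+1}0)$ and cuts below $\cB_t(\lv{t+1}k)$, not merely below $V_t$. The proposition assumes neither, and under its hypotheses residuals may be negative, as the paper notes.

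You use nonnegativity only to place $R^k_{\rm in}$ in $[0,M^V]$ before conditional Jensen. Jensen, however, only needs $R^k_{\rm in}$ to be integrable given $\cA_{k-1}$. That integrability follows from the one-step identity \eqref{eq:main:one_step_expectation}. It writes $r^k_t(\bx^k_t)$ as the conditional expectation of $\ell_{t+1}+\lv{t+1}{k-1}(\bx^k_{t+1})$, minus a forward error in $[0,\bgamma^{F,k}_{t+1}]$, minus $\lv t{k-1}(\bx^k_t)$. Each of these terms is integrable along the paths, because $\lv s0\le\lv s{k-1}\le V_s$. This is how the paper closes the argument. With this replacement your proof needs no extra hypothesis, and your Cauchy--Schwarz step already uses only the upper bound $r^k_t\le V_t-\lv t0$.

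A smaller point concerns finiteness of $\sup_{X_t}(V_t-\lv t0)$. You do not need it, since the claim is trivial when $M^V=\infty$. Your justification for it also does not hold: $V_t$ is not assumed convex. Even a convex function on a compact convex set can fail to be continuous, or even bounded, at the relative boundary.
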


\begin{proof}
Validity on $X'_t$ and $\lv t0\le V_t$ give $\lv t0\le\lv tk\le V_t$ on $X'_t$. Since $\cX_{t+1}(x,\xi)\subset X'_{t+1}$
for $x\in X'_t$ and $\cB_t$ is monotone, $r^k_t\le\cB_t(V_{t+1})-\lv t0=V_t-\lv t0$ there (residuals may be negative).
\Cref{lem:gap} holds with $X'_t$ in place of $X_t$, by the same telescoping: $\lv t{k-1}(\bx^k_t)$ is integrable because
$\lv t0\le\lv t{k-1}\le V_t$, and so is every $r^k_t(\bx^k_t)$, by the one-step
identity~\eqref{eq:main:one_step_expectation}. The potential is computed on the compact sets $X_t$, where it is at
most $M^V$, and \cref{lem:lift} is applied only at trial points in $X_t$. Let
$\tilde R^k:=\sum_{t<T}r^k_t(\bx^k_t)\one\{\bx^k_t\in X_t\}$, which is integrable and at most $M^V$. Since
$\sum_{t<T}(r^k_t(\bx^k_t)-\bgamma^k_t)_+\one\{\bx^k_t\in X_t\}\ge(\tilde R^k-\bGamma_k)_+$, we get
$\Phi^k-\Phi^{k-1}\ge\Psi_T((\tilde R^k-\bGamma_k)_+)$. Then
$\EE[\tilde R^k\mid\cA_{k-1}]=\EE[R^k\mid\cA_{k-1}]-\Delta^{\rm out}_k\ge G^k-\Gamma^F_k-\Delta^{\rm out}$; conclude as
in \cref{thm:master}. For the second sufficient condition, use Cauchy--Schwarz and $r^k_t\le V_t-\lv t0$.
\end{proof}

\section{Proofs and complements for Section~\ref{sec:infinite}}\label{app:infinite}

\subsection{Standard facts, measurability and the four lemmas}\label{app:ih-std}

Let $\mathfrak M$ be the set of bounded Borel $W:X\to\R$ such that $(x,\xi)\mapsto\Bd(W;x,\xi)$ is jointly measurable; by~\ref{SI3},
it contains the models of the algorithm and $V$, and $\cB$ is defined on it.

\begin{lemma}[Standard facts]\label{lem:std}
Under \cref{ass:Sinf}:
(i) for $W,W'\in\mathfrak M$, $\cB W$ is Borel and bounded, $W\le W'$ implies $\cB W\le\cB W'$, $\cB(W+c)=\cB W+\beta c$ for
constants $c$, so $\sup|\cB W-\cB W'|\le\beta\sup|W-W'|$; and $\mathfrak M$ is closed under uniform convergence;
(ii) $0\le V\le\cb/(1-\beta)$, $V$ is the only bounded fixed point of $\cB$ in $\mathfrak M$, every $W\in\mathfrak M$ with
$W\le\cB W$ satisfies $W\le V$, and $\cB^jW\to V$ uniformly whenever $\cB^jW\in\mathfrak M$ for all $j\ge0$;
(iii) $V^{\pi}(x_0)\ge\EE[\ell(x_0,\bx_1,\bxi_1)+\beta V(\bx_1)]\ge V(x_0)$ for every policy $\pi$, also conditionally on
a $\sigma$-field independent of the noise on which $\pi$ may depend (as $\pi^k$ depends on $\cA_{k-1}$).
\end{lemma}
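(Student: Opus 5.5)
The plan is to prove the three parts of \cref{lem:std} in order, each by standard discounted dynamic-programming arguments, using only forward invariance~\ref{SI1}, the loss bounds~\ref{SI2} and the measurability in~\ref{SI3}.

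\emph{(i).} For $W\in\mathfrak M$, $\Bd(W;\cdot,\cdot)$ is jointly measurable by definition. Since $0\le\ell\le\cb$ and $\cX(x,\xi)\subset X$ is nonempty, we have $\inf W\,\beta\le\Bd\le\cb+\beta\sup W$. Fubini then makes $\cB W$ Borel and bounded. Monotonicity and $\cB(W+c)=\cB W+\beta c$ hold pointwise in $\xi$ (the infimum is over the same set) and pass through the integral. The contraction bound follows from $W\le W'+\sup|W-W'|$ together with these two properties. For closedness under uniform limits: if $W_j\to W$ uniformly, then $\Bd(W_j;\cdot,\cdot)\to\Bd(W;\cdot,\cdot)$ uniformly, because $|\Bd(W_j)-\Bd(W)|\le\beta\sup|W_j-W|$. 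A pointwise limit of jointly measurable maps is jointly measurable, and $W$ is Borel and bounded.

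\emph{(ii).} The fixed point $V$ exists in $\mathfrak M$ by~\ref{SI3}. Applying monotonicity to $0\le V$ gives $\cB 0\ge0$, hence $V=\cB V\ge\cB(\inf V)\ge\beta\inf V$, so $\inf V\ge0$ when $\inf V<0$ would be contradicted; similarly $\sup V\le\cb+\beta\sup V$. Uniqueness follows from the contraction in (i). If $W\le\cB W$ with $W\in\mathfrak M$, I need iterates to stay in $\mathfrak M$. Rather than relying on this, I would argue directly: $W-V\le\cB W-\cB V$, and the right side is at most $\beta\sup(W-V)_+$ by monotonicity plus constants. Hence $\sup(W-V)_+\le\beta\sup(W-V)_+$, which forces it to be zero. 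Uniform convergence of $\cB^jW$, under the stated hypothesis, is just the contraction estimate $\sup|\cB^jW-V|\le\beta^j\sup|W-V|$.

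\emph{(iii).} This is the step I expect to be the main obstacle, because policies are history-dependent and $V$ is only a Borel fixed point, so I cannot invoke an existence theorem for optimal policies. I would work with truncated costs $C_H:=\sum_{t<H}\beta^t\lambda_t+\beta^HV(\bx_H)$. The aim is to show $\EE C_{H+1}\ge\EE C_H$ via the freezing property, as in the proof of \cref{lem:gap}. Conditionally on $\mathcal H_H$ (the history, plus the independent $\sigma$-field on which $\pi$ depends), $\bxi_{H+1}$ is independent with law $\PP$. The pointwise inequality $\lambda_H+\beta V(\bx_{H+1})\ge\Bd(V;\bx_H,\bxi_{H+1})$ holds because $\bx_{H+1}\in\cX(\bx_H,\bxi_{H+1})$. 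The joint measurability of $\Bd(V;\cdot,\cdot)$ then gives conditional expectation at least $\cB V(\bx_H)=V(\bx_H)$. Applied at $H=0$, this yields the middle inequality $\EE[\ell(x_0,\bx_1,\bxi_1)+\beta V(\bx_1)]\ge V(x_0)$. Induction gives $\EE C_H\ge V(x_0)$ for all $H$. Letting $H\to\infty$, the tail $\beta^HV$ is at most $\beta^H\cb/(1-\beta)\to0$, and monotone convergence applies to the nonnegative losses, so $V^\pi(x_0)\ge V(x_0)$. The first inequality in (iii) is the same step started at $H=1$: we have $V^\pi(x_0)\ge\lim_H\EE C_H$, and each $\EE C_H\ge\EE C_1$.
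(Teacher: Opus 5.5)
Your proposal is correct and follows the paper's proof essentially step for step. You use the same pointwise bounds $\beta\inf W\le\Bd(W;x,\xi)\le\cb+\beta\sup W$ and Fubini in (i), the same comparison $W\le V+c$ with $c=\sup(W-V)_+$ in (ii), and the same freezing/telescoping argument with the tail $\beta^H V(\bx_H)$ vanishing in (iii).

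The one slip is the justification of $\cB V\ge\beta\inf V$ in (ii). It comes from $\ell\ge0$, and your own bound from (i) already gives it directly. It does not come from ``monotonicity applied to $0\le V$'', which would be circular.
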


\begin{proof}
(i) Since $\emptyset\ne\cX(x,\xi)\subset X$ and $\ell\in[0,\cb]$, $\beta\inf W\le\Bd(W;x,\xi)\le\cb+\beta\sup W$, and
Fubini's theorem makes $\cB W$ Borel; monotonicity and the shift are immediate, and give the contraction. If
$W_j\to W$ uniformly, $|\Bd(W_j;x,\xi)-\Bd(W;x,\xi)|\le\beta\sup|W_j-W|$, so $\Bd(W;\cdot,\cdot)$ is a pointwise limit of
jointly measurable functions. (ii) $V=\cB V\ge\beta\inf V$ gives $\inf V\ge0$, and $V\le\cb+\beta\sup V$ gives $\sup V\le\cb/(1-\beta)$.
If $W\le\cB W$ and $c:=\sup(W-V)_+$, then $W\le V+c$ and $W\le\cB W\le\cB(V+c)=V+\beta c$, so $c\le\beta c$ and
$c=0$. Uniqueness and convergence follow from the contraction. (iii) Let $\lambda_t:=\ell(\bx_t,\bx_{t+1},\bxi_{t+1})$ and
$\mathcal H_t$ the $\sigma$-field of the conditioning and of $\bxi_{[t]}$. As $\lambda_t+\beta V(\bx_{t+1})\ge\Bd(V;\bx_t,\bxi_{t+1})$,
the freezing property gives $\EE[\lambda_t+\beta V(\bx_{t+1})\mid\mathcal H_t]\ge V(\bx_t)$. Iterate from $t=1$ and let the
horizon tend to infinity ($V$ is bounded): $V^\pi(x_0)\ge\EE[\lambda_0+\beta V(\bx_1)]$, and $t=0$ gives $\ge V(x_0)$.
\end{proof}

\paragraph{A sufficient condition for~\ref{SI3}.} Assume the sufficient condition of \cref{app:measurability} and,
moreover, that $(x,y)\mapsto\ell(x,y,\xi)+\delta_{\cX(x,\xi)}(y)$ is lower semicontinuous on $X\times X$ for every
$\xi$. For a bounded lower semicontinuous $W$, $\Bd(W;\cdot,\cdot)$ is then jointly measurable (\cref{app:measurability}),
and $\Bd(W;\cdot,\xi)$, an infimum over the compact $X$, is lower semicontinuous and at least $\beta\inf W$, so $\cB W$
is lower semicontinuous (Fatou). Hence the $\cB^j0$ lie in $\mathfrak M$, and their uniform limit $V$ is in $\mathfrak M$
and satisfies $\cB V=V$ (\cref{lem:std}(i)--(ii)). For finitely supported noise and deterministic oracles, \ref{SI3} is not needed:
$\cB$ is defined on all bounded functions, and all the random variables take countably many values.

\begin{lemma}[Sufficient conditions]\label{lem:validIH}
Assume \cref{ass:Sinf}, and let every cut be computed for the current model $W$. (a) If $\uV0\le V$ and every cut
satisfies $f\le\cB W$ on $X$, then every model is $\le V$, so~\ref{OI3} holds with $U=V$ and $M_U\le\cb/(1-\beta)-\inf_X\uV0$. If moreover
$\uV0\le\cB\uV0$ (e.g.\ $\uV0=0$), the assumption $\uV0\le V$ holds by \cref{lem:std}(ii), and every model satisfies
$W\le\cB W$, so $r^k\ge0$. (b) Under~\ref{OI4}, if
$f(x)\ge\cB W(x)-\gamma_{k,t}$ at $x=\bx^k_t$, then~\ref{OI2} holds.
\end{lemma}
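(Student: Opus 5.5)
The plan is to mirror the proof of \cref{lem:valid}, replacing the stage index by the order in which cuts are added to the single shared model. For part~(a), the first task is to show that every model stays below $V$. I would argue by induction over the sequence of updates, across iterations and, within a pass, backward in $t$. At each step the current model $W$ is a finite maximum of $\uV0$ and the cuts added so far, and I assume inductively that $W\le V$ on $X$. A new cut satisfies $f\le\cB W$ by hypothesis. Monotonicity of $\cB$ on $\mathfrak M$ (\cref{lem:std}(i)) then gives $\cB W\le\cB V=V$, so $f\le V$. The additional cuts $f'_i$ of~\ref{OI4} satisfy~\ref{OI3}, which presupposes a ceiling. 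I would therefore read the hypothesis as requiring every cut, including these, to lie below $\cB W$ for the current $W$. Under that reading the updated model stays below $V$. This gives~\ref{OI3} with $U=V$.

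For the bound on $M_U$, I would use $\bar U\le U=V\le\cb/(1-\beta)$ from \cref{lem:std}(ii). This gives $M_U=\sup(\bar U-\uV0)\le\cb/(1-\beta)-\inf_X\uV0$, which is finite because $\uV0$ is bounded.

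For the second claim in~(a), the condition $\uV0\le\cB\uV0$ with $\uV0\in\mathfrak M$ gives $\uV0\le V$ by \cref{lem:std}(ii). I then need the sub-Bellman property $W\le\cB W$ for every model $W$. I would prove it by the same induction. Suppose $W\le\cB W$ and $W'=\max(W,f,\dots)$, where each new cut satisfies $f\le\cB W$. Since $W\le W'$, monotonicity gives $\cB W\le\cB W'$. Hence $W\le\cB W'$ and $f\le\cB W'$, so $W'\le\cB W'$. Applied to $W=\uV{k-1}$, this gives $r^k=\cB\uV{k-1}-\uV{k-1}\ge0$.

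Part~(b) is short. When the cut at $\bx^k_t$ is computed, the current model $W$ satisfies $W\ge\uV{k-1}$, because updates only take maxima~\ref{OI4}. Monotonicity then gives $f(x)\ge\cB W(x)-\gamma_{k,t}\ge\cB\uV{k-1}(x)-\gamma_{k,t}$, which is~\ref{OI2}.

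The only delicate point is measurability: I must check that each intermediate model lies in $\mathfrak M$, so that $\cB$ is defined on it and \cref{lem:std} applies. I would take this from~\ref{SI3}, which covers every model produced by the algorithm.
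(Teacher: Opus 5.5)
Your proposal is correct and follows the paper's proof essentially step by step. For~(a), the paper also inducts over the cuts in order of creation, using monotonicity of $\cB$ to get $f\le\cB W\le\cB V=V$ and to carry $W\le\cB W$ over to $W'=\max(W,f)$; for~(b), it combines $W\ge\uV{k-1}$ from the monotone update with monotonicity of $\cB$. Your explicit bound $M_U\le\cb/(1-\beta)-\inf_X\uV0$, via $\bar U\le U=V\le\cb/(1-\beta)$, is a detail the paper leaves implicit, and you handle it correctly.
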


\begin{proof}
(a) If $\uV0\le\cB\uV0$, then $\uV0\le V$ by \cref{lem:std}(ii). By induction on the cuts, let $W\le V$ be the current
model and $f$ a cut; then $f\le\cB W\le\cB V=V$, so $\max(W,f)\le V$. If moreover $W\le\cB W$, then, with
$W':=\max(W,f)\ge W$, $W\le\cB W\le\cB W'$ and $f\le\cB W\le\cB W'$, so $W'\le\cB W'$. (b) By~\ref{OI4},
$W\ge\uV{k-1}$, and $\cB$ is monotone.
\end{proof}

\begin{proof}[Proof of \cref{lem:gapIH}]
Write $\uV{}:=\uV{k-1}$, $r_t:=r^k(\bx_t)$ and $\mathcal H_t:=\cA_{k-1}\vee\sigma(\bxi_{[t]})$. The bounds on $e_t$ are the
definitions of $\pi^k$ and $\Bd$. By the freezing property and~\ref{SI3},
$\EE[\Bd(\uV{};\bx_t,\bxi_{t+1})\mid\mathcal H_t]=\cB\uV{}(\bx_t)$, so
$\EE[\lambda_t+\beta\uV{}(\bx_{t+1})\mid\mathcal H_t]=\uV{}(\bx_t)+r_t+\EE[e_t\mid\mathcal H_t]$. Multiply by $\beta^t$,
take $\EE_{k-1}$, sum over $H\le t<H'$ and let $H'\to\infty$; all terms are bounded given $\cA_{k-1}$, and
$\beta^{H'}\EE_{k-1}\uV{}(\bx_{H'})\to0$, so $\varepsilon^k_H=\EE_{k-1}\sum_{t\ge H}\beta^t(r_t+e_t)$. For $H=0$ this is
$G^k$, as $\bx_0=x_0$; subtracting gives~\eqref{eq:inf:gap_identity}, and $\lambda_t\le\cb$ bounds $\varepsilon^k_H$. For
$\tau$, Fubini and independence give $\EE_{k-1}\sum_{t<\tau}(r_t+e_t)=\sum_t\beta^t\EE_{k-1}(r_t+e_t)$.
\end{proof}

\begin{proof}[Proof of \cref{lem:pass}]
The bounds stated after \cref{ass:Oinf} hold as in the proof of \cref{lem:basic}(i) (an $\Lh$-Lipschitz $f\le U$
satisfies $f\le\bar U$, and $\bar U\le M_U+\sup\uV0$), and give those on $\Phi^k$. Fix $t<\tau_k$, $x:=\bx^k_t$ and the
cut $f$ at $x$. By~\ref{OI2} and~\ref{OI4}, $\uV k(x)\ge f(x)\ge\cB\uV{k-1}(x)-\gamma_{k,t}=\uV{k-1}(x)+r^k(x)-\gamma_{k,t}$
and $\uV k\ge\uV{k-1}$, so $\uV k(x)\ge\uV{k-1}(x)+s^k_t$. \Cref{lem:lift}, applied to $V=\uV{k-1}$ and $W=\uV k$, gives
$\Phi^k-\Phi^{k-1}\ge\kappa\hat\varphi(s^k_t)$. Take the maximum over $t$; $\hat\varphi$ is nondecreasing and
$\hat\psi\le\hat\varphi$.
\end{proof}

\begin{lemma}[Pass lengths]\label{lem:length}
Let $(r_t)_{t\ge0}\subset[0,R]$, $\bar r_0:=0$, $\bar r_n:=\max_{t<n}r_t$ ($n\ge1$), and $\tau$ a random variable in
$\mathbb N^*$.
\begin{enumerate}[label=(\roman*),nosep]
\item $\EE\,\bar r_\tau=\sum_{s\ge0}\PP(\tau>s)\,(\bar r_{s+1}-\bar r_s)$.
\item If $\PP(\tau>s)\ge\beta^s$ for all $s\ge0$, then
$\EE\,\bar r_\tau\ge(1-\beta)\sum_{n\ge1}\beta^{n-1}\bar r_n\ge(1-\beta)\sum_{t\ge0}\beta^tr_t$. Conversely, if $R>0$ and
$\EE\,\bar r_\tau\ge(1-\beta)\sum_t\beta^tr_t$ for every such sequence, then $\PP(\tau>s)\ge\beta^s$ for all $s$.
Under this condition $\EE\tau\ge1/(1-\beta)$, with equality iff $\PP(\tau>s)=\beta^s$ for all $s$.
\item For a deterministic $\tau=H$:
$\bar r_H\ge\frac{1-\beta}{1-\beta^H}\sum_{t<H}\beta^tr_t\ge(1-\beta)\sum_{t<H}\beta^tr_t$.
\end{enumerate}
\end{lemma}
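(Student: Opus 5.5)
The plan is to treat $\bar r_n$ as a nondecreasing sequence and rewrite every statement as a sum of its increments $\Delta_s:=\bar r_{s+1}-\bar r_s\ge0$. Throughout, $\bar r_{s+1}\le R$ keeps all sums finite.

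For (i), write $\bar r_\tau=\sum_{s<\tau}\Delta_s=\sum_{s\ge0}\one\{\tau>s\}\Delta_s$. Taking expectations and exchanging sum and expectation by monotone convergence (all terms are nonnegative) gives $\EE\bar r_\tau=\sum_s\PP(\tau>s)\Delta_s$.

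For the direct part of (ii), use $\Delta_s\ge0$ to get $\EE\bar r_\tau\ge\sum_s\beta^s\Delta_s$. Then sum by parts:
\[
\sum_{s\ge0}\beta^s(\bar r_{s+1}-\bar r_s)=\sum_{n\ge1}(\beta^{n-1}-\beta^n)\bar r_n=(1-\beta)\sum_{n\ge1}\beta^{n-1}\bar r_n .
\]
The boundary term $\beta^N\bar r_{N+1}\le\beta^N R$ tends to $0$ and $\bar r_0=0$, which justifies the rearrangement. Since $\bar r_{t+1}\ge r_t$, this is at least $(1-\beta)\sum_t\beta^tr_t$.

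For the converse, fix $s$ and test the sequence $r_t:=R\one\{t=s\}$. Then $\bar r_n=R\one\{n>s\}$, so $\EE\bar r_\tau=R\,\PP(\tau>s)$, while $(1-\beta)\sum_t\beta^tr_t=(1-\beta)\beta^sR$. This only yields $\PP(\tau>s)\ge(1-\beta)\beta^s$, which is too weak. A better test is the decreasing choice $r_t:=R\beta^{t-s}$ for $t\ge s$ and $r_t:=0$ for $t<s$, which gives $\bar r_n=R\one\{n>s\}$ again. Now
\[
(1-\beta)\sum_{t\ge s}\beta^tR\beta^{t-s}=(1-\beta)R\beta^s\sum_{j\ge0}\beta^{2j}=\frac{R\beta^s}{1+\beta},
\]
which is still not $R\beta^s$. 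The correct test is the constant tail $r_t:=R$ for $t\ge s$ and $0$ otherwise. Then $\bar r_n=R\one\{n>s\}$ and $(1-\beta)\sum_{t\ge s}\beta^tR=R\beta^s$, so the hypothesis gives $R\,\PP(\tau>s)\ge R\beta^s$. Since $R>0$, this is exactly $\PP(\tau>s)\ge\beta^s$.

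Finally, $\EE\tau=\sum_{s\ge0}\PP(\tau>s)\ge\sum_s\beta^s=1/(1-\beta)$. Equality holds iff every term is tight, since each difference $\PP(\tau>s)-\beta^s$ is nonnegative and they sum to zero.

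For (iii), with $\tau=H$ deterministic, $\bar r_H\ge r_t$ for every $t<H$. Hence
\[
\sum_{t<H}\beta^tr_t\le\bar r_H\sum_{t<H}\beta^t=\bar r_H\frac{1-\beta^H}{1-\beta},
\]
and rearranging gives the first inequality. The second follows from $1-\beta^H\le1$.

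The only real obstacle was picking the right extremal sequence in the converse, namely the constant tail from time $s$. The rest is routine Abel summation with a vanishing boundary term.
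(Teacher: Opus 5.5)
Your proof is correct and follows the paper's argument: telescoping with Tonelli for (i), Abel summation with a vanishing boundary term plus the constant-tail test sequence $r_t=R\one\{t\ge s\}$ for (ii), and $r_t\le\bar r_H$ for (iii). The two abandoned test sequences in the converse are harmless, but delete them from a final write-up.
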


\begin{proof}
(i) Write $\bar r_n=\sum_{s<n}(\bar r_{s+1}-\bar r_s)$ and use Tonelli; the increments are nonnegative.
(ii) Hence $\EE\bar r_\tau\ge\sum_s\beta^s(\bar r_{s+1}-\bar r_s)=(1-\beta)\sum_{n\ge1}\beta^{n-1}\bar r_n$ (the
series converge since $\bar r$ is bounded); then use $\bar r_n\ge r_{n-1}$. For the converse, take
$r_t=R\one\{t\ge s\}$: then $\bar r_n=R\one\{n>s\}$, $\EE\bar r_\tau=R\,\PP(\tau>s)$ and
$(1-\beta)\sum_{t\ge s}\beta^tR=R\beta^s$. Finally $\EE\tau=\sum_s\PP(\tau>s)\ge\sum_s\beta^s$.
(iii) $\sum_{t<H}\beta^tr_t\le\bar r_H(1-\beta^H)/(1-\beta)$.
\end{proof}

\subsection{Periodic problems and proof of the master theorem}\label{app:periodic}

\paragraph{Periodic model.} \Cref{lem:std} and its proof hold for the periodic model, with
$\mathbf T:(W_p)_{p\in\mathbb Z_S}\mapsto(\cB_pW_{p+1})_{p\in\mathbb Z_S}$ in place of $\cB$, the norm
$\max_p\sup_{X_p}|W_p|$, the classes $\mathfrak M_p$, and, in~(iii), the cost from phase~$0$ and $\cB_{p(t)}$ at time $t$. In
the proof of \cref{lem:validIH}(a), adding a cut $f\le\cB_pW_{p+1}$ to $W_p$ gives a family $W'$ with
$W'_{p'}\le\cB_{p'}W_{p'+1}\le\cB_{p'}W'_{p'+1}$ for all $p'$, and $f\le\cB_pV_{p+1}=V_p$. \Cref{lem:gapIH,lem:pass}
hold with $r_t:=r^k_{p(t)}(\bx_t)$ and $\uV{k-1}_{p(t)}$ at time $t$. We identify $\mathbb Z_S$ with
$\{p\in\mathbb N:p<S\}$. The number of visits to a phase before the end of a geometric pass is not geometric, but
the summation by parts of \cref{lem:length} still applies.

\begin{lemma}[Pass lengths, phase by phase]\label{lem:length-phase}
Let $(r_t)_{t\ge0}\subset[0,R]$, let $\tau\in\mathbb N^*$ satisfy $\PP(\tau>s)=\beta^s$ for all $s\ge0$, and, for
$p\in\mathbb Z_S$, let $\bar r_{(p)}:=\max\{r_t:t<\tau,\ p(t)=p\}$, with $\max\emptyset:=0$. Then
$\EE\,\bar r_{(p)}\ge(1-\beta^S)\sum_{t\ge0:\,p(t)=p}\beta^tr_t$.
\end{lemma}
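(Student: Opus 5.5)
The plan is to imitate the summation by parts of \cref{lem:length}(i), applied to the subsequence of times in phase $p$. Let $t_j:=p+jS$, $j\ge0$, be the times of phase $p$. Set $a_j:=r_{t_j}$, $\bar a_0:=0$ and $\bar a_n:=\max_{j<n}a_j$. Then $\bar r_{(p)}=\bar a_{N}$, where $N:=\#\{j:t_j<\tau\}$ counts the phase-$p$ visits before the pass ends. Since $N>j$ exactly when $\tau>t_j$, we get $\PP(N>j)=\beta^{t_j}=\beta^{p}\beta^{jS}$. So $N$ is a mixture: it is $0$ with probability $1-\beta^p$, and otherwise it behaves like a geometric variable with ratio $\beta^S$.

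\textbf{Step 1: telescoping.} Write $\bar a_N=\sum_{j<N}(\bar a_{j+1}-\bar a_j)$. The increments are nonnegative and bounded, so Tonelli gives
\[
\EE\bar a_N=\sum_{j\ge0}\PP(N>j)(\bar a_{j+1}-\bar a_j)=\beta^p\sum_{j\ge0}\beta^{jS}(\bar a_{j+1}-\bar a_j).
\]

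\textbf{Step 2: Abel summation.} Summing by parts, using $\bar a_0=0$ and the boundedness of $\bar a$,
\[
\sum_{j\ge0}\beta^{jS}(\bar a_{j+1}-\bar a_j)=(1-\beta^S)\sum_{n\ge1}\beta^{(n-1)S}\bar a_n .
\]
Then $\bar a_n\ge a_{n-1}$ gives the lower bound $(1-\beta^S)\sum_{j\ge0}\beta^{jS}a_j$.

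\textbf{Step 3: recombine.} Multiplying by $\beta^p$ yields
\[
\EE\bar r_{(p)}\ge(1-\beta^S)\sum_{j}\beta^{p+jS}r_{t_j}=(1-\beta^S)\sum_{t:\,p(t)=p}\beta^tr_t,
\]
which is the claim.

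The only point needing care is Step~1: conditioning on $\tau$ must reduce to the survival probabilities $\PP(\tau>t_j)$. This holds because the event $\{N>j\}$ coincides with $\{\tau>t_j\}$. The rest is the same bounded-series bookkeeping as in \cref{lem:length}. The sequence $(r_t)$ is deterministic here; when the lemma is used, it is applied conditionally, with $\tau$ independent of the path.
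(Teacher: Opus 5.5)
Your proposal is correct and follows essentially the same argument as the paper: restrict to the phase-$p$ subsequence $r_{p+jS}$, let $N$ count the phase-$p$ visits before $\tau$ so that $\PP(N>j)=\beta^{p+jS}$, apply the telescoping identity of \cref{lem:length}(i) to $N$, then use Abel summation and $\bar a_n\ge a_{n-1}$. Your extra remark on the law of $N$ and on applying the lemma conditionally, with $\tau$ independent of the path, is accurate but not needed for the statement itself.
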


\begin{proof}
Let $q_j:=r_{p+jS}$ for $j\ge0$, $\bar q_0:=0$, $\bar q_n:=\max_{j<n}q_j$ for $n\ge1$, and $N:=\#\{j\ge0:p+jS<\tau\}$, so
that $\bar r_{(p)}=\bar q_N$ and $\PP(N>j)=\beta^{p+jS}$. The identity of \cref{lem:length}(i) holds for every random
variable with values in $\mathbb N$, so $\EE\,\bar r_{(p)}=\sum_{j\ge0}\beta^{p+jS}(\bar q_{j+1}-\bar q_j)
=\beta^p(1-\beta^S)\sum_{n\ge1}\beta^{S(n-1)}\bar q_n\ge\beta^p(1-\beta^S)\sum_{j\ge0}\beta^{jS}q_j$, since $\bar q$ is
bounded and $\bar q_n\ge q_{n-1}$.
\end{proof}

Let
\begin{equation}\label{eq:appD:periodic_chi}
\chi_S(g):=\max\Big\{\hat\psi\big((1-\beta)g\big),\ S\,\hat\psi\Big(\frac{(1-\beta^S)\,g}{S}\Big)\Big\}\qquad(g\ge0).
\end{equation}
The first term credits a pass with its largest residual, as in \cref{thm:master-ih}, against a potential $S$ times
larger; alone, it gives \cref{thm:det} with the periodic $\Mb_U$ for deterministic pass lengths. The second credits a
pass with one residual per phase, but a pass visits a given phase only about $1/(S(1-\beta))$ times, so the factor
$1-\beta$ becomes $(1-\beta^S)/S=(1-\beta)/q_S$, with $q_S$ as in \cref{thm:periodic}. Since $1-\beta^S=(1-\beta)\sum_{i<S}\beta^i\le
S(1-\beta)$, $q_S\ge1$; since $1-\beta^S\ge1-\beta$, $q_S\le S$; with $x:=S(1-\beta)$, $\ln\beta\le\beta-1$ gives
$\beta^S\le e^{-x}$, so $q_S\le x/(1-e^{-x})\le1+x$, because $(1+x)e^{-x}\le1$.

\begin{proof}[Proof of \cref{thm:periodic}, hence of \cref{thm:master-ih}(a)]
Fix $k\ge1$, couple the pass with an infinite trajectory $(\bx_t)_{t\ge0}$ of $\pi^k$ from $x_0$, independent of
$\tau_k$ given $\cA_{k-1}$, and let $r_t:=r^k_{p(t)}(\bx_t)$ and $s_t:=(r_t-\gamma_{k,t})_+$. On the
$\cA_{k-1}$-measurable event where $\sum_t\beta^t\gamma_{k,t}=\infty$, $\Gamma^\beta_k=\infty$ and there is nothing to
prove, since $\Phi^k\ge\Phi^{k-1}$; we work on its complement. Models, residuals and $s_t$ are bounded (\cref{lem:pass}).

\emph{Gap identity.} (i) is \cref{lem:gapIH} with $\tau=\tau_k$, with~\eqref{eq:inf:gap_meaning}, which follows from
\cref{lem:std}(iii) and $\uV{k-1}_0(x_0)\le\bar U_0(x_0)\le U_0(x_0)$. By \cref{lem:gapIH} with $H=0$ and $e_t\le\gamma^F_k$,
\begin{equation}\label{eq:appD:inexact_gap}
\EE_{k-1}\sum_{t\ge0}\beta^t(r_t-\gamma_{k,t})\ \ge\ G^k-\Gamma^\beta_k .
\end{equation}

\emph{Pathwise lift.} As in the proof of \cref{lem:pass}, at $t<\tau_k$, with $x:=\bx^k_t$ and $p:=p(t)$,
$\uV k_p(x)\ge\uV{k-1}_p(x)+s_t$ and $\uV k_p\ge\uV{k-1}_p$, so
$\int(\uV k_p-\uV{k-1}_p)\,d\mu_p\ge\kappa\hat\psi(\bar s_p)$ with $\bar s_p:=\max\{s_t:t<\tau_k,\ p(t)=p\}$ ($\max\emptyset:=0$).
Summing over $p$,
\begin{equation}\label{eq:appD:periodic_pathwise}
\Phi^k-\Phi^{k-1}\ \ge\ \Lambda_k:=\kappa\sum_{p\in\mathbb Z_S}\hat\psi(\bar s_p),
\end{equation}
which is~\eqref{eq:inf:periodic_lift}, and implies \cref{lem:pass} ($S=1$).

\emph{Lower bounds.} First, $\Lambda_k\ge\kappa\hat\psi(\bar s)$ with $\bar s:=\max_{t<\tau_k}s_t=\max_p\bar s_p$. Given
$\cA_{k-1}$ and the path, \cref{lem:length}(ii) gives $\EE[\bar s\mid\cA_{k-1},\text{path}]\ge(1-\beta)\sum_t\beta^ts_t$, so
$\EE_{k-1}\bar s\ge(1-\beta)(G^k-\Gamma^\beta_k)_+$ by $s_t\ge\max\{0,r_t-\gamma_{k,t}\}$ and~\eqref{eq:appD:inexact_gap}, and
conditional Jensen ($\hat\psi$ is convex and $\bar s$ is bounded) gives
$\EE_{k-1}\Lambda_k\ge\kappa\hat\psi\big((1-\beta)(G^k-\Gamma^\beta_k)_+\big)$. Second, by \cref{lem:length-phase},
$a_p:=\EE_{k-1}\bar s_p\ge0$ satisfies $\sum_pa_p\ge(1-\beta^S)\EE_{k-1}\sum_t\beta^ts_t\ge(1-\beta^S)(G^k-\Gamma^\beta_k)_+$,
and conditional Jensen, convexity and monotonicity of $\hat\psi$ give
$\EE_{k-1}\Lambda_k\ge\kappa\sum_p\hat\psi(a_p)\ge\kappa S\hat\psi\big(\sum_pa_p/S\big)\ge\kappa S\hat\psi\big((1-\beta^S)(G^k-\Gamma^\beta_k)_+/S\big)$.
Hence $\EE_{k-1}[\Phi^k-\Phi^{k-1}]\ge\EE_{k-1}\Lambda_k\ge\kappa\chi_S((G^k-\Gamma^\beta_k)_+)$.

\emph{Summation and rates.} Sum over $k$, with $0=\Phi^0\le\Phi^k\le\Mb_U$. On $\{G^k>\theta+\Gamma^\beta\}$ the $k$-th
term is at least $\kappa\chi_S(\theta)$, as $\chi_S$ is nondecreasing. If
$(1-\beta)\theta\le\hat\rho$, then $(1-\beta^S)\theta/S=(1-\beta)\theta/q_S\le\hat\rho$ too, both evaluations of
$\hat\psi$ are on its first branch, and $\chi_S(\theta)=\max\{1,S/q_S^{d+1}\}((1-\beta)\theta)^{d+1}/(2(d+1)\hat\rho^d)$.
For~(iii), apply \cref{lem:dbp} with $Y_k:=\Mb_U-\Phi^k$ ($Y_0=\Mb_U$ is deterministic) and
$f_k:=\kappa\chi_S((G^k-\Gamma^\beta_k)_+)$, which is $\cA_{k-1}$-measurable. For $S=1$, $q_S=1$ and these are the
statements of \cref{thm:master-ih}(a); the cut count follows from $\EE[\tau_k\mid\cA_{k-1}]=1/(1-\beta)$. For SDDP,
$\Mb_U\le S\cb/(1-\beta)$ and $q_S\le\min\{S,1+S(1-\beta)\}$ give the bound stated after \cref{thm:periodic}.
\end{proof}

\subsection{Covering versions}\label{app:ih-cover}

\begin{proof}[Proof of \cref{thm:master-ih}(b) and~(c)]
Let $Y_k:=\sum_{c\in C}(\bar U-\uV k)(c)$; by the bounds after \cref{ass:Oinf}, $(Y_k)$ is adapted, nonincreasing and
nonnegative, and $Y_0=M_C$ is deterministic. Fix $k\ge1$ and $t<\tau_k$, let $x:=\bx^k_t$, and choose $c\in C$ with
$\dist(x,c)\le\sigma$. As in the proof of \cref{lem:pass}, $(\uV k-\uV{k-1})(x)\ge s^k_t$; both models are
$\Lh$-Lipschitz, so $(\uV k-\uV{k-1})(c)\ge(s^k_t-2\Lh\sigma)_+$, and the other lifts are nonnegative. Different $t$
may give the same $c$, so only the maximum is available: $Y_{k-1}-Y_k\ge(\bar s^k-2\Lh\sigma)_+$. By conditional Jensen for $(\cdot)_+$, then
$\EE_{k-1}\bar s^k\ge(1-\beta)(G^k-\Gamma^\beta_k)_+$ (proof of \cref{thm:periodic}) and $2\Lh\sigma\le(1-\beta)\theta/2$,
\[
\EE_{k-1}[Y_{k-1}-Y_k]\ \ge\ \big(\EE_{k-1}\bar s^k-2\Lh\sigma\big)_+\ \ge\ (1-\beta)\big(G^k-\Gamma^\beta_k-\theta/2\big)_+=:f_k .
\]
Sum over $k\in[K]$ and let $K\to\infty$: $\EE\sum_kf_k\le Y_0=M_C$. The count follows from
$(\theta/2)\,\#\{k\ge1:G^k>\theta+\Gamma^\beta_k\}\le\sum_{k\ge1}(G^k-\Gamma^\beta_k-\theta/2)_+$, and the high-probability
claim is \cref{lem:dbp} applied to $(Y_k)$, with $C=M_C$ and these $f_k$. The net size is \cref{lem:nets}(a) with
$2D/\sigma=8\Lh D/((1-\beta)\theta)$. For~(c), $\bar U=U$ because $\dist=+\infty$ off the diagonal; take $C=X$, $c=x$,
$\sigma=0$ and $\theta=0$, so that the Lipschitz property is not used, with $f_k=(1-\beta)(G^k-\Gamma^\beta_k)_+$ and
$Y_0=C_{\rm fin}$. For periodic problems, take one net $C_p$ per phase, $Y_k:=\sum_p\sum_{c\in C_p}(\bar U_p-\uV k_p)(c)$
and $c\in C_{p(t)}$; the argument is unchanged.
\end{proof}

\subsection{Deterministic pass length}\label{app:det}

\begin{theorem}[Deterministic pass length]\label{thm:det}
Let \cref{ass:Sinf,ass:Oinf} and~\eqref{eq:setting:volume_growth} hold as in \cref{lem:pass}, with $\uV0\ge0$, and
let $\hat\rho$, $\hat\psi$, $\Gamma^\beta_k$ and $\Mb_U$ be as in \cref{sec:ih-lemmas,sec:ih-master}. Run variant~(D) with pass
length $H$; let $\varepsilon_H:=\beta^H\cb/(1-\beta)$ and $\varsigma\in(0,1)$.
\begin{enumerate}[label=(\roman*),nosep]
\item $\EE[\Phi^k-\Phi^{k-1}\mid\cA_{k-1}]\ge\kappa\hat\psi\big((1-\beta)(G^k-\Gamma^\beta_k-\varepsilon_H)_+\big)$ for $k\ge1$, so
the sum over $k$ of the expectations of the right-hand sides is at most $\Mb_U$. If $\Gamma^\beta_k\le\Gamma^\beta$
(deterministic), then, for $\theta>0$ with $\varepsilon_H\le\varsigma\theta$ and $(1-\beta)\theta\le\hat\rho$,
\begin{equation}\label{eq:appD:deterministic_rate}
\EE\,\#\{k\ge1:G^k>\theta+\Gamma^\beta\}\le\frac{2(d+1)\,\Mb_U\,\hat\rho^d}{\kappa(1-\varsigma)^{d+1}(1-\beta)^{d+1}\theta^{d+1}};
\end{equation}
for $\delta\in(0,1]$, with probability at least $1-\delta$, simultaneously for all $\theta$ with
$\varepsilon_H\le\varsigma\theta$, $\#\{k\ge1:G^k>\theta+\Gamma^\beta_k\}\le9\ln(3/\delta)\,\Mb_U/(\kappa\hat\psi((1-\varsigma)(1-\beta)\theta))$.
\item For $\Gamma^\beta_k\le\Gamma^\beta$, $0<\theta<\cb/(1-\beta)$ with $(1-\beta)\theta\le\hat\rho$ and
$H:=\big\lceil\ln\big((d+2)\cb/((1-\beta)\theta)\big)/(1-\beta)\big\rceil$,
\begin{equation}\label{eq:appD:deterministic_rate_tuned}
\EE\,\#\{k\ge1:G^k>\theta+\Gamma^\beta\}\le\frac{2e\,(d+1)\,\Mb_U\,\hat\rho^d}{\kappa(1-\beta)^{d+1}\theta^{d+1}},
\end{equation}
which is at most $2e(d+1)\cb(4\Lh D)^d/(\kappa(1-\beta)^{d+2}\theta^{d+1})$ if $\Mb_U\le\cb/(1-\beta)$, and the expected number
of cuts (trial-point cuts; the optional cuts of~\ref{OI4} are not counted) before the first iteration with $G^k\le\theta+\Gamma^\beta$ is at most $H$ times~\eqref{eq:appD:deterministic_rate_tuned}.
\end{enumerate}
\end{theorem}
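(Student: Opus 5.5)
We follow the proof of \cref{thm:periodic} with $S=1$, replacing the geometric length by the fixed length $H$ and \cref{lem:length}(ii) by \cref{lem:length}(iii). The only new ingredient is the tail $\varepsilon^k_H$ of \cref{lem:gapIH}, which a pass of length $H$ does not see.

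\emph{Step 1: gap identity truncated at $H$.} Fix $k\ge1$ and set $s_t:=(r^k(\bx^k_t)-\gamma_{k,t})_+$. By \cref{lem:gapIH},
\[
G^k=\EE_{k-1}\sum_{t<H}\beta^t\big(r^k(\bx_t)+e_t\big)+\varepsilon^k_H .
\]
Since $\uV{k-1}\ge\uV0\ge0$ and $\inf_X\uV{k-1}\ge0$, the tail satisfies $\varepsilon^k_H\le\beta^H\cb/(1-\beta)=\varepsilon_H$. Using $e_t\le\gamma^F_k$ and $r\le s+\gamma$, we get
\[
\EE_{k-1}\sum_{t<H}\beta^ts_t\ \ge\ G^k-\Gamma^\beta_k-\varepsilon_H .
\]
The left side is nonnegative, so this also holds with the positive part of the right side.

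\emph{Step 2: lift and Jensen.} \cref{lem:pass} gives $\Phi^k-\Phi^{k-1}\ge\kappa\hat\psi(\bar s^k)$ with $\bar s^k=\max_{t<H}s_t$. \cref{lem:length}(iii) gives, pathwise, $\bar s^k\ge(1-\beta)\sum_{t<H}\beta^ts_t$. Taking conditional expectations and applying conditional Jensen, with $\hat\psi$ convex and nondecreasing and $\bar s^k$ bounded, yields
\[
\EE_{k-1}[\Phi^k-\Phi^{k-1}]\ \ge\ \kappa\hat\psi\big((1-\beta)(G^k-\Gamma^\beta_k-\varepsilon_H)_+\big).
\]
Summing over $k$ and using $0\le\Phi^k\le\Mb_U$ gives the first claim of (i).

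\emph{Step 3: counts.} On the event $G^k>\theta+\Gamma^\beta$ with $\varepsilon_H\le\varsigma\theta$, the argument of $\hat\psi$ exceeds $(1-\varsigma)(1-\beta)\theta$. Since $(1-\varsigma)(1-\beta)\theta\le\hat\rho$, this lies on the first branch of $\hat\psi$, which gives \eqref{eq:appD:deterministic_rate}. For the high-probability statement, apply \cref{lem:dbp} to $Y_k=\Mb_U-\Phi^k$ with $f_k$ the right side of Step 2. Since $\Gamma^\beta_k$ is $\cA_{k-1}$-measurable, $f_k$ is too, and the bound then holds simultaneously for all admissible $\theta$ by monotonicity.

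\emph{Step 4: tuning, part (ii).} This is the delicate point, since we must check that $\varsigma=1/(d+2)$ costs only a factor $e$. With $\ln\beta\le\beta-1$ we have $\beta^H\le e^{-(1-\beta)H}\le(1-\beta)\theta/((d+2)\cb)$, hence $\varepsilon_H\le\theta/(d+2)$, so we may take $\varsigma=1/(d+2)$. Then $(1-\varsigma)^{-(d+1)}=(1+1/(d+1))^{d+1}\le e$, which turns \eqref{eq:appD:deterministic_rate} into \eqref{eq:appD:deterministic_rate_tuned}. The condition $\theta<\cb/(1-\beta)$ ensures $H\ge1$. The bound with $\Mb_U\le\cb/(1-\beta)$ follows by substitution.

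For the cut count, each iteration adds exactly $H$ trial-point cuts. The index of the first iteration with $G^k\le\theta+\Gamma^\beta$ is at most $1+\#\{k:G^k>\theta+\Gamma^\beta\}$, so the number of cuts made before it is at most $H\cdot\#\{k:G^k>\theta+\Gamma^\beta\}$. Taking expectations gives $H$ times \eqref{eq:appD:deterministic_rate_tuned}.
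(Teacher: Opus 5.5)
Your proposal is correct and follows the paper's proof essentially step for step. The ingredients match: the truncated gap identity of \cref{lem:gapIH} with $\varepsilon^k_H\le\varepsilon_H$ because $\uV{k-1}\ge\uV0\ge0$; the pass lift of \cref{lem:pass} combined with \cref{lem:length}(iii) and conditional Jensen; \cref{lem:dbp} applied to $Y_k=\Mb_U-\Phi^k$; and, for part~(ii), $\varsigma=1/(d+2)$ with $(1+1/(d+1))^{d+1}\le e$. One nuance: the high-probability bound holds simultaneously in $\theta$ because the exceptional event of \cref{lem:dbp} does not depend on $\theta$, not because of monotonicity; monotonicity of $\hat\psi$ then handles each $\theta$ on that single event.
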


\begin{proof}[Proof of \cref{thm:det}]
(i) Let $s_t:=s^k_t$ and $r_t:=r^k(\bx^k_t)$. By \cref{lem:pass} and \cref{lem:length}(iii),
$\Phi^k-\Phi^{k-1}\ge\kappa\hat\psi\big((1-\beta)\sum_{t<H}\beta^ts_t\big)$. By \cref{lem:gapIH},
$\EE_{k-1}\sum_{t<H}\beta^t(r_t-\gamma_{k,t})\ge G^k-\varepsilon^k_H-\Gamma^\beta_k$, and $\varepsilon^k_H\le\varepsilon_H$
as $\uV{k-1}\ge\uV0\ge0$; since $s_t\ge\max\{0,r_t-\gamma_{k,t}\}$, $\EE_{k-1}\sum_{t<H}\beta^ts_t\ge(G^k-\Gamma^\beta_k-\varepsilon_H)_+$.
Conditional Jensen gives the first claim; sum over $k$, with $0=\Phi^0\le\Phi^k\le\Mb_U$. On $\{G^k>\theta+\Gamma^\beta\}$, $(G^k-\Gamma^\beta-\varepsilon_H)_+\ge(1-\varsigma)\theta$, so the $k$-th term is at
least $\kappa\hat\psi((1-\varsigma)(1-\beta)\theta)=\kappa((1-\varsigma)(1-\beta)\theta)^{d+1}/(2(d+1)\hat\rho^d)$ (first branch).
For the high-probability bound, apply \cref{lem:dbp} with $Y_k:=\Mb_U-\Phi^k$; its regret is at least
$\kappa\hat\psi((1-\varsigma)(1-\beta)\theta)\,\#\{k\in[K]:G^k>\theta+\Gamma^\beta_k\}$, and the exceptional event does not depend on
$\theta$. (ii) $\beta^H\le e^{-(1-\beta)H}\le(1-\beta)\theta/((d+2)\cb)$, so $\varepsilon_H\le\theta/(d+2)$; take
$\varsigma=1/(d+2)$, which costs only $\ln(d+2)/(1-\beta)$ more steps per pass than $\varsigma=1/2$, and use $(1+1/(d+1))^{d+1}\le e$. The first index with $G^k\le\theta+\Gamma^\beta$ is at most
$1+\#\{k\ge1:G^k>\theta+\Gamma^\beta\}$.
\end{proof}

\subsection{Instances}\label{app:ih-instances}

\Cref{tab:ih-families} summarises the constants with which the oracles of this subsection satisfy \cref{ass:Oinf}.

\begin{table}[tb]
\centering\small
\setlength{\tabcolsep}{3pt}
\begin{tabular}{>{\raggedright\arraybackslash}p{3.8cm}>{\raggedright\arraybackslash}p{2.3cm}>{\raggedright\arraybackslash}p{1.7cm}>{\raggedright\arraybackslash}p{1.3cm}>{\raggedright\arraybackslash}p{4.3cm}l}
\toprule
oracle & $\Lh$ & $\gamma$ & $U$ & floor $\Gamma^\beta$ & part\\
\midrule
valid, $\gamma$-tight; Jensen & $\Lh$ & $\gamma$; $L^\xi e(\cP)$ & $V$ & $\gamma/(1-\beta)$ & (a)\\
SAA (\cref{prop:ih-saa}) & $\Lh$ & $(\Delta^+)_+$ & $\Vh$ & $(\Delta^+)_+/(1-\beta)$; $\zeta\le(\Delta^-)_+/(1-\beta)$ & (a)\\
SDDiP, binary states & not needed & $0$ & $V$ & $0$ & (c)\\
SLDP reverse norm & $\Lh\ge\Lip V$ & $0$ & $V$ & $0$ & (a), (b)\\
Zhang--Sun & $l_\lambda+l_\varrho$ & $0$, for $\BR$ & $V$; $V^R$ & $0$; $\zeta=V(x_0)-V^R(x_0)$; $0$ & (b)\\
\bottomrule
\end{tabular}
\caption{Oracles for the shared model: constants of \cref{ass:Oinf} and part of \cref{thm:master-ih} used. A
$\gamma^F$-greedy pass adds $\gamma^F/(1-\beta)$ to the floor. For the Jensen discretisation, $L^\xi$ and $e(\cP)$ are
as in \cref{prop:jensen}; for SAA, $\Vh$ is the fixed point for the sampled laws and $\Delta^\pm$ are the biases of
\cref{prop:ih-saa}. For Zhang--Sun, $G^k$ is the regularised gap, and $\BR$ and $V^R$ are the regularised operator and
its fixed point.}\label{tab:ih-families}
\end{table}

\begin{proof}[Proof of \cref{thm:geom}]
Under~\ref{I1}, $\cB$ maps convex functions in $\mathfrak M$ to convex functions on $X$ (on $\Oe$ under~\ref{Ieta}): the
partial infimum in $y$ of the jointly convex $\ell(z,y,\xi)+\beta W(y)$ is convex in $z$, and finite; integration keeps convexity. So the models are convex, and subgradient
cuts of $\cB W$ satisfy~\ref{I3} where they exist. \ref{OI1} is~\ref{I4}, with $\uV0$ continuous, hence bounded. By~\ref{I2},
\ref{I3} and \cref{lem:validIH}, \ref{OI2} holds with $\gamma_{k,t}=0$, \ref{OI3} with $U=V$, and $r^k\ge0$; \ref{OI4}
holds by construction. Then $\zeta=0$ and $\bar U\le V$, so $\Mb_U\le\Mb\le\cb/(1-\beta)$, as $V\le\cb/(1-\beta)$ and
$\uV0\ge0$, and~\eqref{eq:setting:volume_growth} is \cref{lem:VG}(a). Apply \cref{thm:master-ih}(a) with $\kappa=1$ and
$\Gamma^\beta_k=0$: (i) is \cref{lem:gapIH} with $e_t=0$ and~\eqref{eq:inf:gap_meaning}; (ii) uses the first branch of
$\hat\psi$; for~(iii), on the event of \cref{thm:master-ih}(a), some $k\le\lfloor K_\varepsilon\rfloor+1$ has
$G^k\le\varepsilon$, so $V(x_0)-\uV{k-1}(x_0)\le\varepsilon$, and $\uV j(x_0)$ is nondecreasing in $j$.
\end{proof}

\paragraph{Valid inexact cuts.} If $\uV0\le V$ and every cut added to phase $p$ is $\Lh$-Lipschitz, valid for the current models and
$\gamma$-tight at the trial point ($f\le\cB_pW_{p+1}$ on $X_p$ and $f(x)\ge\cB_pW_{p+1}(x)-\gamma$), then
\cref{lem:validIH} gives \cref{ass:Oinf} with $\gamma_{k,t}=\gamma$ and $U=V$. If moreover $\uV0$ is convex,
the Jensen discretisation of \cref{prop:jensen}, applied phase by phase to
$\xi\mapsto\inf_y[\ell_p(x,y,\xi)+\beta W(y)]$ for the convex $\Lh$-Lipschitz $W$ with $\uV0_{p+1}\le W\le V_{p+1}$,
gives such cuts with $\gamma\le L^\xi e(\cP)$, and the models stay convex.

\begin{proposition}[SAA backward passes, true-law forward passes]\label{prop:ih-saa}
Let $\Ph_p$ be laws on $\Xi_p$, fixed or drawn independently of the forward noises and of the pass lengths, with
periodic operators $\cBh_p$ as in \cref{sec:periodic} and fixed point $\Vh$. Assume \cref{ass:Sinf} phase by
phase for both laws, $\gamma^F$-greedy forward passes from the true laws, $\uV0_p$ and all cuts $\Lh$-Lipschitz for a norm,
$\uV0_p\le\Vh_p$, and cuts exact for $\cBh$ and the current models $W$ ($f\le\cBh_pW_{p+1}$ on
$X_p$, with equality at the trial point). With $\cW_p$ the set
of $\Lh$-Lipschitz $W:X_p\to\R$ with $\uV0_p\le W\le\Vh_p$, let, as in the SAA biases~\eqref{eq:noise:saa_biases},
$\Delta^+:=\max_p\sup\{(\cB_pW-\cBh_pW)(x):x\in X_p,\ W\in\cW_{p+1}\}$ and
$\Delta^-:=\max_p\sup_{X_p}(\cBh_pV_{p+1}-\cB_pV_{p+1})$. Then, conditionally on $\Ph$ (all statements hold under $\PP(\cdot\mid\Ph)$), \cref{ass:Oinf} holds
with $U=\Vh$ and $\gamma_{k,t}=(\Delta^+)_+$, so \cref{thm:periodic} applies under~\eqref{eq:setting:volume_growth}, with
$\Gamma^\beta_k=((\Delta^+)_++\gamma^F)/(1-\beta)$ and $\Mb_U\le\sum_p\int(\Vh_p-\uV0_p)\,d\mu_p$, and
$\zeta\le(\Delta^-)_+/(1-\beta)$. If the maps $\xi\mapsto\inf_{y\in\cX_p(x,\xi)}[\ell_p(x,y,\xi)+\beta W(y)]$, for
$x\in X_p$ and $W\in\cW_{p+1}\cup\{V_{p+1}\}$, are $L^\xi$-Lipschitz on $\Xi_p$, then $\Delta^\pm\le L^\xi\max_p\Wa(\PP_p,\Ph_p)$.
\end{proposition}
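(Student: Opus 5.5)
The plan is to mirror the finite-horizon \cref{thm:saa}, now for the shared periodic models: check \cref{ass:Oinf} for the SAA problem with ceiling $U=\Vh$, derive the tightness error from the bias $\Delta^+$, and then bound $\zeta$ by a contraction argument.

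\emph{Ceiling.} Apply the periodic form of \cref{lem:validIH}(a) to the problem with laws $\Ph$; this is legitimate because \cref{ass:Sinf} holds for $\Ph$ and the cuts are valid for $\cBh$ and the current models. Since $\uV0_p\le\Vh_p$, an induction over cuts shows that every model satisfies $W_p\le\Vh_p$. Indeed, a cut added to phase $p$ satisfies $f\le\cBh_pW_{p+1}\le\cBh_p\Vh_{p+1}=\Vh_p$ by monotonicity of $\cBh_p$ and forward invariance. This gives \ref{OI3} with $U=\Vh$. Since $\Vh$ is bounded, $M_U$ is finite, and because the models are $\Lh$-Lipschitz, $\bar U\le\Vh$. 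Hence $\Mb_U\le\sum_p\int(\Vh_p-\uV0_p)\,d\mu_p$. Conditions \ref{OI1} and \ref{OI4} hold by construction.

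\emph{Tightness.} Take the trial point $x=\bx^k_t$ of phase $p$, and let $W$ be the current model family, so that $W_{p+1}\ge\uV{k-1}_{p+1}$. Exactness for $\cBh$ and monotonicity give
\[
f(x)=\cBh_pW_{p+1}(x)\ge\cBh_p\uV{k-1}_{p+1}(x).
\]
The model $\uV{k-1}_{p+1}$ is $\Lh$-Lipschitz, lies above $\uV0_{p+1}$ and below $\Vh_{p+1}$, so it belongs to $\cW_{p+1}$. The definition of $\Delta^+$ then gives
\[
\cBh_p\uV{k-1}_{p+1}(x)\ge\cB_p\uV{k-1}_{p+1}(x)-(\Delta^+)_+,
\]
which is \ref{OI2} with the deterministic error $\gamma_{k,t}=(\Delta^+)_+$. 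This error is fixed before the pass, as \cref{ass:Oinf} requires. Because the $\Ph_p$ are independent of the forward noises and the pass lengths, conditioning on $\Ph$ preserves the independence used by the gap identity, and the forward passes still use the true laws. \cref{thm:periodic} therefore applies under $\PP(\cdot\mid\Ph)$ with
\[
\Gamma^\beta_k=\sum_t\beta^t(\Delta^+)_++\frac{\gamma^F}{1-\beta}=\frac{(\Delta^+)_++\gamma^F}{1-\beta}.
\]

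\emph{Bound on $\zeta$.} Let $c:=\max_p\sup(\Vh_p-V_p)_+$. For each phase,
\[
\Vh_p-V_p=\bigl[\cBh_p\Vh_{p+1}-\cBh_pV_{p+1}\bigr]+\bigl[\cBh_pV_{p+1}-\cB_pV_{p+1}\bigr]\le\beta c+(\Delta^-)_+ ,
\]
using that $\cBh_p$ is monotone and commutes with constants with factor $\beta$. Taking the maximum over $p$ gives $c\le(\Delta^-)_+/(1-\beta)$, and hence $\zeta=(\Vh_0(x_0)-V_0(x_0))_+\le(\Delta^-)_+/(1-\beta)$.

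\emph{Wasserstein bound.} The final claim follows exactly as in \cref{cor:wasserstein}(a): integrate the $L^\xi$-Lipschitz map $\xi\mapsto\Bd_p(W;x,\xi)$ against a coupling of $\PP_p$ and $\Ph_p$, take the infimum over couplings, and then the maximum over $p$.

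The main obstacle is making the conditioning on $\Ph$ rigorous. One must check that $\Vh$ exists as a bounded Borel fixed point, so that \ref{SI3} holds for both laws, and that conditionally on $\Ph$ the oracle randomness remains independent of the current forward noise and pass length. I would handle this by including $\Ph$ in $\cA_0$.
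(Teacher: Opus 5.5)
Your proof is correct and follows the paper's argument step for step: the periodic \cref{lem:validIH}(a) applied to the $\Ph$-problem gives the ceiling $U=\Vh$; the membership $\uV{k-1}_{p+1}\in\cW_{p+1}$ gives \ref{OI2} with error $(\Delta^+)_+$; the same contraction bound controls $\zeta$; and your coupling argument is the easy direction of the Kantorovich--Rubinstein duality the paper invokes. One small correction: $\bar U\le\Vh$ holds simply because $\bar U\le U$ (take $z=x$ in the infimum defining $\bar U$), not because the models are $\Lh$-Lipschitz.
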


For empirical laws of $N$ i.i.d.\ draws, \cref{prop:uniformSAA} applies with a union bound over the $S$ phases (the net
condition holds for convex $X_p$ by \cref{lem:nets}(a)), the deterministic class of $\Lh$-Lipschitz $W$ with
$\inf\uV0_{p+1}\le W\le\cb/(1-\beta)$, which contains $\cW_{p+1}\cup\{V_{p+1}\}$ if $\inf\uV0_{p+1}\le0$ (e.g.\ $\uV0=0$) and $V_{p+1}$ is $\Lh$-Lipschitz, and
scenario values Lipschitz in $x$: $\Delta^\pm=O(N^{-1/(d+2)}\sqrt{\ln N})$, with constants depending on $d$, $D$, $\Lh$,
the range in $\xi$ and the Lipschitz constant in $x$ of the scenario values, and only logarithmically on $\cb/(1-\beta)$.

\begin{proof}[Proof of \cref{prop:ih-saa}]
As for \cref{thm:saa}, apply \cref{lem:validIH}(a) (periodic) to the problem with laws $\Ph$: since $\uV0\le\Vh$ and every cut satisfies
$f\le\cBh_pW_{p+1}$, every model satisfies $W_p\le\Vh_p$, so~\ref{OI3} holds with the ceiling $U=\Vh$, and every model
of phase $p$ lies in $\cW_p$ by~\ref{OI1}. At $x=\bx^k_t$, the current models satisfy $W\ge\uV{k-1}$, so
$f(x)=\cBh_pW_{p+1}(x)\ge\cBh_p\uV{k-1}_{p+1}(x)\ge\cB_p\uV{k-1}_{p+1}(x)-\Delta^+$, because $\uV{k-1}_{p+1}\in\cW_{p+1}$:
this is~\ref{OI2} with the deterministic error $(\Delta^+)_+$. Also $\bar U_p\le\Vh_p$.
For $\zeta$, let $e:=\max_p\sup_{X_p}(\Vh_p-V_p)_+$. Since $\cBh_p$ is monotone and
$\cBh_p(W+c)=\cBh_pW+\beta c$ for constants $c$,
$\Vh_p-V_p=(\cBh_p\Vh_{p+1}-\cBh_pV_{p+1})+(\cBh_pV_{p+1}-\cB_pV_{p+1})\le\beta e+\Delta^-$, so
$e\le\beta e+(\Delta^-)_+$, that is, $\zeta\le e\le(\Delta^-)_+/(1-\beta)$. For the Wasserstein bound, $(\cB_pW-\cBh_pW)(x)$
is the integral of an $L^\xi$-Lipschitz function against $\PP_p-\Ph_p$; use Kantorovich--Rubinstein duality.
\end{proof}

\begin{proposition}[Shifted sampled cuts]\label{prop:ih-sampled}
Assume \cref{ass:I} phase by phase for general laws, without~\ref{I3}, with $\gamma^F$-greedy forward
passes from the true laws. At $\bx^k_t$, add the shifted sampled cut of \cref{app:sampled} for $\cB_{p(t)}$ and the
current models, built from a deterministic number $N_{k,t}$ of fresh i.i.d.\ draws from $\PP_{p(t)}$ and the scenario oracle of \cref{ass:scen}
(values and subgradients of norm at most $\Lh$ of $\inf_y[\ell_p(\cdot,y,\xi)+\beta W(y)]$ for every convex
$\Lh$-Lipschitz $W$ with $\uV0_{p+1}\le W\le V_{p+1}$, with range at most
$\mathrm{rg}$ in $\xi$, for instance $\mathrm{rg}\le\cb+\Lh D$), with
the shift $b_{k,t}:=(\mathrm{rg}+2\Lh D)\sqrt{\ln(4/\delta_{k,t})/N_{k,t}}$, $\delta_{k,t}:=\delta'/(k(k+1)(t+1)(t+2))$ and
$\delta'\in(0,1]$. Let $\Gamma^\beta_k:=2\sum_{t\ge0}\beta^tb_{k,t}+\gamma^F/(1-\beta)$ and
$M^V:=\sum_{p\in\mathbb Z_S}\sup_{X_p}(V_p-\uV0_p)$. There is an event $A$ with $\PP(A)\ge1-\delta'$ on which every cut is
valid, and $\EE\big[\one_A\sum_{k\ge1}\chi_S((G^k-\Gamma^\beta_k)_+)\big]\le(1+\delta'/2)\,M^V$. If
$N_{k,t}\ge\big(2(\mathrm{rg}+2\Lh D)/((1-\beta)g)\big)^2\ln\big(4k(k+1)(t+1)(t+2)/\delta'\big)$ for all $t$, then
$\Gamma^\beta_k\le g+\gamma^F/(1-\beta)$. For $g=\erel\cb/(1-\beta)$ and $\mathrm{rg}\le\cb+\Lh D$, this sample size suffices:
$\big(2(1+3\Lh D/\cb)/\erel\big)^2\ln\big(4k(k+1)(t+1)(t+2)/\delta'\big)$; for a $\beta$-uniform $\Lh$, it is independent of $\beta$ except through the
logarithm, and of the noise dimension.
\end{proposition}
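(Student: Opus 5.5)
This combines the stopped-potential argument of \cref{thm:sampled} with the periodic pass analysis in the proof of \cref{thm:periodic}. The potential is $\Phi^k:=\sum_p\int(\uV k_p-\uV0_p)\,d\mu_p$ with normalised Lebesgue measures, so $\kappa=1$ (\cref{lem:VG}(a)). Order the cuts of the whole run as $(k,t)$ with $t<\tau_k$. Let $A_{k,t}$ be the event that the uniform deviation of the raw sampled cut from its conditional mean $m$ is at most $b_{k,t}$. Let $\Omega_k$ be the event that all $A_{j,t}$ with $j\le k$ hold.

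\emph{Step 1: validity.} On the event that all previous cuts are valid, the current models are convex, $\Lh$-Lipschitz and lie between $\uV0$ and $V$. This follows from \cref{lem:validIH}(a) in its periodic form and convexity preservation under~\ref{I1}, as in the proof of \cref{thm:geom}. So \cref{ass:scen} applies to the current $W$. Then \cref{lem:conc}, conditionally on the $\sigma$-field $\mathcal G$ generated before the sample, gives $\PP(A_{k,t}^c\cap E\mid\mathcal G)\le\delta_{k,t}$ for the corresponding good event $E$. Since $\tau_k$ is independent of the samples, summing over the first failing index gives $\PP(\Omega_{k-1}\setminus\Omega_k\mid\cA_{k-1})\le\sum_{t\ge0}\delta_{k,t}$. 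Using $\sum_t1/((t+1)(t+2))=1$ and $\sum_k1/(k(k+1))=1$, we get $\PP(A^c)\le\delta'$ for $A:=\bigcap_k\Omega_k$. On $A_{k,t}$ the shifted cut $\hat f-b_{k,t}$ is valid, and at the trial point it is at least $\cB_p W_{p+1}(x)-2b_{k,t}$. Since the current $W\ge\uV{k-1}$, this is~\ref{OI2} with $\gamma_{k,t}=2b_{k,t}$. These errors are deterministic, hence $\cA_{k-1}$-measurable and independent of $\tau_k$.

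\emph{Step 2: lift and gap.} On $\Omega_k$, repeat the pathwise lift~\eqref{eq:appD:periodic_pathwise}, $\Phi^k-\Phi^{k-1}\ge\Lambda_k$. Then repeat the two lower bounds of the proof of \cref{thm:periodic}, namely \cref{lem:length}(ii) and \cref{lem:length-phase} with conditional Jensen, together with the gap identity \cref{lem:gapIH}. This gives the lift $\chi_S((G^k-\Gamma^\beta_k)_+)$ with $\Gamma^\beta_k=2\sum_t\beta^tb_{k,t}+\gamma^F/(1-\beta)$. Residuals are in $[0,\sup(V_p-\uV0_p)]$ on $\Omega_{k-1}$, and the potential is at most $M^V$.

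\emph{Step 3: stopping.} Let $Y_k:=\Phi^{k\wedge(\tau_{\rm f}-1)}$, where $\tau_{\rm f}$ is the first $k$ at which $\Omega_k$ fails. The increment lower bound holds on $\Omega_{k-1}$ except on $\Omega_{k-1}\setminus\Omega_k$, where we lose at most the lift term. That term is bounded by $\chi_S(M^V\text{-scale})\le M^V/2$, since $\hat\psi(u)\le u/2$ and the relevant argument is at most $M^V$. This gives the analogue of~\eqref{eq:appC:stopped_potential}. Summing, with $\EE\lim Y_k\le M^V$ and $\one_A\le\one_{\Omega_{k-1}}$, yields $(1+\delta'/2)M^V$.

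\emph{Step 4: sample size.} Requiring $b_{k,t}\le(1-\beta)g/2$ for all $t$ gives $2\sum_t\beta^tb_{k,t}\le g$, and solving for $N_{k,t}$ gives the stated condition. Substituting $g=\erel\cb/(1-\beta)$ cancels the $(1-\beta)$ factors. With $\mathrm{rg}+2\Lh D\le\cb+3\Lh D$, this yields the final expression.

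The main obstacle is Step 2. The chargeable error must be fixed before the pass, so it must not depend on $\tau_k$; here $b_{k,t}$ is deterministic, so this holds. The random pass length must also not interfere with the conditional probability bound over infinitely many indices $t$, which is handled by indexing $\delta_{k,t}$ by $t$ rather than by the realised pass.
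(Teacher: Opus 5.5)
Your proposal is correct and follows the paper's proof. It runs the stopped-potential argument of \cref{thm:sampled} with $\gamma_{k,t}=2b_{k,t}$, replaces $\Psi_T((R^k-\Gamma_k)_+)$ by the pathwise lift $\Lambda_k$ of~\eqref{eq:appD:periodic_pathwise}, and replaces the finite-horizon gap identity by the two lower bounds from the proof of \cref{thm:periodic}. The only imprecision is in Step~3: the term lost on $\Omega_{k-1}\setminus\Omega_k$ is the pathwise quantity $\Lambda_k$ itself, not a value of $\chi_S$. On $\Omega_{k-1}$ it satisfies $\Lambda_k\le\sum_p\bar s_p/2\le M^V/2$, which is exactly what your $\hat\psi(u)\le u/2$ argument gives.
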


\begin{proof}
We follow the proof of \cref{thm:sampled}, with $A_{k,t}:=\{\sup_{X_{p(t)}}|\hat f-m|\le b_{k,t}\}$ for the cut at $\bx^k_t$,
$\Omega_0$ the whole space, $\Omega_k:=\Omega_{k-1}\cap\bigcap_{t<\tau_k}A_{k,t}$ and $A:=\bigcap_k\Omega_k$. On $\Omega_{k-1}\cap\bigcap_{s>t}A_{k,s}$
all cuts so far are valid (\cref{lem:validIH}(a), with $\uV0\le\cB\uV0$ by~\ref{I2}), so the current models,
maxima of the convex $\uV0$ and of affine $\Lh$-Lipschitz cuts, lie in the class of the oracle, and on $A_{k,t}$ the shifted
cut is valid and at least $\cB_p\uV{k-1}_{p+1}(x)-2b_{k,t}$ at $x=\bx^k_t$, and \cref{lem:conc} gives
$\PP(\Omega_{k-1}\setminus \Omega_k\mid\cA_{k-1})\le\sum_{t\ge0}\delta_{k,t}=\delta'/(k(k+1))$, so $\PP(A^c)\le\delta'$. In the
stopped-potential argument, replace $\Psi_T((R^k-\Gamma_k)_+)$ by $\Lambda_k$ of~\eqref{eq:appD:periodic_pathwise} with
$\kappa=1$ and $\gamma_{k,t}:=2b_{k,t}$, which is at most $\sum_p\bar s_p/2\le M^V/2$ on $\Omega_{k-1}$ (there $\uV{k-1}\le V$, and
$\hat\psi(u)\le u/2$), and \cref{lem:gap} by the two lower bounds of the proof of \cref{thm:periodic}, which only use
the definition of $\Lambda_k$: $\EE[\one_{\Omega_{k-1}}\Lambda_k\mid\cA_{k-1}]\ge\one_{\Omega_{k-1}}\chi_S((G^k-\Gamma^\beta_k)_+)$.
Finally, $b_{k,t}\le(1-\beta)g/2$ for all $t$ gives $2\sum_t\beta^tb_{k,t}\le g$, and this is the condition on $N_{k,t}$.
\end{proof}

\begin{proposition}[Other oracles]\label{prop:ih-families}
Let \cref{ass:Sinf} hold, and let every cut be computed for the current model $W$ at $x=\bx^k_t$.
\begin{enumerate}[label=(\alph*),nosep]
\item (SDDiP.) Let $X\subset\{0,1\}^n$, the noise be finitely supported, the stages be mixed-integer linear with
complete continuous recourse, $\uV0=0$, and each cut be a Lagrangian cut of $\cB W$ with an optimal multiplier or an
integer optimality cut \citep{zou2019sddip}, possibly with further cuts $f'\le\cB W$. Then \cref{ass:Oinf} holds for
$\dist=+\infty$ off the diagonal, with $U=V$ and $\gamma_{k,t}=0$, and \cref{thm:master-ih}(c) gives
$\EE\sum_{k\ge1}(G^k-\gamma^F_k/(1-\beta))_+\le|X|\cb/(1-\beta)^2\le2^n\cb/(1-\beta)^2$.
\item (Reverse-norm cuts.) If $\dist$ is a norm, deterministic $\uV0\le V$, $\uV0$ and $V$ are $\Lh$-Lipschitz, and
$f(y):=\cB W(x)-\Lh\|y-x\|$, then \cref{ass:Oinf} holds with $U=V$ and $\gamma_{k,t}=0$.
\item (Zhang--Sun, shared-model version.) In the stationary setting of \cref{prop:ZS} (finite noise, compact $X$, nodal
costs in $[0,\cb]$ where finite, penalties $\varpi_\xi\mathcal R_\xi\ge0$, $\mathcal R_\xi(0)=0$), let the forward pass be greedy
for $\BR W(\bar x):=\int\inf_{z\in X}[\varpi_\xi\mathcal R_\xi(\bar x-z)+\Bd(W;z,\xi)]\,\PP(d\xi)$, the cut average the
generalised conjugacy cuts of $\Bd(W;\cdot,\xi)$ at $\bar x=\bx^k_t$, and $\uV0=0$. Then the regularised problem satisfies
\cref{ass:Sinf} with $\cb+\max_\xi\varpi_\xi\sup_{X-X}\mathcal R_\xi$ for $\cb$ (given~\ref{SI3}; $\ell^R$ below is lower
semicontinuous under their assumptions), $V^R\le V$, and \cref{ass:Oinf} holds for $\BR$, with $G^k$ the regularised gap
(losses $\ell^R$), $\Lh=\sum_\xi\PP(\xi)(l_{\xi,\lambda}+l_{\xi,\varrho})$, $\gamma_{k,t}=0$ and $U=V$, so
$\zeta=V(x_0)-V^R(x_0)$, or, under \cref{lem:ZSreg}, $U=V^R$ and $\zeta=0$.
\end{enumerate}
\end{proposition}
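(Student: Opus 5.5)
The plan is to verify \cref{ass:Oinf} separately in each case by reducing it to \cref{lem:validIH}, as was done in finite horizon in \cref{prop:SDDiP,prop:sldp,prop:ZS}, and then to read off the rate from the relevant part of \cref{thm:master-ih}. In all three cases \ref{OI4} holds by construction, and \ref{OI2} with $\gamma_{k,t}=0$ will follow from \cref{lem:validIH}(b) once the cut is shown to be exact for the operator at hand and the \emph{current} model $W\ge\uV{k-1}$. The substantive checks are therefore validity, which gives \ref{OI3}, and, where needed, the Lipschitz constant in \ref{OI1}.

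For (a), take $\dist=+\infty$ off the diagonal. Then every function on the finite set $X$ is $\Lh$-Lipschitz, and \ref{OI1} only asks that $\uV0=0$ be deterministic. With finitely supported noise, $\cB W$ is the finite average of the nodal values $\Bd(W;\cdot,\xi)$. Zou et al.'s Theorem~3 (for Lagrangian cuts with an optimal multiplier), or a direct check (for integer optimality cuts), gives per-scenario cuts that are below $\Bd(W;\cdot,\xi)$ on $\{0,1\}^n$ and exact at $x$. Averaging over $\xi$ gives $f\le\cB W$ with equality at $x$, and the extra cuts satisfy $f'\le\cB W$. Since $0\le\cB0$, \cref{lem:validIH}(a) gives $U=V$ and $r^k\ge0$. \Cref{thm:master-ih}(c) then gives the claim with $\Gamma^\beta_k=\gamma^F_k/(1-\beta)$ and $C_{\rm fin}=\sum_{x\in X}V(x)\le|X|\cb/(1-\beta)$, using $V\le\cb/(1-\beta)$ from \cref{lem:std}(ii).

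For (b), the reverse-norm cut is $\Lh$-Lipschitz and exact for the current $W$ by construction. Validity follows by induction on the cuts. If $W\le V$, then $\cB W(x)\le\cB V(x)=V(x)$, so
\[
f(y)\le V(x)-\Lh\|y-x\|\le V(y),
\]
because $V$ is $\Lh$-Lipschitz. Hence every model stays below $V$, which is \ref{OI3} with $U=V$. The potential bound $M_U$ is finite because $V$ and $\uV0$ are bounded. Note that, as in \cref{prop:sldp}, we do not need $f\le\cB W$, only $f\le V$.

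Part (c) is the main obstacle, since the forward pass is greedy for $\BR$ while the cuts are validated against the original problem. I would first write $\BR$ in the form~\eqref{eq:inf:bellman_operator}, with the regularised loss
\[
\ell^R(\bar x,y,\xi):=\inf_z\{\varpi_\xi\mathcal R_\xi(\bar x-z)+\ell(z,y,\xi)\}
\]
and next-state set $\bigcup_z\cX(z,\xi)\subset X$. This gives forward invariance and $0\le\ell^R\le\cb+\max_\xi\varpi_\xi\sup_{X-X}\mathcal R_\xi$. Next, choosing $z=\bar x$ and using $\mathcal R_\xi(0)=0$ gives $\BR W\le\cB W$. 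Since both operators are monotone $\beta$-contractions, iterating from $V$ gives $V^R\le V$, hence $\zeta=(V(x_0)-V^R(x_0))_+=V(x_0)-V^R(x_0)$.

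The cut properties are then imported scenario by scenario from the proof of \cref{prop:ZS}. Zhang and Sun's Prop.~5 gives, for each $\xi$, a generalised conjugacy cut that is $(l_{\xi,\lambda}+l_{\xi,\varrho})$-Lipschitz and lies below $\Bd(W;\cdot,\xi)$ by weak duality. Averaging over $\xi$ gives $f\le\cB W$, and by induction $f\le\cB V=V$, so \ref{OI3} holds with $U=V$. The tightness inequality~\eqref{eq:appB:zs_tightness}, averaged over $\xi$, gives $f(\bar x)\ge\BR W(\bar x)$; then \cref{lem:validIH}(b) applied to $\BR$ gives \ref{OI2} with $\gamma_{k,t}=0$. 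Under the hypotheses of \cref{lem:ZSreg}, its proof applies verbatim for the current $W$ and gives $f\le\BR W\le V^R$, so one may take $U=V^R$ and $\zeta=0$.

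The delicate points I would check carefully are two. First, the cut parameters remain in Zhang and Sun's bounded set uniformly in the model $W$, which is what gives the stated $\Lh$. Second, the infimum over $z$ preserves the measurability required by \ref{SI3}; this is why that condition is assumed rather than derived here.
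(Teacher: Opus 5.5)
Your proposal is correct and follows the paper's proof essentially step for step. In each part you obtain \ref{OI2} from \cref{lem:validIH}(b) and \ref{OI3} from \cref{lem:validIH}(a), or, in (b), from the direct induction $f(y)\le V(x)-\Lh\|y-x\|\le V(y)$. As in the paper, the SDDiP and Zhang--Sun cut properties are imported node by node with $\beta W$ as cost-to-go term. The only difference is cosmetic: you obtain $V^R\le V$ by iterating $\BR$ from $V$, whereas the paper uses $V^R=\BR V^R\le\cB V^R$ and \cref{lem:std}(ii); both arguments work.
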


\begin{proof}
(a) $X$ and the noise are finite, so~\ref{SI3} is not needed, every function on $X$ is $0$-Lipschitz for this metric,
and $\bar U=U$. Validity on $X$ and tightness at binary trial points are properties of one nodal problem, here with
$\beta W$, a maximum of affine functions, as cost-to-go term: \citet[Thm.~3]{zou2019sddip} for Lagrangian cuts with
optimal multipliers, and their~(4.2) for integer optimality cuts. So \cref{lem:validIH} gives~\ref{OI2} with
$\gamma_{k,t}=0$ and~\ref{OI3} with $U=V$, and $C_{\rm fin}=\sum_xV(x)\le|X|\cb/(1-\beta)$. (b) $f$ is continuous and
$\Lh$-Lipschitz, and $f(x)=\cB W(x)$, so~\ref{OI2} holds by \cref{lem:validIH}(b). By induction, if $W\le V$, then
$f(y)\le\cB V(x)-\Lh\|y-x\|=V(x)-\Lh\|y-x\|\le V(y)$, so every model is $\le V$. \emph{Stage-wise absorption.} If
$\cX(\cdot,\xi)$ is constant on $X$ and $\ell(\cdot,y,\xi)$ is $c$-Lipschitz for $\dist$, comparing the infima over the
common set gives $\Bd(W;x,\xi)-\Bd(W;x',\xi)\le c\dist(x,x')$ for $W\in\mathfrak M$, so $\cB W$ and $V=\cB V$ are
$c$-Lipschitz, uniformly in $\beta$, and~(b) applies with $\Lh=c$ and $\uV0=0$. (c) With
$\ell^R(\bar x,y,\xi):=\inf\{\ell(z,y,\xi)+\varpi_\xi\mathcal R_\xi(\bar x-z):z\in X,\ y\in\cX(z,\xi)\}$ and
$\cX^R(\bar x,\xi):=\bigcup_{z\in X}\cX(z,\xi)\subset X$, $\BR$ is the operator~\eqref{eq:inf:bellman_operator} of the
regularised problem (\cref{app:ZS}), and $0\le\ell^R\le\cb+\varpi_\xi\sup_{X-X}\mathcal R_\xi$. The checks
of~\ref{O1}--\ref{O4} in the proofs of \cref{prop:ZS,lem:ZSreg} concern one nodal problem; with $\beta W$ as cost-to-go
term, they give $\Lh$-Lipschitz averaged cuts $f\le\cB W$ (and $f\le\BR W$ under the conditions of \cref{lem:ZSreg}) with
$f(\bar x)\ge\BR W(\bar x)$. So~\ref{OI1} holds, and \cref{lem:validIH}, for $\BR$ for~\ref{OI2} and for $\cB$ (respectively
$\BR$) for~\ref{OI3}, gives the rest. Finally $z=\bar x$ is feasible, so $\BR W\le\cB W$, and $V^R=\BR V^R\le\cB V^R$
gives $V^R\le V$ by \cref{lem:std}(ii).
\end{proof}

\subsection{Lipschitz constants}\label{app:ih-lip}

\paragraph{Idea of the proof, shared model.} The proof of \cref{prop:lipiter} bounds the slope of a cut of $\cB W$ by
the oscillation of $\cB W$ on $\Oe$ divided by $\eta$ (\cref{lem:slope}), and the oscillation by optional stopping of
the submartingale $\sum_{u<s}\beta^u\ell_u+\beta^sW(\bx_s)$ ($W\le\cB W$). The same $W$ appears in
$\cB W(x)\ge\beta\inf W$ and in $\cB W(z)\le m\cb+\beta\inf W+\beta\Lh\chi$, so $\inf W$ cancels. For stage-wise functions
no such cancellation is available when $m\ge2$, and a $\beta$-uniform bound for the truncation of
\cref{sec:trunc} is not established here. \Cref{rem:lipex} gives examples of $(R_{m,0})$.

\begin{lemma}[Slopes of a cut]\label{lem:slope}
Assume \ref{I1} and \ref{Ieta}. Let $W:X\to[0,\infty)$ be convex and bounded, $x\in X$, and $f$ affine
with $f\le\cB W$ on $\Oe$ and $f(x)=\cB W(x)$. Let $\Theta:=\sup_{z\in\Oe}\cB W(z)-\cB W(x)$. Then $f$ is
$(\Theta/\eta)$-Lipschitz on $\aff X$. Moreover:
\begin{enumerate}[label=(\roman*),nosep]
\item $\Theta\le\cb+\beta\,(\sup_X W-\inf_X W)$;
\item under $(R_{1,0})$ (\cref{def:reach}), $\Theta\le\cb$;
\item if $W\le\cB W$ on $X$, $W$ is $L_W$-Lipschitz on $X$ and $(R_{m,\chi})$ holds, then
$\Theta\le m\cb+\beta L_W\chi$.
\end{enumerate}
\end{lemma}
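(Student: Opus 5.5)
The plan is to prove the Lipschitz bound first, and then the three bounds on $\Theta$ by comparing an upper bound on $\cB W$ over $\Oe$ with the lower bound $\cB W(x)\ge\beta\inf_XW$.

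\emph{Slope bound.} Write $f(y)=f(x)+\langle g,y-x\rangle$ with $g$ in the direction space $E$ of $\aff X$. For a unit vector $u\in E$, the point $x+\eta u$ lies in $\Oe$, so $f(x)+\eta\langle g,u\rangle=f(x+\eta u)\le\cB W(x+\eta u)$. Since $f(x)=\cB W(x)$, this gives $\eta\langle g,u\rangle\le\Theta$. Applying it to $\pm u$, as in the proof of \cref{prop:benders}, yields $|\langle g,u\rangle|\le\Theta/\eta$. Hence $f$ is $(\Theta/\eta)$-Lipschitz on $\aff X$. Note that $\Theta\ge0$ because $x\in\Oe$.

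\emph{Lower bound at $x$.} Under \ref{Ieta}, every successor lies in $X$ and $\ell\ge0$, so $\Bd(W;x,\xi)\ge\beta\inf_XW$ for every $\xi$. Integrating gives $\cB W(x)\ge\beta\inf_XW$.

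\emph{Part (i).} For $z\in\Oe$, the successors again lie in $X$ and $\ell\le\cb$, so $\cB W(z)\le\cb+\beta\sup_XW$. Subtracting the lower bound at $x$ gives (i).

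\emph{Part (ii).} Under $(R_{1,0})$ we have $\cX(z,\xi)=X$. Choosing $y$ with $W(y)\le\inf_XW+\epsilon$ gives $\cB W(z)\le\cb+\beta\inf_XW+\beta\epsilon$. Letting $\epsilon\to0$ and subtracting yields $\Theta\le\cb$.

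\emph{Part (iii), main step.} This is where the argument is not routine: the reaching policy is not greedy for $W$, so its cost cannot be compared with $W$ directly. I will use $W\le\cB W$ to build a submartingale along that policy.

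Fix $z\in\Oe$ and $\epsilon>0$, and pick $y\in X$ with $W(y)\le\inf_XW+\epsilon$. By $(R_{m,\chi})$ there are a policy started at $z$ and a stopping time $1\le\sigma\le m$ with $\|\bx_\sigma-y\|\le\chi$. All $\bx_s$ with $s\ge1$ lie in $X$ by forward invariance from $\Oe$, which is why $W(\bx_s)$ is defined. Write $\lambda_u:=\ell(\bx_u,\bx_{u+1},\bxi_{u+1})$ and set
\[
M_s:=\sum_{u<s}\beta^u\lambda_u+\beta^sW(\bx_s),\qquad s\ge1 .
\]
By the freezing property, $\EE[\lambda_s+\beta W(\bx_{s+1})\mid\mathcal H_s]\ge\cB W(\bx_s)$, since $\bx_{s+1}$ is feasible. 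Combined with $\cB W(\bx_s)\ge W(\bx_s)$, this shows that $(M_s)_{s\ge1}$ is a submartingale. The first step of the policy is also feasible, so $\cB W(z)\le\EE M_1$.

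Optional stopping at the bounded time $\sigma$ gives $\EE M_1\le\EE M_\sigma$. Then I bound $M_\sigma$ term by term. Since $\sigma\le m$ and $\beta^u\le1$, the loss part is at most $m\cb$. For the last term, $W(\bx_\sigma)\le W(y)+L_W\chi\le\inf_XW+\epsilon+L_W\chi$, and this quantity is nonnegative because $W\ge0$. Since $\sigma\ge1$, $\beta^\sigma\le\beta$, so
\[
\beta^\sigma W(\bx_\sigma)\le\beta(\inf_XW+\epsilon+L_W\chi).
\]
Hence $\cB W(z)\le m\cb+\beta\inf_XW+\beta L_W\chi+\beta\epsilon$. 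Letting $\epsilon\to0$, taking the supremum over $z\in\Oe$ and subtracting the lower bound at $x$, the term $\inf_XW$ cancels and $\Theta\le m\cb+\beta L_W\chi$.

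The points to check carefully are the integrability of $M_s$ and the measurability needed for the freezing step. Integrability holds because $W$ is bounded and $\ell\in[0,\cb]$, and \ref{SI3} holds on $\Oe$ under \ref{Ieta}.
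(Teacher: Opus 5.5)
Your proposal is correct and follows essentially the same route as the paper's proof. You bound the slope by testing $f$ at $x\pm\eta u$ in $\Oe$, where the paper tests it at $x+\eta\,\Pi g/\|\Pi g\|$. Parts (i) and (ii) compare the same upper bounds with $\cB W(x)\ge\beta\inf_XW$, and part (iii) uses the same submartingale $\sum_{u<s}\beta^u\ell_u+\beta^sW(\bx_s)$ with optional stopping at $\sigma$, so that $\inf_XW$ cancels.
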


\begin{proof}
\emph{Lipschitz constant.} Write $f=f(x)+\langle g,\cdot-x\rangle$, and let $\Pi$ be the orthogonal projection
onto the direction space of $\aff X$. If $\Pi g\ne0$, the point $z:=x+\eta\,\Pi g/\|\Pi g\|$ lies in $\Oe$, so
$\cB W(x)+\eta\|\Pi g\|=f(z)\le\cB W(z)$, that is, $\|\Pi g\|\le\Theta/\eta$. For $y,y'\in\aff X$,
$|f(y)-f(y')|=|\langle\Pi g,y-y'\rangle|$. \emph{Lower bound at $x$.} Losses are nonnegative and
$\cX(x,\xi)\subset X$, so $\cB W(x)\ge\beta\inf_X W$. (i) For $z\in\Oe$, $\cX(z,\xi)\subset X$, so
$\cB W(z)\le\cb+\beta\sup_X W$. (ii) For $z\in\Oe$, $\cX(z,\xi)=X$, so $\cB W(z)\le\cb+\beta\inf_X W$.
(iii) Fix $z\in\Oe$, $\epsilon_0>0$ and $y\in X$ with $W(y)\le\inf_X W+\epsilon_0$. Take the policy and the stopping
time of $(R_{m,\chi})$, write $\ell_s:=\ell(\bx_s,\bx_{s+1},\bxi_{s+1})$, and let $Y_s:=\sum_{u<s}\beta^u\ell_u+\beta^sW(\bx_s)$ for
$s\ge1$. For $s\ge1$, $\bx_s\in X$. Since $\bxi_{s+1}$ is independent of $\bxi_{[s]}$ and $\bx_{s+1}$ is a feasible
selection, $\EE[Y_{s+1}-Y_s\mid\bxi_{[s]}]\ge\beta^s(\cB W(\bx_s)-W(\bx_s))\ge0$. So $(Y_{s\wedge\sigma})_{s\ge1}$ is a
submartingale, and optional stopping at the bounded time $\sigma$ gives
$\cB W(z)\le\EE[\ell_0+\beta W(\bx_1)]=\EE Y_1\le\EE Y_\sigma$. Now $Y_\sigma\le m\cb+\beta^\sigma W(\bx_\sigma)$ and
$W(\bx_\sigma)\le W(y)+L_W\chi$; since $W(y)+L_W\chi\ge0$ and $\sigma\ge1$,
$\cB W(z)\le m\cb+\beta(\inf_X W+\epsilon_0+L_W\chi)$. Subtract the lower bound at $x$ and let $\epsilon_0\to0$.
\end{proof}

\begin{proof}[Proof of \cref{prop:lipiter}]
By \cref{lem:validIH}(a), \cref{lem:std}(ii) and the convexity shown in the proof of \cref{thm:geom}, every current
model $W$ is convex and satisfies $0\le\uV0\le W\le V\le\cb/(1-\beta)$ and $W\le\cB W$. (i) \Cref{lem:slope}(i) gives $\Theta\le\cb+\beta\cb/(1-\beta)=\cb/(1-\beta)$. (ii) By induction on the
cuts, in order of creation. Let $\Lh:=m\cb/(\eta-\chi)$. If $\uV0$ and the cuts created before are
$\Lh$-Lipschitz, so is the current $W$, and \cref{lem:slope}(iii) bounds the slope of the new cut by
$(m\cb+\beta\Lh\chi)/\eta\le(m\cb+\Lh\chi)/\eta=\Lh$. (iii) \Cref{lem:slope}(ii).
\end{proof}

\paragraph{Periodic problems.} Assume \ref{I1}--\ref{I3} phase by phase, with~\ref{Ieta} for the data of each phase
($\cX_p(z,\xi)\subset X_{p+1}$ for $z$ in the neighbourhood of $X_p$), so that every family of models satisfies
$0\le W_{p'}\le\cB_{p'}W_{p'+1}$, and let each $\uV0_p$ be $L_0$-Lipschitz. Then \cref{prop:lipiter}(i)
and~(iii) hold phase by phase, with $(R_{1,0})$ read as $\cX_p(z,\xi)=X_{p+1}$. For~(ii), assume that, for every $p$, every $z$ in the
neighbourhood of $X_p$ and every $y\in X_{p+1}$, some policy started at $z$ at a time of phase $p$ and a stopping time
$\sigma\equiv1\pmod S$ with $1\le\sigma\le m$ give $\|\bx_\sigma-y\|\le\chi<\eta$ almost surely, and $L_0\le m\cb/(\eta-\chi)$. Then every cut is $m\cb/(\eta-\chi)$-Lipschitz: in the induction of
\cref{prop:lipiter}(ii), apply the proof of \cref{lem:slope}(iii) to $Y_s:=\sum_{u<s}\beta^u\ell_u+\beta^sW_{p+s}(\bx_s)$,
a submartingale since $W_{p'}\le\cB_{p'}W_{p'+1}$; $\sigma\equiv1\pmod S$ makes $\inf_{X_{p+1}}W_{p+1}$ cancel. Unless
one step suffices, this forces $m\ge S+1$, a constant of the order of the loss of a period divided by $\eta$; whether
this is necessary is open.

\begin{remark}[Examples]\label{rem:lipex}
\emph{Inventory with backlog} (the newsvendor of \citet[eq.~(30)]{julan2023infinite}, with $n=1$): state
$x\in X=[-b',b]$, demand $\xi\ge0$, next state $y\in[\max(-b',x-\xi),b]$, convex ordering, holding and backlog
costs. \ref{Ieta} at $z=b+\eta$ forces $\xi\ge\eta$ a.s., so the rule $\bx_{s+1}=\max(y,\bx_s-\bxi_{s+1})$ reaches any lower
target $y$, going down by at least $\eta$ per step, and orders reach any higher level in one step: $(R_{m,0})$ holds
with $m=\lceil(b+b')/\eta\rceil+1$. \emph{Hydro reservoir with free spillage}: next level
$y\in[0,\min(b,x+\xi)]$, inflow $\xi\ge0$, convex thermal cost. \ref{Ieta} at $z=-\eta$ forces $\xi\ge\eta$, and the
rule $\bx_{s+1}=\min(y,\bx_s+\bxi_{s+1})$ gives $(R_{m,0})$ with $m=\lceil b/\eta\rceil+1$. In these examples~\ref{Ieta}
alone gives a $\beta$-uniform bound of order $(D/\eta)\cb/\eta$. Some neighbourhood
condition is needed: $\cX(q)=[q,1]$ on $[0,1]$ with loss $q'$ gives $V(q)=q/(1-\beta)$, a Lipschitz constant of order
$1/(1-\beta)$; this example violates feasibility at $q>1$, hence~\ref{Ieta} and Ju--Lan's Assumption~2.2.
\end{remark}

\subsection{Non-stationary truncation and the warning instance}\label{app:trunc}

Let $\mathrm{Pb}_H$ be the $H$-stage problem of \cref{sec:setting} with stage losses $\beta^{t-1}\ell$, $X_0=\{x_0\}$,
$X_t=X$ for $t\ge1$, and $V_H=0$. Run SDDP on $\mathrm{Pb}_H$ with $\lv t0=0$ and exact cuts (variant~(N)).
Write $\mathcal V^k_t:=\beta^{-t}\lv tk$ for the normalised functions, let $\pi^{k,H}$ be the
non-stationary greedy policy of iteration $k$ and $G^k_H:=V^{\pi^{k,H}}_0(x_0)-\lv0{k-1}(x_0)$, with $V^{\pi^{k,H}}_0$ the $H$-stage cost of this policy.

\begin{theorem}[Truncation]\label{thm:trunc}
Assume \ref{I1}, \cref{ass:S} for $\mathrm{Pb}_H$, and that the normalised stage cuts are $\Lh$-Lipschitz on $X$ (under \ref{Ieta},
with stage cuts valid on $\Oe$, this holds with $\Lh=\cb/((1-\beta)\eta)$, and with $\Lh=\cb/\eta$ under
$(R_{1,0})$).
\begin{enumerate}[label=(\roman*),nosep]
\item For every $H$ and every $\varepsilon'>0$ with $(1-\beta)\varepsilon'\le\hat\rho$ ($\hat\rho=4\Lh D$),
$\EE\,\#\{k\ge1:G^k_H>\varepsilon'\}\le2(d+1)\cb\hat\rho^d/((1-\beta)^{d+2}\varepsilon'^{d+1})$ (\cref{cor:disc}).
\item Let $\bar\pi^k$ be any policy that applies $\pi^{k,H}$ for the first $H$ decisions and then anything
feasible (for instance, restarts $\pi^{k,H}$ every $H$ steps). Then $\lv0{k-1}(x_0)\le V(x_0)$ and
$V^{\bar\pi^k}(x_0)-V(x_0)\le V^{\bar\pi^k}(x_0)-\lv0{k-1}(x_0)\le G^k_H+\varepsilon_H$.
\item Hence, for $0<\varepsilon<\cb/(1-\beta)$ with $(1-\beta)\varepsilon\le\hat\rho$ and $H$ as in \cref{thm:det}(ii),
$\EE\,\#\{k\ge1:V^{\bar\pi^k}(x_0)-\lv0{k-1}(x_0)>\varepsilon\}\le2e(d+1)\cb\hat\rho^d/((1-\beta)^{d+2}\varepsilon^{d+1})$,
with $H$ cuts per iteration (trial-point cuts; the optional cuts of~\ref{O4} are not counted).
\end{enumerate}
\end{theorem}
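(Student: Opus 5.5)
The plan is to reduce all three parts to results already available for the finite-horizon problem $\mathrm{Pb}_H$ with discounted stage losses, and then add the truncation tail.

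For (i), $\mathrm{Pb}_H$ is exactly the setting of \cref{cor:disc}. The stage-$t$ loss is $\beta^{t-1}\ell$ with $\ell\in[0,\cb]$, and $\lv t0=0$ is the constant $\sum_{s>t}\beta^{s-1}a_s$ with $a_s=0$. The stage cuts $\lv tk=\beta^t\mathcal V^k_t$ are $\beta^t\Lh$-Lipschitz because the normalised cuts are $\Lh$-Lipschitz. The state sets $X_t=X$ are convex with diameter at most $D$, so $\kappa_t=1$ by \cref{lem:VG}(a). Exact SDDP cuts are valid and tight by \cref{lem:valid}. Applying \cref{cor:disc} with $\varepsilon=\varepsilon'$ then gives (i) verbatim, uniformly in $H$.

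I would check the parenthetical Lipschitz claim by the argument of \cref{prop:benders}. Its oscillation bound for $\cB_t(\lv{t+1}k)$ on $\Oe$ is at most $\sup V_t\le\beta^t\cb/(1-\beta)$, so the slope is at most $\beta^t\cb/((1-\beta)\eta)$. Under $(R_{1,0})$ I would use the cancellation of \cref{lem:slope}(ii) stagewise. In that case $\cB_t W(z)$ does not depend on $z\in\Oe$ beyond the loss term, so the oscillation is at most $\beta^t\cb$ and the slope is at most $\beta^t\cb/\eta$.

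For (ii), I would first show $\lv0{k-1}(x_0)\le V^H_0(x_0)\le V(x_0)$. The first inequality is validity (\cref{lem:valid}(a)). The second holds because the truncated problem drops nonnegative future losses, so its value is below the infinite-horizon value; formally, any infinite-horizon policy restricted to $H$ steps costs at least $V^H_0(x_0)$. For the second claim, $V^{\bar\pi^k}(x_0)$ equals the $H$-step discounted cost of $\pi^{k,H}$, which is $V^{\pi^{k,H}}_0(x_0)$, plus the expected tail $\EE\sum_{t\ge H}\beta^t\ell\le\beta^H\cb/(1-\beta)=\varepsilon_H$. Subtracting $\lv0{k-1}(x_0)$ gives $G^k_H+\varepsilon_H$. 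The left inequality $V^{\bar\pi^k}(x_0)-V(x_0)\le V^{\bar\pi^k}(x_0)-\lv0{k-1}(x_0)$ then follows from the first claim. The one point needing care is that $\bar\pi^k$ is a valid nonanticipative policy of the stationary problem. This holds by forward invariance \ref{SI1}, which makes the continuation feasible.

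For (iii), I would take $H$ as in \cref{thm:det}(ii), so that $\varepsilon_H\le\varepsilon/(d+2)$. By (ii), the event $V^{\bar\pi^k}-\lv0{k-1}>\varepsilon$ implies $G^k_H>\varepsilon'$ with $\varepsilon'=(1-1/(d+2))\varepsilon$. Applying (i) with this $\varepsilon'$ gives a factor $(1+1/(d+1))^{d+1}\le e$ relative to the bound at $\varepsilon$. The condition $(1-\beta)\varepsilon'\le\hat\rho$ follows from $(1-\beta)\varepsilon\le\hat\rho$. Each iteration of variant~(N) adds one trial-point cut per stage, hence $H$ cuts.

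I expect the main obstacle to be the stagewise Lipschitz claim under $(R_{1,0})$. There one must confirm that the constant $\cb/\eta$ survives without the shared-model cancellation, which works only because with $m=1$ a single step reaches all of $X$. Everything else is bookkeeping on top of \cref{cor:disc}.
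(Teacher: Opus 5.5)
Your proposal follows the paper's proof essentially step for step. You use \cref{cor:disc} for (i), validity plus the tail bound $\varepsilon_H$ for (ii), and the rescaling $\varepsilon'=\frac{d+1}{d+2}\varepsilon$ with $(1+1/(d+1))^{d+1}\le e$ for (iii). Your Lipschitz checks, via the oscillation bound of \cref{prop:benders} and the one-step cancellation under $(R_{1,0})$, match the paper's use of \cref{lem:slope}(i)--(ii) with $W=\mathcal V_{t+1}\in[0,\cb/(1-\beta)]$. The case you flag as the main obstacle needs no shared-model structure, as you correctly observe.

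One justification in (ii) does not go through as written. You argue that every infinite-horizon policy restricted to $H$ steps costs at least the truncated value. That gives $\tilde V^{(H)}_0(x_0)\le\inf_\pi V^\pi(x_0)$, where $\tilde V^{(H)}_0$ is the value of $\mathrm{Pb}_H$. But $V$ is defined as the fixed point of $\cB$, and \cref{lem:std}(iii) only gives $V^\pi(x_0)\ge V(x_0)$. To conclude $\tilde V^{(H)}_0(x_0)\le V(x_0)$ you would also need $\inf_\pi V^\pi(x_0)\le V(x_0)$. That requires $\gamma$-optimal measurable selections for $\Bd(V;\cdot,\cdot)$, which are not part of the stated assumptions. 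Use the paper's operator comparison instead: the truncated value at time $0$ is $\cB^H0$, and $0\le V$ together with monotonicity of $\cB$ gives $\cB^H0\le\cB^HV=V$. Combined with $\lv0{k-1}(x_0)\le\tilde V^{(H)}_0(x_0)$ from \cref{lem:valid}(a), this yields the first claim of (ii). The rest of your argument is unchanged.
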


\begin{proof}
(i) The assumptions of \cref{cor:disc} hold, with constant $\beta^{t}\Lh$ for the cuts of index $t$, $\lv{}0=0$, and exact cuts.
(ii) The cost incurred after time $H$ is at most $\beta^H\cb/(1-\beta)$. By \cref{lem:valid}(a),
$\lv0{k-1}(x_0)\le\tilde V^{(H)}_0(x_0)$, where $\tilde V^{(H)}_0=\cB^H0\le\cB^HV=V$. (iii) If
$V^{\bar\pi^k}(x_0)-\lv0{k-1}(x_0)>\varepsilon$, then $G^k_H>\varepsilon-\varepsilon_H\ge\frac{d+1}{d+2}\varepsilon$; apply~(i). The Lipschitz
constants come from \cref{lem:slope}(i)--(ii) applied to $W=\mathcal V_{t+1}\in[0,\cb/(1-\beta)]$.
\end{proof}

\begin{proposition}[Warning: the greedy policy of $\mathcal V_1$ can stay suboptimal]\label{prop:counter-ih}
Let $x=(u,w)\in[0,1]^2$, $x_0=(0,0)$, deterministic dynamics $\cX(u,w)=\{(1-u,w'):0\le w'\le(1+u)/2\}$, loss
$\ell(x,y)=|w-u|+\alpha(1-w')$ with $0<\alpha\le\beta/2$, and $\cb=1+\alpha$; this satisfies~\ref{I1}. Run~(N) with any
$H\ge2$ and $\lv t0=0$, and let $\hat\pi^k:=\pi_{\mathcal V^k_1}$ be the greedy stationary policy of
$\mathcal V^k_1$ after $k\ge0$ iterations.
\begin{enumerate}[label=(\roman*),nosep]
\item For every exact-cut oracle whose cuts have $w$-slopes in $[-1,1]$ on the slices below (for instance,
subgradients of the natural extension of $\cB W$ to $w\in\R$), \cref{cor:disc} applies ($d=1$, $D=1$, $\Lh=1$), so
almost surely $G^k_H\le\varepsilon'$ for all large $k$, for every $\varepsilon'>0$.
\item For every exact-cut oracle that adds, at index~$1$, only the cut at the trial point, and every $k$, $\hat\pi^k$ is the policy $(u,w)\mapsto(1-u,(1+u)/2)$, and
$V^{\hat\pi^k}(x_0)-V(x_0)\ge\beta(\beta-\alpha)/(1-\beta^2)\ge\beta^2/(4(1-\beta))$.
\end{enumerate}
\end{proposition}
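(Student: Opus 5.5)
Part~(i) is a verification of the hypotheses of \cref{cor:disc}, and part~(ii) is an explicit computation on this deterministic two-dimensional instance. The two parts are essentially independent.

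\emph{Part~(i).} I would check the data first. The state set $X=[0,1]^2$ is convex, so $\kappa=1$, $d=2$ a priori, and $D$ is the diameter. However, the states reachable at time $t\ge1$ lie on the slice $u=p(t)\in\{0,1\}$. I would therefore take $X_t:=\{u_t\}\times[0,1]$ with $u_t:=t\bmod2$. These sets are forward invariant for the deterministic dynamics, since from $u_t$ the next $u$ is $1-u_t=u_{t+1}$. This gives $d=1$, $D=1$ and $\kappa=1$ by \cref{lem:VG}(a).

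The loss lies in $[0,1+\alpha]=[0,\cb]$, and $\lv{}0=0$. The hypothesis on the $w$-slopes gives normalised cuts that are $1$-Lipschitz on each slice, i.e.\ $\Lh=1$. Exactness and validity follow from \cref{lem:valid}. \Cref{cor:disc} then bounds $\EE\,\#\{k:G^k_H>\varepsilon'\}$, which is finite. This bound is the content of \cref{thm:trunc}(i). The almost sure statement follows as in \cref{cor:anytime}(c): intersect over rational $\varepsilon'$.

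\emph{Part~(ii): the true problem.} First I would compute $V$ and an optimal policy by a one-step-exchange argument.
\begin{itemize}[nosep]
\item From $u=0$ the next state is $(1,w')$ with $w'\le1/2$. Raising $w'$ lowers the current cost $\alpha(1-w')$ and also the next cost $|w'-1|$, so $w'=1/2$ is optimal.
\item From $u=1$ the next state is $(0,w')$ with $w'\le1$. Raising $w'$ saves $\alpha$ per unit now but costs $\beta$ per unit next step through $|w'-0|$. Since $\alpha\le\beta/2<\beta$, the choice $w'=0$ is optimal.
\end{itemize}
The comparison policy $(u,w)\mapsto(1-u,(1+u)/2)$ agrees with the optimal one at even times and deviates at the odd times $t=1,3,\dots$, where $u=1$. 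At each such time it loses $(\beta-\alpha)\beta^{t}$, since it pays $\beta\cdot1$ at $t+1$ instead of saving only $\alpha$ at $t$. The decisions at later even times are unaffected, because $w'=1/2$ remains optimal whatever $w$ is. Summing $\sum_{j\ge0}(\beta-\alpha)\beta^{2j+1}$ gives $\beta(\beta-\alpha)/(1-\beta^2)$. Finally, $\alpha\le\beta/2$ and $1+\beta\le2$ give the bound $\beta^2/(4(1-\beta))$.

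\emph{Part~(ii): the greedy policy.} The main step is to show that $\hat\pi^k$ is exactly the comparison policy for every $k$. It suffices to show that $\mathcal V^k_1$ is nonincreasing in $w$ on both slices $u=0$ and $u=1$. Then the greedy step from $u$ minimises $\alpha(1-w')+\beta\mathcal V^k_1(1-u,w')$, and this is minimised at the largest admissible $w'=(1+u)/2$. This holds because the first term is strictly decreasing and the second is nonincreasing; the tie-breaking issue disappears since $\alpha>0$.

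To prove the monotonicity, I would use that the only cuts at index~$1$ are taken at trial points $\bx^k_1=(1,w)$ with $w\le1/2<1$. At such a point, $\cB W(u,w)$ contains the term $|w-u|$, and every other term is independent of $w$. Any exact affine minorant of the convex function $\cB W$ tight at $(1,w)$ therefore has $w$-slope equal to $-1$ (the only element of $\partial_w|w-u|$ at $w<u$), or at least nonpositive slope.

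An affine cut has the same $w$-slope on every slice $u$, including $u=0$. Hence $\mathcal V^k_1=\max(0,\text{cuts})$ is nonincreasing in $w$ on the whole square.

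\emph{Main obstacle.} The hardest part is making this slope argument rigorous: exactness at the trial point must force the $w$-component of every cut's gradient to be $\le0$. I would obtain this from the subgradient inequality restricted to the segment $\{1\}\times[0,1]$, where $\cB W(1,\cdot)$ is $w\mapsto(1-w)+\text{const}$. A tight affine minorant of this function at an interior point must have slope exactly $-1$. One further point needs checking: the cut's dependence on $u$ must not alter the $w$-monotonicity at $u=0$. It does not, because an affine function has a $w$-slope independent of $u$.
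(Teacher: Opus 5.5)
Your proof is correct and follows the paper's route: slices $\{u_t\}\times[0,1]$ for part~(i); for part~(ii), the $|w-u|$ term forces every index-$1$ cut to be nonincreasing in $w$ on every line $u=\mathrm{const}$, so the greedy step takes the largest $w'$, and a direct cost comparison finishes (you compute $V(x_0)$ exactly, whereas the paper only bounds it above by the same alternating policy). The one difference is that the paper runs an induction on $k$ to place the trial point at the interior point $(1,\tfrac12)$, where the $w$-slope is exactly $-1$, while you skip the induction. That works, but then your ``main obstacle'' paragraph must also cover a boundary trial point $w_0=0$: there, validity at $w=1$ together with tightness at $w_0$ gives only a $w$-slope $\le-1$, which still suffices.
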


\begin{proof}
Use the slices $X_0=\{x_0\}$, $X_t=\{1\}\times[0,1]$ ($t$ odd) and $\{0\}\times[0,1]$ ($t\ge2$ even): convex, compact,
forward invariant, with $d_t\le1$ and $D_t\le1$. (i) Cuts with $w$-slopes in $[-1,1]$ are $1$-Lipschitz on the slices
(for the natural extension, $\cB W(u,w)=|w-u|+\min_{0\le w'\le(1+u)/2}[\alpha(1-w')+\beta W(1-u,w')]$ has $w$-slopes in
$[-1,1]$); apply \cref{cor:disc} and Borel--Cantelli (a finite expected count). (ii) By induction on $k$, suppose $\mathcal V^k_1(1,\cdot)$ is nonincreasing (true
for $k=0$). The first decision minimises $\alpha(1-w_1)+\beta\mathcal V^k_1(1,w_1)$ over $w_1\in[0,1/2]$, a strictly
decreasing function, so $\bx_1=(1,1/2)$. On the slice $\{u=1\}$, $\cB\mathcal V_2(1,w)=(1-w)+\mathrm{const}$ is
differentiable at the interior point $w=1/2$, so an affine $f\le\cB\mathcal V_2$ on the slice that is tight at $(1,1/2)$
has $w$-slope $-1$, whatever its $u$-slope. Hence $\mathcal V^{k+1}_1=\max(0,\text{cuts})$ is nonincreasing in $w$ on
\emph{every} line $u=\mathrm{const}$, which closes the induction, and at any $(u,w)$ the objective
$\alpha(1-w')+\beta\mathcal V^k_1(1-u,w')$ of $\hat\pi^k$ is strictly decreasing in $w'$, so $w'=(1+u)/2$. From $x_0$, $\hat\pi^k$ alternately visits $(1,\frac12)$ and $(0,1)$, starting with
$(1,\frac12)$, with costs $\alpha/2$ at $t=0$, $1/2$ at odd $t$ and $1+\alpha/2$ at even $t\ge2$, so
$(1-\beta^2)V^{\hat\pi^k}(x_0)=\alpha/2+\beta/2+\beta^2$. The policy that alternates $(0,0)\to(1,\frac12)\to(0,0)$ costs
$\alpha/2$ at even $t$ and $1/2+\alpha$ at odd $t$, so $(1-\beta^2)V(x_0)\le\alpha/2+\beta/2+\beta\alpha$. Subtract, and use
$1+\beta\le2$ and $\alpha\le\beta/2$.
\end{proof}

\subsection{Tightness of the convex-regret form}\label{app:tight}

\begin{proposition}[Tightness of the convex-regret form]\label{prop:tight}
Let $X=[0,1]$, $x_0=0$, deterministic dynamics $\cX(z)=X$ for all $z\in\Oe$, loss
$\ell(z,y)=(r_0-az)_++\alpha|y-z|$ with $0<r_0\le a$ and $0<\alpha<\beta a$, and $\uV0=0$. Then \ref{I1}, \ref{I2}, \ref{Ieta} and
$(R_{1,0})$ hold, and every cut of every possible model is $(a+\alpha)$-Lipschitz, so $\Lh=a+\alpha$ and $D=1$ are
valid. Moreover (i) $\pi^1$ stays at $0$ forever and $G^1=r_0/(1-\beta)$; (ii) $V(x)=(1+\alpha/a)(r_0-ax)_+$ on $X$,
so in variants~(D) and~(G), for every run, $\sup_k\Phi^k\le\avg_X V=(1+\alpha/a)\,r_0^2/(2a)$. Consequently, if
for some $\vartheta>0$ the bound $\EE\sum_{k\ge1}\hat\psi(\vartheta G^k)\le\avg_X(V-\uV0)$ held for all instances, then
$\vartheta\le2\sqrt2(1+\alpha/a)(1-\beta)$. The same holds for any per-iteration bound
$\EE_{k-1}[\Phi^k-\Phi^{k-1}]\ge\hat\psi(\vartheta G^k)$.
\end{proposition}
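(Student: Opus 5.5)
The plan is to verify the structural assumptions, compute $V$ and the first iteration by hand, and then compare the $k=1$ term of the hypothetical bound with the potential. \textbf{Assumptions.} I would first extend the data to $z\in\Oe=[-\eta,1+\eta]$. The loss is jointly convex and lies in $[0,\cb]$ for a suitable $\cb$. We have $\cX(z)=X$, which is a constant set, so \ref{Ieta} and $(R_{1,0})$ hold, and \ref{I1}, \ref{I2} are immediate with $\uV0=0$.

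\textbf{Lipschitz constant.} For any bounded model $W$,
\[
\cB W(z)=(r_0-az)_++\min_{y\in X}\big[\alpha|y-z|+\beta W(y)\big].
\]
The first term is $a$-Lipschitz in $z$. The second is an infimum of $\alpha$-Lipschitz functions of $z$ over a set independent of $z$, so it is $\alpha$-Lipschitz. Hence $\cB W$ is $(a+\alpha)$-Lipschitz on $\Oe$. By the projection argument in the proof of \cref{lem:slope}, an affine cut below $\cB W$ on $\Oe$ and tight at $x\in X$ has slope at most $\Theta/\eta\le(a+\alpha)\eta/\eta$. This gives $\Lh=a+\alpha$, with $D=1$ and $d=1$.

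\textbf{The value function.} Let $V^\star(x):=(1+\alpha/a)(r_0-ax)_+$ on $X$. The candidate policy moves once to $r_0/a$, paying $(r_0-ax)_++\alpha(r_0/a-x)_+$, and then pays nothing. I would check that $V^\star$ is a fixed point of $\cB$, and conclude by uniqueness in \cref{lem:std}(ii). For $y\ge r_0/a$ the term $\beta V^\star(y)$ vanishes. For $y<r_0/a$, its slope $\beta(a+\alpha)$ exceeds the movement price $\alpha$, so moving to $y=\max(x,r_0/a)$ is optimal. This uses $\alpha<\beta a$. I expect this to be a short but careful case check, and it is the main computational step.

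\textbf{First iteration.} $\pi^1$ is greedy for $\uV0=0$, so from $z$ it minimises $\alpha|y-z|$. It therefore stays at $0$ forever and pays $r_0$ per step. Hence $V^{\pi^1}(x_0)=r_0/(1-\beta)$ and $G^1=r_0/(1-\beta)$.

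\textbf{Potential.} Cuts are valid by \cref{lem:validIH}(a), so every model lies between $0$ and $V$. Then $\Phi^k\le\avg_X V=\int_0^{r_0/a}(1+\alpha/a)(r_0-ax)\,dx=(1+\alpha/a)r_0^2/(2a)$ for every run, in both variants.

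\textbf{Constraint on $\vartheta$.} Suppose the convex-regret bound holds. Keeping only the deterministic term $k=1$ gives
\[
\hat\psi\big(\vartheta r_0/(1-\beta)\big)\le(1+\alpha/a)\,\frac{r_0^2}{2a}.
\]
Fix $\vartheta$, $a$, $\alpha$, $\beta$ and take $r_0$ small enough that $\vartheta r_0/(1-\beta)\le\hat\rho=4(a+\alpha)$. Then the first branch of $\hat\psi$ with $d=1$ applies, $\hat\psi(s)=s^2/(4\hat\rho)$. This yields
\[
\frac{\vartheta^2r_0^2}{16(a+\alpha)(1-\beta)^2}\le(1+\alpha/a)\,\frac{r_0^2}{2a},
\]
that is, $\vartheta^2\le 8(1+\alpha/a)^2(1-\beta)^2$, which is the claim. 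For the per-iteration version, take expectations of $\Phi^1-\Phi^0$ at $k=1$ and use the same bound on $\Phi^1$, which gives the identical inequality.
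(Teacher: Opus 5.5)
Your proposal is correct and follows the paper's proof essentially step by step: $\cB W$ as the sum of an $a$-Lipschitz and an $\alpha$-Lipschitz function, the greedy stall at $0$ giving $G^1=r_0/(1-\beta)$, the fixed-point check of $(1+\alpha/a)(r_0-ax)_+$ with uniqueness from \cref{lem:std}(ii), the bound $\Phi^k\le\avg_X V$ from validity, and the $k=1$ term on the first branch of $\hat\psi$. There are two minor points. First, the paper does not shrink $r_0$: it rules out $s>\hat\rho$ for every $r_0\in(0,a]$, since then $\hat\psi(s)>\hat\psi(\hat\rho)=a+\alpha>(a+\alpha)/2\ge\avg_X V$, so its conclusion holds instance by instance. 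Second, your inequality $\Theta/\eta\le(a+\alpha)\eta/\eta$ is false for the global oscillation $\Theta$ of \cref{lem:slope}, which is a supremum over all of $\Oe$ (for $W=0$ and $x=1$, $\Theta=r_0+(a+\alpha)\eta$); the fix is immediate, because the projection argument only uses the single point $x\pm\eta$, where the increment of $\cB W$ is at most $(a+\alpha)\eta$.
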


\begin{proof}
For every $W$, $\cB W(z)=(r_0-az)_++\min_{y\in X}[\alpha|y-z|+\beta W(y)]$ is the sum of an $a$-Lipschitz and an
$\alpha$-Lipschitz function on $\Oe$, so every subgradient at a point of $X\subset\operatorname{int}\Oe$ has norm at
most $a+\alpha$. (i) With $\uV0=0$, the greedy objective is $\alpha|y-x|$, uniquely minimised at $y=x$, so $\bx_t=0$ and
$V^{\pi^1}(0)=r_0/(1-\beta)$. (ii) Let $U(x):=(1+\alpha/a)(r_0-ax)_+$. For $x\ge r_0/a$, $y=x$ gives $\cB U(x)=0=U(x)$. For
$x<r_0/a$: on $y\in[x,r_0/a]$ the objective $\alpha(y-x)+\beta(a+\alpha)(r_0/a-y)$ is decreasing, since
$\alpha<\beta(a+\alpha)$; on $y\ge r_0/a$ it is increasing, and on $y\le x$ it is decreasing in $y$. So the minimum is
$\alpha(r_0/a-x)$, and $\cB U(x)=r_0-ax+\alpha(r_0/a-x)=U(x)$. Hence $U=V$, and $\Phi^k\le\avg V$ by \cref{lem:validIH}(a).
The average is $(1+\alpha/a)\int_0^{r_0/a}(r_0-as)\,ds$. \emph{Consequence.} Here $\hat\rho=4(a+\alpha)$, and the $k=1$
term alone gives $\hat\psi(s)\le(1+\alpha/a)r_0^2/(2a)$ with $s:=\vartheta r_0/(1-\beta)$. If $s>\hat\rho$, then
$\hat\psi(s)>\hat\psi(\hat\rho)=a+\alpha>(1+\alpha/a)a/2\ge(1+\alpha/a)r_0^2/(2a)$, a contradiction. So $s\le\hat\rho$, and
$s^2/(4\hat\rho)\le(1+\alpha/a)r_0^2/(2a)$ gives $\vartheta^2\le8(1+\alpha/a)^2(1-\beta)^2$.
\end{proof}

In this instance the true potential increase is at most $\avg V$, of the order of $\hat\varphi(r_0)$, while
$G^1=r_0/(1-\beta)$, so \emph{every} lower bound on the potential increase loses the factor $1-\beta$ there, cut-by-cut
accounting included.

\subsection{Comparison with Ju and Lan}\label{app:julan}

\citet{julan2023infinite} work with an SAA of $N$ scenarios, their $\lambda$ is our $\beta$, and one iteration of their
methods is one forward step and one cut; their Lipschitz constants (their Lemma~2.4) are of the order of our crude
constant. With a common $\beta$-uniform constant $\Lh$ (\cref{prop:lipiter} provides one under
$(R_{m,\chi})$), $n=d$, $\Lambda:=4\Lh D/(\cb\,\erel)$ and $(1-\beta)\varepsilon=\cb\,\erel$, reading their
Theorems~2.8, 2.11 and~3.5 with the truncation length needed for accuracy $\varepsilon$ gives the following numbers of
cuts until $\varepsilon$-accuracy, up to constants and norms (Inf-EDDP and Inf-SDDP are their explorative and sampling
methods, and CE stands for continual exploration); all rows give an $\varepsilon$-tight lower bound at the root.
\begin{center}
\footnotesize
\setlength{\tabcolsep}{4pt}
\begin{tabular}{lll}
\toprule
method & cuts until $\varepsilon$-accuracy & guarantee\\
\midrule
Inf-EDDP, det. \citep[Thm.~2.8]{julan2023infinite} & $\big(\ln(2/\erel)/(1-\beta)\big)^2(\Lambda+1)^d$ & first decision\\
CE-Inf-EDDP, det. \citep[Thm.~2.11]{julan2023infinite} & $\big(4\ln(2/\erel)/(1-\beta)\big)(\Lambda+1)^d$ & first decision\\
CE-Inf-SDDP, random \citep[Thm.~3.5]{julan2023infinite} & CE-Inf-EDDP $\times\,N^{2\ln(2/\erel)/(1-\beta)}$ & first decision\\
(G), \cref{thm:geom} & $2(d+1)\Lambda^d/\big((1-\beta)^2\erel\big)$ & stationary policy\\
(D), \cref{thm:det} & (G) $\times\,e\ln((d+2)/\erel)$ & stationary policy\\
(N), \cref{thm:trunc} & as (D) & non-stationary policy\\
\bottomrule
\end{tabular}
\end{center}

\section{Notation}\label{app:notation}

\begin{center}\footnotesize
\setlength{\tabcolsep}{3pt}
\renewcommand{\arraystretch}{1.05}
\begin{tabular}{>{$}l<{$}>{\raggedright\arraybackslash}p{5.9cm}>{$}l<{$}>{\raggedright\arraybackslash}p{5.9cm}}
\toprule
X_t,d_t,D_t & state set at time $t$ (state $x_t$), its dimension and diameter & \bxi_t,\PP_t,\Xi_t & noise of stage $t$, its law, its space\\
\cX_t,\ell_t,\cb & next-state map, stage loss, initial error per stage \eqref{eq:main:initial_error} & \Bd_t,\cB_t & pointwise (given $\xi$) and expected Bellman operators\\
V_t,\lv tk & value function at time $t$, approximation after $k$ iterations & f^k_t & cut added at iteration $k$\\
\pi^k,V^{\pi^k}_0(x_0) & greedy policy of iteration $k$, its expected cost & G^k & gap $V^{\pi^k}_0(x_0)-\lv0{k-1}(x_0)$\\
r^k_t,R^k & residual, its sum along the path & \bgamma^k_t,\bgamma^{F,k}_t & cut error, forward error\\
\cF^k_{t-1} & forward operator of iteration $k$ (greedy up to $\bgamma^{F,k}_t$) & \bGamma_k,\bGamma^F_k & total cut error, total forward error\\
\Gamma,\bar{\bGamma}_k & total errors (deterministic, random) & U_t,\Ub t & ceiling, its Lipschitz envelope\\
M_t,M^V & potential bounds & \zeta_0 & ceiling excess at $x_0$\\
\dist_t,L_t & metric, Lipschitz constant & \mu_t,\kappa_t & reference measure, growth constant\\
\Phi^k,\cA_k & potential after $k$ iterations, $\sigma$-field of $k$ iterations & \Ph_t,\Delta^\pm_t & SAA law, SAA biases\\
\varphi_t,\hat\psi_t,\psi_t & lift function and its convex minorants & \rho_t,\Psi_T & $4L_tD_t$, infimal convolution\\
\beta,\cB,V & discount, stationary operator, fixed point & \Lh,\hat\rho & normalised Lipschitz constant, $4\Lh D$\\
\tau_k,H & pass length (random, fixed) & \eta,\chi & recourse margin, reachability tolerance\\
S,p(t) & period, phase $t\bmod S$ of time $t$ (\cref{sec:periodic}) & \chi_S,q_S & periodic lift function \eqref{eq:appD:periodic_chi}, period factor (\cref{thm:periodic})\\
\hat M_t,\Vh_t & SAA potential bound and value function (\cref{sec:noise}) & \Mb_U,M_U,\Mb & infinite-horizon potential bounds $\int(\bar U-\uV0)\,d\mu$, $\sup_X(\bar U-\uV0)$, $\avg_X(V-\uV0)$\\
\Gamma^\beta_k,\zeta & infinite-horizon error floor (\cref{sec:ih-master}), ceiling excess at $x_0$ & M_C,e_t & net potential bound (\cref{thm:master-ih}), forward error at time $t$\\
C_U,C_{\rm fin} & potential bounds $\sum_{t<T}\int(\Ub t-\lv t0)\,d\mu_t$ and, for finite $X_t$, $\sum_{t<T}\sum_x(U_t-\lv t0)(x)$ & \varepsilon_H,\erel & truncation tail $\beta^H\cb/(1-\beta)$, relative accuracy ($(1-\beta)\varepsilon=\cb\,\erel$)\\
\bottomrule
\end{tabular}
\captionof{table}{Main notation. Time and stage indices follow TFDP: stage $t\in[T]$ has loss $\ell_t$ and noise $\bxi_t$ (noise history $\bxi_{[t]}:=(\bxi_s)_{s\in[t]}$), the state $x_t$ is chosen at stage $t$, and $V_t$, $\cB_t$, $\lv tk$, $f^k_t$, $r^k_t$ are indexed by the time $t<T$ of the state they act on. Quantities with superscript $k$ are produced in iteration $k$, which uses the approximations of index $k-1$.}\label{tab:notation}
\end{center}

\end{document}